\documentclass{article}
\usepackage[utf8]{inputenc}
\usepackage{amsmath}
\usepackage{amssymb}
\usepackage{amsfonts}
\usepackage{ bbold }
\usepackage{amsthm}
\usepackage{comment}
\usepackage{stmaryrd}
\usepackage{float}
\usepackage{url}
\usepackage[table]{xcolor}
\newcommand{\N}{\mathbb{N}}
\newcommand{\Z}{\mathbb{Z}}
\newcommand{\Q}{\mathbb{Q}}
\newcommand{\R}{\mathbb{R}}
\newcommand{\C}{\mathbb{C}}
\newcommand{\Sh}{S}
\newcommand{\Shs}{S^*}
\DeclareMathOperator{\Flm}{\mathcal{F}_{1-k,N,D,D_0}}
\DeclareMathOperator{\Plz}{\mathcal{P}_{k,N,D,D_0}}

\newcommand{\im}{\text{Im}}
\newcommand{\re}{\text{Re}}
\usepackage{bbm}
\newtheorem{theorem}{Theorem}
\newtheorem{lemma}[theorem]{Lemma}
\newtheorem{corollary}[theorem]{Corollary}
\newtheorem{proposition}[theorem]{Proposition}

\newtheorem{remark}[theorem]{Remark}
\newtheorem{definition}[theorem]{Definition}

\newtheorem{example}[theorem]{Example}

\newenvironment{proof-sketch}{%
  \proof}{\endproof}

\usepackage{graphicx} 
\title{Central Values of $L$-Functions of Twisted Modular Forms and Local Polynomials}
\author{Charlotte Dombrowsky}
\date{}

\begin{document}

\maketitle
\begin{abstract}
In this paper we study the product of two  central values of $L$-functions of a twisted modular. We show that it suffices to compute a local polynomial at a finite number of points to decide whether the product is zero. For the proof, we relate the local polynomial to the product of the $L$-functions using a locally harmonic Maass form and building on the Shimura-Shintani correspondence. This extends results from Ehlen, Guerzhoy, Kane and Rolen as well as Males, Mono, Rolen and Wagner. 
\end{abstract}
\section*{Introduction and statement of results}
A deep result in number theory is the relation between central values of L-functions of twisted modular forms of integer weight and coefficients of specific modular forms of half-integral weight. This connection was observed first by Waldspurger \cite{Waldspurger} and further developed by Zagier and Kohnen \cite{Kohnen-Zagier},\cite{Kohnen1985}.  Kohnen's Theorem was generalized by Sakata \cite{Sakata-4p^m}, \cite{Sakata-N} who removed the condition that the level had to be square-free.

From the Birch and Swinnerton-Dyer Conjecture, it follows that studying central values of twisted $L$-functions gives insight into the rank of elliptic curves over quadratic extensions. This observation was used by Tunnell \cite{Tunnell} to find a criterion, when an integer is a congruent number, i.e. the area of a right triangle with sides given by rational numbers. This problem is equivalent to finding non-torsion points on a family of quadratic twists of elliptic curves. Tunnell showed that an integer is a congruent number if and only if certain binary quadratic forms have the same number of solutions over integers. In this approach, one only considers modular forms of weight $2$, however the Bloch-Kato Conjecture generalizes the Birch and Swinnerton-Dyer Conjecture for higher weight modular forms.

While Kohnen and Sakata provide beautiful formulas to compute the central values given an integer weight form, it can be laborious to find the corresponding half-integral weight form. The MAGMA code from Purkait \cite{CodePurkait} computes the Shimura decomposition of a space of half-integral weight forms and thus helps to determine the corresponding function of half-integral weight. However, even this algorithm takes (in particular for high levels) a long time to finish. Our main theorem, which is build upon the work of Zagier \cite{Kohnen-Zagier}, Kohnen \cite{Kohnen1985} and Sakata \cite{Sakata-N}, leads to an effective algorithm to determine when the central values of $L$-functions of twisted modular forms vanish. 

The statement is a generalization of a theorem from Ehlen, Guerzhoy, Kane and Rolen. In \cite{Ehlen_2020} they constructed a locally harmonic Maass form to study $L$-functions of cusp forms of weight $2$. Using this function, they relate the vanishing of the product of two twisted $L$-functions evaluated at the central value to a local polynomial depending on some binary quadratic forms. Under the assumption that the space of cusp forms of level $N$ and weight $2$ is one dimensional, they show that the product vanishes if and only if the local polynomial evaluated at two given rational numbers is the same. This result was generalized by Kong in his PhD thesis \cite{Kong-2017}, in which he removes the condition on the dimension by using Hecke operators.

Instead of explicitly computing half-integral weight forms, we will show that due to Kohnen's  and Sakata's work one can relate the product of two central values of twisted $L$-functions to the behavior of a locally harmonic Maass form. Locally harmonic Maass forms are generalizations of harmonic Maass forms. They are modular of a given weight $k$, are annihilated by the weight $k$ hyperbolic Laplacian and display at most polynomial growth towards infinity. However, they differ from harmonic Maass forms by allowing for singularities on a certain exceptional set within the upper half-plane. Locally harmonic Maass forms were first introduced by Bringmann, Kane, and Kohnen in \cite{B-K-K}, and further explored e.g. by L{\"o}hbrich and Schwagenscheidt \cite{löbrich-schwagenscheidt} and Mono \cite{Mono-lhmF-even-weight}.

We will define a locally Harmonic Maass form which is essentially a twist of the form defined in \cite{B-K-K}. This form splits into three parts: a holomorphic part, a non-holomorphic part and a local polynomial which captures the jumps of the locally harmonic Maass form along the exceptional set. Our theorem essentially says that the product of the central values of two twisted $L$-functions is zero if and only if (a projection of) the locally harmonic Maass form is equal to (a projection of) the local polynomial, in other words the holomorphic and non-holomorphic parts vanish.

\smallskip

This project was inspired by the preprint \cite{males-mono-rollen-wagner} from Males, Mono, Rolen and Wagner. The authors prove a similar statement to the one below but assume that the level is square-free. We aimed to extend their result for general composite level. However, their proof has a mistake, relying on a misconception of the behavior of $\Plz$. We provide a new proof which gives a criterion in more generality.
\\
In our argumentation we use objects from \cite{males-mono-rollen-wagner},  often renormalizing them to make some technical steps work smoothly. Our statement adjusts their claim and extends it to general composite level. Additionally we remove the condition that both discriminant $D$ and $D_0$ need to be a square modulo $4N$.

Our theorem relates the product of $L$-functions to a local polynomial defined as follows:
\begin{align}\label{def: Plx}
    P_{k,N,D,D_0}(x):=
    c_{k,N,D,D_0,\infty}
    +c_{k,DD_0}\sum_{\substack{Q=[a,b,c] \in \mathcal{Q}_{N,DD_0}\\ a<0<Q(x,1)}} \chi_{D_0}(Q)Q(x,1)^{k-1}.
\end{align}
Here $\mathcal{Q}_{N,DD_0}$ is the set of binary quadratic forms of discriminant $DD_0$ such that $N$ divides the leading coefficient:
\begin{align*}
    \mathcal{Q}_{N,DD_0}:=\{ Q=[a,b,c] : b^2-4ac=DD_0, \quad N|a\}.
\end{align*}
Moreover  $c_{k,N,D,D_0,\infty}$ and $c_{k,DD_0}$ are explicit constants depending on the weight $k$, the level $N$ and two discriminants $D$ and $D_0$ (see equation \eqref{eq: def c_infty c_2}). 
Given a newform $F \in S_{2k}(N)$,  let $\prod_i(T_{p_i}-a_{p_i})$ be a product of Hecke operators such that
    \begin{align*}
        \mathbb{T}:=\prod_i(T_{p_i}-a_{p_i}): S_{2k}(N) \rightarrow \text{span}_{\C}\{F\}
    \end{align*}
    is not the zero map. Denote by $\tilde{\mathbb{T}}$ the lift of $\mathbb{T}$ given by
    \begin{align*}  \Tilde{\mathbb{T}}:=\prod_i(T_{p_i}-p_i^{1-2k}a_{p_i}).
    \end{align*}
The main theorem of this paper is the following:

\begin{theorem}\label{theo: main prod L-functions}
Let $k>1$ be an integer. Let $N$ be an odd integer.
Let $D,D_0$ be two fundamental discriminants such that $(D,N)=(D_0,N)=1$, $DD_0$ is not a square, $D(-1)^k, D_0(-1)^k>0$ and for all $p|N$, we have $ \left( \frac{D}{p}\right)=\left( \frac{D_0}{p}\right)$. Let $F \in S_{2k}(N)$ be a newform. 
Denote by $\tau: \{ p\text{ is a prime with } p |N\} \rightarrow \{ \pm 1\}$ the system of Atkin-Lehner eigenvalues of $F$. We assume that 
\begin{align*}
    \tau(p)=\begin{cases}
        1, & \text{if } \text{ord}_p(N) \text{ is even, }\\
        \left( \frac{D}{p}\right)=\left( \frac{D_0}{p}\right), & \text{if } \text{ord}_p(N) \text{ is odd.}
    \end{cases}
\end{align*}
The following are equivalent
    \begin{align*}
        &\text{(i)} \quad L(F\otimes \chi_D,k) L(F\otimes \chi_{D_0},k)=0 \\
        &\text{(ii)} \quad  (P_{k,N,D,D_0}|_{2-2k}\tilde{\mathbb{T}}|_{2-2k} \gamma  )(x)=(P_{k,N,D,D_0}|_{2-2k}\tilde{\mathbb{T}}) (x), \quad \forall x \in \Q, \gamma \in \Gamma_0(N)\\
        &\text{(iii)} \quad P_{k,N,D,D_0}|_{2-2k}\tilde{\mathbb{T}}(x)=0, \quad x \sim_{\Gamma_0(N)}0.
        \end{align*}
\end{theorem}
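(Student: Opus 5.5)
We plan to go through the twisted locally harmonic Maass form $\mathcal{F}_{1-k,N,D,D_0}$ of weight $2-2k$ (a twist by the genus character $\chi_{D_0}$ of the Bringmann--Kane--Kohnen form). First we establish its splitting
\begin{align*}
\mathcal{F}_{1-k,N,D,D_0}(z)=\mathcal{F}^+(z)+\mathcal{F}^-(z)+P_{k,N,D,D_0}(z),
\end{align*}
valid on each connected component of $\mathbb{H}$ minus the exceptional set $\bigcup_{Q}\{a|z|^2+b\re(z)+c=0\}$. Here $\mathcal{F}^+$ is a holomorphic Eichler-type integral, $\mathcal{F}^-$ is a non-holomorphic Eichler integral, and $P_{k,N,D,D_0}$ is the local polynomial defined in \eqref{def: Plx}. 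We compute both parts explicitly. The holomorphic part is, up to constants, the Eichler integral of the cusp form $f_{k,N,D,D_0}(z)=\sum_{Q\in\mathcal{Q}_{N,DD_0}}\chi_{D_0}(Q)Q(z,1)^{-k}\in S_{2k}(N)$, and the non-holomorphic part is the Eichler integral of its conjugate. This uses the standard computation of $\xi_{2-2k}$ and $D^{2k-1}$ applied to the Poincaré-type series. The constant $c_{k,N,D,D_0,\infty}$ comes from the constant term at $\infty$.

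Next we invoke the Kohnen--Zagier / Sakata theory. The function $f_{k,N,D,D_0}$ is the $(D,D_0)$-theta lift, namely the image under the Shimura--Shintani correspondence. Its Petersson inner product with a newform $F$ is proportional to $\overline{c_g(|D|)}\,c_g(|D_0|)$, where $g$ is the half-integral weight form attached to $F$. Under the Atkin--Lehner hypothesis on $\tau$ and the condition $\left(\frac{D}{p}\right)=\left(\frac{D_0}{p}\right)$, the Waldspurger-type formula of Kohnen (general level: Sakata) gives $|c_g(|D|)|^2\doteq L(F\otimes\chi_D,k)$ with a nonzero constant. Applying $\mathbb{T}$ projects onto $\mathrm{span}\{F\}$, so we obtain
\begin{align*}
f_{k,N,D,D_0}|\mathbb{T}=c\,L(F\otimes\chi_D,k)^{1/2}L(F\otimes\chi_{D_0},k)^{1/2}\cdot(\text{unit})\cdot F .
\end{align*}
Hence (i) holds if and only if $f_{k,N,D,D_0}|\mathbb{T}=0$. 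On the weight $2-2k$ side, the Eichler integral intertwines $T_p$ with $p^{1-2k}T_p$, because of the Bol identity and the normalisation of Hecke operators. Consequently $\mathcal{F}|\tilde{\mathbb{T}}$ has holomorphic and non-holomorphic parts equal to the Eichler integrals of $f|\mathbb{T}$. So (i) is equivalent to $\mathcal{F}^{\pm}|\tilde{\mathbb{T}}=0$.

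For (i)$\Rightarrow$(ii): if $\mathcal{F}^\pm|\tilde{\mathbb{T}}=0$, then $\mathcal{F}|\tilde{\mathbb{T}}=P|\tilde{\mathbb{T}}$ on each component. Since $\mathcal{F}|\tilde{\mathbb{T}}$ is $\Gamma_0(N)$-invariant of weight $2-2k$, so is $P|\tilde{\mathbb{T}}$. We then extend this to boundary points $x\in\Q$ by continuity of the polynomial pieces up to the real line. For (ii)$\Rightarrow$(iii): we check directly that $P|\tilde{\mathbb{T}}(0)=0$, since at $x=0$ the sum is controlled by $c_\infty$ against forms with $c>0$ (a symmetry $Q\mapsto -Q\circ S$ argument), and then apply invariance. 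For (iii)$\Rightarrow$(i): $\mathcal{F}^+ +\mathcal{F}^-$ applied to $\tilde{\mathbb{T}}$ equals $\mathcal{F}|\tilde{\mathbb{T}}-P|\tilde{\mathbb{T}}$. The obstruction to modularity of $P|\tilde{\mathbb{T}}$ is the period polynomial $r_{f|\mathbb{T}}$ together with its conjugate. Evaluating at cusps $\gamma 0$ turns vanishing at all $x\sim 0$ into vanishing of the period polynomial $r_{f|\mathbb{T}}(\gamma\cdot)$ in enough points. The Eichler--Shimura isomorphism then forces $f|\mathbb{T}=0$.

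The main obstacle is this last step. The difficulty is to show that the values of $P|\tilde{\mathbb{T}}$ on the orbit of $0$ determine the real and imaginary parts of the period polynomial. Otherwise the combination $\mathcal{F}^++\mathcal{F}^-$ could vanish on cusps without $f$ vanishing, which is exactly the subtlety mishandled in earlier work. We would first try to write $(\mathcal{F}^++\mathcal{F}^-)(x)$ at rational $x$ as a specific linear combination of $r_f(x)$ and $\overline{r_f(x)}$, via $\int_x^{i\infty}$ limits. We would then use injectivity of $f\mapsto$ (odd or even part of $r_f$) from Eichler--Shimura, choosing the parity determined by the sign of $(-1)^kD$, together with the cocycle relations on $\Gamma_0(N)$-translates of $0$.
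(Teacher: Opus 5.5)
Your reduction of (i) to $f_{k,N,D,D_0}|_{2k}\mathbb{T}=0$ via Kohnen--Sakata agrees with the paper, as do the lift of $\mathbb{T}$ to $\tilde{\mathbb{T}}$ and the implication (i)$\Rightarrow$(ii).

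\textbf{The gap: (ii)$\Rightarrow$(iii).} This step rests on the claim that $(P_{k,N,D,D_0}|_{2-2k}\tilde{\mathbb{T}})(0)=0$ can be checked directly by the symmetry $Q\mapsto -Q\circ S$. That cannot work, for two reasons.
\begin{itemize}
\item For $N>1$ the map $[a,b,c]\mapsto[-c,b,-a]$ does not preserve $\mathcal{Q}_{N,DD_0}$.
\item Even for $N=1$ it only permutes the finite set $\{a<0<c\}$. So it cannot cancel $c_{k,N,D,D_0,\infty}$, which is an infinite Dirichlet-type series over all $a$ with $N\mid a$.
\end{itemize}
The vanishing at $0$ is analytic, not combinatorial, and the ingredient you are missing is the Fricke involution. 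From $\chi_{D_0}(Q|W_N)=\left(\frac{D_0}{N}\right)\chi_{D_0}(Q)$ one gets $\mathcal{F}_{1-k,N,D,D_0}|_{2-2k}\bigl(W_N-\left(\frac{D_0}{N}\right)\bigr)=0$, and $W_N$ commutes with $\tilde{\mathbb{T}}$.

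The paper uses this only once (i) is known. Then $\mathcal{F}|\tilde{\mathbb{T}}=\mathcal{P}|\tilde{\mathbb{T}}$ is a Fricke eigenfunction, and $\mathcal{P}|\tilde{\mathbb{T}}$ is constant for $\im(z)$ large. Letting $z\to 0$ along the imaginary axis gives
\[
\left(\frac{D_0}{N}\right)(P_{k,N,D,D_0}|_{2-2k}\tilde{\mathbb{T}})(0)=\lim_{z\to 0}(\sqrt{N}z)^{2k-2}\,(\mathcal{P}_{k,N,D,D_0}|_{2-2k}\tilde{\mathbb{T}})\left(-\frac{1}{Nz}\right)=0 .
\]
So you should organize the implications as the paper does: prove (i)$\Rightarrow$(ii) and (i)$\Rightarrow$(iii) directly, the latter via $W_N$ followed by $\Gamma_0(N)$-invariance, and prove both converses by contraposition. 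Alternatively, keep your cycle, but first deduce (i) from (ii) by the Eichler--Shimura argument and only then run the Fricke argument.

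\textbf{The step you call the main obstacle is unproblematic.} No separation into real and imaginary parts is needed.
\begin{itemize}
\item On $\Q$, the defect $(\gamma_{21}x+\gamma_{22})^{2k-2}P(\gamma x)-P(x)$ equals $R_\gamma(x)$, where $R_\gamma$ is the modularity error of the two Eichler integrals. This is a polynomial of degree at most $2k-2$.
\item If $P|\tilde{\mathbb{T}}$ vanishes on the infinite orbit $\Gamma_0(N)\cdot 0$, then every $R_\gamma$ has infinitely many zeros and is identically zero.
\item To identify the cocycle $\gamma\mapsto R_\gamma$, the paper uses that $f_{k,N,D,D_0}$, hence $F$, has real coefficients, so $\overline{F(-\overline{w})}=F(w)$.
\item After the substitution $w\mapsto -1/(Nw)$ and using the Fricke eigenvalue of $F$, the cocycle becomes $\pi_F(\gamma_0)-\tilde{\pi}_F(\gamma_0)$. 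By Antoniadis's form of Eichler--Shimura this cannot vanish for all $\gamma_0$ when $F\neq 0$.
\end{itemize}
The parity is dictated by the reality of the coefficients, not by the sign of $(-1)^kD$; in any case both $\Phi^{\pm}$ are isomorphisms, so any nontrivial combination of $\pi_F,\tilde{\pi}_F$ would do. Contrapositively: if $f|\mathbb{T}\neq 0$, pick $\gamma$ with $R_\gamma\not\equiv 0$ and $x\sim_{\Gamma_0(N)}0$ with $R_\gamma(x)\neq 0$. Then $P|\tilde{\mathbb{T}}$ is nonzero at $x$ or at $\gamma x$.

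Finally, ``continuity up to the real line'' is not automatic, because geodesics accumulate at every rational. The paper only uses vertical limits, which stabilize because $\{Q\in\mathcal{Q}_{N,DD_0}: a<0<Q(x,1)\}$ is finite.
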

\begin{remark}
    The conditions on the eigenvalues of the Atkin-Lehner involutions reduces to:
\begin{itemize}
    \item For $N$ square-free:
    \begin{align*}
        \forall p|N: \quad \tau(p)=\left(\frac{D}{p}\right). 
    \end{align*}
    \item For $N=p^m$ a power of a prime with $m>2$:
    \begin{align*}
        \tau(p)=\begin{cases}
            1, & \text{if } m \text{ even},\\
            \left( \frac{D}{p}\right), &\text{if } m \text{ odd}.
        \end{cases}
    \end{align*}
\end{itemize}
\end{remark}

Note that since $F$ is a newform and thus in particular an eigenform for the Hecke operators, we can always choose a product of Hecke operators as in the theorem. This product is not unique. We elaborate on how to choose the Hecke operators in Section \ref{sec: 3-Hecke Operators}. From the proof it will follow that we only need to compute $P_{k,N,D,D_0}|_{2-2k}\tilde{\mathbb{T}}(x)$ for a small number of rationals $x \sim_{\Gamma_0(N)}0$ to check if it is zero everywhere.

Our theorem simplifies the computation of central values of $L$-functions: finding the modular form of half-integral weight whose coefficients encode the central values of the $L$-function as described by Kohnen and Sakata can demand a lot of effort. However, finding suitable Hecke operators takes little effort and computing $P_{k,N,D,D_0}|_{2-2k}\tilde{\mathbb{T}}(x)$ takes only a short amount of time. This makes this criterion easy to apply. This can be seen in the following example (for more details see \ref{subsec: Ex. S_4(25)}):
\begin{example}
Let $F:=q + q^2 + 7q^3 - 7  q^4 + 7  q^6 + 6  q^7 - 15  q^8 + 22  q^9 + O(q^{11}) \in S_4(25)$. Then $F|W_{25}=F$. Moreover, one can compute that 
\begin{align*}
    L(F\otimes \chi_{21},2)\approx1.62649, \quad L(F\otimes \chi_{56 },2)\approx0.37350, \text{ and }L(F\otimes \chi_{69 },2)=0.
\end{align*}
We set $P(x):=P_{2,25,D,21}|_{-2}(T_7+7^{-3}\cdot 6)(T_2+ 2^{-3}\cdot 4)(x)$.
Then we have the following results about the local polynomial:
\medskip

{\scriptsize
\begin{tabular}{c|ccccccc}
  $D$& $P(1)$& $P(1/4)$ &$P(2/7)$ &$P(4/9)$& $P(9/14)$& $P(13/18)$ &$P(16/19)$\\
  \hline
56 & 0 & -25/343 & -1160/16807 & -400/3969 & -2475/33614 & -565/7938 & -800/17689 \\
69 & 0 & 0 & 0 & 0 & 0 & 0 & 0 \\
    \end{tabular}}
    
\end{example}
Our proofs relies on the observation that $L(F \otimes \chi_D,k)L(F \otimes \chi_{D_0},k)$ vanishes, if and only if the inner product $\langle F, f_{k,N,D,D_0}\rangle=0$. Here $f_{k,N,D,D_0}$ is an explicit modular from introduced by Kohnen.  This follows from directly from Kohnen \cite{Kohnen1985} in the case when the level is square-free and can be proven for composite level with the results from Sakata \cite{Sakata-N}. We then define a locally harmonic Maass form, which detects the vanishing of this inner product. Essentially we have that:
\begin{align*}
    \mathcal{F}_{1-k,N,D,D_0}= \mathcal{E}_{f_{k,N,D,D_0}}+f^*_{k,N,D,D_0}+\mathcal{P}_{k,N,D,D_0}
\end{align*}
Here $\mathcal{E}_{f_{k,N,D,D_0}}$ and $f^*_{k,N,D,D_0}$ holomorphic and non-holomorphic Eichler integral of the modular form $f_{k,N,D,D_0}$. When the dimension of the $S_{2k}(N)$ is equal to one, then $\langle F, f_{k,N,D,D_0}\rangle=0$ if and only if $f_{k,N,D,D_0} = 0$. This implies that $\mathcal{F}_{1-k,N,D,D_0}=\mathcal{P}_{k,N,D,D_0}$. Since $\mathcal{F}_{1-k,N,D,D_0}$ is modular, so is $\mathcal{P}_{k,N,D,D_0}$. One can show that this implies that it is zero for all rational numbers equivalent to $0$ modulo $\Gamma_0(N)$. When the dimension is bigger than $1$ we get an analogous
statement using Hecke operators.

\medskip
\paragraph{Outline.} This paper is structured as follows:
In a preliminary section, we recall some basics on modular forms, binary quadratic forms and the Shimura-Shintani correspondence. We also explain how one can relate the central values of the $L$-function of a modular form of integer weight to the coefficients of a specific half-integral weight form. We end the preliminary section with some comments on the modularity errors of the (non-)holomorphic Eichler integral. In section \ref{sec: A locally harmonic Maass form.} we study a specific locally harmonic Maass form. We then study the local polynomial in section \ref{sec: local polynomial}. Finally, in section \ref{sec: proof main theorem}, we prove the main theorem. We then compute some examples in section \ref{sec: comp examples}.
The appendix is dedicated to proving some of the properties of the function $\Flm$.

\subsection*{Acknowledgements}
The author would like to thank Andreas Mono for many helpful discussions. 

\numberwithin{theorem}{section}
\section{Preliminaries}
The aim of this section is to relate products of central values of $L$-functions of a newform $F$ to the innerproduct $\langle F, f_{k,N,D,D_0}\rangle$. Here $f_{k,N,D,D_0}$ is an explicit modular form. Moreover, we study modularity errors of the holomorphic and non-holomorphic Eichler integrals. 
\subsection{Modular Forms, Twists and L-functions}
In this subsection we recall some basic definitions about modular forms and fix notation. 
\subsubsection{Modular Forms of Integer Weight}
Throughout this paper, we denote with $S_{2k}(N)$ the space of cusp forms of weight $2k$ for the group $\Gamma_0(N)$ with trivial character. The space of newforms and oldforms is denoted by $S_{2k}^{new}(N)$ and $S^{old}_{2k}(N)$ respectively.

In the following, we denote by $F=\sum a_n q^n \in S_{2k}(N)$ a cusp form. We recall the action of Hecke and Atkin-Lehner operators, as well as the definition of the twists of $F$ and its $L$-function.

\begin{itemize}
    \item[(i)] Let $p$ be a prime. Then the Hecke operator $T_p$ acts on $F$ in the following way:
\begin{align*}
        F(z)|_{2k}T_p=p^{2k-1}F(pz)+ p^{-1}\sum_{j \mod p} F\left(\frac{z+j}{p}\right).
    \end{align*}
    \item[(ii)] Let $Q|N$ with $(Q,N/Q)=1$. Set
    \begin{align*}
        W_{Q}=\begin{pmatrix}
            Q \alpha & \beta \\ N \gamma &Q \delta
        \end{pmatrix} \in M_2(\mathbb{Z})
    \end{align*}
    with determinant $Q$. The Atkin-Lehner operator acting on $S_{2k}(N)$ is given by $|_{2k} W_Q$. This action is independent of the choice of matrix. In particular if $Q=N$, then the corresponding Atkin-Lehner involution is also called Fricke involution and we can choose:
    \begin{align*}
        W_N=\begin{pmatrix}
            0 & -1 \\ N & 0
        \end{pmatrix}.
    \end{align*}
   Explicitly we get that:
    \begin{align*}
        F|_{2k} W_N(z)=\text{det}(W_N)^{k}(Nz)^{-2k}F(-1/(Nz))=N^{-k}z^{-2k}F(-1/(Nz)).
    \end{align*}
\item[(iii)]    Let $D$ be a fundamental discriminant and denote with $\chi_D$ the Dirichlet character associated to it, i.e. $\chi_D(n):=\left( \frac{D}{n}\right)$, where $\left( \frac{\cdot}{\cdot}\right)$ denotes the Kronecker symbol.
Let $F=\sum_{n=1}^{\infty}a_nq^n$ be a new form. We denote the twist of $F$ with $\chi_D$ by:
\begin{align*}
    F\otimes \chi_D=\sum_{n=1}^{\infty}a_n\chi_D(n)q^n.
\end{align*}
\item[(iv)] Finally, we recall that the $L$-function of a Hecke eigenform $F= \sum_{n=1}^{\infty} a_n q^n \in S_{2k}^{new}(N)$ is given by
        \begin{align*}
            L(F,s):=\sum_{n=1}^{\infty}\frac{a_n}{n^s}
        \end{align*}
for $s$ with $\re(s)\gg0$ and that this $L$-function has an analytic continuation to the complex plane. 
    \end{itemize}

Finally, we recall the definition of the Petersson inner product:
\begin{definition}
For two function $F,G \in S_k(N)$, the Petersson inner product is given by:
\begin{align*}
    \langle F,G \rangle:=\frac{1}{[SL_2({\Z}): \Gamma_0(N)]}\int_{\Gamma_0(N) \setminus \mathbb{H}}F(z) \overline{G(z)}y^{k-2}dxdy.
\end{align*}
Here we integrate over a fundamental domain of $\Gamma_0(N) \setminus \mathbb{H}$.
\end{definition}

\subsubsection{Modular Forms of Half-Integral Weight}
We now define half-integral weight modular forms. To define and study spaces of new- and oldforms in more depth, we give the full definition here:
\begin{definition}
 Let $k$ and $N$ be positive integers. Let $\chi$ be a Dirichlet character with modulus $4N$.
 \begin{itemize}
     \item[(i)] A holomorphic function $f: \mathbb{H} \rightarrow \mathbb{C}$ is a modular form of weight $k+\frac{1}{2}$ for $\Gamma_0(4N)$ if:
     \begin{itemize}
         \item For any $z \in \mathbb{H}$ and for any $\gamma=\begin{pmatrix}
             a & b \\ c & d
         \end{pmatrix} \in \Gamma_0(N)$, we have:
         \begin{align*}
    f(z)=f|_{k+\frac{1}{2}}\gamma (z)=\chi(d)^{-1} \left( \frac{c}{d}\right)^{2k+1} \epsilon_d^{1-2k}(cz+d)^{-k-\frac{1}{2}} f(\gamma \cdot  z). 
\end{align*}
    Here we set $\sqrt{z}$ to be the branch of the square root having argument in $[\pi/2, -\pi/2)$. This yields a holomorphic function on the complex plane with the negative real axis removed. We define $z^{k/2}:=(\sqrt{z})^k$. Moreover, $\left( \frac{c}{d}\right)$ denotes the Legendre symbol and
    \begin{align*}
        \epsilon_d:= \begin{cases}
    1 & \text{if } d \equiv 1 \mod 4,\\
    i &  \text{if } d \equiv 3 \mod 4. \end{cases}
    \end{align*}
    \item The function $f$ is holomorphic at all cusps of $\Gamma_0(4N)$.    
     \end{itemize}
     We denote the space modular forms of weight $k+\frac{1}{2}$, level $4N$ and character $\chi$ by $M_{k+1/2}(\Gamma_0(4N),\chi)$.
    \item[(ii)] If $f$ vanishes at all cusps, then $f$ is a cusp form and is in the space $S_{k+1/2}(
    \Gamma_0(4N),\chi)$.
    \item[(iii)] For the trivial character, we set $M_{k+1/2}(4N):=M_{k+1/2}(\Gamma_0(4N),\chi_{triv})$ and $S_{k+1/2}(4N):=S_{k+1/2}(\Gamma_0(4N),\chi_{triv})$.
 \end{itemize}
\end{definition} 
We recall that for a prime $p$ the Hecke operator $T_{p^2}$ acts on a modular form $f=\sum_{n=1}^{\infty} c_nq^n$ of half-integral weight in the following way: 
 \begin{align*}
        f(z)|_{k+1/2}T_{p^2}=\sum_{n=0}^{\infty} (a_{p^2n}+\chi^*(p)\left( \frac{n}{p}\right)p^{k-1}a_{n}+\chi^*(p^2)p^{2k-1}a_{n/p^2})q^n
    \end{align*}
    Here $\chi^*(n):=\left( \frac{(-1)^k}{n}\right)\chi(n)$ and as above, if $ p^2 \nmid n$, then $a_{n/p^2}=0$.
Later we will see how we can establish a correspondence between the space of half-integral and integer weight modular forms using eigenfunctions under the Hecke operators.

\begin{definition}
For two functions $f,g \in S_{k+1/2}(\Gamma_0(4N), \chi)$, we set:
    \begin{align*}
        \langle f, g \rangle= \frac{1}{[\Gamma_0(4): \Gamma_0(N)]} \int_{\Gamma_0(N) \setminus \mathbb{H}}f(z) \overline{g(z)}y^{k-3/2}dxdy.
    \end{align*}
\end{definition}

\subsection{Binary Quadratic Forms}
Here we define  binary quadratic forms and study some of their properties. This helps us to define and understand the local polynomials in the statement of Theorem \ref{theo: main prod L-functions} better. Moreover it allows us to define maps between integer and half-integral weight modular forms in the next subsection.

Let $Q=[a,b,c]$ be a binary quadratic form, i.e. a triplet of integers $a,b,c \in \Z$. We set:
\begin{align*}
    [a,b,c](X,Y)=aX^2+bXY+cY^2.
\end{align*}
For any $r \in \Z$, we say that $r$ is represented by $Q$ if there exists $x,y \in \Z$ such that $Q(x,y)=r$.
We define the set of binary quadratic forms with fixed discriminant $D$ as follows:
\begin{equation*}
    \mathcal{Q}_D:=\{ Q=[a,b,c] : b^2-4ac=D\}.
\end{equation*}
The group $SL_2(\Z)$ acts on $\mathcal{Q}_D$. Let $\gamma=\begin{pmatrix}
    \gamma_{11} & \gamma_{12} \\ \gamma_{21} & \gamma_{22}
\end{pmatrix} \in SL_2(\Z)$. Then
\begin{align*}
    \left([a,b,c] \circ \gamma \right)(x,y)=[a,b,c](\gamma_{11} x+\gamma_{12} y, \gamma_{21} x + \gamma_{22} y)
\end{align*}
This action is compatible with the M\"{o}bius transformation in the following sens:
\begin{align*}
    \left([a,b,c] \circ \gamma \right)(z,1)=(\gamma_{21} z+\gamma_{22})^2[a,b,c](\gamma \cdot z, 1),
\end{align*}
where $z \in \mathbb{H}$. For any positive integer $N$, we set:
\begin{equation*}
    \mathcal{Q}_{N,D}:=\{ Q=[a,b,c]: N | a, \quad b^2-4ac=D\}.
\end{equation*}
The action of $SL_2(\Z)$ does not preserve this set. However, the action restricted to the congruence subgroup $\Gamma_0(N)$ does.

Moreover, we can define a Fricke involution on $\mathcal{Q}_{N,D}$. 
\begin{align*}
    [a,b,c]| W_N (x,y):=\frac{1}{N}[a,b,c] \circ\begin{pmatrix}
    0 & -1 \\ N &0
\end{pmatrix} (x,y)=[cN,-b,a/N](x,y)
\end{align*}
Then the Fricke involution preserves the discriminant and one can check that it defines a bijection on $\mathcal{Q}_{N,DD_0}$.

We introduce the extended genus character, as defined by Kohnen in \cite{Kohnen1985}:
\begin{definition}\label{def: generalized genus character}
    Let $D \equiv 0,1 \mod 4$ and let $Q=[a,b,c]$ be a binary form such that $D|b^2-4ac$.
    \begin{align*}
        \chi_{D}(Q)=\begin{cases}
            0 & \text{if } (a,b,c,D)>1\\
            \left( \frac{D}{r}\right) &\text{if } (a,b,c,D)=1, \quad Q \text{ represents }r, \quad (r,D)=1.
        \end{cases}
    \end{align*}
\end{definition}
This character is independent of the choice of number represented by $Q$.
\begin{remark}
The name of this function has been introduced by Gross, Kohnen and Zagier in \cite{Zagier-Gross-Kohnen-L-series-II-1987} who study its behavior on the set of binary quadratic forms $\mathcal{Q}_{N,DD_0}$ with the extra assumption that both $D$ and $D_0$ are squares modulo $4N$. They show under these conditions that the generalized genus character is as an extension of the classical genus character defined on primitive forms in $Q_{DD_0}$. 
\end{remark}
Kohnen shows the following explicit formula:
\begin{proposition}[Proposition 6 in \cite{Kohnen1985}]\label{prop: explicit genus char}
Let $[a,b,c]$ be a binary quadratic form with $b^2-4ac=|D|m$ and $a>0$. For every prime $p$, we set $p^*=\pm p^l \in \Z$ for some $l \in \Z_{\geq 0}$ such that $|p^*|$ is the exact power of $p$ dividing $D$ and $D/p^*$ is a fundamental discriminant.
Then
\begin{align*}
    \chi_{D}([a,b,c])=\prod_{p^{\nu}||a}\left( \frac{D/p^*}{p^{\nu}}\right)\left( \frac{p^*}{ac/p^{\nu}}\right).
\end{align*}
\end{proposition}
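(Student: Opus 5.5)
The plan is to reduce the formula to a local computation, one prime at a time, by factoring $D$ into prime discriminants. Since $D$ is a fundamental discriminant, it factors uniquely as $D=\prod_{q\mid D} q^*$, where the $q^*$ are the prime discriminants from the statement. For $p\nmid D$ we have $p^*=1$. Both sides of the claimed identity are products of Kronecker symbols, so the first step is to rewrite the left-hand side in the same shape.

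First I treat the degenerate case $(a,b,c,D)>1$. Then some prime $q\mid D$ divides $a$, $b$ and $c$. The factor indexed by $p=q$ on the right-hand side is $\left(\frac{q^*}{ac/q^{\nu}}\right)$, and $q\mid c$ makes it vanish. So both sides are $0$.

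Now assume $(a,b,c,D)=1$, and choose $r=Q(x,y)$ with $(r,D)=1$. Multiplicativity of the Kronecker symbol in its upper argument gives
\[
\chi_D(Q)=\left(\tfrac{D}{r}\right)=\prod_{q\mid D}\left(\tfrac{q^*}{r}\right).
\]
Each factor $\left(\frac{q^*}{r}\right)$ is a character of $r$ modulo $|q^*|$. Since $q\mid b^2-4ac$, the form $Q$ is congruent modulo $q$ to $a(X+\tfrac{b}{2a}Y)^2$ or to $c(\cdots)^2$. Hence the value $\left(\frac{q^*}{r}\right)$ does not depend on which coprime $r$ is represented, and I can compute it using a convenient represented value:
\begin{itemize}
\item if $q\nmid a$, use $r\equiv a$ locally, giving $\left(\frac{q^*}{a}\right)$;
\item if $q\mid a$, then $q\nmid c$ (otherwise $q\mid b$ as well), so use $r\equiv c$, giving $\left(\frac{q^*}{c}\right)$.
\end{itemize}
This yields
\[
\chi_D(Q)=\prod_{q\mid D,\ q\nmid a}\left(\tfrac{q^*}{a}\right)\prod_{q\mid D,\ q\mid a}\left(\tfrac{q^*}{c}\right).
\]

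The remaining step is bookkeeping. Expand $\left(\frac{q^*}{a}\right)=\prod_{p^{\nu}\| a}\left(\frac{q^*}{p^{\nu}}\right)$ and regroup by the primes $p\mid a$.
\begin{itemize}
\item For $p\nmid D$, the collected factors are $\prod_{q\mid D}\left(\frac{q^*}{p^{\nu}}\right)=\left(\frac{D}{p^{\nu}}\right)$. This matches $\left(\frac{D/p^*}{p^{\nu}}\right)\left(\frac{p^*}{ac/p^{\nu}}\right)$ because $p^*=1$.
\item For $p\mid D$ with $p\mid a$, the collected factors are $\prod_{q\mid D,\ q\ne p,\ q\nmid a}\left(\frac{q^*}{p^{\nu}}\right)$ together with the term $\left(\frac{p^*}{c}\right)$. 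The target expression instead has $\left(\frac{D/p^*}{p^{\nu}}\right)=\prod_{q\ne p}\left(\frac{q^*}{p^{\nu}}\right)$ and $\left(\frac{p^*}{a/p^{\nu}}\right)\left(\frac{p^*}{c}\right)$.
\end{itemize}
The discrepancy is a product, over pairs of distinct primes $p,q$ both dividing $\gcd(a,D)$, of $\left(\frac{q^*}{p^{\nu_p}}\right)\left(\frac{p^*}{q^{\nu_q}}\right)$. This product equals $1$ by quadratic reciprocity for prime discriminants, which states $\left(\frac{q^*}{p}\right)=\left(\frac{p^*}{q}\right)$ for distinct primes.

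I expect the main obstacle to be the prime $2$. Here $2^*\in\{-4,\pm 8\}$, and the local argument must work modulo $8$ rather than modulo $q$. When $2\mid a$, I would need to check that the represented values coprime to $D$ are congruent to $c$ modulo $8$ up to squares. I would also need reciprocity in the form $\left(\frac{2^*}{p}\right)=\left(\frac{p^*}{2}\right)$ with the Kronecker symbol conventions, and similar care is needed for signs coming from $a>0$. I would handle this case by a direct computation of $Q$ modulo $8$ using $b^2\equiv D m+4ac$.
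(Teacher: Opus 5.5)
\textbf{The final regrouping step is wrong, and the identity you use to close it is false.} Your local strategy is sound. The intermediate formula
\[
\chi_D(Q)=\prod_{q\mid D,\ q\nmid a}\left(\frac{q^*}{a}\right)\prod_{q\mid D,\ q\mid a}\left(\frac{q^*}{c}\right)
\]
is correct, modulo the $2$-adic check discussed below. The regrouping fails in three ways:
\begin{itemize}
\item For $p\mid a$ with $p\nmid D$, your formula does not supply all of $\left(\frac{D}{p^{\nu}}\right)$. It only supplies the factors $\left(\frac{q^*}{p^{\nu}}\right)$ with $q\nmid a$.
\item The ``reciprocity'' $\left(\frac{q^*}{p}\right)=\left(\frac{p^*}{q}\right)$ is false. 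Reciprocity gives $\left(\frac{q^*}{p}\right)=\left(\frac{p}{q}\right)$, so $\left(\frac{q^*}{p}\right)\left(\frac{p^*}{q}\right)=(-1)^{\frac{p-1}{2}\frac{q-1}{2}}$. For example, $\left(\frac{-7}{3}\right)=-1\neq 1=\left(\frac{-3}{7}\right)$.
\item Your claimed discrepancy is not the true one. Take $D=21$ and $Q=[21,21,1]$, of discriminant $21\cdot 17$. Then $\chi_{21}(Q)=1$, your intermediate formula gives $1$, and the right-hand side gives $(-1)(-1)=1$. Yet your discrepancy $\left(\frac{-7}{3}\right)\left(\frac{-3}{7}\right)$ equals $-1$.
\end{itemize}
The correct bookkeeping needs no reciprocity at all. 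Let $A$ be the set of primes dividing $a$. Each cross term $\left(\frac{q^*}{p^{\nu_p}}\right)$ with $p\in A$, $q\in A$, $q\mid D$, $q\neq p$ occurs twice on the right-hand side. It appears once inside $\left(\frac{D/p^*}{p^{\nu_p}}\right)$ (which is $\left(\frac{D}{p^{\nu_p}}\right)$ when $p\nmid D$), and once inside $\left(\frac{q^*}{a/q^{\nu_q}}\right)$. It never appears in your formula. All other factors match, so the ratio is
\[
\Bigl(\prod_{p\in A}\ \prod_{q\in A,\ q\mid D,\ q\neq p}\left(\frac{q^*}{p^{\nu_p}}\right)\Bigr)^{2}=1 .
\]

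\textbf{The prime $2$ needs a hypothesis you never state.} You defer this case, but it is where the condition on $m$ enters. Knowing $D\mid b^2-4ac$ alone does not suffice: for $D=-4$, the form $[1,0,3]$ represents $1$ and $3$, and $\left(\frac{-4}{1}\right)\neq\left(\frac{-4}{3}\right)$. You need $D'=(b^2-4ac)/D$ to be a discriminant. This is Kohnen's standing condition $(-1)^km\equiv 0,1\pmod 4$, implicit in the statement.

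With it the check is short. Write $b=2b'$, so $aQ(X,Y)=(aX+b'Y)^2-\tfrac{D}{4}D'Y^2$.
\begin{itemize}
\item If $2^*=-4$, then $\tfrac{D}{4}\equiv 3\pmod 4$.
\item If $2^*=\pm 8$, then $\tfrac{D}{4}=\pm 2D_1$ with $D_1=D/2^*\equiv 1\pmod 4$.
\end{itemize}
Hence for $2^*=-4,8,-8$, every odd value of $aQ$ lies in $\{1\}\bmod 4$, $\{\pm1\}\bmod 8$, $\{1,3\}\bmod 8$ respectively. This is exactly the kernel of $\left(\frac{2^*}{\cdot}\right)$, so $\left(\frac{2^*}{r}\right)=\left(\frac{2^*}{a}\right)$. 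The same argument with $c$ works when $a$ is even.

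Alternatively, grant well-definedness of $\chi_D$, as the paper does. Then $\left(\frac{2^*}{r}\right)$ is independent of $r$, because $\left(\frac{D}{r}\right)$ and all the odd parts are. You may then evaluate it at $r=Q(1,8t)\equiv a\pmod 8$ (or $Q(8t,1)\equiv c$), choosing $t$ by the Chinese remainder theorem so that $(r,D)=1$.
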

\begin{remark}
    Kohnen considers $1$ to be a fundamental discriminant.
\end{remark}

In \cite{Zagier-Gross-Kohnen-L-series-II-1987} Gross, Kohnen and Zagier, assuming extra conditions, prove the following even more effective formula:
\begin{proposition}[Proposition 1 in \cite{Zagier-Gross-Kohnen-L-series-II-1987}]
    Let $ [aN,b,c] \in \mathcal{Q}_{N,DD_0}$ such that both $D$ and $D_0$ are squares modulo $4N$. 
    Let $N_1,N_2$ be two psoitive integers and $D_1,D_2$ be two discriminants such that   $N=N_1N_2$, $D=D_1D_2$ and such that $(D_1,N_1a)=(D_2,N_2c)=1$. Then
    \begin{align*}
        \chi_{D}([aN,b,c])=\left( \frac{D_2}{N_1a}\right)\left( \frac{D_2}{N_2c}\right)
    \end{align*}
    If no such splitting exists, then $\chi_{D}([aN,b,c])=0$.
\end{proposition}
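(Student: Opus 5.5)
The plan is to argue directly from Definition \ref{def: generalized genus character}, using the local form of Kohnen's formula in Proposition \ref{prop: explicit genus char} as a guide. Write $Q=[aN,b,c]\in\mathcal{Q}_{N,DD_0}$.

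\textbf{Step 1: the degenerate case.} First I would show that a splitting $N=N_1N_2$, $D=D_1D_2$ with $(D_1,N_1a)=(D_2,N_2c)=1$ exists if and only if $(aN,b,c,D)=1$.

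If $p\mid(aN,b,c,D)$, then $p$ divides $aN$ and $c$. It must therefore divide whichever of $D_1,D_2$ carries it. But $p\mid D_1$ forces $p\mid N_1a$ or $p\mid N_2$, and $p\mid D_2$ forces $p\mid N_2 c$, so no admissible splitting exists.

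Conversely, assume $(aN,b,c,D)=1$. Since $(D,N)=1$, each prime $p\mid D$ fails to divide $aN$ or fails to divide $c$. Put $p$ into $D_1$ in the first case and into $D_2$ otherwise; since $D$ is fundamental, take the corresponding prime discriminants $p^*$. Then put all of $N$ into $N_1$ or distribute it arbitrarily, which is harmless because $(D,N)=1$. This settles the case where $\chi_D(Q)=0$.

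\textbf{Step 2: local factorisation.} For a prime $p\mid D$ with prime discriminant $p^*$, the discriminant $DD_0$ of $Q$ is divisible by $p$. So $Q\bmod p$ is, up to a constant, the square of a linear form. Hence all values of $Q$ coprime to $p$ lie in a single square class mod $p$ (for $p=2$, one argues with $p^*\in\{-4,\pm 8\}$ mod $8$).

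Therefore $\left(\frac{p^*}{r}\right)$ is the same for every value $r$ of $Q$ with $p\nmid r$. Choosing via CRT a single represented $r$ coprime to $D$ gives
\begin{align*}
\chi_D(Q)=\prod_{p\mid D}\left(\frac{p^*}{r_p}\right),
\end{align*}
where $r_p$ is any value of $Q$ prime to $p$. This is essentially the content of Proposition \ref{prop: explicit genus char}.

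\textbf{Step 3: evaluation.} For $p\mid D_1$ take $r_p=Q(1,0)=aN$, which is prime to $p$. For $p\mid D_2$ take $r_p=Q(0,1)=c$. This gives
\begin{align*}
\chi_D(Q)=\left(\frac{D_1}{N_1a}\right)\left(\frac{D_1}{N_2}\right)\left(\frac{D_2}{c}\right).
\end{align*}
Next use that $D$ is a square modulo $4N$, so $\left(\frac{D}{N_2}\right)=1$ and hence $\left(\frac{D_1}{N_2}\right)=\left(\frac{D_2}{N_2}\right)$. Substituting yields the Gross--Kohnen--Zagier form $\left(\frac{D_1}{N_1a}\right)\left(\frac{D_2}{N_2c}\right)$.

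\textbf{Main obstacle: matching the displayed formula.} The displayed statement has $\left(\frac{D_2}{N_1a}\right)$ where the derivation produces $\left(\frac{D_1}{N_1a}\right)$. The two agree exactly when $\left(\frac{D}{N_1a}\right)=1$. This holds on the $N_1$ part because $D$ is a square mod $4N$. On the $a$ part I would try $DD_0=b^2-4aNc$, which makes $DD_0$ a square mod $a$, together with the hypothesis that $D_0$ is a square mod $4N$ and quadratic reciprocity. If this fails for general $a$, I would conclude that the displayed formula contains a typo and that the intended statement is the GKZ form $\left(\frac{D_1}{N_1a}\right)\left(\frac{D_2}{N_2c}\right)$.

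The $p=2$ bookkeeping in Step 2, and checking that the choice of splitting does not affect the answer, are the other delicate points.
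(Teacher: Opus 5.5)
The paper gives no proof of this proposition. It is quoted from Gross--Kohnen--Zagier and not used afterwards; Lemma~\ref{lem: chi(W_N Q)} is proved directly from Definition~\ref{def: generalized genus character}. So there is no route to compare with, only the question of what your argument proves.

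Your Steps 1--3 are the standard argument, and they correctly prove
\[
\chi_D([aN,b,c])=\left(\frac{D_1}{N_1a}\right)\left(\frac{D_2}{N_2c}\right),
\]
which is, as far as I recall, the form in which the proposition appears in Gross--Kohnen--Zagier.

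The reconciliation you propose under ``Main obstacle'' cannot succeed, because the displayed formula with $\left(\frac{D_2}{N_1a}\right)$ is false. The hypotheses only control Kronecker symbols of $D$ and $D_0$ at primes dividing $4N$. The congruence $DD_0\equiv b^2 \pmod{4a}$ only gives $\left(\frac{D}{p}\right)=\left(\frac{D_0}{p}\right)$ for $p\mid a$ with $p\nmid DD_0$; it never gives $\left(\frac{D}{a}\right)=1$.

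A concrete counterexample:
\begin{itemize}
\item Take $N=11$, $D=5$, $D_0=12$. Both are squares modulo $44$ and coprime to $N$, and $DD_0=60$ is not a square.
\item Take $Q=[22,18,3]\in\mathcal{Q}_{11,60}$, so $a=2$ and $c=3$.
\item Then $\chi_5(Q)=\left(\frac{5}{3}\right)=-1$, and all four splittings are admissible.
\item The displayed formula gives $+1$ for each splitting. For instance, $D_1=1$, $D_2=5$, $N_1=11$, $N_2=1$ gives $\left(\frac{5}{22}\right)\left(\frac{5}{3}\right)=(-1)(-1)$.
\item Your formula gives $-1$ for each splitting.
\end{itemize}

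Worse, when a prime of $D_2$ divides $a$, the displayed first factor vanishes although $\chi_D(Q)\neq 0$. For $Q=[55,40,7]\in\mathcal{Q}_{11,60}$, every admissible splitting has $D_1=1$ and $D_2=5$. The displayed formula then gives $0$, but $\chi_5(Q)=\left(\frac{5}{7}\right)=-1$. So do not hedge: the printed statement contains a misprint, and you have proved the correct version.

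Three small repairs.
\begin{itemize}
\item You use $(D,N)=1$ twice: in Step 3, to know that $aN$ is prime to every $p\mid D_1$, and in Step 1, to distribute $N$ freely. This is a standing hypothesis of the paper but is not in the displayed statement, so state it.
\item In Step 1, the fact that no prime $p\mid D$ divides both $aN$ and $c$ does not come from $(D,N)=1$. It follows from $(aN,b,c,D)=1$ together with $p\mid b^2-4aNc$, which would force $p\mid b$ (for $p=2$, use $4\mid D$).
\item The componentwise constancy in Step 2, including $p=2$, is the usual proof that the genus character is well defined, and you may quote it.
\end{itemize}
Given $(D,N)=1$, your argument uses only that $D$ is a square modulo $4N$, to get $\left(\frac{D}{N_2}\right)=1$. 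The hypothesis on $D_0$ plays no role.
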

The authors also prove that under the same assumptions, $[a,b,c]$ is invariant under the Fricke involution. We have the following more general result:
\begin{lemma}\label{lem: chi(W_N Q)}
Let $D,D_0$ be two fundamental discriminants with $(D_0,N)=(D,N)=1$. Let $Q=[a,b,c]$ be a quadratic form such that $N|a$ and $b^2-4ac=DD_0$. 
Then
\begin{align*}
    \chi_{D_0}(Q | W_N)=\left( \frac{D_0}{N}\right)\chi_{D_0}(Q).
\end{align*}
\end{lemma}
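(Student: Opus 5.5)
We argue directly from Definition \ref{def: generalized genus character}, with $Q=[a,b,c]$, $N\mid a$, $b^2-4ac=DD_0$, and $Q|W_N=[cN,-b,a/N]$. First observe that $Q|W_N$ is an integral form with the same discriminant, and that the two forms have the same content up to factors of $N$: $\gcd(cN,b,a/N)$ and $\gcd(a,b,c)$ differ only by primes dividing $N$. Since $(D_0,N)=1$, we get $(a,b,c,D_0)=1$ if and only if $(cN,-b,a/N,D_0)=1$. So both sides of the claimed identity vanish simultaneously, and we may assume both contents are coprime to $D_0$.

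In the nonvanishing case we compute both characters with represented integers. The form $Q|W_N$ satisfies $(Q|W_N)(x,y)=\frac1N Q(-Ny,x)$, i.e. it represents $\frac1N Q(-Ny,x)=\frac{a}{N}\cdot N^2y^2/N\cdot\ldots$. Concretely,
\begin{align*}
(Q|W_N)(x,y)=cNx^2-bxy+\tfrac{a}{N}y^2 .
\end{align*}
Choose $x,y$ so that $r:=Q(y',x')$ is coprime to $D_0$; this is possible because $Q$ is primitive at the primes of $D_0$. The cleanest route is to pick a value $r=Q(u,v)$ with $(r,ND_0)=1$. Such a value exists: a form with content coprime to $D_0$ represents integers coprime to any given modulus, up to the content. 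Since content primes dividing $N$ are harmless, we simply need $(r,D_0)=1$ together with control at $N$.

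Next compare $N\cdot(Q|W_N)(x,y)=Q(-Ny,x)$. Since $N\mid a$, every value of $Q(-Ny,x)$ equals $N(aNy^2/N\cdot\ldots)$. The better approach is to write it directly: $(Q|W_N)(v,-u/N)$ is not integral in general, so instead we take $r'=(Q|W_N)(x,y)$ with $(r',D_0)=1$, which gives $Nr'=Q(-Ny,x)$. The right side is a value of $Q$; call it $s=Nr'$. If $s$ is coprime to $D_0$ (automatic, since $(N,D_0)=1$), then $\chi_{D_0}(Q)=\left(\frac{D_0}{s}\right)=\left(\frac{D_0}{N}\right)\left(\frac{D_0}{r'}\right)=\left(\frac{D_0}{N}\right)\chi_{D_0}(Q|W_N)$. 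Here we used that the definition allows any represented integer coprime to $D_0$, and that the Kronecker symbol $\left(\frac{D_0}{\cdot}\right)$ is completely multiplicative in the lower argument. Since $\left(\frac{D_0}{N}\right)=\pm1$, this gives the claim.

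The main obstacle is justifying the well-definedness input. Definition \ref{def: generalized genus character} is stated for forms whose discriminant is divisible by $D_0$ with $(a,b,c,D_0)=1$, and independence of the represented integer is asserted there, so we invoke it for both $Q$ and $Q|W_N$. We also need existence of a value $r'$ of $Q|W_N$ coprime to $D_0$. This follows from the content condition by the standard argument: for each prime $p\mid D_0$ pick $(x,y)$ mod $p$ with $p\nmid Q|W_N(x,y)$, then apply CRT. The only subtlety is $p=2$ when $D_0$ is even, which is handled the same way since a form with content odd takes an odd value at one of $(1,0),(0,1),(1,1)$.
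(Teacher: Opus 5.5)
Your argument is correct and is essentially the paper's proof. Both proofs first note that the contents of $Q$ and $Q|W_N=[cN,-b,a/N]$ differ only at primes dividing $N$, so the two characters vanish together; then each uses that $N$ times a value of one form is a value of the other. The paper goes from $Q$ to $Q|W_N$ via $[cN,-b,a/N](-y,Nx)=N\,Q(x,y)$, while you go in the opposite direction and then use $\left(\frac{D_0}{N}\right)=\pm 1$.

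One slip to fix: the correct identity is $N\,(Q|W_N)(x,y)=Q(-y,Nx)$, not $Q(-Ny,x)$. With the correct identity, $Nr'$ is still a value of $Q$ coprime to $D_0$, so your conclusion is unaffected. You should also delete the abandoned intermediate attempts in your write-up.
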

\begin{remark}
    If $D_0$ is a square mod $N$, then we recover the result from \cite{Zagier-Gross-Kohnen-L-series-II-1987}.
\end{remark}
\begin{proof}
We have that $[a,b,c]|W_N=[cN,-b,a/N]$. We consider two cases: $(D_0,a,b,c)>1$ or  $(D_0,a,b,c)=1$.\\
As $(N,D_0)=1$, it follows that $(D_0,a,b,c)=1 \Leftrightarrow (D_0,a/N,-b,cN)=1$.
From this it follows that:
\begin{align*}
    \chi_{D_0}(Q)=0 \Leftrightarrow (D_0,a,b,c)>1 \Leftrightarrow (D_0,a/N,-b,cN)>1 \Leftrightarrow \chi_{D_0}(Q|W_N)=0
\end{align*}
Now assume that $(D_0,a,b,c)=(D_0,a/N,b,cN)=1$. Let $(D_0,r)=1$ be such that for some integers $x,y$:
    \begin{align*}
        [a,b,c](x,y)=ax^2+bxy+cy^2=r.
    \end{align*}
Then
\begin{align*}
    [Nc,-b,a/N](-y,Nx)=cNy^2-b(-y)(Nx)+a/N(Nx)^2=Nr
\end{align*}
Since $(D_0,N)=1$ the result follows.
\end{proof}
We end this section with some comments about the geodesic associated to a binary quadratic form $Q=[a,b,c]$. For $z \in \mathbb{H}$, we set:
\begin{align*}
    Q_z=[a,b,c]_z:=\frac{1}{\im(z)}(a|z|^2+b\re(z)+c).
\end{align*}
Let $S_Q:=\{ z \in \mathbb{H} : Q_z=0\}$ be the geodesic associated to $Q$. We denote by $C_Q$ its image in $\Gamma_0(N) \setminus \mathbb{H}$. Let $\Gamma_Q$ be the stabilizer of $Q$ in $\Gamma_0(N)$, then
\begin{align*}
     C_Q = \Gamma_Q \setminus S_Q.
\end{align*}
Let $D$ be the discriminant of $Q$ and let $(t,r)$ be the smallest positive solution to the Pell equation $t^2-|D|mu^2=4$, then $\Gamma_Q / \{ \pm 1 \} $ is generated by:
\begin{align}\label{eq: def stab Q}
    \begin{pmatrix}
        \frac{t-bu}{2}& -cr \\ au & \frac{t+bu}{2}
    \end{pmatrix}.
\end{align}
\subsection{Shimura-Shintani correspondance}
The Shimura and Shintani lifts map cusp forms of half-integral weight to cusp forms of integer weight, and vice versa, preserving the Hecke eigenvalues. In this section, we define the two operators and give a characterization in terms of a kernel operator. We follow Kohnen \cite{Kohnen1985}. We point out, that the assumption that the level $N$ is square-free is not needed for the results stated in this section. 

The Shimura lift maps forms of half-integral weight to forms of integer weight.
\begin{definition}\label{def: Shimura lift}
Let $N$ be a integer. Let $f= \sum c_n q^n \in S_{k+1/2}(4N, \chi)$ and $t$ be a square-free-integer. Set:
\begin{align*}
        \chi_t(d)=\chi(d) \left( \frac{(-1)^{k}t}{d}\right)
    \end{align*}
to be a character modulo $4Nt$. Define 
    \begin{align*}
        \Sh_t(f):=\sum_{n=0}^{\infty} \left( \sum_{d | n} \chi_t(d)d^{k-1} c_{t (n/d)^2} \right) q^n.
    \end{align*}
    Then $\Sh_t(f) \in M_{2k}(2N, \chi^2)$ is the $t$\textsuperscript{th} Shimura lift of $f$.
\end{definition}

The Shimura lift has the property that is preserves the Hecke eigenvalues in the following sens:
\begin{proposition}
      Let $f \in S_{k+1/2}(4N, \chi)$ and let $p$ be a prime. Then:
    \begin{align*}
        \Sh_t(f|_{k+1/2}T_{p^2})=\Sh_t(f)|_{2k}T_p.
    \end{align*}
\end{proposition}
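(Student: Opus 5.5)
The identity will be checked coefficient by coefficient, using only the coefficient formula for $T_{p^2}$ recalled above and the standard coefficient formula for $T_p$ on $M_{2k}(2N,\chi^2)$. If $G=\sum b_nq^n$, then the $n$-th coefficient of $G|_{2k}T_p$ is $b_{pn}+\chi^2(p)p^{2k-1}b_{n/p}$, with $b_{n/p}=0$ when $p\nmid n$. This is the normalization of $T_p$ for which the statement is meant. The operator $T_p$ in (i) above carries the factor $p^{-1}$, so before starting I would fix that normalization.

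Write $f=\sum c_nq^n$, $f|_{k+1/2}T_{p^2}=\sum c'_nq^n$, and $A(n)=\sum_{d\mid n}\chi_t(d)d^{k-1}c_{t(n/d)^2}$. The first step is a generating-function reformulation. As formal Dirichlet series,
\begin{align*}
\sum_{n\ge1}A(n)n^{-s}=L(s-k+1,\chi_t)\sum_{m\ge1}c_{tm^2}m^{-s}.
\end{align*}
The analogous identity holds for $f|T_{p^2}$, with the series $\sum_m c'_{tm^2}m^{-s}$. Substituting the formula for $c'$ gives
\begin{align*}
c'_{tm^2}=c_{p^2tm^2}+\chi^*(p)\left(\tfrac{tm^2}{p}\right)p^{k-1}c_{tm^2}+\chi^*(p^2)p^{2k-1}c_{tm^2/p^2}.
\end{align*}
Since $\chi^*(p)\left(\tfrac{t}{p}\right)=\chi(p)\left(\tfrac{(-1)^kt}{p}\right)=\chi_t(p)$, the middle term equals $\chi_t(p)p^{k-1}c_{tm^2}$ when $p\nmid m$ and vanishes when $p\mid m$. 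Likewise $\chi^*(p^2)=\chi(p)^2$ whenever $p\nmid 4N$, and both sides vanish consistently otherwise.

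The second step is to localize at $p$. Write $m=p^jm'$ with $p\nmid m'$ and fix $m'$. Only the $p$-part of the Dirichlet series is affected, so it suffices to compare the two formal power series in $X=p^{-s}$:
\begin{align*}
\Phi(X)=\sum_j c_{tm'^2p^{2j}}X^j,\qquad \Phi'(X)=\sum_j c'_{tm'^2p^{2j}}X^j.
\end{align*}
In terms of these, the $T_p$ action on the integral-weight side is the operator sending $\sum a_jX^j$ to $\sum (a_{j+1}+\chi^2(p)p^{2k-1}a_{j-1})X^j$. We must check that this operator, applied to $(1-\chi_t(p)p^{k-1}X)^{-1}\Phi(X)$, gives $(1-\chi_t(p)p^{k-1}X)^{-1}\Phi'(X)$.

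Multiplying through by $1-\chi_t(p)p^{k-1}X$, everything reduces to matching the coefficient of $X^j$. The term $c_{p^2tm^2}$ accounts for the shift $a_{j+1}$. The term $\chi(p)^2p^{2k-1}c_{tm^2/p^2}$ accounts for $a_{j-1}$. The middle term, present only for $j=0$, exactly compensates the boundary term produced when the shift operator passes the factor $(1-\chi_t(p)p^{k-1}X)^{-1}$. This last check is a two-line computation at $j=0$, and it is also needed at $j=1$ for the $a_{j-1}$ term.

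I expect the main obstacle to be this boundary bookkeeping, together with the degenerate primes. When $p\mid 4Nt$ we have $\chi_t(p)=0$, and possibly $\chi(p)=0$, so the Euler factor disappears and $T_p$ becomes $U_p$. These cases I would treat separately but identically: the identity then reduces to $c'_{tm^2}=c_{p^2tm^2}$, which is immediate from the formula for $T_{p^2}$.
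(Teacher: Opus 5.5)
The paper gives no proof of this proposition. It is recalled as a standard property of Shimura's lift, following Kohnen, so your argument is an independent, self-contained route rather than a reconstruction. It is correct, and it is the classical direct verification. After the factorization $\sum_n A(n)n^{-s}=L(s-k+1,\chi_t)\sum_m c_{tm^2}m^{-s}$ and localization at $p$, set $\alpha=\chi_t(p)p^{k-1}$, $\beta=\chi(p)^2p^{2k-1}$ and $g=(1-\alpha X)^{-1}$. The claim then becomes
\begin{align*}
X^{-1}\bigl(g\Phi-\phi_0\bigr)+\beta X g\Phi = g\bigl(X^{-1}(\Phi-\phi_0)+\alpha\phi_0+\beta X\Phi\bigr).
\end{align*}
This holds because $X^{-1}(g-1)=\alpha g$, which is exactly your boundary term at $j=0$. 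Compared with citing the literature, your route is elementary and applies directly to arbitrary $f$, not just Hecke eigenforms. It also does not use the kernel $\Omega_{k,N,D}$ or any of the newform theory the paper relies on later.

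There are two small corrections. First, your treatment of degenerate primes is wrong when $p\mid t$ but $p\nmid 4N$. In that case $\chi_t(p)=0$ but $\chi(p)\neq 0$, so $T_p$ on the integral-weight side is not $U_p$. Also $c'_{tm^2}=c_{p^2tm^2}+\chi(p)^2p^{2k-1}c_{t(m/p)^2}$, not $c_{p^2tm^2}$. No separate case is needed, though: your generic computation with $\alpha=0$ (and additionally $\beta=0$ when $p\mid 4N$) covers every prime uniformly. The only extra input is that $p^2\mid tm^2$ if and only if $p\mid m$, for squarefree $t$. Second, there is nothing to renormalize in the paper's formula (i) for $T_p$. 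There $p^{-1}\sum_{j \bmod p}F((z+j)/p)=\sum_n a_{pn}q^n$, so (i) already gives $a_{pn}+p^{2k-1}a_{n/p}$; with the nebentypus $\chi^2$ you only need to insert the factor $\chi^2(p)$.
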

The function mapping integer weight forms to half-integral weight forms is the Shintani operator. Basically, it is given by a sum over the following integral:
\begin{definition}
Let $F\in S_{2k}(N)$. Let $D$ be a fundamental discriminant such that $(-1)^kD>0$. Let $m \in \N$ with $(-1)^km \equiv 0,1 \mod 4$. We define:
\begin{align*}
    r_{k,N,F,D,(-1)^km}:= \sum_{Q \in \Gamma_0(N) \setminus Q_{N,|D|m}}\chi_{D}(Q) \int_{\Gamma_Q\setminus S_Q}F(z)Q(z,1)^{k-1}dz.
\end{align*}
\end{definition}
This integral allows us to define the Shintani lift:
\begin{definition}\label{def: Shintani-lift}
    Let $ F \in S_{2k}(N)$. Let $D$ be a fundamental discriminant such that $(-1)^kD>0$. Let $m \in \N$ with $(-1)^km \equiv 0,1 \mod 4$. Set
    \begin{align*}
        F|S^*_{D}:=\sum_{\substack{m \geq 1, \\ (-1)^km \equiv 0,1 \mod 4}} \left(\sum_{t |N} \mu(t) \left( \frac{D}{t}\right)t^{k-1} r_{k,Nt,F,D,(-1)^kmt^2}\right)e^{2 \pi i m t}.
    \end{align*}
    Where $\mu(t)$ is the Mobius function. Then $F|S^*_{D}$ is in the space $S_{k+1/2}(N)$.\footnote{As we will see later, it follows directly from this definition that the Shintani lift maps $F$ to the Kohnen subspace (see definition \ref{def: Kohnen space non trivial character}), i.e. $F|S^*_{D} \in S_{k+1/2}^K(N)$.}
\end{definition}

We describe how the two functions are connected through a kernel operator. This kernel is essentially a sum of Petersson innerproducts with the following functions:

\begin{definition}\label{def: sec1-f_k,N,D,D_0}
Let $N \geq 1$. Let $D,D_0$ be such that $D,D_0 \equiv 0,1 \mod 4$ and $(-1)^kD,(-1)^kD_0>0$. Let $k \geq 2$
\begin{align*}
        f_{k,N,D,D_0}(z)=\sum_{Q \in \mathcal{Q}_{N,DD_0}}\chi_{D_0}Q(z,1)^{-k}
    \end{align*}
This is a cusp form of weight $2k$ for $\Gamma_0(N)$.
\end{definition}

\begin{remark}\label{rem: real cf of f_k,N,D,D_0}
    One can easily show that the q-expansion of $f_{k,N,D,D_0}$ has real coefficients. Indeed as $\chi_{D_0}([a,-b,c])=\chi_{D_0}([a,b,c])$, we can map $[a,b,c] \in \mathcal{Q}_{N,DD_0}$ to $[a,-b,c]\in \mathcal{Q}_{N,DD_0}$ and get that
    \begin{align*}
        \overline{f_{k,N,D,D_0}(-\overline{z})}=\sum_{Q \in \mathcal{Q}_{N,DD_0}}\chi_{D_0}(Q)(az^2-bz+c)^{-k}= f_{k,N,D,D_0}(z).
    \end{align*}
\end{remark}

The kernel operator is given by:
\begin{definition}
Let $N$ be odd, $k \geq 1$ an integer and let $D$ be a fundamental discriminant with $(-1)^kD>0$. For $z,\tau \in \mathbb{H}$ we set:
\begin{align*}
    \Omega_{k,N,D}&(z,\tau):=i_Nc_{k,D} \times \\
    &\sum_{\substack{m \geq 1,\\ (-1)^km \equiv 0,1 \mod 4}} m^{k-1/2} 
    \left( \sum_{t | N} \mu(t) \left( \frac{D}{t} \right) t^{k-1} f_{k, N/t,D,(-1)^km}(tz)\right)e^{2 \pi i m \tau }.
\end{align*}
Here
\begin{align*}
    c_{k,D}=\left((-1)^{[k/2]}|D|^{-k+1/2} \pi \binom{2k-2}{k-1}2^{-3k+2}\right)^{-1}, \quad i_N=[SL_2(\Z):\Gamma_0(N)].
\end{align*}
\end{definition}
We are now ready to state the following result:
\begin{theorem}[Theorem 2 in \cite{Kohnen1985}]\label{theo: kernel Shintani Shimura} The Shimura and Shintani-lift are adjoint operators on cusp forms. Moreover, we have:
\begin{itemize}
    \item For all $F \in S_{2k}(4N)$, we have:
    \begin{align*}
        F|\Shs_D(\tau)=\langle F,\Omega_{k,N,D,}(\cdot, - \overline{\tau})\rangle.
    \end{align*}
    \item For all $f \in S_{k+1/2}(4N)$, we have:
\begin{align*}
        f|\Sh_D(z)=\langle f, \Omega_{k,N,D,}(-\overline{z}, \cdot )\rangle.
    \end{align*}
\end{itemize}
\end{theorem}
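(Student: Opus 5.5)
The plan is to follow Kohnen's strategy: establish the two kernel identities separately and derive adjointness from them. By definition of the Petersson product, the two identities say $\langle F\,|\,\Shs_D, f\rangle=\langle F, f\,|\,\Sh_D\rangle$ whenever the resulting double integral over $(\Gamma_0(N)\setminus\mathbb{H})\times(\Gamma_0(4N)\setminus\mathbb{H})$ converges absolutely. Absolute convergence follows from the cuspidality of $F$ and $f$ together with the absolute convergence of the series defining $f_{k,N,D,D_0}$ for $k\ge 2$. So everything reduces to computing the inner product of $\Omega_{k,N,D}$ against $F$ in the $z$-variable, and against $f$ in the $\tau$-variable.

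\emph{First identity.} Expand $\Omega_{k,N,D}(\cdot,-\overline{\tau})$ as a $q$-series in $\tau$ and pair with $F$ term by term; this is legitimate by uniform convergence on compacta. The coefficient of $e^{2\pi i m\tau}$ then becomes
\[
\sum_{t\mid N}\mu(t)\left(\tfrac{D}{t}\right)t^{k-1}\,\langle F, f_{k,N/t,D,(-1)^km}(t\,\cdot)\rangle .
\]
The core computation, going back to Katok and Zagier, is the unfolding identity
\[
\langle F,f_{k,N,D,D_0}\rangle = c\,|DD_0|^{1/2-k}\,\pi\binom{2k-2}{k-1}2^{-3k+2}\,\frac{1}{i_N}\sum_{Q\in\Gamma_0(N)\setminus\mathcal{Q}_{N,DD_0}}\chi_{D}(Q)\int_{C_Q}F(z)Q(z,1)^{k-1}\,dz .
\]
The steps to obtain it are as follows.
\begin{enumerate}
\item Split $\mathcal{Q}_{N,DD_0}$ into $\Gamma_0(N)$-orbits. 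This is where the invariance $\chi_{D_0}(Q\circ\gamma)=\chi_{D_0}(Q)$ is used.
\item Unfold each orbit to an integral over $\Gamma_Q\setminus\mathbb{H}$, with $\Gamma_Q$ generated by the matrix \eqref{eq: def stab Q}.
\item Move $S_Q$ to the imaginary axis by a real matrix, and use coordinates $z=re^{i\theta}$.
\item Integrate out the direction transversal to the geodesic. This leaves a cycle integral over $C_Q$ times the Beta-type integral $\int_0^\pi \sin^{2k-2}\theta\,d\theta$, which produces the binomial constant above.
\end{enumerate}
The twisted terms with $t\mid N$ are handled the same way, via the relation between $f_{k,N/t,\dots}(tz)$ and forms in $\mathcal{Q}_{Nt,\dots}$ that appears in the definition of $\Shs_D$. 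The normalisations $c_{k,D}$ and $i_N$ are chosen exactly to cancel these constants, which yields $F\,|\,\Shs_D$.

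\emph{Second identity.} Now view $\Omega_{k,N,D}(-\overline z,\tau)$ as a function of $\tau$ and compute its Fourier expansion in $z$. For this I use Zagier's evaluation of the Fourier coefficients of $f_{k,N,D,D_0}$:
\begin{enumerate}
\item Sum over $a$ and over $b \bmod 2a$, and apply Poisson summation in the $c$-direction. This gives coefficients built from exponential sums of the form $\sum_{b \bmod 2a}\chi_D([a,b,c])\,e(nb/2a)$ together with $J$-Bessel factors.
\item Using Kohnen's explicit formula for the genus character (Proposition~\ref{prop: explicit genus char}), identify these exponential sums as twisted Kloosterman/Salié sums.
\end{enumerate}
The result should be that the $n$-th $z$-coefficient of $\Omega$ equals, up to the same constants,
\[
\sum_{d\mid n}\chi_D(d)\,d^{k-1}\,P_{|D|n^2/d^2}(\tau),
\]
where $P_m$ denotes the $m$-th weight $k+1/2$ Poincaré series projected to Kohnen's plus space. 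The Petersson coefficient formula $\langle f,P_m\rangle \propto c_m\, m^{1/2-k}$ then gives exactly the coefficients $\sum_{d\mid n}\chi_D(d)d^{k-1}c_{|D|n^2/d^2}$ of $f\,|\,\Sh_D$ from Definition~\ref{def: Shimura lift}.

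The main obstacle is this second step. One has to match the exponential sums arising from $f_{k,N,D,(-1)^km}$ with the Kloosterman sums of half-integral weight Poincaré series for $\Gamma_0(4N)$, including the plus-space projection and the Möbius correction over $t\mid N$. The first attempt will be to verify the match prime by prime, using multiplicativity of both sums together with Proposition~\ref{prop: explicit genus char}. Where this becomes unwieldy, I fall back on showing that both sides are Hecke-equivariant, using $\Sh_t(f|T_{p^2})=\Sh_t(f)|T_p$, and comparing a single coefficient.
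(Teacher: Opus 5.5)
The paper gives no proof of this theorem; it quotes it from Kohnen \cite{Kohnen1985}. Your outline follows Kohnen's (and Kohnen--Zagier's) architecture: unfolding against hyperbolic Poincar\'e series on the Shintani side, and an expansion of the kernel in $z$ by plus-space Poincar\'e series on the Shimura side.

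The Shintani half is acceptable as a sketch. You do need growth estimates uniform in $m$ to interchange the $m$-sum with integration over a non-compact fundamental domain; uniform convergence on compacta is not enough.

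The genuine gap is the Shimura half. Write $\Omega_{k,N,D}(z,\tau)=\sum_{n\geq 1}\omega_n(\tau)e^{2\pi i n z}$. Identifying $\omega_n$ with a combination of Poincar\'e series is the heart of the theorem, and you only record what it ``should be''. Proving it needs three ingredients:
\begin{itemize}
\item a Zagier-type Fourier expansion of the $f_{k,M,D,(-1)^km}$ for $M\mid N$, i.e.\ sums over $a\equiv 0\pmod M$ of twisted exponential sums $\sum_{b\bmod 2a}\chi_D([a,b,c])e(nb/2a)$ against $J$-Bessel factors;
\item the Fourier expansion of level-$4N$ plus-space Poincar\'e series via Kohnen's Kloosterman sums;
\item a twisted Sali\'e-sum identity writing the former as a sum over $d\mid(n,a)$ of $\left(\frac{D}{d}\right)$ times plus-space Kloosterman sums of modulus $4a/d$.
\end{itemize}
Since $4a/d$ need not be divisible by $4N$ when $(d,N)>1$, you must also show that the M\"obius combination $\sum_{t\mid N}\mu(t)\left(\frac{D}{t}\right)t^{k-1}f_{k,N/t,D,(-1)^km}(t\,\cdot)$ cancels exactly those terms. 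What remains should be level-$4N$ Poincar\'e series together with the coprimality to $N$ built into $\chi_t$ in Definition~\ref{def: Shimura lift}. Your prime-by-prime plan is a reasonable route to this, but none of it is carried out.

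The formula you anticipate is also inconsistent with your own coefficient formula. Pairing $f$ with $\sum_{d\mid n}\chi_D(d)d^{k-1}P_{|D|n^2/d^2}$ via $\langle f,P_m\rangle\propto c_m m^{1/2-k}$ gives $|D|^{1/2-k}n^{1-2k}\sum_{d\mid n}\chi_D(d)d^{3k-2}c_{|D|n^2/d^2}$, not the Shimura coefficients. What you actually need is $\omega_n\propto n^{2k-1}\sum_{d\mid n,\,(d,N)=1}\left(\frac{D}{d}\right)d^{-k}P_{|D|n^2/d^2}$.

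The fallback does not close the gap, for three reasons.
\begin{itemize}
\item Hecke-equivariance of $f\mapsto\langle f,\Omega_{k,N,D}(-\overline{z},\cdot)\rangle$ is not automatic. It amounts to a relation $\Omega_{k,N,D}|_{\tau}T_{p^2}=\Omega_{k,N,D}|_{z}T_p$, which must itself be proved.
\item Even granting it, two Hecke-equivariant maps into $S_{2k}(N)$ that agree in one Fourier coefficient need not coincide. For $p\nmid N$, the eigenspace attached to a newform $G$ of level $M<N$ is spanned by the $G(dz)$ with $d\mid N/M$, and one coefficient does not determine an element of it.
\item Even the first coefficient $\langle f,\omega_1\rangle$ of the right-hand side can only be evaluated after identifying $\omega_1$ with a multiple of $P_{|D|}$. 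That is the case $n=1$ of the identity you are trying to avoid.
\end{itemize}

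Two smaller points.
\begin{itemize}
\item The second identity should be read for $f$ in Kohnen's plus space (Definition~\ref{def: Kohnen space non trivial character}), and the first for $F\in S_{2k}(N)$. The form $\Omega_{k,N,D}(-\overline{z},\cdot)$ lies in the plus space, so for other $f$ the right-hand side sees only the orthogonal projection of $f$ onto it, while $f|\Sh_D$ sees the coefficients of $f$ itself. Your use of $\langle f,P_m\rangle\propto c_m m^{1/2-k}$ silently assumes this.
\item Deducing adjointness from the two identities also uses $\overline{\Omega_{k,N,D}(z,-\overline{\tau})}=\Omega_{k,N,D}(-\overline{z},\tau)$. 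This follows from the reality of the Fourier coefficients of the $f_{k,N,D,D_0}$ (Remark~\ref{rem: real cf of f_k,N,D,D_0}).
\end{itemize}
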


Due to this kernel operator, we can express the $D_0$th coefficient of the $D$th Shintani lift of a newform $F$ as an inner product:
\begin{lemma}\label{lem: Coeff of Shintani lift}
    Let $F \in S_{2k}^{new}(N)$ be a newform. Then:
    \begin{align*}
        c_{|D_0|}(F|S^*_D)=i_{N}c_{k,D} \langle F, f_{k,N,D,D_0}\rangle.
    \end{align*}
\end{lemma}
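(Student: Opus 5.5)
The plan is to read off the coefficient from Theorem \ref{theo: kernel Shintani Shimura}. By the first part of that theorem, $F|\Shs_D(\tau)=\langle F,\Omega_{k,N,D}(\cdot,-\overline{\tau})\rangle$. Substituting the $q$-expansion of $\Omega_{k,N,D}$ in $\tau$, replacing $\tau$ by $-\overline{\tau}$ turns $e^{2\pi i m\tau}$ into $\overline{e^{2\pi i m \tau}}$. Since the Petersson product is conjugate linear in the second slot, this conjugation is undone, and we get
\begin{align*}
F|\Shs_D(\tau)=\sum_{m} i_N\,\overline{c_{k,D}}\, m^{k-1/2}\Big\langle F, \sum_{t|N}\mu(t)\left(\tfrac{D}{t}\right)t^{k-1}f_{k,N/t,D,(-1)^km}(t\,\cdot)\Big\rangle e^{2\pi i m\tau}.
\end{align*}
Interchanging the sum over $m$ with the integral is justified by absolute convergence of the kernel for $k\ge 2$, which is part of Kohnen's construction. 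Note that $c_{k,D}$ is real, and the factor $i_N$ is real.

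Next, take $m=|D_0|$, so $(-1)^km=D_0$. The $m=|D_0|$ coefficient then equals $i_Nc_{k,D}|D_0|^{k-1/2}$ times a sum over $t|N$ of $\mu(t)(\frac{D}{t})t^{k-1}\langle F, f_{k,N/t,D,D_0}(t\,\cdot)\rangle$. The key step is that for a newform $F$ of level $N$, every term with $t>1$ vanishes. Indeed, $f_{k,N/t,D,D_0}$ is a cusp form of level $N/t$ (Definition \ref{def: sec1-f_k,N,D,D_0}). Hence $z\mapsto f_{k,N/t,D,D_0}(tz)$ lies in $S_{2k}^{old}(N)$, and newforms are Petersson-orthogonal to the old space. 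Only $t=1$ survives, which gives $\langle F,f_{k,N,D,D_0}\rangle$.

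The main obstacle is bookkeeping of normalizations rather than substance. The factor $|D_0|^{k-1/2}$ must match the stated formula, so I would check whether the paper's convention is that this factor is absorbed into $c_{|D_0|}$ or into the normalization of $\Omega$. I would also check whether the kernel theorem is applied at level $N$ rather than $4N$, since the theorem is quoted with $S_{2k}(4N)$ but $\Omega_{k,N,D}$ is built from level-$N$ forms. I would settle this by tracing Kohnen's Theorem 2 with the index normalization $i_N$ of the Petersson product used here. If the power of $|D_0|$ does not cancel, the lemma should be read up to that explicit positive factor, which is harmless for the later vanishing criterion.
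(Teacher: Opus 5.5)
Your proposal follows the paper's proof: expand $\Omega_{k,N,D}(\cdot,-\overline{\tau})$ via Theorem~\ref{theo: kernel Shintani Shimura}, pull the exponentials out of the conjugate-linear slot, read off the $|D_0|$-th coefficient, and discard the $t>1$ terms because $f_{k,N/t,D,D_0}(t\,\cdot)$ is old at level $N$ while $F$ is new. Your bookkeeping is more careful than the paper's proof, which drops the argument $t\,\cdot$ and the factor $m^{k-1/2}$ when extracting the coefficient; with $\Omega_{k,N,D}$ as defined, the identity is $c_{|D_0|}(F|S^*_D)=i_Nc_{k,D}|D_0|^{k-1/2}\langle F,f_{k,N,D,D_0}\rangle$, as you suspected, and this positive factor changes only the constant in Theorem~\ref{theo: composite N product l-function inner product}, not the vanishing criterion.
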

\begin{proof}
By Theorem \ref{theo: kernel Shintani Shimura} we have:
\begin{align*}
    F|S_D^*(\tau)&=\langle F, \Omega_{k,N,D}(\cdot,-\overline{\tau}) \rangle\\
    & = \langle F, i_{N}c_{k,D} \sum_{\substack{b \geq 1,\\ (-1)^kb \equiv 0,1 \mod 4}} b^{k-1/2}\left( \sum_{t | N} \mu(t) \left( \frac{D}{t} \right) t^{k-1} f_{k, N/t,D,(-1)^kb}\right)e^{-2 \pi i b \overline{\tau} } \rangle\\
    & = i_{N}c_{k,D} \sum_{\substack{b \geq 1,\\ (-1)^kb \equiv 0,1 \mod 4}} b^{k-1/2} e^{2 \pi i b \tau } \sum_{t | N} \mu(t) \left( \frac{D}{t} \right) t^{k-1}\langle F, f_{k, N/t,D,(-1)^kb} \rangle.
\end{align*}
Thus the $|D_0|$th coefficient is given by
\begin{equation}\label{N-Kohnen-D_0}
\begin{split}
c_{|D_0|}(f|S_D^*)&=i_{N}c_{k,D} \sum_{t | N} \mu(t) \left( \frac{D}{t} \right) t^{k-1}\langle F, f_{k, N/t,D,(-1)^kD_0} \rangle\\
   & =i_{N}c_{k,D} \langle f, f_{k,N,D,(-1)^kD_0} \rangle
   \end{split}
\end{equation}
Here the last equation follows since all other inner product vanish, as $F$ is a newform and $f_{k, N/t,D,(-1)^km}$ is of level $N/t$.
\end{proof}

\subsection{Central values of $L$-functions and coefficients of Modular forms of half-integral weight}
In the following we define a subspace of modular forms of half-integral weight, which is in one-to-one correspondence with new forms of integer weight. We then explain, how the coefficients of the half-integral weight form encode the central value of the $L$-function of the  twists of the form of integer weight.

The case when the level is square-free has been described by Kohnen (\cite{Kohnen1982}, \cite{Kohnen1985}). Here, we follow Ueda  \cite{Ueda-1998} who generalized the correspondence between integer and half-integral weight forms and Sakata \cite{Sakata-N} who generalized Kohnen's statements about the central values of twisted $L$-functions.
\subsubsection{Newforms of Half-Integral Weight}
We start by defining newforms of half-integral weight.
\begin{definition}\label{def: Kohnen space non trivial character}
Let $\chi=\left( \frac{N_0}{\cdot}\right) $ be a character modulo $N$, i.e. $N_0|N$, and set $\epsilon:=\left( \frac{-1}{N_0}\right)$. Then we define the Kohnen space as:
\begin{align*}
    S_{k+1/2}^K(\Gamma_0(4N), \chi):= \left\{ f=\sum_{n=1}^{\infty} c_nq^n \in \right. &S_{k+1/2}(\Gamma_0(4N), \chi) \text{ s.t. } \\
    & \Biggl. c_n=0 \text{ for } \epsilon(-1)^kn\equiv2,3 \mod 4 \Biggr\}
\end{align*}
\end{definition}
We now define the lifting maps:
\begin{definition}
    Let $p|N$ be an odd prime divisor. Then
    \begin{align*}
        U_p: S_{K+1/2}(\Gamma_0(4N), \chi) &\rightarrow S_{k+1/2}\left(\Gamma_0(4N), \chi \left( \frac{p}{\cdot}\right) \right),\\
        f=\sum_{n=1}^{\infty}c_nq^n &\mapsto f|U_p:=\sum_{n=1}^{\infty} c_{pn}q^n.
    \end{align*}
\end{definition}
Moreover, we define:
\begin{definition}
Let $d$ be a positive integer. Then we define the map:
\begin{align*}
    V_d : M_{k+1/2}(\Gamma_0(4N),\chi) &\rightarrow M_{k+1/2}\left(\Gamma_0(4Nd), \chi \left(\frac{d}{\cdot}\right)\right) \\
    f= \sum_{n=0}^{\infty} c_nq^n&\mapsto f|V_d (z)=f(dz)=\sum_{n=1}^{\infty} a_n q^{dn}.
\end{align*}
\end{definition} 
Finally, we define the twisting operators $R_p$:
\begin{definition}\label{def: R_p half integral}
Let $p$ be an odd prime. 
    \begin{align*}
        S_{k+1/2}(\Gamma_0(4N), \chi) &\rightarrow S_{k+1/2}\left(\Gamma_0(4Np), \chi \left( \frac{p^2}{}\right)\right), \\
        f=\sum_{n=1}^{\infty}c_nq^n &\mapsto f|R_p:=\sum_{n=1}^{\infty}c_n \left( \frac{n}{p}\right)q^n.
    \end{align*}
    If $p^2|N$, then $R_p$ preserves the level. 
\end{definition}

The space of oldforms is defined as follows:
\begin{definition}\label{def: S^K,old S^k,new}
    Let $N$ be an odd integer. Let $k \geq 2$. We define the space of old forms of level $4N$ and weight $k+1/2$ as:
    \begin{align*}
        S&^{K,old}_{k+1/2}(4N):=\\
        &\sum_{\substack{0<d_1|N\\d_1 \neq N}}
        \sum_{0 <d_2 | (N/d_1)} \sum_{\substack{\xi: (\Z/4d_1\Z )^*\rightarrow \{\pm 1 \} \\\xi\left( \frac{d_2}{\cdot}\right)=\mathbb{1}}}
        S^K_{k+1/2}(\Gamma_0(4d_1), \xi)|V_{d_2}\\
         &+ \sum_{\substack{0<d_1|N\\d_1 \neq N}}
        \sum_{0 <d_2 | (N/d_1)^2} 
        \sum_{\substack{\xi: (\Z/4d_1\Z )^*\rightarrow \{\pm 1 \} \\\xi\left( \frac{d_2}{\cdot}\right)=\mathbb{1}}} \sum_{\substack{0 \leq e_l \leq 2\\ l \in II(N)} } \left(S^K_{k+1/2}(\Gamma_0(4d_1), \xi)|U_{d_2} \right) \vert \prod_{l \in II(N)} R_l^{e_l}
    \end{align*}
Here $II(N):=\{p|N: \text{ord}_p(N) \geq 2\}$.
    
    The space of newforms of level $4N$ and weight $k+1/2$ is defined as the orthogonal complement of $S^{K,old}_{k+1/2}(4N)$ and denoted by $S^{K,new}_{k+1/2}(4N)$.
\end{definition}

\begin{remark}
    \begin{itemize}
        \item[(i)] If $N$ is square-free, then we recover the definition from Kohnen in \cite{Kohnen1982}.
        \item[(ii)] Let $p$ be an odd prime. If $m \geq 3$, then the space of oldforms is given by:
    \begin{align*}
       S^{K,old}_{k+1/2}(4 \cdot p^m) =S_{k+1/2}^K\left(4\cdot p^{m-1}\right)+S_{k+1/2}^K\left(\Gamma_0(4\cdot p^{m-1}), \left( \frac{p}{}\right)\right)|V_{p}
    \end{align*}
If $m=2$, then 
    \begin{align*}
       S^{K,old}_{k+1/2}(4 \cdot p^2) &=S_{k+1/2}^K\left(4\cdot p\right)
       +S_{k+1/2}^K\left(\Gamma_0(4\cdot p), \left( \frac{p}{}\right)\right)|V_{p}\\
       &+S_{k+1/2}^K\left(4\cdot p\right)|R_{p}
       +S_{k+1/2}^K\left(4\cdot p\right)|R_{p^2}
    \end{align*} 
    \end{itemize}
\end{remark}
\subsubsection{Eigenspaces of Newforms}
To state the correspondence, we need to define several eigenspaces, both of newforms of integer weight as well as of half-integral weight. 

To do so we will distinguish primes which divide the level to an odd or even power. We define the following notation:
\begin{itemize}
    \item  $I(N):=\{ p |N: \text{ord}_p(N) = 1\}$
    \item  $II(N):=\{ p |N: \text{ord}_p(N) \geq 2\}$
    \item $II(N)_{even}:=\{ p |N: \text{ord}_p(N) \geq 2 \text{ and } 2 \mid \text{ord}_p(N)\}$
    \item $II(N)_{odd}:=\{ p |N: \text{ord}_p(N) \geq 2 \text{ and } 2 \nmid \text{ord}_p(N) \}$
    \item $II(N)_2:=\{ p | N: \text{ord}_p(N)=2\}$
    \item $II(N)_2^*:=\left\{ p | N: \text{ord}_p(N)=2 \text{ and } \left( \frac{-1}{p}\right)=1\right\}$
\end{itemize}

\paragraph{Very Newforms}
Before defining the eigenspace, we introduce a subcategory of newforms of integer weight. These are forms that cannot be constructed by twisting lower level modular forms with the quadratic character $\left( \frac{\cdot}{p}\right)$:
\begin{definition}\label{def: R_p 2k}
    Let $N'$ be the least common multiple of $N$ and $p^2$. Then
    \begin{align*}
        R_p: S_{2k}(N) &\rightarrow S_{2k}(N'),\\
        F=\sum_{n=0}^{\infty} a_nq^n &\mapsto \sum_{n=0}^{\infty}a_n \left( \frac{n}{p}\right)q^n.
    \end{align*}
    In particular, if $p^2|N$, then this map is level preserving. 
\end{definition}

This map twists the eigenvalues of an eigenfunction in the following sense:
\begin{lemma}[Proposition 3.2 and 3.4 in \cite{Atkin-Li-1978},  Proposition A.3 in \cite{Ueda-1993}]\label{lem: F|R_p|W_Pp=...}
Let $F \in S_{2k}(N)$. Let $p,l$ be two distinct primes.
\begin{itemize}
    \item[(i)] If $(l,N)=1$, then
    \begin{align*}
        F|R_p|_{2k}T_l=\left( \frac{l}{p}\right)F|_{2k}T_l|R_p.
    \end{align*}
    \item[(ii)] If $l^m||N$, then  
    \begin{align*}
        F|R_p|W_{l^m}=\left( \frac{l^m}{p}\right)F|W_{l^m}|R_p.
    \end{align*}
    \item[(iii)] If $p||N$, then:
    \begin{align*}
        F|R_p|W_{p^2}=\left(\frac{-1}{p}\right)F|R_p.
    \end{align*}
\end{itemize}
\end{lemma}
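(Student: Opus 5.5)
The three parts are of different difficulty. Part (i) is a coefficient computation. Parts (ii) and (iii) need a matrix description of the twist. Throughout, $p$ is odd and $N'=\mathrm{lcm}(N,p^2)$.

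\textbf{Part (i).} Write $F=\sum a_nq^n$, so that $F|R_p=\sum a_n\left(\frac{n}{p}\right)q^n$. Since $l\neq p$ and $(l,N)=1$, we have $(l,N')=1$, so $T_l$ acts on level $N'$ by the usual formula $b_n=a_{ln}+l^{2k-1}a_{n/l}$. The $n$-th coefficient of $F|R_p|_{2k}T_l$ is therefore
\begin{align*}
a_{ln}\left(\tfrac{ln}{p}\right)+l^{2k-1}a_{n/l}\left(\tfrac{n/l}{p}\right).
\end{align*}
Because $\left(\frac{l}{p}\right)^2=1$, we have $\left(\frac{n/l}{p}\right)=\left(\frac{l}{p}\right)\left(\frac{n}{p}\right)$ whenever $l\mid n$. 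So this coefficient equals $\left(\frac{l}{p}\right)\left(\frac{n}{p}\right)\bigl(a_{ln}+l^{2k-1}a_{n/l}\bigr)$, which is the claim.

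\textbf{Parts (ii) and (iii): the Gauss sum expression.} Let $g_p=\sum_{u \bmod p}\left(\frac{u}{p}\right)e^{2\pi i u/p}$ be the Gauss sum. Since $\sum_{u}\left(\frac{u}{p}\right)e^{2\pi i nu/p}=\left(\frac{n}{p}\right)g_p$, we get
\begin{align*}
F|R_p=g_p^{-1}\sum_{u \bmod p}\left(\tfrac{u}{p}\right)F|_{2k}\begin{pmatrix}1&u/p\\0&1\end{pmatrix}.
\end{align*}
For each $u$ I will commute $W$ past the translation matrix. Concretely, I look for an identity
\begin{align*}
\begin{pmatrix}1&u/p\\0&1\end{pmatrix}W=\gamma_u\,W\begin{pmatrix}1&u'/p\\0&1\end{pmatrix},\qquad \gamma_u\in\Gamma_0(N),
\end{align*}
where $u'$ is some explicit multiple of $u$ modulo $p$. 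Reindexing the sum in $u'$ then produces a Legendre symbol factor.

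\textbf{Part (ii).} Here $l\neq p$ and $l^m\| N$. I choose the representative $W_{l^m}=\begin{pmatrix}l^m\alpha&\beta\\ N'\gamma& l^m\delta\end{pmatrix}$ with $p^2\mid\beta$, which is possible since $p\nmid l$. Multiplying out and reducing, one should get $u'\equiv l^{m}u\cdot(\text{a square unit})$ modulo $p$. Reindexing then gives the factor $\left(\frac{l^m}{p}\right)$. The only care needed is to check that the leftover matrix lies in $\Gamma_0(N')$.

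\textbf{Part (iii).} This is the main obstacle. Here $p\|N$, $N'=Np$, and $W_{p^2}$ is the Atkin--Lehner operator at $p$ on level $N'$. For $u\not\equiv0 \pmod p$, I plan to write
\begin{align*}
\begin{pmatrix}1&u/p\\0&1\end{pmatrix}W_{p^2}=\gamma\begin{pmatrix}1&v/p\\0&1\end{pmatrix}
\end{align*}
with $\gamma\in\Gamma_0(N)$, where $v\equiv -u^{-1}\cdot c$ modulo $p$ and $c$ is a square. This is the standard Atkin--Li manipulation: solve $uv\equiv-1$ to clear the $p^2$ in the lower-left entry. It yields $\left(\frac{v}{p}\right)=\left(\frac{-1}{p}\right)\left(\frac{u}{p}\right)$. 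The $u=0$ term carries weight $\left(\frac{0}{p}\right)=0$ and drops out. That removal is exactly what prevents $F|W_p$ and the $U_p$-eigenvalue from appearing.

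The hard step is verifying that $\gamma$ really lies in $\Gamma_0(N)$ and not merely in $\Gamma_0(N/p)$ times a $W_p$-type matrix. I would first attempt this with the explicit choice $W_{p^2}=\begin{pmatrix}p^2 a&b\\ N'c&p^2d\end{pmatrix}$, reading off the congruence conditions on the entries. If the resulting matrix only lands in $\Gamma_0(N/p)$, I would fall back to the argument in Atkin--Li Proposition 3.4. That argument combines this identity with the relation $a_p=-p^{k-1}\tau(p)$ to show that the possible extra terms cancel.
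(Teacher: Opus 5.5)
Your approach is the standard Atkin--Li Gauss-sum argument. The paper relies on exactly that, since it gives no proof of this lemma and only cites Atkin--Li and Ueda. Parts (i) and (ii) are fine as written. In (ii) the normalisation $p^2\mid\beta$ is harmless but unnecessary: $T_uWT_{-u'}W^{-1}$ depends only on the first column of $W$. Integrality of its upper-right entry reduces, using $p^2\mid N'$, to $u'\equiv u\,l^{-m}\alpha^{-2}\pmod p$, and $\alpha$ is a unit mod $p$.

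The step you flag in (iii) is not an obstacle, but your displayed identity needs a scalar. Its left side has determinant $p^2$, so it should read $T_uW_{p^2}=p\,\gamma_u\,T_v$. This is harmless, because scalar matrices act trivially under $|_{2k}$.

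Here is the check. Write $N=pM$, $N'=p^2M$ and $W_{p^2}=\begin{pmatrix}p^2x&y\\ N'z&p^2w\end{pmatrix}$ with $p^2xw-Myz=1$, so $Myz\equiv-1\pmod p$. For $p\nmid u$ take $v\equiv y(uMz)^{-1}\equiv -y^2u^{-1}\pmod p$. Then
\begin{align*}
\begin{pmatrix}1&u/p\\0&1\end{pmatrix}W_{p^2}\begin{pmatrix}1&-v/p\\0&1\end{pmatrix}
=p\begin{pmatrix}px+uMz&\frac{y-uvMz}{p}+uw-vx\\ pMz&pw-vMz\end{pmatrix}.
\end{align*}
The matrix on the right is integral, and it is the upper-right entry, not the lower-left one, that the congruence on $v$ makes integral. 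It has determinant $1$ and lower-left entry $pMz=Nz$, so it lies in $\Gamma_0(N)$ outright.

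Since $\left(\frac{v}{p}\right)=\left(\frac{-1}{p}\right)\left(\frac{u}{p}\right)$ and $u\mapsto v$ permutes $(\Z/p\Z)^\times$, reindexing gives $F|R_p|W_{p^2}=\left(\frac{-1}{p}\right)F|R_p$ for every $F\in S_{2k}(N)$.

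So drop the fallback. It is unnecessary, and the relation $a_p=-p^{k-1}\tau(p)$ holds only for newforms. The lemma is stated for arbitrary $F\in S_{2k}(N)$, so that route could not prove it as given. The same computation with $M=N$ also covers $p\nmid N$.
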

Very newforms are defined as follows:
\begin{definition}\label{def: very new form}
    Let $F=\sum a_n q^n \in S_{2k}(N)$ be a newform. $F$ is a very newform if it is in the orthogonal complement of newforms of level strictly smaller than $N$, lifted by twisting operators $R_p$ with $p^2||N$, i.e. in the orthogonal complement given by:
    \begin{align*}
        \sum_{0<d<N} \sum_{p^2 || N} S_{2k}^{new}(d)|R_p.
    \end{align*}
    We denote the space of very newforms by $S^*_{2k}(N)$.
\end{definition}
\begin{remark}
\begin{itemize}
    \item[(i)]
If $p^3|N$ then for any $F \in S_{2k}^{new}(d)$ with $d<N$, the form $F|R_p$ is in $S_{2k}^{new}(N')$, where $N'$ is the least common multiple of $d$ and $p^2$. Thus in particular $N'<N$ and therefore $F|R_p$ is not a newform of level $N$. Hence the orthogonal complement of $\sum_{0<d<N} \sum_{p^2 || N} S_{2k}^{new}(d)|R_p$ in $S_{2k}^{new}(N)$ is equal to the orthogonal complement of $\sum_{0<d<N} \sum_{p^2 | N} S_{2k}^{new}(d)|R_p$.
    \item[(ii)] Independently from Ueda, other mathematicians have studied the space of very new forms. Mao shows in \cite{Mao} (Lemma 2.3) that $F$ is a very newform if and only if $F|R_p$ is a newform for all primes $p$ such that $p^2|N$. Therefore we have:
    \begin{align*}
    S^*_{2k}(N):=\left\{ F \in  \right. &S^{new}_{2k}(N) :  \\
   &  \left.  F|R_p \text{ is also a newform in }S^{new}_{2k}(N), \quad \forall p \text{ with } p^2 | N \right\}.
\end{align*}
    Note that this equivalent definition has computational advantages: it is easier to check if the twists of a given newform are also newforms, instead of checking if it is in the orthogonal complement of some subspaces of lower level  modular forms. 
\end{itemize}
\end{remark}

\paragraph{Eigenspaces of integer weight forms under Atkin-Lehner involutions.}
We divide the space of newforms of integer weight forms into subspaces as follows: Let $F$ be a newform of level $N$. Let $S=II(N)$ or $S=I(N)\cup II(N)$. Then
\begin{align}\label{eq: def of tau}
    \tau: S \rightarrow \{\pm 1\}
\end{align}
describes the eigenvalues of $F$ under the Atkin-Lehner operators corresponding to the set $S$ if
\begin{align*}
    F|W_{p^{\text{ord}_p(N)}}=\tau(p)F, \quad \forall p \in S.
\end{align*}
We denote the subspaces with fixed eigenvalue under the Atkin-Lehner involutions by:
\begin{align*}
    S^{new,\tau}_{2k}(N)&:=\{ F \in S_{2k}^{new}(N) : F|W_{p^{\text{ord}_p(N)}}=\tau(p)F, \quad \forall p \in S\},\\
    S^{*,\tau}_{2k}(N)&:=\{ F \in S^*_{2k}(N) : F|W_{p^{\text{ord}_p(N)}}=\tau(p)F, \quad \forall p \in S\}.
\end{align*}

\paragraph{Eigenspaces of half-integral weight forms under twisting operators $R_p$}
We also define subspaces of half-integral weight forms using the twisting operators $R_p$. It suffices to consider the primes whose square divides the level, i.e. the set $II(N):=\{ p: p^2 |N \}$. Let
\begin{align}\label{eq: def of kappa}
    \kappa: II(N)\rightarrow \{\pm 1\}.
\end{align}
We set:
    \begin{align*}
        S_{k+1/2}^{K,new,\kappa}(4N):=\left\{ g \in S_{k+1/2}^{K, new}(4N): g|R_p=\kappa(p)g, \quad \forall p \in II(N)\right\}.
    \end{align*}

It follows from the following lemma, that these subspaces are orthogonal to each other:
\begin{lemma}[Corollary 1.10 in \cite{Ueda-1991-trace}]\label{lem: self-adjoint-twisting-operator}
    Let $f,g \in S_{k+1/2}(4N)$ be two cusp forms of half integral weight. Let $\chi$ be a non-trivial primitive character of odd conductor $M$ with $M^2|N$ and such that $\chi^2=1$. Then
    \begin{align*}
        \langle f\otimes \chi, g \rangle = \langle f, g \otimes \chi\rangle.
    \end{align*}
\end{lemma}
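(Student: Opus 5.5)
The plan is to write the twist by $\chi$ as a Gauss-sum combination of translates, and then move each translation from $f$ to $g$ by an invariant change of variables in the Petersson integral.

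\textbf{Step 1: the twist as a sum of translates.} Since $\chi$ is primitive and real of conductor $M$, the Gauss sum $\mathfrak g(\chi)=\sum_{u \bmod M}\chi(u)e^{2\pi i u/M}$ satisfies $\mathfrak g(\chi)^2=\chi(-1)M$ and $\overline{\mathfrak g(\chi)}=\chi(-1)\mathfrak g(\chi)$. Primitivity gives the standard identity
\begin{align*}
    f\otimes\chi(z)=\frac{\mathfrak g(\chi)}{M}\sum_{u \bmod M}\chi(u)\, f\!\left(z-\tfrac{u}{M}\right).
\end{align*}
It holds coefficientwise because $\sum_u \chi(u)e^{-2\pi i nu/M}=\chi(-n)\mathfrak g(\chi)$ and $\chi(-1)^2=1$. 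The same formula holds for $g$.

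\textbf{Step 2: moving one translate across the inner product.} Put $\alpha_u=\begin{pmatrix}1&u/M\\0&1\end{pmatrix}$. I want to show
\begin{align*}
    \langle f(\cdot - u/M),\, g\rangle=\langle f,\, g(\cdot+u/M)\rangle .
\end{align*}
To do this, I pick a finite-index subgroup $\Gamma''\subset\Gamma_0(4N)$ with two properties.
\begin{itemize}
    \item[(a)] $\alpha_u^{-1}\Gamma''\alpha_u=\Gamma''$.
    \item[(b)] On $\Gamma''$, the weight $k+1/2$ automorphy factor $j$ satisfies $j(\alpha_u^{-1}\gamma\alpha_u,z)=j(\gamma,\alpha_u z)$.
\end{itemize}
I take $\Gamma''=\Gamma_0(4N)\cap\Gamma_1(M)$, or a principal-congruence variant if needed. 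For $\gamma=\begin{pmatrix}a&b\\c&d\end{pmatrix}$ in this group, $\alpha_u^{-1}\gamma\alpha_u$ has lower-left entry $c$, diagonal $a-cu/M$ and $d+cu/M$, and upper-right entry $b+(a-d)u/M-cu^2/M^2$. All of these are integral because $M^2\mid c$ and $a\equiv d \bmod M$, which gives (a).

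With (a) and (b), the function $y^{k+1/2}f(z-u/M)\overline{g(z)}$ is $\Gamma''$-invariant. The Petersson product can be computed on $\Gamma''\setminus\mathbb H$ after rescaling by the index, which is harmless since both sides get the same normalization. The substitution $z\mapsto z+u/M$ preserves $y$ and $dx\,dy$ and maps a fundamental domain for $\Gamma''$ to one for $\alpha_u^{-1}\Gamma''\alpha_u=\Gamma''$. This yields the displayed identity.

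\textbf{Step 3: collecting terms.} Summing Step 2 over $u$ gives
\begin{align*}
    \langle f\otimes\chi,g\rangle=\frac{\mathfrak g(\chi)}{M}\sum_u\chi(u)\langle f, g(\cdot+u/M)\rangle=\Big\langle f,\ \frac{\overline{\mathfrak g(\chi)}}{M}\sum_u\chi(u)\,g(\cdot+u/M)\Big\rangle .
\end{align*}
Here I used that $\chi(u)$ is real. Reindexing $u\mapsto -u$ introduces $\chi(-1)$, and $\overline{\mathfrak g(\chi)}\chi(-1)=\mathfrak g(\chi)$. By Step 1 the right-hand side is $\langle f,g\otimes\chi\rangle$.

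\textbf{Main obstacle.} I expect the hard part to be (b), the compatibility of the theta multiplier under conjugation by $\alpha_u$. The factor $cz+d$ transforms correctly, since $c(z+u/M)+d=cz+(d+cu/M)$. The issue is that $\left(\frac{c}{d}\right)\epsilon_d^{-1}$ must equal $\left(\frac{c}{d+cu/M}\right)\epsilon_{d+cu/M}^{-1}$. The $\epsilon$ factors agree because $4\mid c/M$ (since $4M^2\mid c$). For the Kronecker symbol, I would first try quadratic reciprocity together with the periodicity of $d\mapsto\left(\frac{c}{d}\right)$. Odd primes $p\mid M$ divide $c$ to order at least $2$, so the symbol depends on $d$ only through residues modulo $4$ and modulo primes dividing $c/M$. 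If this turns out delicate, the fallback is to shrink $\Gamma''$ further, e.g. to $\Gamma(4NM^2)$, on which the multiplier is trivial up to the $j$-factor. This only changes the normalizing index, which is the same for both sides.
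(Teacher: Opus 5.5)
Your proof is correct. It necessarily takes a different route from the paper, because the paper proves nothing here: it quotes the statement as Corollary 1.10 of Ueda's trace-formula paper, where it sits inside his general treatment of twisting operators on half-integral weight forms.

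You instead give a direct, self-contained argument in three steps:
\begin{itemize}
\item the Gauss-sum expansion $f\otimes\chi=\frac{\mathfrak g(\chi)}{M}\sum_u\chi(u)f(\cdot-u/M)$;
\item the transfer $\langle f(\cdot-u/M),g\rangle=\langle f,g(\cdot+u/M)\rangle$, obtained by the change of variables $z\mapsto z+u/M$ on $\Gamma''=\Gamma_0(4N)\cap\Gamma_1(M)$, a group normalized by the translations $\alpha_u$;
\item reassembly using $\overline{\mathfrak g(\chi)}=\chi(-1)\mathfrak g(\chi)$.
\end{itemize}
What your route buys is visibility of where each hypothesis enters. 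The condition $M^2\mid N$ gives the integrality in (a). Since $M$ is odd and $4N\mid c$, you get $4\mid c/M$, which handles the $\epsilon_d$ factor. Reality and primitivity of $\chi$ make the Gauss sums close up on $g\otimes\chi$ itself. The citation buys brevity and Ueda's more general statements.

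Two small repairs are needed in your treatment of (b).

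First, the claim that the symbol depends on $d$ only through residues modulo $4$ and modulo primes dividing $c/M$ is slightly too coarse, and the sign of $d$ needs care.
\begin{itemize}
\item If $v_2(c)$ is odd, the factor $\left(\frac{2}{d}\right)$ sees $d$ modulo $8$. This is harmless, because then $v_2(c)\ge 3$ and $8\mid c/M$.
\item $d$ and $d'=d+cu/M$ may have opposite signs. This is also harmless. With Shimura's sign convention, $d\mapsto\left(\frac{c}{d}\right)$ (for $d$ prime to $c$) coincides with the Kronecker character $\left(\frac{4c}{\cdot}\right)$.
\item That character's conductor divides $c/M$. Its odd part is the product of the odd primes $p$ with $v_p(c)$ odd. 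Each such $p$ divides $c/M$, including $p\mid M$, since $M^2\mid c$ gives $v_p(c/M)\ge v_p(M)$. Its $2$-part is $8$ only when $v_2(c)$ is odd.
\end{itemize}
Hence $\left(\frac{c}{d'}\right)=\left(\frac{c}{d}\right)$. Together with $d'\equiv d \bmod 4$ and the exact equality $c(z+u/M)+d=cz+d'$, this proves (b) on $\Gamma''$.

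Second, your fallback claim is false and should be dropped: the multiplier $\left(\frac{c}{d}\right)\epsilon_d^{-1}$ is not trivial on $\Gamma(4NM^2)$, nor on any $\Gamma(4m)$.
\begin{itemize}
\item For example, $\begin{pmatrix}17&24\\12&17\end{pmatrix}\in\Gamma(4)$ has multiplier $\left(\frac{12}{17}\right)=\left(\frac{17}{3}\right)=-1$.
\item The same happens in every $\Gamma(4m)$: take $c=4mp$ and $d\equiv 1 \bmod 8m$ a non-residue modulo a prime $p\nmid 2m$.
\end{itemize}
The fallback is not needed anyway, since the direct argument above already establishes (b) on $\Gamma''$.
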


We finish this subsection by showing that given a discriminant $D$, all half-integral weight forms with a non-zero $|D|$th coefficient have the same system of eigenvalues under the twisting operator.
\begin{lemma}\label{lem: cf-are-zero-except-kappa-0}
Let $N$ be an odd integer and $k \geq 1$. Let $\kappa: II(N) \rightarrow \{ \pm 1 \}$ and let $f_{\kappa}= \sum_{n=1}^{\infty} c_nq^n \in S_{k+1/2}^{K,new, \kappa}(4N)$. Let $D$ be a fundamental discriminant. If there exists a prime $p_0 \in II(N)$ such that
\begin{align*}
    \kappa(p_0)\neq\left( \frac{|D|}{p_0}\right),
\end{align*}
then 
\begin{align*}
    c_{|D|}=0.
\end{align*}
\end{lemma}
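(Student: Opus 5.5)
The plan is to compare the $|D|$th coefficient of $f_\kappa$ with that of $f_\kappa|R_{p_0}$. By Definition \ref{def: R_p half integral},
\begin{align*}
    f_\kappa|R_{p_0}=\sum_{n=1}^{\infty} c_n\left(\frac{n}{p_0}\right)q^n .
\end{align*}
Since $f_\kappa\in S^{K,new,\kappa}_{k+1/2}(4N)$ and $p_0\in II(N)$, we also have $f_\kappa|R_{p_0}=\kappa(p_0)f_\kappa$. Comparing the coefficient of $q^{|D|}$ on both sides gives
\begin{align*}
    c_{|D|}\left(\frac{|D|}{p_0}\right)=\kappa(p_0)\,c_{|D|}.
\end{align*}

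We then split into two cases according to the value of $\left(\frac{|D|}{p_0}\right)$.

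\emph{Case $\left(\frac{|D|}{p_0}\right)=\pm1$.} Because $\kappa(p_0)\in\{\pm1\}$ and the hypothesis says $\kappa(p_0)\neq\left(\frac{|D|}{p_0}\right)$, we get $\left(\frac{|D|}{p_0}\right)=-\kappa(p_0)$. The identity then reads $-\kappa(p_0)c_{|D|}=\kappa(p_0)c_{|D|}$, so $2\kappa(p_0)c_{|D|}=0$ and hence $c_{|D|}=0$.

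\emph{Case $\left(\frac{|D|}{p_0}\right)=0$, i.e. $p_0\mid D$.} The identity now reads $0=\kappa(p_0)c_{|D|}$, and since $\kappa(p_0)=\pm1$ is invertible, again $c_{|D|}=0$. The hypothesis $\kappa(p_0)\neq 0$ holds automatically here, so this case needs no extra input.

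The only points needing care are bookkeeping ones. First, $R_{p_0}$ must preserve the level $4N$ so that the eigen-equation makes sense inside the same space; this holds because $p_0^2\mid N$, as noted after Definition \ref{def: R_p half integral}. Second, the symbol $\left(\frac{n}{p_0}\right)$ is the Legendre symbol of $n$ modulo the odd prime $p_0$, so $\left(\frac{|D|}{p_0}\right)$ is exactly the factor multiplying $c_{|D|}$ in $f_\kappa|R_{p_0}$. No genuine obstacle is expected; the argument is a direct coefficient comparison.
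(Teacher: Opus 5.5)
Your proposal is correct and follows the paper's proof: both compare the coefficient of $q^{|D|}$ in $f_\kappa|R_{p_0}$ and in $\kappa(p_0)f_\kappa$. The case split is harmless but not needed, since $\bigl(\kappa(p_0)-\left(\frac{|D|}{p_0}\right)\bigr)c_{|D|}=0$, and the first factor is nonzero by hypothesis, which covers both cases at once.
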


\begin{proof}
We know that $f$ satisfies $f_{\kappa}|R_p=\kappa(p)f_{\kappa}$ for all $p \in II(N).$ For $p_0$, this implies that:
\begin{align*}
    \sum_{n=0}^{\infty} \left( \frac{n}{p_0}\right) c_n q^n
    =\sum_{n=0}^{\infty} \kappa(p_0) c_n q^n.
\end{align*}
As $\kappa(p_0) \neq \left( \frac{|D|}{p_0}\right)$, this implies that $c_{|D|}=0$.
\end{proof}
\subsubsection{Correspondance}
We are now ready to state the isomorphism between the space of half-integral and integer weight cusp forms.

\begin{theorem}[Theorem 1 in \cite{Sakata-N}]\label{Sakata-general-decomposition} Let $k>1$.
Set $N_1= \prod_{p \in I(N)} p$ be the square-free part of $N$.
Let $\kappa: II(N) \rightarrow \{ \pm 1\}$ be a map.  We define a map $\tau_{\kappa}: II(N) \rightarrow \{ \pm 1\}$ by:
\begin{align*}
    \tau_{\kappa}=\begin{cases}
        1 & \text{ if } p \in II(N)_{even},\\
        \kappa(p) \left( \frac{-1}{p}\right)^k & \text{ if } p \in II(N)_{odd}.
    \end{cases}
\end{align*}
Let $I,J,K \subset II(N)_2$ be a partition with $I\cup J \neq \emptyset$ and $I,J \subset II(N)^*_2=\{ p \in I(N)_2 | \left( \frac{-1}{p}\right)=1\}$. For a fixed partition, we let $\tilde{\tau}_{\kappa}$ run over the set of maps $II(N)\setminus I \rightarrow \{\pm 1\}$ satisfying:
\begin{align*}
    \tilde{\tau}_{\kappa}(p)=
    \begin{cases}
        1 & \text{ if } p \in II(N)_{even} \setminus(I+J), \\
        \kappa(p) \left( \frac{-1}{p}\right)^k \prod_{q \in I+J }\left( \frac{p}{q}\right)& \text{ if } p \in II(N)_{odd}.
    \end{cases}
\end{align*}
Then we have the following isomorphism as modules over the Hecke algebra:
\begin{align*}
    S_{k+1/2}^{K,new,\kappa}(4N) \cong &S^{*, \tau_{\kappa}}_{2k}(N)\bigoplus\\
    &\bigoplus_{\substack{II(N)_2=I \cup J \cup K\\I\cup J \neq \emptyset, \quad I,J \subset II(N)^*_2}} \bigoplus_{\tilde{\tau}_{\kappa}} \quad S_{2k}^{*, \tilde{\tau}_{\kappa}} \left( N_1 \prod_{l \in J}l \prod_{II(N)-(I\cup J)}p^{ord_p(N)} \right)| \prod_{p \in I\cup J}R_p.
\end{align*}
\end{theorem}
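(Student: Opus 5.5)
The plan is to prove the isomorphism by comparing traces of Hecke operators on the two sides, in the style of Niwa, Kohnen and Ueda, rather than by constructing an explicit map. Both sides are semisimple modules over the Hecke algebra generated by the $T_{p^2}$ (resp.\ $T_p$) for $p\nmid N$, and the Shimura lift $\Sh_t$ intertwines these actions. So it suffices to show that for every $n$ coprime to $N$,
\begin{align*}
\text{tr}\left(T_{n^2}\,\middle|\, S_{k+1/2}^{K,new,\kappa}(4N)\right)
\end{align*}
equals the sum of the traces of $T_n$ on the integer weight spaces on the right-hand side. Linear independence of Hecke characters, that is strong multiplicity one for the newform spaces $S^{*,\tau}_{2k}$, then upgrades equality of traces to an isomorphism of Hecke modules.

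First step: isolate the $\kappa$-eigenspace on the half-integral weight side. By Lemma \ref{lem: self-adjoint-twisting-operator}, each $R_p$ with $p\in II(N)$ is self-adjoint. Since $R_p^2$ acts as the identity on forms whose coefficients are supported on $n$ prime to $p$, the operator
\begin{align*}
\prod_{p\in II(N)}\tfrac12\left(1+\kappa(p)R_p\right)
\end{align*}
is an orthogonal projector. It commutes with $T_{n^2}$ for $(n,N)=1$, and it preserves the Kohnen space and the new subspace of Definition \ref{def: S^K,old S^k,new}. Hence $\text{tr}(T_{n^2}|S^{K,new,\kappa})$ is an average of the twisted traces $\text{tr}(T_{n^2}\circ \prod_{p\in A}R_p)$ on $S^{K,new}_{k+1/2}(4N)$, over subsets $A\subset II(N)$. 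These twisted traces I would take from Ueda's trace formula for the Kohnen space with twisting operators. The newform traces are obtained from the full-space traces by Möbius-type inversion along the oldform decomposition of Definition \ref{def: S^K,old S^k,new}.

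Second step: carry out the same manipulation on the integer weight side. There the relevant operators are the Atkin--Lehner involutions $W_{p^{\text{ord}_p N}}$ and the twists $R_p$ of Definition \ref{def: R_p 2k}. I would use Lemma \ref{lem: F|R_p|W_Pp=...} to see how $R_p$ permutes Atkin--Lehner eigenspaces and how it moves newforms between levels. Then comes the comparison, done prime by prime, of the local factors at each $p\mid N$ in the two trace formulas.
\begin{itemize}
\item For $p\in II(N)_{even}$ the local contribution forces $\tau(p)=1$.
\item For $p\in II(N)_{odd}$ it forces $\tau(p)=\kappa(p)\left(\frac{-1}{p}\right)^k$, which gives $\tau_\kappa$.
\item For $p\in I(N)$ there is no constraint.
\end{itemize}

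The main obstacle is the primes with $\text{ord}_p(N)=2$. There the half-integral weight trace contains additional terms that do not match very newforms of level $N$. Instead they match twists $G|R_p$ of very newforms $G$ of lower $p$-level, either $p^0$ or $p^1$, which are exactly the $I$ and $J$ pieces. These extra terms occur only when $\left(\frac{-1}{p}\right)=1$, because only then is $R_p$ compatible with the Kohnen plus-space condition. I would first settle the single prime power case $N=p^2$ by explicit computation of the local orbital integrals, checking that the extra terms equal the traces on $S^{*,\tilde\tau}_{2k}(1)|R_p$ and $S^{*,\tilde\tau}_{2k}(p)|R_p$, including the twisted sign $\prod_{q}\left(\frac{p}{q}\right)$. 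The general level then follows multiplicatively, since the trace formula factors into local terms over the primes dividing $N$. Summing over the partitions $I\cup J\cup K$ yields the stated decomposition.
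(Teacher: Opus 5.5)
The paper gives no proof of this theorem; it quotes Sakata, whose argument rests on Ueda's trace formulas for $T_{n^2}$ composed with twisting operators. Your overall route (twisted traces on the Kohnen space, passage to the new space, prime-by-prime comparison, linear independence of characters) is that standard one. As written, however, three steps are wrong or unsupported.

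\emph{Why only primes with $\left(\frac{-1}{p}\right)=1$ contribute.} Your reason for this restriction is false. $R_p$ multiplies $c_n$ by $\left(\frac{n}{p}\right)$ and changes the character only by the principal character mod $p$. So $\epsilon$ is unchanged, and $R_p$ always preserves the plus space. The restriction is an integral-weight phenomenon. By Lemma \ref{lem: F|R_p|W_Pp=...}(iii), and its analogue at $p$-level $1$, a twist $G|R_p$ of a form of $p$-level $1$ or $p$ has $W_{p^2}$-eigenvalue $\left(\frac{-1}{p}\right)$. But every form on the right-hand side must have eigenvalue $\tau_\kappa(p)=1$ at primes of even order. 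This is exactly the argument of Lemma \ref{lem: all newforms with positive even values}.

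\emph{The twisted sign and the multiplicativity claim.} The sign $\prod_{q\in I\cup J}\left(\frac{p}{q}\right)$ cannot be checked in the case $N=p^2$, where $II(N)_{odd}=\emptyset$ and there is no second prime. It couples $p\in II(N)_{odd}$ with a different prime $q\in I\cup J$. It comes from Lemma \ref{lem: F|R_p|W_Pp=...}(ii), $G|R_q|W_{p^m}=\left(\frac{p^m}{q}\right)G|W_{p^m}|R_q$. So the decomposition, parametrized by $\tilde\tau_\kappa$, does not ``follow multiplicatively'' from prime-power computations. What is local is the condition on the Atkin--Lehner eigenvalues of the twisted level-$N$ forms themselves. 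By Lemmas \ref{lem: system eigenvalues of twist} and \ref{lem: all newforms with positive even values}, the right-hand side is just $S^{new,\tau_\kappa}_{2k}(N)$ (Corollary \ref{cor: decomp Sakata}). Compare traces against that space instead. With $R_A=\prod_{p\in A}R_p$ and $W_B=\prod_{p\in B}W_{p^{\text{ord}_p(N)}}$, prove
\[
\text{tr}\bigl(T_{n^2}R_A\mid S^{K,new}_{k+1/2}(4N)\bigr)=
\begin{cases}
\left(\frac{-1}{\prod_{p\in A}p}\right)^{k}\sum_{C\subseteq II(N)_{even}}\text{tr}\bigl(T_nW_{A\cup C}\mid S^{new}_{2k}(N)\bigr), & A\subseteq II(N)_{odd},\\
0, & \text{otherwise}.
\end{cases}
\]
These conditions are purely local. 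Only afterwards rewrite the right-hand side in terms of lower-level forms; that rewriting is where the sign appears.

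\emph{The projector and the old space.} $\frac12(1+\kappa(p)R_p)$ is a projector on $S^{K,new}_{k+1/2}(4N)$ only if $R_p^2=1$ there, i.e.\ only if every newform has $c_n=0$ for $p\mid n$. Your justification is a tautology and does not show this. You need two facts. First, $R_p$, which is self-adjoint by Lemma \ref{lem: self-adjoint-twisting-operator}, preserves the old space and hence the new space. Second, the $p$-divisible part $\sum_{p\mid n}c_nq^n$ of a newform is old. Alternatively, use $\frac12(R_p^2+\kappa(p)R_p)$, which is a projector because $R_p^3=R_p$, at the cost of also needing the traces of $T_{n^2}R_p^2$. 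Similarly, ``M\"obius inversion along the oldform decomposition'' presupposes that the sum defining $S^{K,old}_{k+1/2}(4N)$ is direct, with known Hecke and $R_p$-action. For half-integral weight this is not available in advance. It must be obtained together with the theorem, by induction on the level from the trace identities, as in Kohnen's square-free case.
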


\begin{remark}
    Note that Sakata makes two typos when stating the theorem. First he claims that $\tilde{\tau}$ is defined on $II(N)$, while this map is not defined on $I$. He also fixes the conditions for this map on the entire set $II(N)$ and does neither exclude the sets $I$ nor $J$.
\end{remark}

We show how we can simplify this statement by studying the space on the right hand side.
If an integer weight form $F$ is obtained by twisting a lower weight form, then we have the following relation between their respective eigenvalues under the Atkin-Lehner involutions:
\begin{lemma}\label{lem: system eigenvalues of twist}
    Let $N$ be a positive integer. Let $N_1$ be the square-free part of $N$ and $I,J \subset II(N)^*$ be two disjoint subsets. Let $\tilde{\tau}: I(N) \cup II(N)\setminus I \rightarrow \{ \pm 1\}$ be a map and assume that
    \begin{align*}
    F \in S^{*, \tilde{\tau}}_{2k}\left(N_1 \prod_{p \in J}p \prod_{p \in II(N)-(I\cup J)} p^{ord_p(N)} \right)|\prod_{p \in I \cup J}R_p \subset S_{2k}(N).
\end{align*}
Then $\tau(p):I(N)\cup II(N)\rightarrow \{\pm 1\}$, the system of eigenvalues under the Atkin-Lehner involutions of $F$, satisfies:
\begin{align*}
    \tau(p)=\begin{cases}
    \tilde{\tau}(p)& \text{if } p \in II(N)_{even}\setminus (I\cup J), \\
    \tilde{\tau}(p)\prod_{q \in I\cup J} \left( \frac{p}{q}\right)& \text{if } p \in I(N) + II(N)_{odd},\\
    1 & \text{if }p \in I \cup J.
    \end{cases}
\end{align*}
\end{lemma}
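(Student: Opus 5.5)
We must prove Lemma \ref{lem: system eigenvalues of twist}. Write $F=G|\prod_{p\in I\cup J}R_p$, where
\begin{align*}
G\in S^{*,\tilde\tau}_{2k}(M), \qquad M:=N_1\prod_{p\in J}p\prod_{p\in II(N)-(I\cup J)}p^{\text{ord}_p(N)}.
\end{align*}
The plan is to compute $F|W_{p^{\text{ord}_p(N)}}$ prime by prime. We peel off the twisting operators $R_q$, $q\in I\cup J$, one at a time, and apply Lemma \ref{lem: F|R_p|W_Pp=...} at each step. Since all $R_q$ commute (they multiply the coefficients by quadratic symbols), the order in which we peel them off does not matter. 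Note that $I\cup J\subset II(N)_2$, so the level of $F$ at each $q\in I\cup J$ is exactly $q^2$, which is consistent with $F\in S_{2k}(N)$. At every other prime $p$ the level of $G$ and of $F$ agree, namely $\text{ord}_p(M)=\text{ord}_p(N)$.

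\textbf{Primes $p\notin I\cup J$.} Let $l=p$ with $l^m\|N$, and note that $l\ne q$ for all $q\in I\cup J$. For each intermediate form $H=G|\prod_{q\in S'}R_q$, the exact power of $l$ dividing its level is still $l^m$. Applying Lemma \ref{lem: F|R_p|W_Pp=...}(ii) repeatedly therefore gives
\begin{align*}
F|W_{l^m}=\prod_{q\in I\cup J}\left(\frac{l^m}{q}\right)\, G|W_{l^m}|\prod_{q}R_q=\tilde\tau(l)\prod_{q}\left(\frac{l}{q}\right)^m F.
\end{align*}
This splits into two cases according to the parity of $m$:
\begin{itemize}
\item[(a)] If $m$ is even, i.e.\ $l\in II(N)_{even}\setminus(I\cup J)$, the product of symbols is $1$, so $\tau(l)=\tilde\tau(l)$.
\item[(b)] If $m$ is odd, i.e.\ $l\in I(N)\cup II(N)_{odd}$, we get $\tau(l)=\tilde\tau(l)\prod_q\left(\frac{l}{q}\right)$.
\end{itemize}
Strictly speaking, Lemma \ref{lem: F|R_p|W_Pp=...}(ii) relates the twisted form to the Atkin–Lehner operator at the same prime power for possibly different levels. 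I would justify this by noting that the matrix $W_{l^m}$ for level $N$ also serves for the lower level, since the operator is independent of the chosen matrix.

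\textbf{Primes $q\in I\cup J$.} These are the main obstacle, and I would split them into two subcases.
\begin{itemize}
\item[(c)] If $q\in J$, then $q\|M$. Write $F=H|R_q$ with $H=G|\prod_{q'\ne q}R_{q'}$, which has $q$-exponent exactly $1$. By Lemma \ref{lem: F|R_p|W_Pp=...}(iii), $F|W_{q^2}=\left(\frac{-1}{q}\right)F$. Since $J\subset II(N)_2^*$, we have $\left(\frac{-1}{q}\right)=1$.
\item[(d)] If $q\in I$, then $q\nmid M$, so $H$ has level prime to $q$ and $F=H|R_q$ has level exactly $q^2$ at $q$. Here I would prove an analogue of (iii) for a form of level prime to $q$. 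The first thing I would try is the standard formula for twisting by a primitive character $\chi$ of conductor $q$:
\begin{align*}
(H\otimes\chi)|W_{q^2}=\chi(-1)\frac{g(\chi)^2}{q}\,(H|W_{\text{(trivial at }q)})\otimes\bar\chi,
\end{align*}
where the Atkin–Lehner operator on the right acts trivially at $q$ because $H$ has level prime to $q$. For the quadratic character we have $g(\chi)^2=\chi(-1)q$, so the total sign is $\chi(-1)^2=1$. Alternatively, Atkin–Li (Theorem 3.2/Prop.\ 3.1 in \cite{Atkin-Li-1978}) gives exactly $w=\chi(-1)\cdot g(\chi)^2/q=1$. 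Either way $\tau(q)=1$.
\end{itemize}

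The subtle point throughout is to keep track of the exact power of each prime dividing the level of the intermediate forms, so that the hypotheses $l^m\|N$ in Lemma \ref{lem: F|R_p|W_Pp=...}(ii) and $p\|N$ in (iii) are really satisfied. Combining cases (a)–(d) yields the stated formula for $\tau$.
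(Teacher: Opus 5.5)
The problem is in case (d). Your cases (a)--(c) agree with the paper's argument: commute $W_{p^m}$ past the $R_q$ using Lemma~\ref{lem: F|R_p|W_Pp=...}(ii), and for $q\in J$ use (iii) together with $\left(\frac{-1}{q}\right)=1$.

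\textbf{The error in case (d).} The twisting identity you quote carries a spurious factor $\chi(-1)$. Take $\chi=\left(\frac{\cdot}{q}\right)$ and $H$ of level $L$ with $q\nmid L$. The correct statement is $(H|R_q)|W_{q^2}=\frac{g(\chi)^2}{q}\,H|R_q=\chi(-1)\,H|R_q=\left(\frac{-1}{q}\right)H|R_q$. It is not $+1$ unconditionally.

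You can check this directly. Write $H|R_q=g(\chi)^{-1}\sum_{u \bmod q}\chi(u)\,H|\begin{pmatrix}1&u/q\\0&1\end{pmatrix}$ and take $W_{q^2}=\begin{pmatrix}q^2\alpha&\beta\\Lq^2\gamma&q^2\delta\end{pmatrix}$. Then $\begin{pmatrix}1&u/q\\0&1\end{pmatrix}W_{q^2}=q\,\gamma_u\begin{pmatrix}1&v/q\\0&1\end{pmatrix}$ with $\gamma_u\in\Gamma_0(L)$ and $v\equiv-\beta^2u^{-1}\pmod q$. Hence $\chi(u)=\chi(-1)\chi(v)$, and the sign $\chi(-1)$ comes out of the sum.

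A symptom of the error is that your argument for $q\in I$ never uses the hypothesis $I\subset II(N)_2^*$. As written it would give $\tau(q)=1$ also for $q\equiv 3\pmod 4$. That is false, and it contradicts the sign $-1$ the paper uses in the proof of Lemma~\ref{lem: all newforms with positive even values}.

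\textbf{The repair.} Lemma~\ref{lem: F|R_p|W_Pp=...}(iii) is stated for arbitrary $F\in S_{2k}(N)$, not only for newforms. So you may view $H$ as a form of level $Lq$, where $q\,\|\,Lq$, and apply (iii) exactly as in case (c). This gives $\tau(q)=\left(\frac{-1}{q}\right)$, which equals $1$ precisely because $q\in II(N)_2^*$. This is in effect what the paper does, treating $I$ and $J$ together via (ii) and (iii).
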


\begin{proof}
For all $p \in I(N)\cup II(N)\setminus (I\cup J)$, with $p^m||N$. Using Lemma \ref{lem: F|R_p|W_Pp=...} (ii) we find that:
\begin{align*}
    F|\prod_{q \in I\cup J}R_q|W_{p^m}
    =\prod_{q \in I\cup J} \left( \frac{p^m}{q}\right) F|W_{p^m}|\prod_{q \in I\cup J}R_q
    =\tilde{\tau}(p)\prod_{q \in I\cup J} \left( \frac{p^m}{q}\right) F|\prod_{q \in I\cup J}R_q.
\end{align*}
Therefore
\begin{align*}
    \tau(p)=\begin{cases}
        \tilde{\tau}(p)& \text{if } m \text{ is even,}\\
        \tilde{\tau}(p)\prod_{q \in I\cup J} \left( \frac{p}{q}\right)& \text{if } m \text{ is odd.}
    \end{cases}
\end{align*}
Let $p \in I\cup J \subset \left\{ p |N : p^2||N, \left( \frac{-1}{p}\right)=1\right\} $. Then by Lemma \ref{lem: F|R_p|W_Pp=...} (ii) and (iii) we have that:
\begin{align*}
    F|\prod_{q \in I\cup J}R_q|W_{p^2}
    &= F|R_p|\prod_{q \in I\cup J\setminus \{p\}}R_q|W_{p^2}\\
    &= \prod_{q \in I\cup J\setminus \{p\}}\left( \frac{p^2}{q}\right) F|R_p|W_{p^2}|\prod_{q \in I\cup J\setminus \{p\}}R_q\\
    &= \prod_{q \in I\cup J\setminus \{p\}} \left( \frac{p^2}{q}\right) \left( \frac{-1}{p}\right)F|R_p|\prod_{q \in I\cup J\setminus \{p\}}R_q
\end{align*}
Since $\left( \frac{-1}{p}\right)=1$ we have that:
\begin{align*}
    \tau(p)=1
\end{align*}
\end{proof}

\begin{lemma}\label{lem: all newforms with positive even values}
With the same notation as in Theorem \ref{Sakata-general-decomposition} set:
    \begin{align*}
       V_1:=&S^{*, \tau_{\kappa}}_{2k}(N)\bigoplus\\
    &\bigoplus_{\substack{II(N)_2=I \cup J \cup K\\I\cup J \neq \emptyset, \quad I,J \subset II(N)^*_2}} \bigoplus_{\tilde{\tau}_{\kappa}} \quad S_{2k}^{*, \tilde{\tau}_{\kappa}} \left( N_1 \prod_{l \in J}l \prod_{II(N)-(I\cup J)}p^{ord_p(N)} \right)| \prod_{p \in I\cup J}R_p.
    \end{align*}
  Set $ V_2:= S_{2k}^{new, \tau}(N)$ where
  \begin{align*}
      \tau(p)=\begin{cases}
            1, \quad & p \in II(N)_{even},\\
            \kappa(p) \left( \frac{-1}{p}\right)^k & p \in II(N)_{odd}.
        \end{cases}
  \end{align*}
    Then $V_1=V_2$.
\end{lemma}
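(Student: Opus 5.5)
The plan is to compare the two spaces newform by newform. Both $V_1$ and $V_2$ are spanned by Hecke eigenforms of level exactly $N$, and by multiplicity one such eigenforms are determined by their Hecke eigenvalues away from $N$. So it suffices to prove two things: every eigenform lying in a summand of $V_1$ is a newform of level $N$ with Atkin--Lehner system equal to $\tau$ on $II(N)$; and, conversely, every newform $F\in S^{new,\tau}_{2k}(N)$ lies in one of these summands.

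\emph{Inclusion $V_1\subset V_2$.} The summand $S^{*,\tau_\kappa}_{2k}(N)$ is contained in $S^{new,\tau}_{2k}(N)$ by definition, since $\tau_\kappa=\tau$ on $II(N)$. Now take $F=G|\prod_{p\in I\cup J}R_p$ with $G$ a very newform of level $N_1\prod_{l\in J}l\prod_{II(N)-(I\cup J)}p^{\text{ord}_p(N)}$. The first step is to check that $F$ is a newform of level $N$. For this I would use Atkin--Li: twisting by $\left(\frac{\cdot}{p}\right)$ a newform whose $p$-part of the level is $1$ or $p$ produces a newform whose $p$-part of the level is $p^2$, and it leaves the other local components untouched. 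Next I would apply Lemma \ref{lem: system eigenvalues of twist} to read off the Atkin--Lehner system of $F$ prime by prime:
\begin{itemize}
\item for $p\in II(N)_{even}\setminus(I\cup J)$ it gives $\tilde\tau_\kappa(p)=1$;
\item for $p\in I\cup J$ it gives $1$;
\item for $p\in II(N)_{odd}$ it gives $\tilde\tau_\kappa(p)\prod_{q\in I\cup J}\left(\frac{p}{q}\right)=\kappa(p)\left(\frac{-1}{p}\right)^k\prod_{q}\left(\frac{p}{q}\right)^2=\kappa(p)\left(\frac{-1}{p}\right)^k$.
\end{itemize}
This is exactly $\tau$, so $F\in V_2$.

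\emph{Inclusion $V_2\subset V_1$.} Let $F\in S^{new,\tau}_{2k}(N)$ be a newform. If $F$ is very new, then $F\in S^{*,\tau_\kappa}_{2k}(N)$ and we are done. For primes with $p^3\mid N$, Remark (i) after Definition \ref{def: very new form} (together with Mao's criterion) shows there is no obstruction to being very new. The only primes where $F$ can fail to be very new are therefore $p\in II(N)_2$. For each such $p$, consider the newform attached to $F|R_p$; its $p$-level is $p^0$, $p^1$ or $p^2$. Put $p$ into $I$, $J$ or $K$ accordingly.

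For $p\in I\cup J$ we have $F=(F|R_p)|R_p$ up to the $p$-old adjustment, so $F$ is a $R_p$-twist of a form of lower $p$-level. Lemma \ref{lem: F|R_p|W_Pp=...}(iii) then forces $\tau(p)=\left(\frac{-1}{p}\right)$. Since $p\in II(N)_{even}$ requires $\tau(p)=1$, this gives $\left(\frac{-1}{p}\right)=1$, i.e. $I,J\subset II(N)_2^*$. Twisting simultaneously at all $p\in I\cup J$ (the twists commute) writes $F=G|\prod_{p\in I\cup J}R_p$, where $G$ is a newform of the reduced level. By construction of $I$, $J$ and $K$, together with Mao's criterion, $G$ is very new. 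Inverting the computation of Lemma \ref{lem: system eigenvalues of twist} shows that the Atkin--Lehner system of $G$ is precisely an admissible $\tilde\tau_\kappa$. Hence $F$ lies in the corresponding summand of $V_1$.

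The main obstacle is the local classification at $p^2\| N$ used in both directions. One needs that a newform whose $p$-level is exactly $p^2$ is either twist-minimal (and then very new at $p$), or is the $\left(\frac{\cdot}{p}\right)$-twist of a newform of $p$-level exactly $1$ or exactly $p$, and that twisting back returns $F$ itself. I would try to extract this from Atkin--Li (Propositions 3.2, 3.4 and Theorem 3.1 of \cite{Atkin-Li-1978}), using that the quadratic character at $p$ has conductor $p$. Finally, the sum in $V_1$ should be checked to be direct; this follows from multiplicity one, because distinct partitions $(I,J,K)$ give twist classes with different local levels.
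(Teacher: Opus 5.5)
Your proposal is correct and follows essentially the same route as the paper. The inclusion $V_1\subset V_2$ comes from Lemma \ref{lem: system eigenvalues of twist}. For the converse, the key point is that an $R_p$-twist from lower $p$-level at some $p^2\| N$ has $W_{p^2}$-eigenvalue $\left(\frac{-1}{p}\right)$, which is incompatible with $\tau(p)=1$ unless $p\in II(N)_2^*$. The paper states only this obstruction, whereas you also check that the twisted forms are new of level $N$, build $(I,J,K)$ explicitly, and verify the conditions on $\tilde\tau_\kappa$; for $p\in I$ this needs the analogue of Lemma \ref{lem: F|R_p|W_Pp=...}(iii) for forms of level prime to $p$, which Atkin--Li supply and which the paper also uses tacitly.
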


\begin{proof}
    From Lemma \ref{lem: system eigenvalues of twist}, it follows that $V_1 \subset V_2$. Now assume that there is a form $F \in V_2 \setminus V_1$. Since in particular very newforms are contained in $V_1$ we know that $F$ is of the form $F_0|R_p$ for some $F_0 \in S_{2k}(N_0)$ with $N_0<N$ and $p^2||N$ (see also definition \ref{def: very new form}). 
    Assume that $p^2||N$ with $\left( \frac{-1}{p}\right)=-1$. By Lemma \ref{lem: F|R_p|W_Pp=...}, we have that:
    \begin{align*}
        F_0|R_p|W_{p^2}=\left( \frac{-1}{p}\right)F_0|R_p=-F.
    \end{align*}
    Thus $F \notin V_2$. This proves the claim.
\end{proof}
Hence we have the following corollary from Theorem \ref{Sakata-general-decomposition}:
\begin{corollary}\label{cor: decomp Sakata}
Let $k>1$. Let $N$ be an integer. Let $\kappa: II(N) \rightarrow \{ \pm 1\}$ be a map. Let
\begin{align*}
      \tau(p)=\begin{cases}
            1, \quad & p \in II(N)_{even},\\
            \kappa(p) \left( \frac{-1}{p}\right)^k & p \in  II(N)_{odd}.
        \end{cases}
  \end{align*}
Then we have the following isomorphism as modules over the Hecke algebra:
\begin{align*}
    S_{k+1/2}^{K,new,\kappa}(4N) \cong S_{2k}^{new,\tau}(N).
\end{align*}
\end{corollary}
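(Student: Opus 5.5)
The corollary follows by combining Theorem \ref{Sakata-general-decomposition} with Lemma \ref{lem: all newforms with positive even values}. The theorem identifies $S_{k+1/2}^{K,new,\kappa}(4N)$, as a Hecke module, with the space $V_1$ of Lemma \ref{lem: all newforms with positive even values}. That lemma identifies $V_1$ with $V_2=S_{2k}^{new,\tau}(N)$ for exactly the $\tau$ in the statement. The plan is therefore a matching of notation followed by a careful check of the lemma's proof, which is where the real content lies.

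First I check that the $\tau$ in the corollary agrees with $\tau_\kappa$ of Theorem \ref{Sakata-general-decomposition} on $II(N)$. It is $1$ on $II(N)_{even}$ and $\kappa(p)\left(\frac{-1}{p}\right)^k$ on $II(N)_{odd}$. Since $\tau$ is only prescribed on $S=II(N)$, the eigenvalues at primes in $I(N)$ are unrestricted on both sides. I also observe that the Hecke operators $T_\ell$ with $(\ell,N)=1$ act on each summand of $V_1$. For twisted summands they act up to the sign $\left(\frac{\ell}{p}\right)$, by Lemma \ref{lem: F|R_p|W_Pp=...}(i), but the twisted space is still Hecke stable. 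So the equality $V_1=V_2$ is an equality of Hecke modules, and composing it with Sakata's isomorphism gives the corollary.

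The main work is verifying $V_1=V_2$ rigorously.

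For $V_1\subset V_2$, I apply Lemma \ref{lem: system eigenvalues of twist} to each twisted summand. For $p\in I\cup J$ it gives $\tau(p)=1$, consistent because $\mathrm{ord}_p(N)=2$ is even. For $p\in II(N)_{even}\setminus(I\cup J)$ it gives $\tilde\tau_\kappa(p)=1$. For $p\in II(N)_{odd}$, the factor $\prod_{q\in I\cup J}\left(\frac{p}{q}\right)$ built into $\tilde\tau_\kappa$ cancels against the factor produced by twisting, leaving $\kappa(p)\left(\frac{-1}{p}\right)^k$. Each twisted form is also a newform of level $N$, so it lies in $V_2$.

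For $V_2\subset V_1$ I decompose $S^{new}_{2k}(N)$ orthogonally into very newforms plus twists $S^{new}_{2k}(d)|R_p$ with $p^2\| N$. I iterate over the set of such $p$, using Mao's characterization that $F|R_p$ is new for all $p^2\mid N$ when $F$ is very new. Every newform eigenform of level $N$ then has the form $G|\prod_{p\in T}R_p$, with $G$ very new of the appropriate lower level and $T\subset II(N)_2$.

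Next I use Lemma \ref{lem: F|R_p|W_Pp=...}(iii). It shows that if some $p\in T$ has $\left(\frac{-1}{p}\right)=-1$, then the $W_{p^2}$-eigenvalue is $-1$, which contradicts $\tau(p)=1$. Hence $T\subset II(N)_2^*$. I then split $T=I\cup J$ according to whether $G$ has level divisible by $p^0$ or by $p^1$, matching Sakata's level $N_1\prod_{J}l\prod p^{\mathrm{ord}_p N}$. Finally, running Lemma \ref{lem: system eigenvalues of twist} backwards recovers that $G$ has Atkin–Lehner signs $\tilde\tau_\kappa$.

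I expect the main obstacle to be this bookkeeping in the reverse inclusion. One point is making sure the level of $G$ is exactly of the shape in Sakata's decomposition. Another is handling twists with $p^3\mid N$, which by the earlier remark produce no new forms of level $N$ and so can be discarded.
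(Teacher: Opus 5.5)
Your proposal is correct and follows the paper's route: it combines Sakata's decomposition (Theorem \ref{Sakata-general-decomposition}) with the equality $V_1=V_2$ of Lemma \ref{lem: all newforms with positive even values}, obtaining $V_1\subset V_2$ from Lemma \ref{lem: system eigenvalues of twist} and the reverse inclusion from Lemma \ref{lem: F|R_p|W_Pp=...}(iii), which excludes twists at primes with $\left(\frac{-1}{p}\right)=-1$. Your reverse inclusion iterates over all $p$ with $p^2\| N$, splits $T$ into $I$ and $J$ according to the $p$-level of $G$, and recovers $\tilde\tau_\kappa$; this is more complete than the paper's one-step argument, which only rules out $\left(\frac{-1}{p}\right)=-1$ and leaves implicit how the remaining twists land in $V_1$.
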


\begin{remark}
    \begin{itemize}
        \item[(i)] If $N$ is square-free, then this is Theorem 2 from \cite{Kohnen1982}. 
        \item[(ii)] Let $k>1$ and $p$ be an odd prime. For $m>1$ we have the following isomorphisms as Hecke modules.
\begin{align*}
        S^{K,new, \pm 1}_{k+1/2}(4p^m)\simeq \begin{cases}
            S_{2k}^{new,+1}(p^m), & \text{if m is even},\\
            S_{2k}^{new,\pm \left( \frac{-1}{p}\right)^k}(p^m), & \text{if m is odd}.
        \end{cases}
    \end{align*}
    \end{itemize}
\end{remark}
\subsubsection{Central Values of $L$-functions.}
We are finally ready to state the result from Sakata \cite{Sakata-N}, generalizing Kohnen's result about twisted L-functions and coefficients of half-integral weight forms to general level. The following property will play an important role:
\begin{definition}\label{def: tau_odd}
    Let $I,J \subset II(N)^*$ be two disjoint subsets. Let $\kappa: II(N) \rightarrow \{ \pm 1\}$ and $ \tilde{\tau}: II(N)\setminus I \rightarrow \{ \pm 1\}$ be two maps.  We say that $\tau$ satisfy the condition $(\tilde{\tau}_{odd})$ if
    \begin{align*}
    \kappa(p)=\tilde{\tau}(p)\left(\frac{-1}{p}\right)^k \prod_{q \in I\cup J}\left( \frac{p}{q}\right), \quad \forall p \in II(N)_{odd}, \quad (\tilde{\tau}_{odd}). 
    \end{align*}
\end{definition} 

We now state Sakata's theorem. It is somewhat ambiguous what he claims exactly since he does not distinguish in his paper between the system of Atkin-Lehner eigenvalues of a modular form and its twist. With Corollary \ref{cor: Sakata L-functions} we formulate a more elegant and clean version of his result.
\begin{theorem}[Theorem 3 in \cite{Sakata-N}]\label{theo: Sakata L-function}
Let $k>1$.
Let $N$ be an integer with square-free part $N_1$ and $I,J$ be two disjoint subsets in $II(N)_2^*$. Let
\begin{align*}
    F \in S^{*, \tilde{\tau}}_{2k}\left(N_1 \prod_{p \in J}p \prod_{p \in II(N)-(I\cup J)} p^{ord_p(N)} \right)|\prod_{p \in I \cup J}R_p \subset S_{2k}(N).
\end{align*}
Let $D$ be a fundamental discriminant with $(-1)^kD>0$ and $(D,N)=1$. Denote by $\tau$ the system of eigenvalues of $F$ under the Atkin-Lehner operators. Assume that $\tau$ satisfies:
\begin{align*}
    \tau(p)=\begin{cases}
        \left( \frac{D}{p}\right) & \text{if } p \in I(N) +II(N)_{odd},\\
        1 & \text{if } p \in II(N)_{even}-(I+J).
    \end{cases}
\end{align*}
Then we have:
\begin{align*}
    \sum_{\substack{\kappa \in Map(II(N), \{\pm 1\})\\ \kappa \text{ with } (\tilde{\tau}_{odd})} } \frac{|c_{f_{\kappa}(|D|)}|^2}{\langle f_{\kappa}, f_{\kappa} \rangle} =
    2^{\nu(N)}\frac{(k-1)!}{\pi^k} |D|^{k-1/2}\frac{L(F\otimes \chi_D,k)}{\langle F,F\rangle}.
\end{align*}
Here the map $f_{\kappa}$ is the (up to scalar multiplication) unique half-integral weight modular form in $S_{k+1/2}^{K,new,\kappa}(4N)$ which corresponds to $F$ as in Theorem \ref{Sakata-general-decomposition}. Moreover $\nu(N)$ is the number of different prime divisors of $N$.
\end{theorem}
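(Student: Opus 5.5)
The plan is to follow Kohnen's strategy from \cite{Kohnen1985}. Adjointness of the Shimura and Shintani lifts converts the left-hand side into the $|D|$th coefficient of $F|S^*_D$. That coefficient is a twisted sum of cycle integrals, which can be unfolded into the twisted $L$-value. Throughout I normalise $F$ so that $a_1=1$ and write $f_\kappa=\sum c_{f_\kappa}(n)q^n$.

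\emph{Step 1 (Shimura side).} Let $\kappa$ be such that $S^{K,new,\kappa}_{k+1/2}(4N)$ contains a form $f_\kappa$ corresponding to $F$ under Theorem \ref{Sakata-general-decomposition}. By Hecke equivariance of the Shimura lift and multiplicity one, $\Sh_D(f_\kappa)$ is a multiple of $F$. Reading off the first coefficient in Definition \ref{def: Shimura lift} (the $n=1$ term is $c_{|D|}$) gives
\begin{align*}
\Sh_D(f_\kappa)=c_{f_\kappa}(|D|)\,F .
\end{align*}
Adjointness from Theorem \ref{theo: kernel Shintani Shimura} then yields $c_{f_\kappa}(|D|)\langle F,F\rangle=\langle f_\kappa, F|S^*_D\rangle$.

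\emph{Step 2 (Shintani side).} $F|S^*_D$ lies in the $F$-isotypic part of $S^{K}_{k+1/2}(4N)$. By Lemma \ref{lem: self-adjoint-twisting-operator} the $R_p$ are self-adjoint, so the eigenspaces for different $\kappa$ are orthogonal, and one can write $F|S^*_D=\sum_\kappa \alpha_\kappa f_\kappa$. Taking inner products with $f_\kappa$ and using Step 1 gives $\alpha_\kappa=\overline{c_{f_\kappa}(|D|)}\langle F,F\rangle/\langle f_\kappa,f_\kappa\rangle$. Comparing $|D|$th coefficients,
\begin{align*}
c_{|D|}(F|S^*_D)=\langle F,F\rangle\sum_\kappa \frac{|c_{f_\kappa}(|D|)|^2}{\langle f_\kappa,f_\kappa\rangle}.
\end{align*}
Lemma \ref{lem: cf-are-zero-except-kappa-0} kills every $\kappa$ with $\kappa(p)\neq(\frac{|D|}{p})$ for some $p\in II(N)$. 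Combined with Lemma \ref{lem: system eigenvalues of twist}, the surviving $\kappa$ are exactly those satisfying $(\tilde\tau_{odd})$, which is the sum in the statement.

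\emph{Step 3 (unfolding).} By Definition \ref{def: Shintani-lift},
\begin{align*}
c_{|D|}(F|S^*_D)=\sum_{t\mid N}\mu(t)\Bigl(\frac{D}{t}\Bigr)t^{k-1}r_{k,Nt,F,D,D t^2}.
\end{align*}
Each term is a $\chi_D$-twisted sum of cycle integrals over forms of square discriminant. For square discriminant the geodesics run between rational cusps, so each $\Gamma_0(N)$-class can be represented by a form $[aN,|D|,c]$ whose geodesic is a vertical half-line or a semicircle. Using Proposition \ref{prop: explicit genus char}, the values $\chi_D(Q)$ on these representatives become additive twists $(\frac{D}{c})$. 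Mellin transforming the integrals along the vertical lines, as in Birch's lemma, and summing the Gauss sums $\sum_{c \bmod |D|}(\frac{D}{c})e^{2\pi i nc/|D|}$ should produce $|D|^{k-1/2}\frac{(k-1)!}{\pi^k}L(F\otimes\chi_D,k)$ up to an explicit factor.

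\emph{Main obstacle.} The hard part is the orbit bookkeeping in Step 3. The classes of forms of discriminant $D^2$ with $N\mid a$ split according to the local Atkin--Lehner involutions, since $W_{p^{\mathrm{ord}_p(N)}}$ permutes these classes. Using Lemma \ref{lem: chi(W_N Q)} prime by prime, each $p\mid N$ should contribute a factor $1+\tau(p)(\frac{D}{p})$ when $\mathrm{ord}_p(N)$ is odd, and $1+\tau(p)$ when it is even. Under the hypothesis on $\tau$ each such factor equals $2$, giving $2^{\nu(N)}$; otherwise it vanishes. I expect the delicate cases to be primes in $I\cup J$, where $F$ is a twist $F_0|R_p$. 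There one must check, via Lemma \ref{lem: F|R_p|W_Pp=...}(iii), that $\tau(p)=1$ and that the Möbius-weighted lower-level terms $t>1$ either cancel or vanish because $F$ is new. I would first verify all of this for $N=p^m$ and then pass to general $N$ by multiplicativity over primes.
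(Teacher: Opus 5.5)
The paper does not prove this statement; it is quoted from Sakata, so your outline has to stand on its own. Steps 1--2 are sound. They are exactly the argument the paper runs, with $D_0$ in place of $D$, in the proof of Theorem~\ref{theo: composite N product l-function inner product}. One sentence at the end of Step 2 is wrong, though. Lemma~\ref{lem: cf-are-zero-except-kappa-0} leaves the single index $\kappa_0=(\frac{|D|}{\cdot})$. By contrast, ``$\kappa$ with $(\tilde\tau_{odd})$'' is the set of $\kappa$ for which an $f_\kappa$ exists at all (Theorem~\ref{Sakata-general-decomposition}), and $\kappa_0$ belongs to it by the computation in Corollary~\ref{cor: Sakata L-functions}. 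That, not your ``survivors'' argument, is why the two sums agree.

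\textbf{The gap is Step 3.} It is the whole content of Sakata's theorem, and you leave it as an expectation. Its first claim does not follow from ``geodesics run between rational cusps'':
\begin{itemize}
\item a root can be moved to $\infty$ only if it is $\Gamma_0(N)$-equivalent to $\infty$;
\item a form $[aN,|D|,c]$ of discriminant $D^2$ has $ac=0$, so it only has roots at $\infty$-type and $0$-type cusps;
\item for non-square-free $N$ the Atkin--Lehner involutions are not transitive on cusps (e.g.\ $1/p$ for $N=p^2$), so applying $W_{p^{\mathrm{ord}_pN}}$ prime by prime does not by itself reach every class.
\end{itemize}
The missing lemma uses $(D,N)=1$. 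Write $Q=(\alpha x+\beta y)(\gamma x+\delta y)$ with $\alpha\delta-\beta\gamma=D$. Then no $p\mid N$ divides both $\alpha$ and $\gamma$, nor both $\alpha$ and $\beta$, nor both $\gamma$ and $\delta$. Hence for each $p^m\| N$ exactly one of $\alpha,\gamma$ is divisible by $p^m$. The two roots therefore lie at cusps of complementary Hall types $M$ and $N/M$, never at cusps like $1/p$. Every class is then $[0,-D,c]|W_M$ for a unique $M\| N$ and a unique $c \bmod |D|$. Two further facts finish the count:
\begin{itemize}
\item $\chi_D(Q|W_M)=(\frac{D}{M})\chi_D(Q)$, by the same proof as Lemma~\ref{lem: chi(W_N Q)}, since $Q|W_M$ represents $MQ(w)$;
\item $\int_{S_{Q|W_M}}F(z)(Q|W_M)(z,1)^{k-1}dz=\tau(M)\int_{S_Q}F(w)Q(w,1)^{k-1}dw$, where $F|W_M=\tau(M)F$.
\end{itemize}
Together these give exactly your factor $\prod_{p\mid N}\bigl(1+\tau(p)(\frac{D}{p})^{\mathrm{ord}_pN}\bigr)$, times the contribution of the classes $[0,-D,c]$.

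Two of the difficulties you anticipate do not arise. The $t>1$ terms need no cancellation: through the kernel they are pairings of the newform $F$ with $V_t$-images of level-$N/t$ forms, hence zero (Lemma~\ref{lem: Coeff of Shintani lift}). The primes in $I\cup J$ need no special treatment either. The cycle-integral side sees $F$ only through its Atkin--Lehner eigenvalues, and $\tau(p)=1$ on $I\cup J$ by Lemma~\ref{lem: system eigenvalues of twist}. $I$ and $J$ matter only for the range of $\kappa$.

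What remains open is the constant $2^{\nu(N)}(k-1)!/\pi^k$, which is the substance of the statement. It cannot be read off Definition~\ref{def: Shintani-lift}, because that definition omits the normalizing factor that makes $S^*_D$ exactly adjoint to $\Sh_D$ for the Petersson products used in the statement. Birch's computation on the classes $[0,-D,c]$ gives $|D|^{k-1/2}(k-1)!(2\pi)^{-k}$ times a unit times $L(F\otimes\chi_D,k)$. Combining $c_{k,D}$ in the kernel with the unfolding identity for $\langle F,f_{k,N,D,D}\rangle$ produces the missing factor. That factor is what has to turn $(2\pi)^{-k}$ into $\pi^{-k}$ and the unit into $1$. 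Until it is traced, your argument proves the identity only up to an undetermined constant.
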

Note that in the previous subsection (in particular in Theorem \ref{Sakata-general-decomposition} and Corollar \ref{cor: decomp Sakata}) we only considered the eigenvalues of the integer weight forms under Atkin-Lehner involutions corresponding to primes in $II(N)$. Here we also fix the eigenvalues of Atkin-Lehner involutions corresponding to primes in $I(N)$.
\begin{remark}
\begin{itemize}
\item[(i)] In particular, if $J = I = \emptyset$, then $F$ is a very newform.
\item[(ii)] As mentioned before Sakata does not distinguish between the systems of Atkin-Lehner eigenvalues $\tau$ (corresponding to $F$) and $\tilde{\tau}$ (corresponding to the form of lower level whose twist is equal to $F$). These two maps are not the same in general: from Lemma \ref{lem: system eigenvalues of twist} it follows that for $p \in I(N) \cup II(N)_{odd}$ that $\tau(p)=\tilde{\tau}(p)\prod_{q \in I \cup J}\left( \frac{p}{q}\right)$, thus in general $\tau(p)\neq\tilde{\tau}(p) $. This implies it does make a difference whether $\kappa$ satisfies $(\tilde{\tau})_{odd}$ or $(\tau)_{odd}$ (see p. 76 in \cite{Sakata-N}). 
\item[(iii)] In light of Lemma \ref{lem: system eigenvalues of twist}, we can replace the conditions on $\tau$, the eigenvalues of $F$, with the following conditions on $\tilde{\tau}$:
    \begin{align*}
        \tilde{\tau}(p)=\begin{cases}
            \left( \frac{D}{p}\right)\prod_{q\in I+J}\left( \frac{p}{q}\right) &\text{if } p \in I(N)+II(N)_{odd}\\
            1 &\text{if } p \in II(N)_{even} \setminus (I+J)
        \end{cases}
    \end{align*}
\end{itemize}
\end{remark}
We can simplify the statement from Theorem \ref{theo: Sakata L-function} as follows:
\begin{corollary}\label{cor: Sakata L-functions}
Let $k>1$. Let $F \in S_{2k}^{new}(N)$ be a newform. Let $D$ be a fundamental discriminant with $(-1)^kD>0$ and $(D,N)=1$. Denote by $\tau$ the system of eigenvalues of $F$ under the Atkin-Lehner operators. Assume that $\tau$ satisfies:
\begin{align*}
    \tau(p)=\begin{cases}
        \left( \frac{D}{p}\right) & \text{if } p \in I(N) +II(N)_{odd},\\
        1 & \text{if } p \in II(N)_{even}.
    \end{cases}
\end{align*}
 Let $\kappa_0: II(N) \rightarrow \{ \pm 1 \}$ be defined by:
\begin{align*}
    \kappa_0(p):=\left( \frac{|D|}{p}\right) \quad \forall p \in II(N).
\end{align*}
Then we have:
\begin{align*}
    \frac{|c_{f_{\kappa_0}}(|D|)|^2}{\langle f_{\kappa_0}, f_{\kappa_0} \rangle} =
    2^{\nu(N)}\frac{(k-1)!}{\pi^k} |D|^{k-1/2}\frac{L(F\otimes \chi_D,k)}{\langle F,F\rangle}.
\end{align*}
Here the map $f_{\kappa_0}$ is the unique (up to scalar multplication) half-integral weight modular form in $S_{k+1/2}^{K,new,\kappa_0}(4N)$ which corresponds to $F$ as in Theorem \ref{Sakata-general-decomposition}. Moreover $\nu(N)$ is the number of different prime divisors of $N$.
\end{corollary}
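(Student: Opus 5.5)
We derive the corollary from Theorem \ref{theo: Sakata L-function} by checking three things: that its hypotheses hold, that the sum over $\kappa$ has only one possibly nonzero term, and that this term is exactly the one for $\kappa_0$.

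\textbf{Step 1: place $F$ in Sakata's framework.} By Lemma \ref{lem: all newforms with positive even values}, the space $S^{new,\tau}_{2k}(N)$ equals the direct sum $V_1$. Since $F$ is a newform, hence a Hecke eigenform, and the summands of $V_1$ are Hecke stable with distinct eigen-packets, $F$ lies in exactly one summand. So there are disjoint $I,J\subset II(N)_2^*$ and a map $\tilde\tau$ with
\[
F\in S^{*,\tilde\tau}_{2k}(\cdots)|\prod_{p\in I\cup J}R_p .
\]
This is the situation of Theorem \ref{theo: Sakata L-function}. Its hypotheses on $\tau$ are the ones assumed in the corollary. The assumption on $II(N)_{even}$ is even stronger here, since it is imposed on all of $II(N)_{even}$ rather than only on $II(N)_{even}\setminus(I\cup J)$. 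Lemma \ref{lem: system eigenvalues of twist} gives $\tau(p)=1$ on $I\cup J$ in any case. Hence Theorem \ref{theo: Sakata L-function} applies and gives
\[
\sum_{\kappa \text{ with } (\tilde\tau_{odd})}\frac{|c_{f_\kappa}(|D|)|^2}{\langle f_\kappa,f_\kappa\rangle}
=2^{\nu(N)}\frac{(k-1)!}{\pi^k}|D|^{k-1/2}\frac{L(F\otimes\chi_D,k)}{\langle F,F\rangle}.
\]

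\textbf{Step 2: kill the terms with $\kappa\neq\kappa_0$.} Take any $\kappa$ appearing in the sum with $\kappa\neq\kappa_0$. Then there is some $p_0\in II(N)$ with $\kappa(p_0)\neq\left(\frac{|D|}{p_0}\right)$. By Lemma \ref{lem: cf-are-zero-except-kappa-0}, $c_{f_\kappa}(|D|)=0$, so this term contributes nothing. Only the term $\kappa=\kappa_0$ can survive.

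\textbf{Step 3 (main obstacle): show $\kappa_0$ satisfies $(\tilde\tau_{odd})$ and $f_{\kappa_0}$ corresponds to $F$.} If $\kappa_0$ violated $(\tilde\tau_{odd})$, the sum would be empty. Conversely, the identification of $f_{\kappa_0}$ with the form attached to $F$ is exactly what requires $\kappa_0$ to be compatible with $F$ in the sense of Corollary \ref{cor: decomp Sakata}. So we must check, for every $p\in II(N)_{odd}$,
\[
\left(\tfrac{|D|}{p}\right)=\tilde\tau(p)\left(\tfrac{-1}{p}\right)^k\prod_{q\in I\cup J}\left(\tfrac{p}{q}\right).
\]
By Lemma \ref{lem: system eigenvalues of twist}, the right-hand side equals $\tau(p)\left(\frac{-1}{p}\right)^k$, which by hypothesis is $\left(\frac{D}{p}\right)\left(\frac{-1}{p}\right)^k$. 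Since $(-1)^kD>0$, we have $|D|=(-1)^kD$. Therefore
\[
\left(\tfrac{|D|}{p}\right)=\left(\tfrac{-1}{p}\right)^k\left(\tfrac{D}{p}\right),
\]
which is the required identity, so $\kappa_0$ satisfies $(\tilde\tau_{odd})$.

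The same computation shows that $\tau$ has the shape required in Corollary \ref{cor: decomp Sakata}:
\[
\tau(p)=\kappa_0(p)\left(\tfrac{-1}{p}\right)^k \text{ on } II(N)_{odd},\qquad \tau(p)=1 \text{ on } II(N)_{even}.
\]
Corollary \ref{cor: decomp Sakata} is an isomorphism of Hecke modules, and the eigen-packet of $F$ has multiplicity one by strong multiplicity one. Hence $F$ corresponds to a unique line in $S^{K,new,\kappa_0}_{k+1/2}(4N)$, spanned by $f_{\kappa_0}$.

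The delicate point is keeping $\tau$ and $\tilde\tau$ apart, which is exactly the ambiguity flagged in the remarks on Sakata's theorem. Once the $\kappa_0$ term is the only surviving term and $f_{\kappa_0}$ is the form attached to $F$, the identity in Step 1 reduces to the stated formula.
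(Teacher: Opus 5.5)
Your proposal is correct and follows essentially the same route as the paper. You place $F$ in Sakata's framework via Lemma \ref{lem: all newforms with positive even values}, use Lemma \ref{lem: system eigenvalues of twist} with $|D|=(-1)^kD$ to show $\kappa_0$ satisfies $(\tilde\tau_{odd})$, and discard every other $\kappa$ with Lemma \ref{lem: cf-are-zero-except-kappa-0}. Your added justifications (strong multiplicity one to put $F$ in a single summand and to identify $f_{\kappa_0}$) are harmless refinements the paper leaves implicit; note also that if $\kappa_0$ failed $(\tilde\tau_{odd})$ the sum would vanish rather than be empty, though this remark plays no logical role.
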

\begin{proof}
 From Lemma \ref{lem: all newforms with positive even values} it follows that for any $F \in S_{2k}^{new}(N)$ with $\tau(p)=1$ for all $p \in II(N)_{even}$, there exists $I,J \subset II(N)^*$ such that
 \begin{align*}
    F \in S^{*, \tilde{\tau}}_{2k}\left(N_1 \prod_{p \in J}p \prod_{p \in II(N)-(I\cup J)} p^{ord_p(N)} \right)|\prod_{p \in I \cup J}R_p \subset S_{2k}(N).
\end{align*}
We now show why the sum on the left hand side of Sakata's original statement reduces to a single coefficient. We first study what it means for a map $\kappa: II(N)\rightarrow \{\pm 1\}$ to satisfy $(\tilde{\tau})_{odd}$ (see definition \ref{def: tau_odd}). For all $p \in II(N)_{odd}$, we have:
    \begin{align*}
        \kappa(p)
        &=\tilde{\tau}(p)\left(\frac{-1}{p}\right)^k \prod_{q \in I+J}\left( \frac{p}{q}\right)
        =\tau(p)\left(\frac{-1}{p}\right)^k \left(\prod_{q \in I+J}\left( \frac{p}{q}\right)\right)^2\\
        &= \tau(p)\left(\frac{-1}{p}\right)^k
        = \left( \frac{|D|}{p}\right)
    \end{align*}
    where we used Lemma \ref{lem: system eigenvalues of twist} to replace $\tilde{\tau}$ with $\tau(p)\prod_{q \in I+J}\left( \frac{p}{q}\right)$ and that by assumption $(-1)^kD>0$.
    This shows that in particular $\kappa_0$ satisfies $(\tilde\tau)_{odd}$. It follows from Lemma \ref{lem: cf-are-zero-except-kappa-0} that for $\kappa\neq \kappa_0$ the coefficient $c_{f_{\kappa}}(|D|)$ is zero. This proves the claim. 
\end{proof}
\begin{remark}
    Note that Kohnen in \cite{Kohnen_1988-remark} and Mao in \cite{Mao} prove similar statements as Corollary \ref{cor: Sakata L-functions}. We chose to exhibit the approach from Ueda and Sakata as they provide an insightful description of newforms for half-integral weight when the level is not square-free, offering the most detailed understanding of the underlying structure.
\end{remark}
For the proof of the main theorem we need that  the product of two $L$-functions of twists of a newform $F$ evaluated at the central value can be expressed as the inner product of $F$ and the cusp from $f_{k,N,D,D_0}$ from definition \ref{def: sec1-f_k,N,D,D_0}. More precisely, we have the following:
\begin{theorem}\label{theo: composite N product l-function inner product}
    Let $k >1$ and $N$ an odd integer. Let $D,D_0$ be two fundamental discriminants such that $(-1)^kD,(-1)^kD_0 >0$ and $(D,N)=(D_0,N)=1$.  We assume that for all primes $p \in II(N)$ we have that $\left( \frac{D}{p}\right)=\left( \frac{D_0}{p}\right)$. Let $F \in S_{2k}^{new}(N)$ be a newform and 
    assume that its system of eigenvalues under the Atkin-Lehner involutions satisfies:
    \begin{align*}
        \tau(p)=\begin{cases}
            \left( \frac{D}{p}\right), &p \in I(N)+II(N)_{odd},\\
            1, &p \in II(N)_{even}. \\
        \end{cases}
    \end{align*}
Then we have that:
\begin{align*}
    |i_{N}c_{k,D} &\langle F, f_{k,N,D,(-1)^kD_0} \rangle|^2\\
    &=\left(\frac{2^{\nu(N)}(k-1)!}{\pi^k} \right)^2|DD_0|^{k-1/2}L(F \otimes \chi_D,k)L(F \otimes \chi_{D_0},k)
\end{align*}
Here $\nu(N)$ denotes the number of different prime divisors of N.
In particular we have:
\begin{align*}
    L(F \otimes \chi_D,k)
   L(F \otimes \chi_{D_0},k)=0 \Longleftrightarrow \langle F, f_{k,N,D,(-1)^kD_0}\rangle=0.
\end{align*}
\end{theorem}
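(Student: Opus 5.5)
The plan is to write the single Shintani coefficient $c_{|D_0|}(F|\Shs_D)$ in two different ways and compare them. First, Lemma \ref{lem: Coeff of Shintani lift} gives
\begin{align*}
c_{|D_0|}(F|\Shs_D)=i_Nc_{k,D}\langle F,f_{k,N,D,(-1)^kD_0}\rangle .
\end{align*}
Second, I will express the same coefficient through the half-integral weight form $f_{\kappa_0}$ attached to $F$ in Corollary \ref{cor: Sakata L-functions}. Since $\Shs_D$ is adjoint to $\Sh_D$ (Theorem \ref{theo: kernel Shintani Shimura}) and both commute with Hecke operators, the image $F|\Shs_D$ lies in the $F$-isotypic part of the Kohnen space. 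This isotypic part decomposes over the $\kappa$'s by Corollary \ref{cor: decomp Sakata}. By Lemma \ref{lem: cf-are-zero-except-kappa-0}, only the $\kappa$-components with $\kappa(p)=\left(\frac{|D_0|}{p}\right)$ on $II(N)$ can carry a nonzero $|D_0|$-th coefficient.

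The hypothesis $\left(\frac{D}{p}\right)=\left(\frac{D_0}{p}\right)$ for $p\in II(N)$ makes this the same $\kappa_0$ as the one defined by $D$. So we may write
\begin{align*}
c_{|D_0|}(F|\Shs_D)=\lambda\, c_{f_{\kappa_0}}(|D_0|), \qquad \lambda=\frac{\langle F|\Shs_D,f_{\kappa_0}\rangle}{\langle f_{\kappa_0},f_{\kappa_0}\rangle}.
\end{align*}
Here $f_{\kappa_0}$ spans the $F$-eigenline in $S^{K,new,\kappa_0}_{k+1/2}(4N)$, and any other components are orthogonal. Adjointness then gives
\begin{align*}
\langle F|\Shs_D,f_{\kappa_0}\rangle=\langle F,f_{\kappa_0}|\Sh_D\rangle .
\end{align*}
Because $f_{\kappa_0}$ corresponds to $F$, the lift $f_{\kappa_0}|\Sh_D$ is a multiple of $F$. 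Reading off its first coefficient from Definition \ref{def: Shimura lift} gives $f_{\kappa_0}|\Sh_D=c_{f_{\kappa_0}}(|D|)\,F$, after normalising $F$ to have leading coefficient $1$. Hence
\begin{align*}
c_{|D_0|}(F|\Shs_D)=\frac{\langle F,F\rangle\,\overline{c_{f_{\kappa_0}}(|D|)}\,c_{f_{\kappa_0}}(|D_0|)}{\langle f_{\kappa_0},f_{\kappa_0}\rangle}.
\end{align*}

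Taking absolute values squared, the right-hand side equals
\begin{align*}
\langle F,F\rangle^2\,\frac{|c_{f_{\kappa_0}}(|D|)|^2}{\langle f_{\kappa_0},f_{\kappa_0}\rangle}\cdot\frac{|c_{f_{\kappa_0}}(|D_0|)|^2}{\langle f_{\kappa_0},f_{\kappa_0}\rangle}.
\end{align*}
Next I apply Corollary \ref{cor: Sakata L-functions} twice, once for $D$ and once for $D_0$. This is legitimate because the Atkin--Lehner hypothesis on $\tau$ holds for both discriminants: on $I(N)\cup II(N)_{odd}$ it is stated in terms of $\left(\frac{D}{p}\right)$, and equality of the two characters is assumed on $II(N)$. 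On $I(N)$ I read the statement as also requiring $\left(\frac{D}{p}\right)=\left(\frac{D_0}{p}\right)$; this is implicit in the main theorem's hypotheses, and it is needed here. The two factors of $\langle F,F\rangle$ cancel, and the result is exactly
\begin{align*}
\left(\frac{2^{\nu(N)}(k-1)!}{\pi^k}\right)^2|DD_0|^{k-1/2}L(F\otimes\chi_D,k)L(F\otimes\chi_{D_0},k).
\end{align*}
The equivalence with $\langle F,f_{k,N,D,(-1)^kD_0}\rangle=0$ then follows, since $c_{k,D}\neq0$ and $i_N\neq0$.

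The main obstacle is the middle step: justifying that $F|\Shs_D$ is a multiple of $f_{\kappa_0}$ in the relevant coefficient, and pinning down the normalisation $f_{\kappa_0}|\Sh_D=c_{f_{\kappa_0}}(|D|)F$. Three points need care:
\begin{itemize}
\item showing that the Shintani lift of a newform has no old-form component contributing at $|D_0|$, using the newform property exactly as in Lemma \ref{lem: Coeff of Shintani lift} together with orthogonality of the $\kappa$-eigenspaces (Lemma \ref{lem: self-adjoint-twisting-operator});
\item ensuring that the level-$4N$ adjointness in Theorem \ref{theo: kernel Shintani Shimura} applies with the inner-product normalisations used in Corollary \ref{cor: Sakata L-functions};
\item tracking any stray factor, such as $2^{\nu(N)}$ or $i_N$, in this bookkeeping.
\end{itemize}
If a mismatched constant appears, I would absorb it into the choice of $\lambda$ and check it against the square-free case, where Kohnen's Corollary 1 in \cite{Kohnen1985} gives the identity directly.
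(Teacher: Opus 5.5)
Your proposal is correct and follows the paper's proof essentially step for step. In both, $c_{|D_0|}(F|S^*_D)$ is computed once via Lemma~\ref{lem: Coeff of Shintani lift} and once via the orthogonal $\kappa$-decomposition, adjointness and $f_{\kappa_0}|S_D=c_{|D|}(f_{\kappa_0})F$, and then Corollary~\ref{cor: Sakata L-functions} is applied for $D$ and for $D_0$. The only difference is that the paper discards the $\kappa\neq\kappa_0$ terms via $c_{|D|}(f_\kappa)=0$ rather than $c_{|D_0|}(f_\kappa)=0$, which is equivalent under the $II(N)$ hypothesis.

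You are also right that applying the corollary to $D_0$ requires $\left(\frac{D}{p}\right)=\left(\frac{D_0}{p}\right)$ for $p\in I(N)$ as well. The paper's proof uses this silently, and it is among the hypotheses of Theorem~\ref{theo: main prod L-functions}. It cannot simply be dropped: for square-free $N$ with two primes where $\left(\frac{D_0}{p}\right)=-\tau(p)$, Kohnen's newform has vanishing $|D_0|$-th coefficient, so the left side is $0$. Yet $F\otimes\chi_{D_0}$ has root number $+1$ there, so the right side need not vanish.
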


\begin{proof}
We will express the $D_0$th coefficient of the $D$th Shintani lift of $F$ in two different ways.
From Lemma \ref{lem: Coeff of Shintani lift}, we know that $|D_0|$th coefficient of the $D$th Shintani lift of $F$ is given by
\begin{align}\label{eq: 1}
c_{|D_0|}(F|S_D^*)=i_{N}c_{k,D} \langle F, f_{k,N,D,(-1)^kD_0} \rangle
\end{align}
We now express $c_{|D_0|}(F|S_D^*)$ in terms of coefficients of modular form of half-integral weight. From Lemma \ref{lem: all newforms with positive even values}, we know that
there exists $I,J \subset II(N)^*$ such that
 \begin{align*}
    F \in S^{*, \tilde{\tau}}_{2k}\left(N_1 \prod_{p \in J}p \prod_{p \in II(N)-(I\cup J)} p^{ord_p(N)} \right)|\prod_{p \in I \cup J}R_p \subset S_{2k}(N).
\end{align*}
Denote with $\{\kappa: II(N) \rightarrow \{ \pm 1 \}\}$ the set of all functions satisfying $(\tilde{\tau})_{odd}$. In particular, we set $\kappa_0: II(N) \rightarrow \{ \pm 1 \}$ to be:
\begin{align*}
    \kappa_0(p):=\left( \frac{|D|}{p}\right), \quad \forall p \in II(N)_{odd}.
\end{align*}
As shown in the proof of Theorem \ref{cor: Sakata L-functions} $\kappa_0$ satisfies $(\tilde{\tau})_{odd}$.
 We can decompose the space of half-integral weight forms with the same system of Hecke eigenvalue as $F$ into subspaces corresponding to their behavior under the twisting operators $R_p$:
    \begin{align*}
        S^{K,new}_{k+1/2}(N;F)=
        \bigoplus_{\substack{\kappa: II(N) \rightarrow \{ \pm 1 \}\\ \kappa \text{ satisfies }(\tilde{\tau})_{odd}}} S^{K,\kappa}_{k+1/2}(N) \cap S^{K}_{k+1/2}(N;F)
    \end{align*}
    Let $f_{\kappa}=\sum_{n=1}^{\infty}c_n(f_{\kappa})q^n$ be the (up to scalar multplication) unique form in $S_{k+1/2}^{K,\kappa}(N)$. By Lemma \ref{lem: self-adjoint-twisting-operator}, for $\kappa_1 \neq \kappa_2$, we have that  $\langle f_{\kappa_1},f_{\kappa_2} \rangle =0 $. Therefore they form an orthogonal basis. The Shimura lift satisfies:
\begin{align*}
    f_{\kappa}|\Sh_D = c_{|D|}(f_{\kappa})F.
\end{align*}
Thus we can write:
\begin{align*}
    F |S_D^* &= \sum_{\substack{\kappa: II(N) \rightarrow \{ \pm 1 \}\\ \kappa \text{ satisfies }(\tilde{\tau})_{odd}}} \frac{\langle F |S_D^*, f_{\kappa} \rangle}{\langle f_{\kappa}, f_{\kappa}\rangle} f_{\kappa}
    = \sum_{\substack{\kappa: II(N) \rightarrow \{ \pm 1 \}\\ \kappa \text{ satisfies }(\tilde{\tau})_{odd}}}
    \frac{\langle F , f_{\kappa} | \Sh_D \rangle}{\langle f_{\kappa}, f_{\kappa}\rangle} f_{\kappa}\\
    &= \sum_{\substack{\kappa: II(N) \rightarrow \{ \pm 1 \}\\ \kappa \text{ satisfies }(\tilde{\tau})_{odd}}} \frac{\langle F , F \rangle}{\langle f_{\kappa}, f_{\kappa}\rangle} \overline{c_{|D|}(f_{\kappa})} f_{\kappa}
    = \frac{\langle F , F \rangle}{\langle f_{\kappa_0}, f_{\kappa_0}\rangle} \overline{c_{|D|}(f_{\kappa_0})}f_{\kappa_0}
\end{align*}
Here we used Lemma \ref{lem: cf-are-zero-except-kappa-0} in the last line. Thus
\begin{align}\label{eq: 2}
   c_{|D_0|}(F|S_D^*)=\overline{c_{|D|}(f_{\kappa_0})}\frac{\langle F,  F\rangle}{\langle f_{\kappa_0}, f_{\kappa_0}\rangle} c_{|D_0|}(f_{\kappa_0})
\end{align}
Combining equations \eqref{eq: 1} and \eqref{eq: 2} we get that
\begin{align*}
    \overline{c_{|D|}(f_{\kappa_0})}c_{|D_0|}(f_{\kappa_0})\frac{\langle F,  F \rangle}{\langle f_{\kappa_0}, f_{\kappa_0}\rangle} =i_{N}c_{k,D} \langle F, f_{k,N,D,(-1)^kD_0} \rangle
\end{align*}
Theorem \ref{cor: Sakata L-functions} implies that
\begin{align*}
    &\frac{|c_{|D|}(f_{\kappa_0})c_{|D_0|}(f_{\kappa_0})|^2}{\langle f_{\kappa_0}, f_{\kappa_0} \rangle^2}\\
    &=\left(\frac{2^{\nu(N)}(k-1)!}{\pi^k} \right)^2|DD_0|^{k-1/2}\frac{L(F\otimes \chi_D,k)L(F \otimes \chi_{D_0},k)}{\langle F,F\rangle^2}
\end{align*}
Thus finally:
\begin{align*}
    |i_{N}c_{k,D} &\langle F, f_{k,N,D,(-1)^kD_0} \rangle|^2\\
    =
    &\left(\frac{2^{\nu(N)}(k-1)!}{\pi^k} \right)^2|DD_0|^{k-1/2}L(F\otimes \chi_D,k)L(F\otimes \chi_{D_0},k)
\end{align*}
This proves the statement.
\end{proof}
\subsection{Modularity errors of the holomorphic and non-holomorphic Eichler integrals}
In section \ref{sec: A locally harmonic Maass form.} we will construct a locally harmonic Maass form which essentially splits into three parts: a local polynomial and the holomorphic and non-holomorphic Eichler integral of the function $f_{k,N,D,D_0}$. In this section we define the Eichler integrals and compute their modularity errors under a matrix $\gamma \in \Gamma_0(N)$. We then explain how Eichler-Shimura theory implies that for any nonzero cusp form there always exists a matrix $\gamma \in \Gamma_0(N)$ such that the error term is nonzero. 

\medskip

For a cusp form $F=\sum\limits_{n=1}^{\infty} a_n q^n \in S_{2k}(N)$ we define the holomorphic Eichler integral to be:
    \begin{align}\label{eq: holomorphic Eichler}
    \mathcal{E}_{F}(z):= -\frac{(2 \pi i)^{2k-1}}{(2k-2)!}\int_z^{i \infty}F(w)(z-w)^{2k-2}dw= \sum_{n=1}^{\infty}a_nn^{1-2k}q^n.
\end{align}
Moreover, the non-holomorphic Eichler integral is given by:
    \begin{align}\label{eq: non-holomorphic eichler}
    F^*=(2i)^{1-2k}\int_{-\overline{z}}^{i \infty}\overline{F(-\overline{w})}(w+z)^{2k-2}dw.
\end{align}
Under the modularity operator, the holomorphic and non-holomorphic Eichler integral have the following error term:
\begin{lemma}\label{lem: error mod eichler integrals}
    Let $F \in S_{2k}(N)$ be a cusp form. Let $ \gamma \in \Gamma_0(N)$.
    \begin{itemize}
        \item[(i)] The modularity error of $\mathcal{E}_F$ is given by: 
        \begin{align*}
\mathcal{E}_F|_{2-2k} (\gamma-1) (z)= -\frac{(2 \pi i )^{2k-1}}{(2k-2)!} \int_{i \infty}^{\gamma^{-1}(i \infty)} F(w)(z-w)^{2k-2}dw   \end{align*}
        \item[(ii)] The modularity error of $F^*$ is given by:
        \begin{align*}
            F^*|_{2-2k}(\gamma-1)(z)= (2i)^{1-2k}\int_{i \infty}^{-\gamma^{-1}(-i \infty)}\overline{F(-\overline{w})}(w+z)^{2k-2}dw
        \end{align*}
    \end{itemize}
\end{lemma}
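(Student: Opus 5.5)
The plan is a direct change of variables in the defining integrals, using the modularity of $F$ to absorb the Jacobian and automorphy factors; the real content is tracking the endpoints. Write $\gamma=\begin{pmatrix} a&b\\ c&d\end{pmatrix}\in\Gamma_0(N)$. Throughout I use the identity
\begin{align*}
\gamma z-\gamma u=\frac{z-u}{(cz+d)(cu+d)},
\end{align*}
valid since $\det\gamma=1$, together with $d(\gamma u)=(cu+d)^{-2}du$.

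For (i), start from
\begin{align*}
\mathcal{E}_F|_{2-2k}\gamma(z)=(cz+d)^{2k-2}\mathcal{E}_F(\gamma z).
\end{align*}
In the integral defining $\mathcal{E}_F(\gamma z)$, substitute $w=\gamma u$. Then:
\begin{itemize}
\item $F(\gamma u)=(cu+d)^{2k}F(u)$;
\item $(\gamma z-\gamma u)^{2k-2}=(z-u)^{2k-2}(cz+d)^{2-2k}(cu+d)^{2-2k}$;
\item $dw=(cu+d)^{-2}du$.
\end{itemize}
The powers of $(cu+d)$ cancel exactly, and the factor $(cz+d)^{2-2k}$ cancels against the prefactor. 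The limits become $u$ from $z$ to $\gamma^{-1}(i\infty)$. This gives
\begin{align*}
\mathcal{E}_F|_{2-2k}\gamma(z)=-\frac{(2\pi i)^{2k-1}}{(2k-2)!}\int_z^{\gamma^{-1}(i\infty)}F(u)(z-u)^{2k-2}\,du.
\end{align*}
Subtracting $\mathcal{E}_F(z)$ and splitting the path at $z$ leaves the integral from $i\infty$ to $\gamma^{-1}(i\infty)$, as claimed. The integrand is holomorphic in $u$, so the path is irrelevant. Convergence at the endpoints holds because $\gamma^{-1}(i\infty)$ is a cusp and $F$, being a cusp form, decays exponentially there.

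For (ii), set $G(w):=\overline{F(-\overline{w})}$. This is holomorphic, and with $\varepsilon=\mathrm{diag}(-1,1)$ (acting as $z\mapsto -z$) it transforms in weight $2k$ under $\gamma':=\varepsilon\gamma\varepsilon=\begin{pmatrix} a&-b\\ -c&d\end{pmatrix}\in\Gamma_0(N)$. This uses that $F$ has real-rational structure only through conjugation, so no assumption on the coefficients is needed; the verification is a one-line check. The two key identities are
\begin{align*}
-\overline{\gamma z}=\gamma'(-\overline{z}),\qquad \gamma z=-\gamma'(-z).
\end{align*}
The second gives
\begin{align*}
w+\gamma z=\gamma' u-\gamma'(-z)=\frac{u+z}{(-cu+d)(cz+d)}\quad\text{for } w=\gamma' u.
\end{align*}
Substituting $w=\gamma'u$ in $(cz+d)^{2k-2}F^*(\gamma z)$ then cancels all automorphy factors exactly as in (i). The limits run from $-\overline{z}$ to $\gamma'^{-1}(i\infty)=\varepsilon\gamma^{-1}\varepsilon(i\infty)=-\gamma^{-1}(-i\infty)$. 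Subtracting $F^*(z)$ gives the stated formula. Note that $F^*$ is not holomorphic in $z$, but $z$ enters only through the polynomial $(w+z)^{2k-2}$, so the computation is purely algebraic in $z$.

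The only delicate point I expect is (ii): getting the conjugated matrix $\gamma'$ and the sign conventions in the endpoint $-\gamma^{-1}(-i\infty)$ right, and checking that the Jacobian produces $(-cu+d)$ consistently rather than $(cu+d)$. Everything else is routine bookkeeping.
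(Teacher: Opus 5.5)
Your proposal is correct and follows the paper's own argument. The paper also substitutes $w\mapsto\gamma^{-1}(w)$ in $\mathcal{E}_F$ and $w\mapsto -\gamma^{-1}(-w)$ (your $\gamma'^{-1}=\varepsilon\gamma^{-1}\varepsilon$) in $F^*$, using weight $2k$ modularity of $F$. I checked your cancellation of the automorphy factors, including the $(-cu+d)$ factors for $\overline{F(-\overline{w})}$, and your endpoint $\gamma'^{-1}(i\infty)=-\gamma^{-1}(-i\infty)$; both are right.
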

\begin{proof}
    The proof follows from a straight forward computation, in which on substitutes $w$ with $\gamma^{-1}(w)$ and $-(\gamma^{-1}(-w))$ respectively  and uses that $F$ is modular of weight $2k$. 
\end{proof}

For the proof of the Theorem \ref{theo: main prod L-functions} we need that if $F$ is a nonzero function, then there exists a matrix $\gamma\in \Gamma_0(N)$ such that 
\begin{align}\label{eq: mod eichler error raw}
\begin{split}
        z^{2k-2}N^{k-1}\left(\int_{i \infty}^{\gamma (i \infty)} \right.& F( w) \left(\frac{-1}{Nz}-w\right)^{2k-2} d w \\
& \left. - \int_{i \infty}^{-(\gamma (-i\infty))}F(w)\left(w+\frac{-1}{Nz}\right)^{2k-2}dw\right)
\end{split}
\end{align}
is not zero for all $z \in \mathbb{H}$. To prove this statement, we introduce Eicher-Shimura Theory. We closely follow the exposition in \cite{Antoniadis1992}. 

We denote by $V_{\C}$ the space of polynomials with complex coefficients of degree at most $2k-2$, i.e:
\begin{align*}
    V_{\C}:=\{ P(z)  \in \C[z] : \deg(P(z)) \leq 2k-2\}.
\end{align*}
We let $SL_2(\Z)$ act on the vector space $V_{\C}$ by the usual action: for $\gamma =\begin{pmatrix}
    \gamma_{11} & \gamma_{12} \\ \gamma_{21}& \gamma_{22}
\end{pmatrix} \in SL_2(\Z)$, we set:
\begin{align*}
    \gamma  \cdot P (z) =P|_{2-2k}\gamma ^{-1} (z)=P\left( \frac{\gamma_{22}z-\gamma_{12}}{-\gamma_{21}z+\gamma_{11}}\right)\left( -\gamma_{21}z+\gamma_{11}\right)^{2k-2}
\end{align*}

We look at functions mapping the space $\Gamma_0(N)$ to polynomials, i.e. the set 
\begin{align*}
   \{ \Gamma_0(N) \rightarrow V_{\C}\}.
\end{align*}
Note that for a fixed form $F$, equation \eqref{eq: mod eichler error raw} defines such a map. We decompose the parabolic homology group of the space $\{ \Gamma_0(N) \rightarrow V_{\C}\}$ into two subspaces. Recall that a matrix in $SL_2(\Z)$ is parabolic if its trace is equal to $\pm2$. We define the space of  $1$-cocycles by:
\begin{align*}
    C_{\C}:=\{ &\sigma: \Gamma_0(N) \rightarrow V_{\C}: \forall \gamma _1,\gamma _2 \in \Gamma_0(N): \sigma(\gamma _1\gamma _2)=\sigma(\gamma _1) + \gamma _1 \cdot \sigma(\gamma _2) \text{ and }  \\
    &\forall \text{ parabolic } \gamma  \in \Gamma_0(N), \exists P(z) \in V_{\C} \text{ such that } \sigma(\gamma )=P(z)-(\gamma  \cdot P)(z) \}.
\end{align*}
Moreover we define the space of $1$-coboundaries as:
\begin{align*}
    B_{\C}=\{ \sigma: \Gamma_0(N) \rightarrow V_{\C}: &\exists P(z) \in V_{\C} \\
    &\text{ such that } \forall \gamma  \in \Gamma_0(N): \sigma(\gamma )=P(z)-(\gamma \cdot P)(z) \}.
\end{align*}
The parabolic cohomology group is given by:
\begin{align*}
    H^1_P(\Gamma_0(N), V_{\C})=\frac{C_{\C}}{B_{\C}}.
\end{align*}

We want to decompose the space $H^1_P(\Gamma_0(N), V_{\C})$ into $\pm 1$ eigenspaces under the involution given by the matrix
\begin{align*}
\epsilon=\begin{pmatrix}
    -1 & 0 \\ 0 & 1
\end{pmatrix}.
\end{align*}
Let $\sigma: \Gamma_0(N) \rightarrow V_{\C}$ be any map sending matrices to polynomials, then we set
\begin{align*}
    \tilde{\sigma}(\gamma )=(\epsilon \cdot \sigma (\epsilon \gamma  \epsilon))x= \sigma (\epsilon \gamma  \epsilon)(-x)
\end{align*}
We can define an involution on $H^1_P(\Gamma_0(N), V_{\C})$ as follows:
\begin{align*}
    [\sigma] \mapsto [\Tilde{\sigma}]
\end{align*}
We decompose $H^1_P(\Gamma_0(N), V_{\C})$ into its eigenspaces under this involution:
\begin{align*}
    H^1_P(\Gamma_0(N), V_{\C}) = H^1_P(\Gamma_0(N), V_{\C})^+ \oplus H^1_P(\Gamma_0(N), V_{\C})^-. 
\end{align*}
According to the Eichler-Shimura isomorphism \cite{Shimura-Arithmetic-Automorphic-Forms}, there is an isomorphism between the $\R$-vectorspaces $S_{2k}(\Gamma_0(N))$ and $H_p^1(\Gamma_0(N), V_{\R})$. (Where  $V_{\R}$ is the space of polynomials of degree at most $2k-2$ with coefficients in $\R$.) Antoniadis \cite{Antoniadis1992} states the following as a corollary of the Eichler-Shimura isomorphism.
\begin{proposition}[Proposition 1 in \cite{Antoniadis1992}]\label{prop: Antonidas}
Let $F \in S_{2k}(\Gamma_0(N))$. Set:
\begin{align*}
\pi_F(\gamma )=\int_0^{\gamma (0)} F(w)(z-w)^{2k-2}dw, \text{ and } \tilde{\pi}_F(\gamma )=\int_0^{-\gamma (0)} F(w)(z+w)^{2k-2}dw.    
\end{align*}
    The following maps define isomorphisms:
\begin{align*}
    \Phi^+&: S_{2k}(\Gamma_0(N)) \rightarrow H^1_P(\Gamma_0(N), V_{\C})^+, \quad F \mapsto \frac{1}{2i}( \pi_F + \tilde{\pi}_F), \text{ and } \\
    \Phi^-&: S_{2k}(\Gamma_0(N)) \rightarrow H^1_P(\Gamma_0(N), V_{\C})^-, \quad F \mapsto \frac{1}{2}( \pi_F - \tilde{\pi}_F).
\end{align*}
\end{proposition}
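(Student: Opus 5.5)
The plan is to deduce the proposition from the classical Eichler--Shimura isomorphism in its complex form. That form states that
\begin{align*}
S_{2k}(\Gamma_0(N)) \oplus \overline{S_{2k}(\Gamma_0(N))} \longrightarrow H^1_P(\Gamma_0(N), V_{\C}), \quad (F,\overline{G}) \mapsto [\pi_F + \overline{\pi_G}]
\end{align*}
is an isomorphism of complex vector spaces. Here $\overline{\pi_G}(\gamma)$ is the polynomial obtained by conjugating the coefficients of $\pi_G(\gamma)$. The proof then has three steps: check that $\pi_F$ and $\tilde{\pi}_F$ define classes in $H^1_P$; identify how the involution $[\sigma]\mapsto[\tilde\sigma]$ acts on these classes; and deduce bijectivity from the isomorphism above together with a dimension count.

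\textbf{Cocycle property.} For $\pi_F$ the cocycle relation follows from $\int_0^{\gamma_1\gamma_2(0)}=\int_0^{\gamma_1(0)}+\int_{\gamma_1(0)}^{\gamma_1\gamma_2(0)}$. In the second integral one substitutes $w=\gamma_1 w'$ and uses the modularity of $F$ together with the identity $(z-\gamma_1 w')(c w'+d)=(\gamma_1^{-1}z - w')(-cz+a)$, which gives exactly $\gamma_1\cdot\pi_F(\gamma_2)$. For a parabolic $\gamma$ fixing a cusp $\mathfrak{a}$, one writes $\pi_F(\gamma)=P-\gamma\cdot P$ with $P(z)=\int_0^{\mathfrak{a}}F(w)(z-w)^{2k-2}dw$. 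This integral converges because $F$ is cuspidal. Since $\epsilon$ normalizes $\Gamma_0(N)$, the function $\tilde{\pi}_F$, which is precisely $\gamma\mapsto\pi_F(\epsilon\gamma\epsilon)(-z)$ up to the substitution $w\mapsto -w$, is again a parabolic cocycle. Moreover, $\sigma\mapsto\tilde\sigma$ preserves coboundaries, so it is a well-defined involution on $H^1_P$.

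\textbf{Action of the involution.} Set $F^\rho(w):=\overline{F(-\overline{w})}\in S_{2k}(\Gamma_0(N))$. Substituting $w\mapsto-\overline{w}$ in $\tilde{\pi}_F$, and integrating along the vertical path where this is harmless, gives $\tilde{\pi}_F(\gamma)=-\overline{\pi_{F^\rho}(\gamma)}$ for real $z$, hence as polynomials. By the definitions, $\widetilde{\pi_F}=\tilde{\pi}_F$ and $\widetilde{\tilde{\pi}_F}=\pi_F$. Consequently $\pi_F\pm\tilde\pi_F$ lie in the $\pm$-eigenspaces, so $\Phi^\pm$ are well defined and $\C$-linear; the scalars $\frac{1}{2i}$ and $\frac12$ are only normalizations.

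\textbf{Bijectivity.} Under the Eichler--Shimura isomorphism, $\Phi^+(F)$ corresponds to $\frac{1}{2i}(F,-\overline{F^\rho})$ and $\Phi^-(F)$ to $\frac12(F,\overline{F^\rho})$. If $\Phi^\pm(F)=0$, then the $S_{2k}$-component $F$ vanishes, so both maps are injective. The involution exchanges the two summands $S_{2k}$ and $\overline{S_{2k}}$ (via $F\mapsto \overline{F^\rho}$). Hence each eigenspace is the graph of $\pm$ this exchange and has complex dimension $\dim S_{2k}(\Gamma_0(N))$. Injectivity together with equality of dimensions gives the isomorphisms.

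The main obstacle is the second step. One must track the orientation and the sign of the substitution $w\mapsto -\overline{w}$, and check that the resulting identity holds as an equality of cocycles and not merely of cohomology classes. The base point $0$ and the branch of $(z\pm w)^{2k-2}$ need care here. I would first verify the identity on the generators $T=\left(\begin{smallmatrix}1&1\\0&1\end{smallmatrix}\right)$ and on a hyperbolic element, and then extend it using the cocycle relation.
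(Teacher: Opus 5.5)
Your argument is correct and takes the same route as the paper. The paper gives no proof of its own; it quotes the result from Antoniadis as a corollary of the Eichler--Shimura isomorphism, and your argument carries out exactly that deduction.

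The step you flag as the main obstacle is in fact immediate. Since $(\epsilon\gamma\epsilon)(0)=-\gamma(0)$ and $2k-2$ is even, $\widetilde{\pi_F}=\tilde{\pi}_F$ holds exactly as cocycles, with no substitution needed. The identity $\tilde{\pi}_F=-\overline{\pi_{F^\rho}}$ then follows directly from the reflection $w\mapsto-\overline{w}$, which preserves $\mathbb{H}$. There are no branch issues, because the exponent is an integer.
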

One immediate corollary from the proposition is:
\begin{corollary}\label{cor: Antonidas}
    Let $F \in S_{2k}(\Gamma_0(N))$ be any nonzero function. Then there exists a matrix $\gamma  \in \Gamma_0(N)$ such that
    \begin{align*}
    \int_0^{\gamma (0)} F(w)(z-w)^{2k-2}dw-\int_0^{-\gamma (0)} F(w)(z+w)^{2k-2}dw 
\end{align*}
is not the zero polynomial in $z$.
\end{corollary}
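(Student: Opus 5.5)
The proof uses only the injectivity of the map $\Phi^-$ from Proposition \ref{prop: Antonidas}. The plan is to argue by contradiction.

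Let $F\in S_{2k}(\Gamma_0(N))$ be nonzero, and suppose that for every $\gamma\in\Gamma_0(N)$ the polynomial
\begin{align*}
\pi_F(\gamma)-\tilde{\pi}_F(\gamma)=\int_0^{\gamma(0)} F(w)(z-w)^{2k-2}dw-\int_0^{-\gamma(0)} F(w)(z+w)^{2k-2}dw
\end{align*}
is the zero polynomial in $z$. Then the map $\frac{1}{2}(\pi_F-\tilde{\pi}_F)\colon\Gamma_0(N)\to V_{\C}$ is identically zero.

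The zero map is a $1$-coboundary: take $P=0$ in the definition of $B_{\C}$. Hence the class $\Phi^-(F)=\left[\frac{1}{2}(\pi_F-\tilde{\pi}_F)\right]$ is the zero class in $H^1_P(\Gamma_0(N),V_{\C})^-$.

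By Proposition \ref{prop: Antonidas}, $\Phi^-$ is an isomorphism and in particular injective. It is linear (at least $\R$-linear, which is all we need), so it sends $0$ to $0$, and injectivity then gives $F=0$. This contradicts $F\neq 0$. Therefore some $\gamma\in\Gamma_0(N)$ makes the expression a nonzero polynomial.

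The only point needing care is to use injectivity in the right direction. A cocycle that is nonzero as a map could still represent the trivial class, so that direction gives nothing. We instead use the implication ``vanishing cocycle $\Rightarrow$ zero class'', which is immediate because the zero map lies in $B_{\C}$. The linearity of $\Phi^-$ follows from the linearity of $F\mapsto\pi_F,\tilde{\pi}_F$ in $F$. Beyond citing Proposition \ref{prop: Antonidas}, I do not expect any real obstacle.
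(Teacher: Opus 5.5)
Your proof is correct and is exactly the deduction the paper intends. The paper gives no separate proof, recording the statement as an immediate consequence of Proposition \ref{prop: Antonidas}. Its content is precisely your argument: an identically vanishing cocycle $\frac{1}{2}(\pi_F-\tilde{\pi}_F)$ lies in $B_{\C}$ and so represents the zero class, and injectivity of $\Phi^-$ then forces $F=0$.
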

As mentioned above for the proof of Theorem \ref{theo: main prod L-functions} we need a slight reformulation of the statement:
\begin{lemma}\label{lem: transform to Antoniadis function}
Let $F \in S_{2k}(\Gamma_0(N))$ be a newform. Then there exists a matrix $\gamma  \in \Gamma_0(N)$ such that
    \begin{align*}
    z^{2k-2}N^{k-1}\left(\int_{i \infty}^{\gamma (i \infty)} \right. & F( w) \left(\frac{-1}{Nz}-w\right)^{2k-2} d w \\
& \left.- \int_{i \infty}^{-(\gamma (-i\infty))}F(w)\left(w+\frac{-1}{Nz}\right)^{2k-2}dw\right)
\end{align*}
is not the zero polynomial.
\end{lemma}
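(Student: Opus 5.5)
The plan is to reduce the statement to Corollary \ref{cor: Antonidas} by substituting with the Fricke involution, using that a newform is an eigenform of $W_N$. Write $F|_{2k}W_N=\varepsilon F$ with $\varepsilon\in\{\pm1\}$; this holds because $F$ is a newform. By the explicit formula for $W_N$ in the preliminaries, $F(-1/(Nu))=\varepsilon N^{k}u^{2k}F(u)$. Given $\gamma\in\Gamma_0(N)$, set $\gamma':=W_N\gamma W_N^{-1}$, which again lies in $\Gamma_0(N)$. Conjugation by $W_N$ is a bijection of $\Gamma_0(N)$, so it suffices to show that the expression in the lemma equals $\varepsilon$ times the Antoniadis combination $\pi_F(\gamma')-\tilde\pi_F(\gamma')$.

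\emph{First integral.} Substitute $w=-1/(Nu)$, so that $dw=du/(Nu^2)$. The endpoint $i\infty$ goes to $u=0$, and $\gamma(i\infty)$ goes to $u_0=-1/(N\gamma(i\infty))=W_N\gamma(i\infty)=\gamma'(0)$. Moreover $-1/(Nz)-w=(z-u)/(Nzu)$. Collecting the powers, the integrand becomes
\begin{align*}
\varepsilon N^{k}u^{2k}F(u)\frac{(z-u)^{2k-2}}{N^{2k-2}z^{2k-2}u^{2k-2}}\frac{du}{Nu^2}=\varepsilon N^{1-k}z^{2-2k}F(u)(z-u)^{2k-2}\,du .
\end{align*}
After multiplying by the prefactor $z^{2k-2}N^{k-1}$, the first term is exactly $\varepsilon\pi_F(\gamma')$.

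\emph{Second integral.} Use the same substitution. Now $w-1/(Nz)=-(z+u)/(Nzu)$, and since the exponent $2k-2$ is even the sign disappears. The lower endpoint again goes to $0$. The upper endpoint $-\gamma(-i\infty)$ goes to $u_1=1/(N\gamma(-i\infty))$. As points of $\mathbb{P}^1(\Q)$ we have $\gamma(-i\infty)=\gamma(i\infty)=a/c$, so $u_1=-u_0=-\gamma'(0)$. The second term therefore becomes $\varepsilon\tilde\pi_F(\gamma')$, and the whole expression equals $\varepsilon\bigl(\pi_F(\gamma')-\tilde\pi_F(\gamma')\bigr)$.

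\emph{Conclusion.} Since $F\neq0$, Corollary \ref{cor: Antonidas} gives some $\gamma_1\in\Gamma_0(N)$ for which $\pi_F(\gamma_1)-\tilde\pi_F(\gamma_1)$ is a nonzero polynomial. Taking $\gamma=W_N^{-1}\gamma_1W_N$ makes $\gamma'=\gamma_1$, which proves the lemma.

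The main obstacle is bookkeeping rather than ideas. One has to keep careful track of the powers of $N$, $z$ and $u$, and of the orientation of the paths. One also has to justify treating $\gamma(\pm i\infty)$ as the same cusp: the integrals run along paths in $\mathbb{H}$ between cusps, and $F$ decays there, so the values are well defined. Finally, the case $c=0$, where $\gamma(i\infty)=i\infty$, is trivial: both sides vanish, consistent with $\gamma'(0)=0$.
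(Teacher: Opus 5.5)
Your proof is correct and is essentially the paper's argument: the substitution $w=-1/(Nu)$ together with the Fricke eigenvalue turns the expression into $\varepsilon\bigl(\pi_F(\gamma')-\tilde\pi_F(\gamma')\bigr)$, where your $\gamma'=W_N\gamma W_N^{-1}$ is exactly the paper's matrix $\gamma_0=\left(\begin{smallmatrix}\gamma_{22} & -\gamma_{21}/N\\ -N\gamma_{12} & \gamma_{11}\end{smallmatrix}\right)$, and Corollary \ref{cor: Antonidas} then finishes the proof. Your tracking of the powers of $N$, $z$, $u$ and of the endpoints, including reading $\gamma(-i\infty)$ as the cusp $\gamma_{11}/\gamma_{21}$, checks out and spells out the computation the paper only calls ``direct''.
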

    \begin{proof}
\begingroup
\allowdisplaybreaks
Let $w_N$ is the eigenvalue of $F$ under the Fricke involution, let $\gamma =\begin{pmatrix}
    \gamma_{11} & \gamma_{12} \\ \gamma_{21}& \gamma_{22}
\end{pmatrix}$ and set $\gamma _0=\begin{pmatrix}
    \gamma_{22} & -\gamma_{21}/N \\ -\gamma_{12}N&\gamma_{11}
\end{pmatrix}$.
A direct computation in which one substitutes $w=-1/(Nw')$ and uses that $F$ is an eigenfunction under the Fricke involution yields that: 
\begin{align*}
z^{2k-2}N^{k-1}&\int_{i \infty}^{\gamma (i \infty)} F( w) \left(\frac{-1}{Nz}-w\right)^{2k-2} d w\\
- &z^{2k-2}N^{k-1}\int_{i \infty}^{-(\gamma (-i\infty))}F(w)\left(w+\frac{-1}{Nz}\right)^{2k-2}dw
 \\
&=w_N  \left(\int_{0}^{\gamma _0(0)} F(w) \left(z-w\right)^{2k-2}d w
-\int_{0}^{-\gamma _0(0)}F(w)(w+z)^{2k-2}dw \right). 
\end{align*}
 The result follows then from Corollary \ref{cor: Antonidas}.
\endgroup
\end{proof}

\section{Locally harmonic Maass forms}\label{sec: A locally harmonic Maass form.}
The goal of this section is to define a locally harmonic Maass form which detects if the product of two twisted $L$-functions evaluated at the central value is zero. 

\subsection{Definition of locally harmonic Maass forms}
We start by introducing locally harmonic Maass forms. For $k>1$ and a nonsquare positive discriminant $D$, Bringmann, Kane and Kohnen  define and study in \cite{B-K-K} the following function:
\begin{align*}
    \mathcal{F}_{1-k,D}(z):=
    \frac{D^{\frac{1}{2}-k}}{\binom{2k-2}{k-1}2\pi}
    \sum_{Q=[a,b,c]\in \mathcal{Q}_D} \text{sgn}(Q_{z})Q(z,1)^{k-1}\beta\left(\frac{Dy^2}{|Q(z,1)|^2};k-\frac{1}{2},\frac{1}{2}\right)
\end{align*}
Here $z=x+iy \in \mathbb{H}$. Moreover for $s,w \in \C$ with $\re(s),\re(w)>0$, we set:
\begin{align}\label{eq: def beta}
    \beta(v;s,w):=\int_0^v u^{s-1}(1-u)^{w-1}du.
\end{align}
For $z \in \mathbb{H}$, we define:
\begin{align}\label{eq: def Q_z}
    Q_z:=\frac{1}{\im(z)}(a|z|^2+b\re(z)+c).
\end{align}
Recall that
\begin{align*}
    \mathcal{Q}_D:=\{[a,b,c] : b^2-4ac=D\} 
\end{align*}
The function $\mathcal{F}_{1-k,D}$ is a weight $2-2k$ locally harmonic Maass form for $SL_2(\Z)$ with exceptional set
\begin{align*}
    E_D:=\left\{ z \in \mathbb{H}| \quad \exists Q \in \mathcal{Q}_D \text{ such that } Q_{z}=0 \right\}.
\end{align*}
in the following sense:
\begin{definition}
A function $\mathcal{F}:\mathbb{H} \rightarrow \C$ is called a locally harmonic Maass form of weight $2-2k$ for the congruence subgroup $\Gamma$ and with exceptional set $E$ if:
\begin{itemize}
    \item[(i)] For every $\gamma \in \Gamma$, for every $z \in \mathbb{H}$: $\mathcal{F}|_{2-2k}\gamma(z)=\mathcal{F}(z)$.
    \item[(ii)] For every $z \in \mathbb{H} \setminus E$, there is a neighborhood $\mathcal{U}$ of $z$ in which $\mathcal{F}$ is real analytic and $\Delta_{2-2k}(\mathcal{F})=0$. 
    
    \item[(iii)] For every $z \in E$, we have:
    \begin{align*}
        \mathcal{F}(z)=\frac{1}{2}\lim_{w \rightarrow 0^+} (\mathcal{F}(z +iw)+\mathcal{F}(z -iw)), \quad w \in \R.
    \end{align*}
    \item[(iv)] The function $\mathcal{F}$ exhibits at most polynomial growth towards $ i \infty$.
\end{itemize}
\end{definition}
Here the weight $k$ Laplacian is given by:
    \begin{align*}
        \Delta_k:=-y^2 \left(\frac{\partial^2}{\partial x^2}+\frac{\partial^2}{\partial y^2}\right)+iky\left(\frac{\partial}{\partial x}+i\frac{\partial}{\partial y}\right).
\end{align*}

Bringmann, Kane and Kohnen show that the function $\mathcal{F}_{1-k,D}$ splits into three parts: a holomorphic part, a non-holomorphic part and a local polynomial. They also investigate its behavior under the Hecke operator. We will study a form similar to $\mathcal{F}_{1-k,D}$ with analogous properties. 
\subsection{The function $\mathcal{F}_{1-k,N,D,D_0}$}
 Let $k>1$, $N$ an integer and $D,D_0$ two fundamental discriminants such that $DD_0>0$ is not a perfect square. We set
\begin{align}\label{def: Flm}
\begin{split}
        \mathcal{F}_{1-k,N,D,D_0}(z):=\frac{(DD_0)^{\frac{1}{2}-k}}{\binom{2k-2}{k-1}2\pi}\sum_{Q \in \mathcal{Q}_{N,DD_0}}&\chi_{D_0}(Q)\text{sgn}(Q_z)\\
    &Q(z,1)^{k-1}\beta\left( \frac{DD_0y^2}{|Q(z,1)|^2};k-\frac{1}{2},\frac{1}{2}\right).
\end{split}
\end{align}
This function has been introduced in \cite{males-mono-rollen-wagner}. Note that we use a different normalization than there.
In the following, we describe some properties of the function $\mathcal{F}_{1-k,N,D,D_0}$. For the proofs, which  are in general very similar to the ones from \cite{B-K-K}, we refer to Appendix \ref{app: locally harmonic Maass Form}.
Let
    \begin{align*}
        E_{N,DD_0}:= \{ z \in \mathbb{H} |\quad \exists Q \in \mathcal{Q}_{N,DD_0}: Q_z=0 \}.
    \end{align*}
We prove in Theorem \ref{theo: Flm is locally harmonic} that the function $\mathcal{F}_{1-k,N,D,D_0}$ is a locally harmonic Maass form of weight $2-2k$ for $\Gamma_0(N)$ with exceptional set $E_{N,DD_0}$.

We now explain how the function $\mathcal{F}_{1-k,N,D,D_0}$ detects the vanishing of the product of two twisted $L$-functions. Recall from Theorem \ref{theo: composite N product l-function inner product}, that for $F$ and $D,D_0$ as in Theorem \ref{theo: main prod L-functions}, we have that
\begin{align}\label{eq: <F,f>=0 <=> LL=0}
    L(F \otimes \chi_D,k)
   L(F \otimes \chi_{D_0},k)=0 \Longleftrightarrow \langle F, f_{k,N,D,D_0}\rangle=0.
\end{align}
Here 
    \begin{align}\label{eq: def sec3-f_k,N,D,D_0}
        f_{k,N,D,D_0}(z):=\frac{(DD_0)^{k-\frac{1}{2}}}{\binom{2k-2}{k-1}\pi}\sum_{Q \in \mathcal{Q}_{N,DD_0}}\chi_{D_0}(Q) Q(z,1)^{-k} \in S_{2k}(N).
    \end{align}
\begin{remark}
    Here we multiply the function defined in definition \ref{def: sec1-f_k,N,D,D_0} by $\frac{(DD_0)^{k-\frac{1}{2}}}{\binom{2k-2}{k-1}\pi}$.
\end{remark}
From Theorem \ref{theo: splitting Flm} it follows that the function $\mathcal{F}_{1-k,N,D,D_0}$ can be written as the sum over the holomorphic Eichler integral of $f_{k,N,D,D_0}$, the non-holomorphic Eicher integral of $f_{k,N,D,D_0}$ and a local polynomial:
\begin{align}\label{eq: split F}
\begin{split}
        \mathcal{F}_{1-k,N,D,D_0}(z)=P_{k,N,D,D_0}(z)&+(DD_0)^{\frac{1}{2}-k}f^*_{k,N,D,D_0}(z)\\
        &-(DD_0)^{\frac{1}{2}-k}\frac{(2k-2)!}{(4\pi)^{2k-1}}
\mathcal{E}_{f_{k,N,D,D_0}}(z).
\end{split}
    \end{align}
Here
\begin{align*}
    \mathcal{P}_{k,N,D,D_0}(z)=c_{k,N,D,D_0,\infty} + c_{k,DD_0}\sum_{\substack{Q=[a,b,c] \in \mathcal{Q}_{N,DD_0}\\ a< 0 < Q_z}} \chi_{D_0}(Q)Q(z,1)^{k-1}.
\end{align*}
With
\begin{align}\label{eq: def c_infty c_2}
    c_{k,N,D,D_0,\infty}&:=\frac{-2^{3-2k}}{(2k-1)\binom{2k-2}{k-1}}\sum_{\substack{a \in \N, \\N |a}} a^{-k}\sum_{\substack{b \mod 2a,\\b^2\equiv DD_0 \mod 4a}}\chi_{D_0}\left(\left[a,b\frac{b^2-DD_0}{4a}\right]\right)\\
    c_{k,DD_0}&:=(-1)^k2^{3-2k}(DD_0)^{\frac{1}{2}-k}
\end{align}

Recall that we defined the local polynomial on the rational as
\begin{align}\label{eq: def-3 Plx}
    P_{k,N,D,D_0}(x)=c_{k,N,D,D_0,\infty} + c_{k,DD_0}\sum_{\substack{Q=[a,b,c] \in \mathcal{Q}_{N,DD_0}\\ a< 0 < Q(x,1)}} \chi_{D_0}(Q)Q(x,1)^{k-1}.
\end{align}

\begin{remark}
We show in Lemma \ref{lem: def lim Plz =Plx} that $\lim_{z \rightarrow x}\mathcal{P}_{k,N,D,D_0}(z)=P_{k,N,D,D_0}(x)$.
\end{remark}
When $\dim(S_{2k}(N))=1$ then $f_{k,N,D,D_0}=\alpha F$ for some $\alpha \in \C$ and therefore:
\begin{align}\label{eq: prod L-function equivalences dim =1}
\begin{split}
    L(F \otimes \chi_D,k)
   L(F \otimes \chi_{D_0},k)=0 &\Longleftrightarrow \langle F, f_{k,N,D,D_0}\rangle=0\\
   &\Longleftrightarrow  f_{k,N,D,D_0}=0\\
   &\Longleftrightarrow \mathcal{F}_{1-k,N,D,D_0}(z)=\mathcal{P}_{k,N,D,D_0}(z) 
   \end{split}
\end{align}
This is not true any longer if $\dim(S_{2k}(N))>1$. Using Hecke operators, we can prove an analogous statement in this case. We therefore study Hecke operators in the next section.
\subsection{Hecke operators}\label{sec: 3-Hecke Operators}
In this section we explain how to pick the Hecke operators as in the Theorem \ref{theo: main prod L-functions} and show how one can lift the Hecke operator from $f_{k,N,D,D_0}$ to $\mathcal{F}_{1-k,N,D,D_0}$. This is based on the ideas of \cite{Kong-2017} as well as \cite{males-mono-rollen-wagner}.

We start by explaining how we can recover a statement similar to equation \eqref{eq: prod L-function equivalences dim =1} when $\dim(S_{2k}(N))>1$ by choosing appropriate Hecke operators. The main idea is to project $f_{k,N,D,D_0}$ to the subspace spanned by a newform $F$ as follows.

Let $F =\sum_{n=1}^{\infty} c_n q^n\in S_{2k}(N)$ be a normalized newform. Then there exists a basis $\{F_n\}_n^m$ consisting of eigenforms with $F=F_1$. For each $n \neq 1$ there exists a prime $p_n$ such that $F_n$ and $F$ have different eigenvalues under the Hecke operator $T_{p_n}$. Let $a_{p_n}$ be the eigenvalue of $F_n$ under $T_{p_n}$. Then 
\begin{align*}
    F_n |_{2k} (T_{p_n}-a_{p_n})=0, \text{ and } F|_{2k} (T_{p_n}-a_{p_n})=(c_{p_n}-a_{p_n})F \neq 0.
\end{align*}
Hence
\begin{align}\label{rem: construction Hecke like operator}
    \mathbb{T}:=\prod_{n=2}^m(T_{p_n}-a_{p_n}): S_{2k}(N) \rightarrow \text{span}_{\C}\{F\}
\end{align}
is a non-zero map. 
Let $G= \sum_{n=1}^m \alpha_n F_n \in S_{2k}(N)$ for some complex numbers $\alpha_n$. Then
\begin{align*}
    G|_{2k} \mathbb{T}=\alpha_1 \prod_{n=2}^m(c_{p_n}-a_{p_n})F.
\end{align*}
Thus we have
\begin{align*}
    \langle F,G \rangle=\overline{\alpha_1} |F|^2 \text{ and }\langle F, G|_{2k}\mathbb{T} \rangle= \overline{\alpha_1} \prod_{n=2}^n\overline{(c_n-a_{p_n})}|F|^2
\end{align*}
In particular we have:
\begin{align*}
    \langle F,G \rangle=0 \Leftrightarrow \alpha_1=0 \Leftrightarrow \langle F, G|_{2k}\mathbb{T} \rangle= 0.
\end{align*}
\begin{remark}
    Note that it might be possible, that for some prime $p$ several eigenfunctions have the same eigenvalue, distinct than the one from $F$, under $T_p$. In this case, it is possible to construct an operator $\mathbb{T}$ with the same property consisting of fewer than $m-1$ factors. This is the case in the example from subsection \ref{subsec: Ex. S_4(25)}.
\end{remark}

We now explain how to lift Hecke operators from $f_{k,N,D,D_0}$ to $\mathcal{F}_{1-k,N,D,D_0}$.

\begin{lemma}\label{lem: lifting Hecke operator}
Let $k>0$ and let $D,D_0$ be two fundamental discriminants with $(-1)^kD,(-1)^kD_0>0$ and such that $DD_0$ is not a perfect square.

Let $\mathbb{T}=\prod_i(T_{p_i}-a_{p_i})$ be a product of Hecke operators and denote $\tilde{\mathbb{T}}=\prod_i(T_{p_i}-p^{1-2k}a_{p_i})$. Then
\begin{align*}
   \mathcal{F}_{1-k,N,D,D_0}|_{2-2k} \tilde{\mathbb{T}}&=\mathcal{P}_{k,N,D,D_0}|_{2-2k}\tilde{\mathbb{T}}+(DD_0\prod_ip_i^2)^{\frac{1}{2}-k}(f_{k,N,D,D_0}|_{2k}\mathbb{T})\\
    &-(DD_0\prod_ip_i^2)^{\frac{1}{2}-k}\frac{(2k-2)!}{(4\pi)^{2k-1}}\mathcal{E}_{f_{k,N,D,D_0}|_{2k}\mathbb{T}}.
\end{align*}
In particular we have
    \begin{align*}
        f_{k,N,D,D_0}|_{2k}\mathbb{T}=0 \Leftrightarrow
        \mathcal{F}_{1-k,N,D,D_0}|_{2-2k}\tilde{\mathbb{T}}(z)&=\mathcal{P}_{k,N,D,D_0}|_{2-2k}\tilde{\mathbb{T}}(z).
    \end{align*}
    for all $ z\in \mathbb{H}\setminus E_{N,DD_0}$.
\end{lemma}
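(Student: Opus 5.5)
The plan is to start from the splitting \eqref{eq: split F}, apply $|_{2-2k}\tilde{\mathbb{T}}$ to each of its three pieces, and reduce everything to one intertwining identity for a single prime $p$:
\begin{align*}
    \mathcal{E}_{G}|_{2-2k}T_p = p^{1-2k}\,\mathcal{E}_{G|_{2k}T_p},\qquad G^*|_{2-2k}T_p = p^{1-2k}\,(G|_{2k}T_p)^*,
\end{align*}
valid for every $G \in S_{2k}(N)$ with real coefficients. The normalisation of $T_p$ in weight $2-2k$ is the one obtained from the formula for $T_p$ in (i) of the preliminaries with $2k$ replaced by $2-2k$.

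For the holomorphic Eichler integral, I would check the identity on $q$-expansions. If $G=\sum a_n q^n$, then $\mathcal{E}_G=\sum a_n n^{1-2k}q^n$. The operator $T_p$ in weight $2-2k$ sends a coefficient $b_n$ to $b_{pn}+p^{1-2k}b_{n/p}$, up to the overall normalisation. Comparing this with $(a_{pn}+p^{2k-1}a_{n/p})\,n^{1-2k}$ gives exactly the factor $p^{1-2k}$. For $G^*$ I would use its expansion in incomplete gamma functions, or equivalently the identity $\xi_{2-2k}(G^*) \doteq G$ together with the commutation of $\xi$ with Hecke operators, which rescales by $p^{2k-1}$ because the weight changes from $2-2k$ to $2k$. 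Since $f_{k,N,D,D_0}$ has real coefficients (Remark \ref{rem: real cf of f_k,N,D,D_0}), and the $a_{p_i}$ may be taken real because the Hecke eigenvalues are real, the conjugation in $G^*$ causes no trouble.

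With the single-prime identity in hand, I would iterate it over the factors. One gets $\mathcal{E}_G|(T_p - p^{1-2k}a_p) = p^{1-2k}\mathcal{E}_{G|(T_p-a_p)}$, and the same for $G^*$. This is precisely why the lift $\tilde{\mathbb{T}}$ carries the factors $p_i^{1-2k}$. Composing over all $i$ produces the prefactor $\prod_i p_i^{1-2k}$, and combined with the $(DD_0)^{1/2-k}$ already present this gives $(DD_0\prod_i p_i^2)^{1/2-k}$. I expect the main obstacle to be keeping the weight $2-2k$ Hecke normalisation consistent with the paper's weight $2k$ normalisation; I will fix it by reading off the $q$-expansion computation above. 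The displayed formula follows from this, with the middle term read as $(f_{k,N,D,D_0}|_{2k}\mathbb{T})^*$.

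For the equivalence, the direction $\Rightarrow$ is immediate, since both Eichler integrals of $0$ vanish. Conversely, if the two sides agree on $\mathbb{H}\setminus E_{N,DD_0}$, then $G:=f_{k,N,D,D_0}|\mathbb{T}$ satisfies $c_1G^*-c_2\mathcal{E}_G=0$ on an open set. Applying $\xi_{2-2k}$ kills $\mathcal{E}_G$ and the local polynomial pieces locally, and sends $G^*$ to a nonzero multiple of $G$. Hence $G=0$ on an open set, and therefore everywhere by holomorphy.
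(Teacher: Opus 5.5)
Your proposal is correct and follows the paper's proof. That proof consists exactly of the two single-prime identities $\mathcal{E}_F|_{2-2k}(T_p-p^{1-2k}a_p)=p^{1-2k}\mathcal{E}_{F|_{2k}(T_p-a_p)}$ and $F^*|_{2-2k}(T_p-p^{1-2k}a_p)=p^{1-2k}(F|_{2k}(T_p-a_p))^*$, iterated over the factors of $\tilde{\mathbb{T}}$. Your extra remarks correctly supply details the paper leaves implicit: $a_{p_i}\in\mathbb{R}$ is needed because $G\mapsto G^*$ is conjugate-linear, the middle term should read $(f_{k,N,D,D_0}|_{2k}\mathbb{T})^*$, and the $\xi_{2-2k}$ argument gives the converse direction of the equivalence.
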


\begin{proof}
A direct calculations shows that for any form $F\in S_{2k}$:
\begin{align*}
   \mathcal{E}_F|_{2-2k} (T_p-p^{1-2k}a_p)&=p^{1-2k}\mathcal{E}_{F|_{2k}(T_p-a_p)}\\
    F^*|_{2-2k} (T_p-p^{1-2k}a_p)&=p^{1-2k}(F|_{2k}(T_p-a_p))^*.
\end{align*}
\end{proof}

We end this section by mentioning the behavior of $\mathcal{F}_{1-k,N,D,D_0}$ under the Fricke involution. In Lemma \ref{lem: Flm Fricke involution} we prove that 
\begin{align}\label{eq: Flm Fricke involution}
\mathcal{F}_{1-k,N,D,D_0}\Big\vert_{2-2k}\left(W_N - \left(\frac{D_0}{N}\right) \text{ id} \right) = 0.
\end{align}

\section{The local polynomial}\label{sec: local polynomial}
Before proving the main theorem, we describe some properties of the non-constant part of the local polynomial on the rational numbers $\Q$. This section has two goals: we want to define $\lim_{z \rightarrow x}\mathcal{P}_{k,N,D,D_0}(z)$ and show that this definition is the only sensible one. Moreover we explain how it is possible that in some cases $P_{k,N,D,D_0}(x)$ is constant for all $x \sim_{\Gamma_0(N)}0$.

To simplify the notation for $\Delta:=DD_0$  we set:
\begin{align*}
    \mathcal{Q}_{N,\Delta}(x):=\{ Q \in \mathcal{Q}_{N, \Delta} : a<0<Q(x,1)\}.
\end{align*}
We start by defining the limit $\lim_{z \rightarrow x} \mathcal{P}_{k,N,D,D_0}$ in the following way:
\begin{lemma}\label{lem: def lim Plz =Plx}
We define $\lim_{z \rightarrow x} \mathcal{P}_{k,N,D,D_0}(z):=\lim_{y \rightarrow 0^+}\mathcal{P}_{k,N,D,D_0}(x+iy)$. Then we have that:
    \begin{align*}
        \lim_{z \rightarrow x} \mathcal{P}_{k,N,D,D_0}(z)=P_{k,N,D,D_0}(x).
    \end{align*}
\end{lemma}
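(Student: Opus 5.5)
The plan is to compare the two index sets directly. For $z=x+iy$ the sum in $\mathcal{P}_{k,N,D,D_0}(z)$ runs over $Q=[a,b,c]\in\mathcal{Q}_{N,\Delta}$ with $a<0<Q_z$, where $Q_z=\frac{1}{y}(a(x^2+y^2)+bx+c)=\frac{1}{y}\left(Q(x,1)+ay^2\right)$. The sum in $P_{k,N,D,D_0}(x)$ runs over $\mathcal{Q}_{N,\Delta}(x)=\{a<0<Q(x,1)\}$. Since $a<0$, the condition $Q_z>0$ is equivalent to $Q(x,1)>|a|y^2$. This is stronger than $Q(x,1)>0$, and it increases to it as $y\to 0^+$. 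So
\begin{align*}
\{Q: a<0<Q_{x+iy}\}\subseteq \mathcal{Q}_{N,\Delta}(x),
\end{align*}
and every fixed $Q\in\mathcal{Q}_{N,\Delta}(x)$ eventually belongs to the left-hand set. The termwise limit of $Q(x+iy,1)^{k-1}$ is $Q(x,1)^{k-1}$ by continuity of polynomials, and the constant $c_{k,N,D,D_0,\infty}$ is unaffected.

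The key step is to show that $\mathcal{Q}_{N,\Delta}(x)$ is finite. Then only finitely many terms appear, and the limit can be taken term by term. I would argue as follows. The set $\{a<0<Q(x,1)\}$ is the set of forms whose geodesic $S_Q$ (a semicircle, since $a\neq0$ because $\Delta$ is not a square) encloses $x$: $Q(x,1)=a(x-\alpha)(x-\alpha')$ with $a<0$ is positive exactly when $x$ lies strictly between the roots $\alpha,\alpha'=\frac{-b\pm\sqrt{\Delta}}{2a}$.

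Write $x=r/s$ in lowest terms. Then $s^2Q(x,1)=Q(r,s)$ is a positive integer, so $Q(r,s)\geq1$, i.e. $Q(x,1)\geq s^{-2}$. On the other hand, since $x$ lies between the roots,
\begin{align*}
Q(x,1)=|a|(x-\alpha)(\alpha'-x)\le |a|\left(\tfrac{|\alpha-\alpha'|}{2}\right)^2=\frac{\Delta}{4|a|}.
\end{align*}
Hence $|a|\le \Delta s^2/4$, so $a$ ranges over finitely many values. For each fixed $a$, the center $-b/(2a)$ lies within radius $\sqrt{\Delta}/(2|a|)$ of $x$, so $b$ is bounded. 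Finally $c=(b^2-\Delta)/(4a)$ is determined. Thus $\mathcal{Q}_{N,\Delta}(x)$ is finite, and $P_{k,N,D,D_0}(x)$ is well defined.

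To conclude, choose $y_0>0$ with $y_0^2<\min_{Q\in\mathcal{Q}_{N,\Delta}(x)}Q(x,1)/|a|$. For $0<y<y_0$ the two index sets coincide. Then $\mathcal{P}_{k,N,D,D_0}(x+iy)-P_{k,N,D,D_0}(x)$ is a finite sum of terms
\begin{align*}
c_{k,DD_0}\chi_{D_0}(Q)\left(Q(x+iy,1)^{k-1}-Q(x,1)^{k-1}\right),
\end{align*}
each tending to $0$, which proves the lemma.

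The main obstacle is the finiteness claim. Without it, the set $\{Q: a<0<Q_z\}$ for $z\in\mathbb{H}$ is finite by an analogous bound involving $y$, but that bound degenerates as $y\to0$. The rationality of $x$, through the integrality of $Q(r,s)$, is exactly what rescues uniformity. For irrational $x$ the argument would fail, which is why the lemma is stated only on $\Q$.
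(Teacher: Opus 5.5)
Your proof is correct and follows the same route as the paper. The index sets $\{Q : a<0<Q_{x+iy}\}$ grow monotonically to $\mathcal{Q}_{N,\Delta}(x)$ as $y\to 0^+$, so by finiteness they coincide for all small $y$, and the limit is then taken termwise. The one difference is that the paper only asserts that $\mathcal{Q}_{N,\Delta}(x)$ is finite for $x\in\mathbb{Q}$, whereas your bound $s^{-2}\le Q(x,1)\le \Delta/(4|a|)$ actually proves it (when choosing $y_0$, the case $\mathcal{Q}_{N,\Delta}(x)=\emptyset$ is trivial).
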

\begin{proof}
For $x,y \in \R$ with $y \geq 0$, we set
\begin{align*}
    \mathcal{Q}_{N,DD_0}(x+iy):= \{ Q \in \mathcal{Q}_{N,DD_0}: a<0<a(x^2+y^2)+bx+c\}
\end{align*}

Let $Q \in \mathcal{Q}_{N,DD_0}$ with $a<0$. Let $x, y_1,y_2 \in \R$ with $0 \leq y_1 <y_2$, then we have:
\begin{align*}
    a(x^2+y_1^2)+bx+c > a(x^2+y_2^2)+bx+c.
\end{align*}
Therefore if $Q_{x+iy_2}>0$, then $Q_{x+iy_1}>0$. This shows that $\mathcal{Q}_{N,DD_0}(x+iy_2) \subset \mathcal{Q}_{N,DD_0}(x+iy_1)$. For $x \in \Q$, we know that $\mathcal{Q}_{N,DD_0}(x)$ is finite. Therefore, there exists $y_0>0$ such that for all $0 \leq y < y_0$: 
\begin{align*}
    \mathcal{Q}_{N,DD_0}(x)=\mathcal{Q}_{N,DD_0}(x+iy)
\end{align*}
and the claim follows. 
\end{proof}
We will justify in a moment, why we do not allow to take the limit along other directions. We first make the following observation:

Let $p_{k,N,D,D_0}$ be the non-constant part of $P_{k,N,D,D_0}(x)$, i.e. for $x \in \Q$ we set:
\begin{align}\label{eq: non constant part local polynomial}
         p_{k,N,D,D_0}(x):=\sum_{\substack{Q \in \mathcal{Q}_{N,DD_0}\\a<0<Q(x,1)}}\chi_{D_0}(Q)Q(x,1)^{k-1}.
     \end{align}
Then $P_{k,N,D,D_0}(x)=c_{k,N,D,D_0,\infty}+c_{k,DD_0}p_{k,N,D,D_0}(x)$. One immediate corollary from Theorem \ref{theo: main prod L-functions} is that:
\begin{corollary}
    Assume that $\dim(S_{2k}(N))=1$, and $F \in S_{2k}(N)$ is nonzero. Let $D,D_0$ be two discriminants as in Theorem \ref{theo: main prod L-functions} and such that $L(F \otimes \chi_D,k)=0$ then $p_{k,N,D,D_0}(x)$ is constant for all $ x \sim_{\Gamma_0(N)} 0$.
\end{corollary}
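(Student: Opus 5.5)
The corollary is Theorem \ref{theo: main prod L-functions} specialised to one dimension, so I would read it off the implication (i) $\Rightarrow$ (iii).

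First, I would note that when $\dim(S_{2k}(N))=1$ the space is spanned by the single newform $F$ (normalised, it is a newform). Hence the construction of Section \ref{sec: 3-Hecke Operators} needs no factors at all. I would take $\mathbb{T}$ to be the empty product, i.e.\ the identity, which maps $S_{2k}(N)$ onto $\text{span}_{\C}\{F\}$ and is not the zero map. Then $\tilde{\mathbb{T}}$ is also the identity, and the theorem applies with $P_{k,N,D,D_0}|_{2-2k}\tilde{\mathbb{T}}=P_{k,N,D,D_0}$.

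Second, I would check the hypotheses. The discriminant conditions and the Atkin--Lehner condition on $\tau$ are assumed ("as in Theorem \ref{theo: main prod L-functions}"). The assumption $L(F\otimes\chi_D,k)=0$ makes the product $L(F\otimes\chi_D,k)L(F\otimes\chi_{D_0},k)$ vanish, which is (i). By the equivalence (i) $\Leftrightarrow$ (iii),
\begin{align*}
P_{k,N,D,D_0}(x)=c_{k,N,D,D_0,\infty}+c_{k,DD_0}\,p_{k,N,D,D_0}(x)=0 \quad \text{for all } x\sim_{\Gamma_0(N)}0.
\end{align*}

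Third, since $c_{k,DD_0}=(-1)^k2^{3-2k}(DD_0)^{\frac12-k}\neq 0$, I would solve for the non-constant part and obtain $p_{k,N,D,D_0}(x)=-c_{k,N,D,D_0,\infty}/c_{k,DD_0}$ for every $x\sim_{\Gamma_0(N)}0$. This is a constant independent of $x$, which proves the claim.

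The only point needing care is that the theorem is stated for a genuinely nonzero operator $\mathbb{T}$, so I must confirm that the identity is admissible. If one insists on a nonempty product, I would instead take $\mathbb{T}=T_p-a$ for some prime $p\nmid N$ and any $a$ different from the $T_p$-eigenvalue of $F$. This is a nonzero scalar on $S_{2k}(N)$. Then $f_{k,N,D,D_0}|\mathbb{T}=0$ exactly when $f_{k,N,D,D_0}=0$, and one argues directly via equation \eqref{eq: prod L-function equivalences dim =1}, which yields $\mathcal{F}_{1-k,N,D,D_0}=\mathcal{P}_{k,N,D,D_0}$, together with the section's argument that modularity forces vanishing at $x\sim 0$. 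Either way, the constancy follows because $c_{k,DD_0}\neq0$.
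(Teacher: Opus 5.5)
Your proof is correct and follows the paper, which presents this statement as an immediate consequence of (i) $\Rightarrow$ (iii) in Theorem \ref{theo: main prod L-functions}: the Hecke product is dropped because $\dim S_{2k}(N)=1$, and then $c_{k,N,D,D_0,\infty}+c_{k,DD_0}\,p_{k,N,D,D_0}(x)=0$ is solved for $p_{k,N,D,D_0}(x)$ using $c_{k,DD_0}\neq 0$. Your parenthetical claim that $F$ is a newform is also right: if $S_{2k}(M)\neq 0$ for some proper divisor $M$ of $N$, a nonzero form $G(z)$ there and its lift $G((N/M)z)$ would be linearly independent in $S_{2k}(N)$, which is impossible when the dimension is $1$.
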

One might naively guess, that the reason for this is because for all $x_0 \in \Q$, the sum $\sum_{\substack{Q \in \mathcal{Q}_{N,DD_0}\\a<0<Q(x_0,1)}}\chi_{D_0}(Q)Q(X,1)^{k-1}$ is a constant polynomial in $X$ independent of $x_0$. However, one can check computationally that this is not the case.
\begin{example}
When $N=9$, $D=172$ and $D_0=13$ then we can check that for different values of $x_0$ the local polynomial $p_{k,N,D,D_0}(x_0)$ is constant, while the polynomial in $X$ given by $\sum_{\substack{Q \in \mathcal{Q}_{N,DD_0}\\a<0<Q(x_0,1)}}\chi_{D_0}(Q)Q(X,1)^{k-1}$ is not. Some numerical values are in the table below:
\begin{table}[H]
    \centering
    \begin{tabular}{c|cc}
    $x_0$ & $\sum_{\substack{Q \in Q_{9,172\cdot13}\\a<0<Q(x_0,1)}}\chi_{13}(Q)Q(X,1)^{k-1}$ & $p_{2,9,172,13}(x_0)$\\
    \hline
    0 & $-3024X^2 + 336$ & 336 \\
    1/2 & $-12096X^2 + 12096X - 2688$ & 336 \\
    1/3 & $-12096X^2 + 8064X - 1008$ & 336 \\
    1/5 & $-75600X^2 + 30240X - 2688$ & 336 \\
    1/7 & $-148176X^2 + 42336X - 2688$ & 336 \\
    1/9 & $-27216X^2 + 6048X$ & 336 \\
    \end{tabular}
    \caption{ The polynomial $\sum_{\substack{Q \in Q_{9,172\cdot 13}\\a<0<Q(x_0,1)}}\chi_{13}(Q)Q(X,1)$ and $p_{2,9,172,13}(x_0)$ }
    \label{tab: comparison p_{k,N,D,D_0}, N=9 D=172,D_0=13}
\end{table}
\end{example}
Why can we still expect that $p_{k,N,D,D_0}(x_0)$ is constant in these cases? The answer to this question as well as the reason for the definition of limit $\lim_{z \rightarrow x}\mathcal{P}_{k,N,D,D_0}$, lies in the sets $\mathcal{Q}_{N,DD_0}(x)$. We therefore study them.

It suffices to consider $\mathcal{Q}_{N,\Delta}(x)$ for $0\leq x \leq1/2$, since we have the following two bijections:
\begin{align}\label{eq: bij: binary quadratic forms}
    \mathcal{Q}_{N,\Delta}(x)&\rightarrow \mathcal{Q}_{N,\Delta}(-x), \quad\text{and } &\mathcal{Q}_{N,\Delta}(x+1)&\rightarrow \mathcal{Q}_{N,\Delta}(x)\\
    [a,b,c]&\mapsto [a,-b,c] \quad &[a,b,c]&\mapsto [a,b+2a,a+b+c].
\end{align}
Moreover, we have the following result:
\begin{lemma}\label{lem: Q_N,Delta}
    Assume that $\sqrt{\Delta}>N$. Then the following are equivalent:
    \begin{itemize}
        \item[(i)] $\mathcal{Q}_{N,\Delta}\neq \emptyset$.
        \item[(ii)] For all $x \in \Q$ we have that $\mathcal{Q}_{N,\Delta}(x) \neq \emptyset$.
    \end{itemize}
\end{lemma}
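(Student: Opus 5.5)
The direction (ii) $\Rightarrow$ (i) is immediate, since $\mathcal{Q}_{N,\Delta}(x)\subset\mathcal{Q}_{N,\Delta}$. For (i) $\Rightarrow$ (ii), I would construct, for each $x\in\Q$, an explicit form with leading coefficient exactly $-N$. The idea is that such forms have root intervals of length $\sqrt{\Delta}/N>1$. Translating $b$ by multiples of $2N$ slides this interval along $\R$ in steps of $1$, so some translate must cover $x$.

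\textbf{Step 1 (a good residue $b_0$).} Fix $[a_0,b_0,c_0]\in\mathcal{Q}_{N,\Delta}$. Then $b_0^2-\Delta=4a_0c_0$ with $N\mid a_0$, so $b_0^2\equiv\Delta \pmod{4N}$.

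\textbf{Step 2 (a family of forms).} For every $t\in\Z$ set $b_t:=b_0+2Nt$. Then $b_t^2=b_0^2+4Nb_0t+4N^2t^2\equiv b_0^2\equiv\Delta\pmod{4N}$. Hence
\begin{align*}
c_t:=\frac{b_t^2-\Delta}{-4N}\in\Z,
\end{align*}
and $Q_t:=[-N,b_t,c_t]$ lies in $\mathcal{Q}_{N,\Delta}$ with leading coefficient $-N<0$.

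\textbf{Step 3 (where $Q_t(X,1)$ is positive).} Since the leading coefficient of $Q_t(X,1)$ is negative, $Q_t(X,1)>0$ holds exactly on the open interval between its real roots. These roots are
\begin{align*}
\frac{b_t}{2N}\pm\frac{\sqrt{\Delta}}{2N},
\end{align*}
and they are real because $\Delta>0$. So the positivity interval is
\begin{align*}
I_t=\left(\frac{b_0}{2N}+t-\frac{\sqrt{\Delta}}{2N},\ \frac{b_0}{2N}+t+\frac{\sqrt{\Delta}}{2N}\right).
\end{align*}
This is the fixed interval $I_0$ translated by $t$.

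\textbf{Step 4 (covering $x$).} By hypothesis $I_0$ has length $\sqrt{\Delta}/N>1$. Therefore the integer translates $I_t$, $t\in\Z$, cover $\R$, and for any $x\in\Q$ there is a $t$ with $x\in I_t$. For this $t$ we have $Q_t(x,1)>0$ and leading coefficient $-N<0$, so $Q_t\in\mathcal{Q}_{N,\Delta}(x)$.

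The endpoints of $I_t$ are irrational because $\Delta=DD_0$ is not a square, so a rational $x$ can never sit on a boundary. Even without this observation the covering argument works, since open intervals of length greater than $1$ translated by integers already cover $\R$.

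There is no real obstacle here. The only point needing care is Step 2: one must check that $c_t$ is an integer, and this uses only the congruence $b_0^2\equiv\Delta\pmod{4N}$ inherited from the given form.
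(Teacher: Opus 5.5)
Your proof is correct and is essentially the paper's argument: you normalize to leading coefficient $-N$, shift $b$ by multiples of $2N$ so that the positivity interval of length $\sqrt{\Delta}/N>1$ moves in integer steps, and conclude that these translates cover $\R$. Defining $c_t$ directly from the congruence $b_0^2\equiv\Delta \pmod{4N}$ is slightly cleaner than the paper's version: it needs no initial form with $a<0$, and it avoids the sign slip in the paper's formula $[-N,b+2rN,c+br-Nr^2]$, whose constant term should be $c-br-Nr^2$.
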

\begin{proof}
     Clearly $(ii)$ implies $(i)$. Thus we only need to show that $(i)$ implies $(ii)$. Assume that there exists a binary quadratic form $Q=[a,b,c]$ which satisfies $b^2-4ac=\Delta$, $a<0$ and $N|a$. Without loss of generality, we may assume that $a=-N$. If not, then $a=-a_0N$ and we can replace the form with $[-N,b,a_0c]$. For any $r \in \Z$, the binary quadratic form $Q_r:=[-N,b+2rN,c+br-Nr^2]$ is in $\mathcal{Q}_{N,\Delta}$. We have that $Q_r(x,1)>0$ for all $ x \in I_r:=(\frac{-b}{-2N}-r+\frac{\sqrt{\Delta}}{-2N},\frac{-b}{-2N}-r-\frac{\sqrt{\Delta}}{-2N})$. Since $\sqrt{\Delta}>N$, the set of intervals $\{I_r\}_{r\in \Z}$ covers $\R$ and therefore for any $x \in \Q$, there is at least one element in $\mathcal{Q}_{N,\Delta}(x)$.
\end{proof}

This is not true any longer when considering complex numbers on the upper half plane, i.e. there are $z \in \mathbb{H}$ such that $\{Q \in \mathcal{Q}_{N,\Delta}: a<0<Q_z\}=\emptyset$, even if $\mathcal{Q}_{N,\Delta}\neq \emptyset$. Recall that $Q_z:=\frac{1}{\im(z)}(a|z|^2+b\re(z)+c)$ and that $ S_Q:=\{ z \in \mathbb{H}: Q_z=0\}$.
Let $Q\in \mathcal{Q}_{N,\Delta}$ with $a<0$.  Since $a<0$, then $Q_z>0$ for all $z$ in the connected component whose boundary is the real line and $S_Q$ and $Q_z<0$ for all $z$ outside of this connected component. But as $N|a$ and $b^2-4ac=\Delta$, we know that $S_Q$ has a radius of at most $\frac{\sqrt{\Delta}}{2N}$. Thus for all $z$ with $\im(z)>\frac{\sqrt{\Delta}}{2N}$, we have that $Q_z<0$. This leads to the following remark:
\begin{remark}\label{rem: Q_z=0 if im >>0}
\begin{itemize}
    \item[(i)] Even if $\mathcal{Q}_{N,\Delta}\neq \emptyset$, then for all $z \in \mathbb{H}$ with $\im(z)>\frac{\sqrt{\Delta}}{2N}$ we have $\{Q \in \mathcal{Q}_{N,\Delta}: a<0<Q_z\}=\emptyset$.
    \item[(ii)] In particular for $z \in \mathbb{H}$ with  $\im(z)>\frac{\sqrt{\Delta}}{2N}$ we have
    \begin{align*}
    \sum\limits_{\substack{Q \in \mathcal{Q}_{N,DD_0},\\a<0<Q_z}}\chi_{D_0}(Q)Q(z,1)^{k-1}=0    
    \end{align*}
    thus $\mathcal{P}_{k,N,D,D_0}(z)=c_{k,N,D,D_0,\infty}$ is constant.
\end{itemize}
\end{remark}
\subsection{The case $N=1$}
In this part, we assume that $N=1$. Our goal is to show that if we fix a rational number $x_1$, then there is at most one more number $x_2$ such that $Q_{1,\Delta}(x_1) = Q_{1,\Delta}(x_2)$. This clarifies, why in some cases $p_{k,1,D,D_0}(x_0)$ is constant and why the definition of the limit as in Lemma \ref{lem: def lim Plz =Plx} is the only one that we can expect to converge.

In \cite{Zagier-1999} Zagier studied the following function:
\begin{align*}
    p_{1, \Delta}(x):=\sum_{Q \in Q_{1, \Delta}(x)}Q(x,1).
\end{align*}
He showed in particular that this function is constant:
\begin{theorem}[Theorem 1 in \cite{Zagier-1999}]
    For every fundamental discriminant $\Delta$ the function $p_{1, \Delta}$ has a constant value $c_{\Delta}$.
\end{theorem}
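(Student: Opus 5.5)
The plan is to derive Zagier's theorem as the case $N=1$, $k=2$ of the locally harmonic Maass form machinery of Section \ref{sec: A locally harmonic Maass form.}, with the vanishing of $S_4(1)$ doing the work. Take $N=1$, $k=2$ and a nonsquare $\Delta=DD_0>0$; the same argument applies to the untwisted $\mathcal{F}_{1-k,\Delta}$ of Bringmann–Kane–Kohnen. By the splitting \eqref{eq: split F}, the form $\mathcal{F}_{-1,1,D,D_0}$ equals $\mathcal{P}_{2,1,D,D_0}$ plus the holomorphic and non-holomorphic Eichler integrals of $f_{2,1,D,D_0}\in S_4(1)$. Since $S_4(1)=\{0\}$, both Eichler integrals vanish. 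Hence $\mathcal{P}_{2,1,D,D_0}=\mathcal{F}_{-1,1,D,D_0}$ on $\mathbb{H}\setminus E_{1,\Delta}$, so $\mathcal{P}$ is invariant under $|_{-2}\gamma$ for all $\gamma\in SL_2(\Z)$. On each connected component of $\mathbb{H}\setminus E_{1,\Delta}$ it is a single polynomial of degree at most $2$.

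The first geometric step is to attach a component to each rational point. Since $\Delta$ is not a square, no geodesic $S_Q$ has a rational endpoint. Fix $x\in\Q$ and $\gamma=\begin{pmatrix} a&b\\ c&d\end{pmatrix}\in SL_2(\Z)$ with $\gamma(\infty)=x$. By Remark \ref{rem: Q_z=0 if im >>0}, the region $\{\im z>\sqrt{\Delta}/2\}$ meets no geodesic, so it lies in one component $C_\infty$, on which $\mathcal{P}\equiv c_{2,1,D,D_0,\infty}$. Its image $\gamma(C_\infty)$ is a horoball tangent to $\R$ at $x$, and it lies in a single component $C_x$, because $E_{1,\Delta}$ is $SL_2(\Z)$-invariant. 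This horoball contains the vertical segment $x+iy$ for small $y>0$. By Lemma \ref{lem: def lim Plz =Plx}, $P(x)=\lim_{y\to0^+}\mathcal{P}(x+iy)$ is therefore the value at $x$ of the polynomial representing $\mathcal{P}$ on $C_x$.

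The key computation is to use modularity to identify that polynomial. Write $\gamma^{-1}=\begin{pmatrix} d&-b\\ -c&a\end{pmatrix}$. For $z\in C_x$ we have $\gamma^{-1}z\in C_\infty$, so
\begin{align*}
\mathcal{P}(z)=(\mathcal{P}|_{-2}\gamma^{-1})(z)=(-cz+a)^{2}\,\mathcal{P}(\gamma^{-1}z)=c_{2,1,D,D_0,\infty}(-cz+a)^2 .
\end{align*}
This is a polynomial in $z$, and letting $z\to x=a/c$ gives $P(x)=0$ (the case $c=0$, i.e. $x=\infty$, is not needed). Consequently $c_{2,1,D,D_0,\infty}+c_{2,DD_0}\,p_{2,1,D,D_0}(x)=0$ for every $x\in\Q$, so $p(x)=-c_{2,1,D,D_0,\infty}/c_{2,DD_0}$ is constant. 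Taking $D_0=1$ (so that $\chi_{D_0}\equiv1$ and $\Delta=D$), or equivalently using Bringmann–Kane–Kohnen's untwisted form, recovers $p_{1,\Delta}(x)=\sum_{Q\in\mathcal{Q}_{1,\Delta}(x)}Q(x,1)$ and its constancy on $\Q$. Constancy on all of $\R$ then follows by a density argument: $p_{1,\Delta}$ is continuous on $\R$, because terms enter or leave the sum exactly where $Q(x,1)=0$, and it is locally a finite sum.

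I expect the main obstacle to be the geometric step: showing rigorously that the vertical approach to $x$ stays inside the single component $\gamma(C_\infty)$, i.e. that only finitely many geodesics come near $x$ and none ends at $x$. This is where non-squareness of $\Delta$ is used. The remaining checks are that the modularity of $\mathcal{F}$ transfers to $\mathcal{P}$ pointwise off $E_{1,\Delta}$, and that the normalizing constants are as in \eqref{eq: def c_infty c_2}.
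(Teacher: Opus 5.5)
The paper does not prove this statement; it quotes it from Zagier. Your argument is correct for $x\in\Q$, which is the only range on which the paper uses $p_{1,\Delta}$. (For $\Delta<0$ the sum is empty, so only nonsquare $\Delta>0$ matters.) It is, however, a genuinely different route from the elementary one.

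The elementary route is essentially already in Section~\ref{sec: local polynomial}. For $N=1$ it uses three ingredients:
\begin{itemize}
\item[(1)] the identity $x^2p_{1,\Delta}(1/x)-p_{1,\Delta}(x)=-p_{1,\Delta,0}(x)$;
\item[(2)] evenness and $1$-periodicity of $p_{1,\Delta}$;
\item[(3)] the involutions $[a,b,c]\mapsto[a,-b,c]$ and $[a,b,c]\mapsto[-c,b,-a]$ of $\{a<0<c\}$.
\end{itemize}
Together these give $p_{1,\Delta,0}(x)=C(1-x^2)$ with $C=p_{1,\Delta}(0)$. Note the sign: in general $A=-CN$, not $A=CN$ as written in the paper; for $\Delta=5$ one gets $p_{1,5,0}(x)=2-2x^2$. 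Hence $G=p_{1,\Delta}-C$ satisfies $G(x+1)=G(x)$, $G(x)=x^2G(-1/x)$ for $x\neq 0$, and $G(0)=0$. Running the Euclidean algorithm on the denominator then gives $G\equiv 0$ on $\Q$. That proof is purely combinatorial and self-contained: for $k=2$ the ``period polynomial'' $C(1-x^2)$ is a coboundary for free.

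Your proof instead imports the splitting theorem (whose proof the paper only sketches, following Bringmann--Kane--Kohnen) together with $S_4(1)=\{0\}$. It then reads off $P(x)=0$ from the cusp at $\infty$, which is the same mechanism as in the paper's final proposition for $x=q_1/(q_2N)$. In exchange, it explains \emph{why} the sum is constant, namely the vanishing of $f_{2,1,D,D_0}$. It also applies verbatim to $\sum\chi_{D_0}(Q)Q(x,1)^{k-1}$ whenever $S_{2k}(1)=\{0\}$, under the paper's sign conditions on $D,D_0$; there the elementary route would need Eichler--Shimura theory to see that the corresponding polynomial is a coboundary. The obstacle you anticipate is already settled by what you wrote: $\gamma(C_\infty)$ misses $E_{1,\Delta}$ because $E_{1,\Delta}$ is $SL_2(\Z)$-invariant, so no counting of geodesics near $x$ is needed.

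Your final step, extending constancy to all of $\R$, should be dropped, because its justification is wrong. The sum is not locally finite near any point:
\begin{itemize}
\item[(a)] For a form with $a<0$, the positivity interval between its roots has length $\sqrt{\Delta}/|a|$.
\item[(b)] If $\gamma(\infty)=r\in\Q$, the translates $\gamma T^n$ of one such interval shrink to $r$ as $n\to\infty$.
\item[(c)] Hence every open interval contains the entire positivity interval of infinitely many forms.
\end{itemize}
Each term $\max(0,Q(x,1))$ is continuous and nonnegative, so your density argument gives, by lower semicontinuity, only $p_{1,\Delta}(x)\le c_\Delta$ at irrational $x$, not equality. Since the paper only considers $p_{1,\Delta}$ on $\Q$, this does not affect the statement.
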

This immediately implies the following corollary:
\begin{corollary}
    For any $x_1 \in \Q$, there exists at most one other $x_2 \in \Q$ such that 
    \begin{align*}
        Q_{1,\Delta}(x_1) = Q_{1,\Delta}(x_2).
    \end{align*}
\end{corollary}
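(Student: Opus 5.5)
The plan is to turn Zagier's constancy statement into a polynomial equation in one variable. A quadratic polynomial with negative leading coefficient has at most two roots, which is exactly the bound we want.

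\textbf{Step 1: the sets are nonempty.} Fix $x_1\in\Q$ and write $S:=\mathcal{Q}_{1,\Delta}(x_1)$. We apply Lemma \ref{lem: Q_N,Delta} with $N=1$.
\begin{itemize}
\item Its hypothesis $\sqrt{\Delta}>1$ holds, since $\Delta$ is a positive non-square discriminant, so $\Delta\geq 5$.
\item We need $\mathcal{Q}_{1,\Delta}\neq\emptyset$. Choose $b$ with $b\equiv \Delta \pmod 2$, so that $b^2\equiv\Delta \pmod 4$. Then $[-1,b,(\Delta-b^2)/4]$ has discriminant $b^2+(\Delta-b^2)=\Delta$ and leading coefficient $-1<0$.
\end{itemize}
The lemma then gives $\mathcal{Q}_{1,\Delta}(x)\neq\emptyset$ for every rational $x$; in particular $S\neq\emptyset$. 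By the argument in the proof of Lemma \ref{lem: def lim Plz =Plx}, $S$ is also finite.

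\textbf{Step 2: the auxiliary polynomial.} Define
\begin{align*}
g_S(X):=\sum_{Q=[a,b,c]\in S} Q(X,1)=\Big(\sum_{Q\in S}a\Big)X^2+\Big(\sum_{Q\in S}b\Big)X+\sum_{Q\in S}c .
\end{align*}
Every $Q\in S$ has $a<0$ and $S$ is nonempty, so the leading coefficient $\sum_{Q\in S} a$ is strictly negative. Hence $g_S(X)-c_\Delta$ is a nonzero polynomial of degree exactly $2$, where $c_\Delta$ is Zagier's constant.

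\textbf{Step 3: counting.} Suppose $x\in\Q$ satisfies $\mathcal{Q}_{1,\Delta}(x)=S$. Then Zagier's theorem gives
\begin{align*}
g_S(x)=\sum_{Q\in\mathcal{Q}_{1,\Delta}(x)}Q(x,1)=p_{1,\Delta}(x)=c_\Delta ,
\end{align*}
so $x$ is a root of $g_S(X)-c_\Delta$. This polynomial has at most two roots, and $x_1$ is one of them. Therefore at most one further rational $x_2\neq x_1$ can satisfy $\mathcal{Q}_{1,\Delta}(x_2)=\mathcal{Q}_{1,\Delta}(x_1)$.

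\textbf{Main obstacle.} The only delicate point is that $g_S-c_\Delta$ is not identically zero. This rests on $S$ being nonempty, which is why Step 1 is needed. Once that is settled, the rest is immediate from Zagier's theorem.
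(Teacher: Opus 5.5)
Your proof is correct and is the same argument as the paper's: if $\mathcal{Q}_{1,\Delta}(x)=\mathcal{Q}_{1,\Delta}(x_1)$, then Zagier's theorem makes $x$ a root of $\sum_{Q\in\mathcal{Q}_{1,\Delta}(x_1)}Q(X,1)-c_\Delta$, and this polynomial has at most two roots. Your Step 1 usefully supplies what the paper leaves implicit, namely that this polynomial genuinely has degree $2$: its leading coefficient $\sum a$ is strictly negative because $\mathcal{Q}_{1,\Delta}(x_1)\neq\emptyset$, and this is exactly where the positivity and non-squareness of $\Delta$ are needed.
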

\begin{proof}
    If $x_1, x_2 \in \Q$ such that $ Q_{1,\Delta}(x_1) = Q_{1,\Delta}(x_2)$, then we have that
    \begin{align*}
        \sum_{Q_{1, \Delta (x_1)}}Q(x_1,1)= \sum_{Q_{1, \Delta (x_1)}}Q(x_2,1)=c_{\Delta}
    \end{align*}
    Thus the polynomial of second degree $\sum_{Q_{1, \Delta (x_1)}}Q(x,1)-c_{\Delta}$ has two roots. Since it can have at most two roots, there is at most one such $x_2$.
\end{proof}
\subsection{General case}
We now turn to the case when $N\geq 1$. One might assume that one can prove the general case following the same methods from \cite{Zagier-1999}. However, surprisingly, when $N>1$ one realizes that studying the sets $\mathcal{Q}_{N,\Delta}(x)$ becomes more involved. Therefore, here we only show that for any integer $n_0$ and any neighborhood $\mathcal{U}$ of $n_0$, there exist rational numbers $x \in \mathcal{U}$ such that $\mathcal{Q}_{N,\Delta}(n_0)\neq \mathcal{Q}_{N,\Delta}(x)$.

We begin by defining a generalization of the local polynomial defined by Zagier:
\begin{align}\label{eq: def p_N,Delta}
        p_{N,\Delta}(x):=\sum_{Q \in \mathcal{Q}_{N,\Delta}(x)}Q(x,1).
\end{align}
Note that the difference between  $p_{N,\Delta}$ and $p_{k,N,D,D_0}$ is that in the latter one we multiply the quadratic forms with the generalized genus character. A straightforward computation shows that $p_{N, \Delta}(x+1)=p_{N, \Delta}(x)$.
\begin{remark}
We saw earlier that if $N=1$, then $p_{N,\Delta}(x)$ is constant. If $N>1$ this is no longer true. Indeed for $N=9$, $\Delta=172\cdot 13$, we find the following values:
\begin{table}[H]
    \centering
    \begin{tabular}{c|c}
    $x$ & $p_{N,\Delta}$\\
    \hline
    0 & 696 \\
        1/2 & 680 \\
1/3 & 2056/3 \\
1/5 & 17272/25 \\
1/7 & 34040/49 \\
1/9 & 696 \\
    \end{tabular}
    \caption{Some explicit values of the local polynomial $p_{9,172\cdot 13}(x)$.}
    \label{tab: p_9,172*13 }
\end{table}
\end{remark}
We also define the polynomial described by the binary quadratic forms corresponding to zero:
\begin{align*}  
        p_{N,\Delta,0}(x):=\sum_{\substack{Q \in \mathcal{Q}_{N,\Delta}\\a<0<c }}Q(x,1)= \sum_{\substack{Q \in \mathcal{Q}_{N,\Delta}(0)}}Q(x,1).
\end{align*}
Note that if $[aN,b,c] \in \mathcal{Q}_{N,\Delta}(0)$, then so is $[aN,-b,c]$. Similarly, if $[-a_1a_2N,b,c] \in \mathcal{Q}_{N,\Delta}(0)$, then so is $[-a_1N,b,ca_1]$ (where we assume that $a_1,a_2 >0$). Thus there exists some $A,C \in \Z$ with $A=CN$ such that:
\begin{align*}
    p_{N, \Delta,0}(x)=Ax^2+C.
\end{align*}
The functions $p_{N,\Delta}$ and $p_{N,\Delta,0}$ satisfy the following relation:
\begin{lemma}\label{lem: x^2p(1/(Nx))-p(x)=-p_0(x)}
We have that
\begin{align*}
    Nx^2p_{N, \Delta}(1/(Nx))-p_{N,\Delta}(x)=-p_{N, \Delta,0}(x).
\end{align*}
\end{lemma}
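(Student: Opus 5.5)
The plan is to compare, form by form, the sets $\mathcal{Q}_{N,\Delta}(1/(Nx))$ and $\mathcal{Q}_{N,\Delta}(x)$ using the Fricke involution $[a,b,c]\mapsto [cN,-b,a/N]$ from the preliminaries. First I record the pointwise identity: for $Q=[a,b,c]$,
\begin{align*}
Nx^2\,Q(1/(Nx),1)=Nx^2\left(\frac{a}{N^2x^2}+\frac{b}{Nx}+c\right)=\frac{a}{N}+bx+cNx^2=(Q|W_N)(x,-1)=(Q|W_N)^{-}(x,1),
\end{align*}
where $Q'^{-}:=[a',-b',c']$ for $Q'=[a',b',c']$. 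Thus $Nx^2\,Q(1/(Nx),1)=\tilde Q(x,1)$ with $\tilde Q=[cN,b,a/N]\in\mathcal{Q}_{N,\Delta}$, and the map $Q\mapsto \tilde Q$ is an involution of $\mathcal{Q}_{N,\Delta}$. I assume $x\neq 0$ and work with a fixed rational $x$, so that both sets are finite.

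The key step is to describe $\mathcal{Q}_{N,\Delta}(1/(Nx))$ in terms of $\tilde Q$. By definition, $Q\in\mathcal{Q}_{N,\Delta}(1/(Nx))$ means $a<0$ and $Q(1/(Nx),1)>0$. Since $Nx^2>0$, the second condition is equivalent to $\tilde Q(x,1)>0$. The condition $a<0$ says that the constant coefficient $a/N$ of $\tilde Q$ is negative. Hence
\begin{align*}
Nx^2p_{N,\Delta}(1/(Nx))=\sum_{\substack{\tilde Q=[A,B,C]\in\mathcal{Q}_{N,\Delta}\\ C<0<\tilde Q(x,1)}}\tilde Q(x,1),\qquad p_{N,\Delta}(x)=\sum_{\substack{Q=[A,B,C]\in\mathcal{Q}_{N,\Delta}\\ A<0<Q(x,1)}} Q(x,1).
\end{align*}
Both are sums over forms with $Q(x,1)>0$. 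Let $S$ be the finite set of $Q\in\mathcal{Q}_{N,\Delta}$ with $Q(x,1)>0$ and ($A<0$ or $C<0$). Finiteness needs a little care, but it suffices: since $\Delta>0$ is not a square, $A$ and $C$ are nonzero, and for $x\in\Q$ the positivity $Q(x,1)>0$ together with $A<0$ or $C<0$ gives finitely many forms, by the same argument as for the finiteness of $\mathcal{Q}_{N,\Delta}(x)$.

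Splitting $S$ according to the signs of $A$ and $C$, the forms with $A<0$ and $C<0$ appear in both sums and cancel. We are left with the forms having $C<0<A$ minus the forms having $A<0<C$, always subject to $Q(x,1)>0$. The main obstacle is to show that this difference equals $-p_{N,\Delta,0}(x)=-\sum_{A<0<C}Q(x,1)$, i.e.\ that the forms with $C<0<A$ and $Q(x,1)>0$ contribute nothing and that the $Q(x,1)>0$ restriction on the $A<0<C$ forms can be dropped.

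I would handle this as follows. If $A<0<C$, the roots of $Q(X,1)$ have opposite signs and $Q(X,1)>0$ exactly between them. If $C<0<A$, then $Q(X,1)>0$ outside the roots. So for the cancellation I would use the bijection $[A,B,C]\mapsto[-A,-B,-C]$ between forms with $C<0<A$ and forms with $A<0<C$. It changes $Q(x,1)$ to $-Q(x,1)$, so the terms with $Q(x,1)>0$ in the first class correspond exactly to the terms with $Q(x,1)<0$ in the second. Subtracting, we obtain
\begin{align*}
-\sum_{A<0<C,\ Q(x,1)>0}Q(x,1)+\sum_{A<0<C,\ Q(x,1)<0}\bigl(-Q(x,1)\bigr)\cdot(-1)=-\sum_{A<0<C}Q(x,1),
\end{align*}
where the forms with $Q(x,1)=0$ contribute zero and $\{A<0<C\}$ is finite because $\Delta=B^2+4|AC|$. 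This gives $-p_{N,\Delta,0}(x)$. I need to check carefully that the signs match and that the genus character does not intervene, which it does not, since $p_{N,\Delta}$ carries no character. The identity then follows.
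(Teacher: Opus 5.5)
Your proposal is correct and is essentially the paper's proof. The same involution $[a,b,c]\mapsto[cN,b,a/N]$ turns $Nx^2p_{N,\Delta}(1/(Nx))$ into a sum over forms with $c<0<Q(x,1)$, the forms with $a,c<0$ cancel, and the negation $Q\mapsto -Q$ matches the $c<0<a$ terms with the negative values of the $a<0<c$ forms; the paper phrases this last step via $\max(0,Q(x,1))$ and $\min(0,Q(x,1))$.

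There is one slip in your final display: drop the factor $\cdot(-1)$. The second sum should read $\sum_{A<0<C,\ Q(x,1)<0}\bigl(-Q(x,1)\bigr)$; as written, the identity would be false.
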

\begin{remark}
    This was proven in \cite{Zagier-1999} for $N=1$.
\end{remark}
\begin{proof}
We compute:
\begin{align*}
    Nx^2p_{N, \Delta}(1/(Nx))-p_{N,\Delta}(x)&=\sum_{\substack{Q \in \mathcal{Q}_{N,\Delta}\\a<0<Q(1/(Nx),1) }}Nx^2Q(1/(Nx),1)-\sum_{\substack{Q \in \mathcal{Q}_{N,\Delta}\\a<0<Q(x,1) }}Q(x,1)\\
    &=\sum_{\substack{Q \in \mathcal{Q}_{N,\Delta}\\c<0<Q(x,1) }}Q(x,1)-\sum_{\substack{Q \in \mathcal{Q}_{N,\Delta}\\a<0<Q(x,1) }}Q(x,1).
\end{align*}
Here we used the bijection:
\begin{align*}
    \{ Q \in \mathcal{Q}_{N,\Delta} :Q(1/(Nx),1)>0 \text{ and } a<0\}&\longleftrightarrow \{ Q \in \mathcal{Q}_{N,\Delta} :Q(x,1)>0 \text{ and } c<0\}\\
    [a,b,c]&\mapsto[cN,b,a/N].
\end{align*}
Thus 
\begin{align*}
    Nx^2p_{N, \Delta}(1/(Nx))-p_{N,\Delta}(x)&=\sum_{\substack{Q \in \mathcal{Q}_{N,\Delta}\\c<0 }}\text{max}(0,Q(x,1))-\sum_{\substack{Q \in \mathcal{Q}_{N,\Delta}\\a<0}}\text{max}(0,Q(x,1))\\
    &=\sum_{\substack{Q \in \mathcal{Q}_{N,\Delta}\\c<0<a }}\text{max}(0,Q(x,1))-\sum_{\substack{Q \in \mathcal{Q}_{N,\Delta}\\a<0<c}}\text{max}(0,Q(x,1)).
\end{align*}
Where we rewrote the summand and noticed that if $a,c<0$ then a binary quadratic form will show up in both sums and therefore the corresponding terms will cancel out. Finally, we have: 
\begin{align*}
        Nx^2p_{N, \Delta}(1/(Nx))-p_{N,\Delta}(x)&=\sum_{\substack{Q \in \mathcal{Q}_{N,\Delta}\\c<0<a }}\text{max}(0,Q(x,1))+\sum_{\substack{Q \in \mathcal{Q}_{N,\Delta}\\c<0<a}}\text{min}(0,Q(x,1))\\
    &=\sum_{\substack{Q \in \mathcal{Q}_{N,\Delta}\\c<0<a }}Q(x,1)
    =-\sum_{\substack{Q \in \mathcal{Q}_{N,\Delta}\\a<0<c }}Q(x,1)=-p_0(x)
\end{align*}
This proves the claim.
\end{proof}
We are now ready to prove the following:
\begin{corollary}
Let $n_0\in \N$ be an integer. 
\begin{itemize}
\item[(i)] If $\mathcal{Q}_{N,\Delta}(0)\neq \emptyset$, then for all $n \in \Z$ we have that
\begin{align*}
    \mathcal{Q}_{N,\Delta}(n_0+1/(nN))\neq \mathcal{Q}_{N,\Delta}(n_0).
    \end{align*}
\item[(ii)] If $\sqrt{\Delta}>N$ and $\mathcal{Q}_{N, \Delta}\neq \emptyset$, then for any $x \in \Q \setminus \{n_0 \}$ we have that
\begin{align*}
    \mathcal{Q}_{N,\Delta}(x) \neq \mathcal{Q}_{N,\Delta}(n_0).
\end{align*}
\end{itemize}
\end{corollary}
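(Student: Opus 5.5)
Throughout, write $\mathcal{Q}(x)$ for $\mathcal{Q}_{N,\Delta}(x)$, $p$ for $p_{N,\Delta}$ and $p_0$ for $p_{N,\Delta,0}$. The plan is to reduce both parts to $n_0=0$ and then derive a contradiction from equality of the sets. For the reduction I will use the bijection \eqref{eq: bij: binary quadratic forms}, $[a,b,c]\mapsto[a,b+2a,a+b+c]$, which identifies $\mathcal{Q}(t+1)$ with $\mathcal{Q}(t)$. Since the same bijection is applied to both sets, $\mathcal{Q}(n_0+t)=\mathcal{Q}(n_0)$ holds if and only if $\mathcal{Q}(t)=\mathcal{Q}(0)$. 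It also follows that $p(n)=p(0)$ for every $n\in\Z$.

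\emph{Part (i).} Fix $n\neq 0$ and suppose $\mathcal{Q}(1/(nN))=\mathcal{Q}(0)$. Then the sum defining $p(1/(nN))$ runs over $\mathcal{Q}(0)$, so $p(1/(nN))=p_0(1/(nN))$. Using $p_0(x)=Ax^2+C$ with $A=CN$, this equals $A/(nN)^2+C$. Next apply Lemma \ref{lem: x^2p(1/(Nx))-p(x)=-p_0(x)} at $x=n$:
\begin{align*}
Nn^2\,p\!\left(\tfrac{1}{nN}\right)-p(n)=-p_0(n)=-(An^2+C).
\end{align*}
Since $p(n)=p(0)=p_0(0)=C$, substituting gives $A/N+Nn^2C-C=-An^2-C$. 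With $A=CN$ this becomes $C(1+2Nn^2)=0$, so $C=0$. On the other hand, $C=p_0(0)$ is the sum of the coefficients $c$ over $\mathcal{Q}(0)$. Every form in $\mathcal{Q}(0)$ has $c=Q(0,1)>0$, and $\mathcal{Q}(0)\neq\emptyset$ by hypothesis, so $C>0$. This contradiction proves (i).

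\emph{Part (ii).} By Lemma \ref{lem: Q_N,Delta} we have $\mathcal{Q}(0)\neq\emptyset$, so (i) already handles every $x$ of the form $n_0+1/(nN)$. For a general $x\neq 0$, it suffices to exhibit a form $Q$ with $a<0$, $Q(0,1)>0$ and $Q(x,1)\le 0$, that is, a form whose positivity interval $(\alpha_-,\alpha_+)$ contains $0$ but not $x$.
\begin{itemize}
\item First take the family $Q_r=[-N,\,b+2rN,\,c+br-Nr^2]$ from the proof of Lemma \ref{lem: Q_N,Delta}. Its intervals $I_r$ are the integer translates of one interval of length $\sqrt\Delta/N>1$. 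Among the $I_r$ containing $0$, take the one with the leftmost left endpoint and the one with the rightmost right endpoint. If $|x|$ is large enough, $x$ lies outside one of these two intervals and we are done.
\item For the remaining small $|x|$, I would use forms with larger leading coefficient $-mN$, whose positivity intervals have length $\sqrt\Delta/(mN)$. To obtain a positivity interval containing $0$ whose endpoint lies strictly between $0$ and $x$, the first attempt is to transport forms in $\mathcal{Q}(0)$ by suitable elements of $\Gamma_0(N)$ fixing $0$. These preserve the value $Q(0,1)$, and their endpoints accumulate at $0$.
\end{itemize}

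The main obstacle is the second step. Conjugating by $\begin{pmatrix}1&0\\ jN&1\end{pmatrix}$ changes the leading coefficient to $Q(1,jN)$, which becomes positive for large $|j|$, so one must control the sign of the new $a$ while moving the endpoint. If this fails, the fallback is an identity-based argument in the style of (i): assume $\mathcal{Q}(x)=\mathcal{Q}(0)$, compare $p(x)=p_0(x)$ with Lemma \ref{lem: x^2p(1/(Nx))-p(x)=-p_0(x)} at $x$ and at $1/(Nx)$, and combine this with the action of $\Gamma_0(N)$ to force $C=0$ again.
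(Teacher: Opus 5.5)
\textbf{Part (i).} This is the paper's argument, with a more careful reduction to $n_0=0$ (you apply the bijection $[a,b,c]\mapsto[a,b+2a,a+b+c]$ to both sets) and a cleaner finish via $C>0$. You did inherit one error from the paper. The true relation is $A=-NC$, not $A=CN$, because $[a,b,c]\mapsto[-cN,b,-a/N]$ is an involution of $\mathcal{Q}(0)$. Your equation $A/N+Nn^2C-C=-An^2-C$ then reduces to $C=0$, so the contradiction with $C>0$ survives unchanged.

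\textbf{Part (ii).} Here there is a genuine gap, and your main route provably fails for small $|x|$. You look for a form with $a<0<c$ and $Q(x,1)\le 0$, i.e.\ an element of $\mathcal{Q}(0)\setminus\mathcal{Q}(x)$. But $\mathcal{Q}(0)$ is finite, because $a<0<c$ forces $4|a|c=\Delta-b^2\le\Delta$. Hence there is $\delta>0$ such that every $Q\in\mathcal{Q}(0)$ is positive on $(-\delta,\delta)$. So $\mathcal{Q}(0)\subseteq\mathcal{Q}(x)$ whenever $0<|x|<\delta$, and the witness you want does not exist. In particular, endpoints of positivity intervals containing $0$ do not accumulate at $0$. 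The translates $Q\circ\begin{pmatrix}1&0\\ jN&1\end{pmatrix}$ do have roots tending to $0$, but for large $|j|$ they have $a>0$ and both roots on one side of $0$; this is exactly the sign problem you ran into. For such $x$ the sets differ only because $\mathcal{Q}(x)\setminus\mathcal{Q}(0)\neq\emptyset$, and your strategy never examines that set.

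The missing idea is already in your fallback, and it takes one line; it needs neither $\Gamma_0(N)$ nor $C=0$. After translating to $n_0=0$, suppose $\mathcal{Q}(x)=\mathcal{Q}(0)$ with $x\neq 0$. Then $p(x)=p_0(x)$, so the identity $Nx^2p(1/(Nx))-p(x)=-p_0(x)$ gives $p(1/(Nx))=0$. But $1/(Nx)\in\Q$, so Lemma \ref{lem: Q_N,Delta} gives $\mathcal{Q}(1/(Nx))\neq\emptyset$; this is where the hypothesis $\sqrt{\Delta}>N$ is used. Then $p(1/(Nx))$ is a nonempty sum of strictly positive terms, a contradiction. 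This is the paper's proof. It handles every $x\neq n_0$ at once, so the detour through (i) is unnecessary.
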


\begin{proof}
Since $p_{N,\Delta}$ is translation invariant (i.e. $p_{N,\Delta}(x)=p_{N,\Delta}(x+1)$), it suffices to prove the claim for $n=0$.

For $(i)$ assume that for some $n \in \Z$ we have $\mathcal{Q}_{N,\Delta}(1/(nN))=\mathcal{Q}_{N,\Delta}(0)$. This implies that
\begin{align}\label{eq: p(1/(nN)}
    p_{N, \Delta}(1/(nN))=p_{N, \Delta,0}(1/(nN))=\frac{A}{(nN)^2}+C
\end{align}
Since $p_{N, \Delta,0}(x)=Ax^2+C$ for some $A,C \in \Z$. Since $\mathcal{Q}_{N, \Delta}(0) \neq \emptyset$, $A,C \neq 0$. From Lemma \ref{lem: x^2p(1/(Nx))-p(x)=-p_0(x)} with $x=n$, we have that
    \begin{align}\label{eq: Nn^2p(1/(Nn))&=-p_0(n)+p(n)}
        Nn^2p_{N,\Delta}(1/(Nn))-p_{N,\Delta}(n)&=-p_{N, \Delta,0}(n)\\
        \Longleftrightarrow  Nn^2p_{N,\Delta}(1/(Nn))&=-p_{N,\Delta,0}(n)+p_{N,\Delta}(n)
    \end{align}
    As $p_{N,\Delta}$ is translation invariant under $1$ (i.e. $p_{N,\Delta}(x+1)=p_{N,\Delta}(x)$) it follows that 
    \begin{align}\label{eq: p(n)=c}
        p_{N,\Delta}(n)=p_{N,\Delta}(0)=p_{N, \Delta,0}(0)=C.
    \end{align}
     Therefore combining equations \eqref{eq: p(1/(nN)}, \eqref{eq: Nn^2p(1/(Nn))&=-p_0(n)+p(n)} and \eqref{eq: p(n)=c} we get:
    \begin{align*}
        Nn^2\left(\frac{A}{(nN)^2}+C\right)=-An^2-C+C=-An^2
    \end{align*}
    Rearranging yields:
    \begin{align*}
        (A+CN)n^2+A/N=0
    \end{align*}
    Noting that $A=CN$, we get that $n$ satisfies:
    \begin{align*}
        n^2=\frac{-1}{2N}
    \end{align*}
    But clearly, no integer number $n$ satisfies this equality. This proves the claim.

    We now prove $(ii)$. Assume that $\mathcal{Q}_{N,\Delta}(x)=\mathcal{Q}_{N,\Delta}(0)$, then $p_{N,\Delta}(x)=p_{N,\Delta,0}(x)$
    \begin{align*}
        Nx^2p_{N, \Delta}(1/(Nx))=-p_{N, \Delta,0}(x)+p_{N, \Delta}(x)=0.
    \end{align*}
    But by Lemma \ref{lem: Q_N,Delta}, since $\mathcal{Q}_{N,\Delta}\neq \emptyset$, we have that $p_{N, \Delta}(x)$ is the sum over strictly positive numbers over a non-empty set, therefore $p_{N, \Delta}(1/(Nx))$ cannot  be zero. 
\end{proof}

From this it follows that for any integer $n_0$ for any neighborhood $ \mathcal{U}$ of $n_0$ there exist several $x \in \mathcal{U}$ such that $\mathcal{Q}_{N,\Delta}(x) \neq \mathcal{Q}_{N,\Delta}(n_0)$. This justifies in particular our definition of the limit $\lim_{z \rightarrow x} \mathcal{P}_{k,N,D,D_0}(z)$.

\section{Proof of the main Theorem}\label{sec: proof main theorem}
In this section we prove the Theorem \ref{theo: main prod L-functions}.

\begin{proof}
    By equation \eqref{eq: <F,f>=0 <=> LL=0} (see also Theorem  \ref{theo: composite N product l-function inner product}) we have that
    \begin{align*}
        L(F \otimes \chi_D,k)L(F \otimes \chi_{D_0},k)=0
        \Leftrightarrow \langle F,f_{k,N,D,D_0}\rangle=0.
    \end{align*}
    \textbf{Case 1: $\dim(S_{2k}(N))=1$}. If we assume that that $\dim(S_{2k}(N))=1$. Then $f_{k,N,D,D_0}$ is a multiple of $F$ and therefore:
    \begin{align*} \langle F,f_{k,N,D,D_0}\rangle=0 \Leftrightarrow 
        f_{k,N,D,D_0}=0.
    \end{align*}
    Note that since $\dim(S_{2k}(N))=1$, we may ignore the product of Hecke operators from the statement of Theorem \ref{theo: main prod L-functions}.

    We first show that if $f_{k,N,D,D_0}=0$, then $P_{k,N,D,D_0}|_{2-2k} \gamma(x)=P_{k,N,D,D_0}(x)$ for all $x \in \Q, \gamma \in \Gamma_0(N)$ and then how this implies that $P_{k,N,D,D_0}(x)=0$ for all $x \sim_{\Gamma_0(N)}0$.

    Recall equation \eqref{eq: split F}:
    \begin{align*}
        \mathcal{F}&_{1-k,N,D,D_0}(z)=\\
        &P_{k,N,D,D_0}(z)+(DD_0)^{\frac{1}{2}-k}f^*_{k,N,D,D_0}(z)-(DD_0)^{\frac{1}{2}-k}\frac{(2k-2)!}{(4\pi)^{2k-1}}
\mathcal{E}_{f_{k,N,D,D_0}}(z).
    \end{align*}
Since $f_{k,N,D,D_0}=0$, we have $f^*_{k,N,D,D_0}=0$ and $\mathcal{E}_{f_{k,N,D,D_0}}=0$. Hence 
        \begin{align*}
        \mathcal{F}_{1-k,N,D,D_0}(z)=\mathcal{P}_{k,N,D,D_0}(z).
    \end{align*}
    Since the left side is invariant under the $|_{2-2k} \gamma$ for all  $\gamma \in \Gamma_0(N)$, so is the right hand side. As $\lim_{z \rightarrow x}\mathcal{P}_{k,N,D,D_0}(z)=P_{k,N,D,D_0}(x)$, so is the local polynomial defined on the rationals, whenever the M\"{o}bius transform is well-defined. 

    We now show that $P_{k,N,D,D_0}(x)=0$ for all $x \sim_{\Gamma_0(N)}0$. We distinguish different cases.
    \begin{itemize}
        \item We first prove the case when $x=0$. By equation \eqref{eq: Flm Fricke involution} (see also Lemma \ref{lem: Flm Fricke involution}) we have that
    \begin{align*}
        \mathcal{F}_{1-k,N,D,D_0}|_{2-2k} \left( W_N -\left( \frac{D_0}{N}\right)  \right)(z) =0.
    \end{align*}
    Since $\mathcal{F}_{1-k,N,D,D_0}(z)=\mathcal{P}_{k,N,D,D_0}(z)$, we have that 
    \begin{align*}
        \mathcal{P}_{k,N,D,D_0}|_{2-2k} \left( W_N -\left( \frac{D_0}{N}\right)  \right) (z)=0.
    \end{align*}
    In Remark \ref{rem: Q_z=0 if im >>0} we noted that for $z \in \mathbb{H}$ with $\im(z)>\frac{\sqrt{DD_0}}{2N}$ we have that $ \mathcal{P}_{k,N,D,D_0}(z)=c_{k,N,D,D_0,\infty}$ is constant. Thus:
\begin{align*}
    \lim_{z \rightarrow 0}(\sqrt{N}z)^{2k-2} \mathcal{P}_{k,N,D,D_0}\left(\frac{-1}{Nz}\right)=\lim_{z \rightarrow 0}(\sqrt{N}z)^{2k-2}c_{k,N,D,D_0,\infty}=0.
\end{align*}
    On the other hand using the Fricke involution we find that:
\begin{align*}
        \lim_{z \rightarrow 0}(\sqrt{N}z)^{2k-2} \mathcal{P}_{k,N,D,D_0}\left(\frac{-1}{Nz}\right)&=\lim_{z \rightarrow 0}\left( \frac{D_0}{N}\right) \mathcal{P}_{k,N,D,D_0}(z) \\
        &= \left( \frac{D_0}{N}\right)  P_{k,N,D,D_0}(0).
    \end{align*}
   Thus $P_{k,N,D,D_0}(0)=0$.
    \item Let $x \in \Q$ with $x \sim_{\Gamma_0(N)}0$. Then there exists a matrix $\gamma= \begin{pmatrix}
        \gamma_{11} & \gamma_{12} \\ \gamma_{21}& \gamma_{22}
    \end{pmatrix}\in \Gamma_0(N)$ such that $x=\gamma\cdot 0=\frac{\gamma_{12}}{\gamma_{22}}$.
    Since $P_{k,N,D,D_0}$ is invariant under $|_{2-2k} \gamma$ for $\gamma \in \Gamma_0(N)$, we have:
    \begin{align*}\gamma_{22}^{2k-2}P_{k,N,D,D_0}\left(\frac{\gamma_{12}}{\gamma_{22}}\right)=
        \gamma_{22}^{2k-2}P_{k,N,D,D_0}(\gamma\cdot 0)=P_{k,N,D,D_0}(0)=0
    \end{align*}
    and thus $\mathcal{P}_{k,N,D,D_0}(x)=0$.
        \end{itemize}

Now we assume that $f_{k,N,D,D_0}\neq 0$. First we show that there exists a matrix $\gamma \in \Gamma_0(N)$ such that $P_{k,N,D,D_0}$ is not invariant under $|_{2-2k}\gamma$.

Since $S_2(N)$ is of dimension $1$,  $f_{k,N,D,D_0}\neq 0$ implies that $f_{k,N,D,D_0}=\alpha F$ for some non-zero $\alpha \in \C$. Since $\mathcal{F}_{1-k,N,D,D_0}$ is invariant under $|_{2-2k}\gamma$ for all $\gamma \in \Gamma_0(N)$, after rearranging we get that:
\begin{align*}
        \mathcal{P}&_{k,N,D,D_0}(z)|_{2-2k}(\gamma^{-1}-1)\\
        =&(DD_0)^{\frac{1}{2}-k}\frac{(2k-2)!}{(4\pi)^{2k-1}}
\mathcal{E}_{\alpha F}|_{2-2k}(\gamma^{-1}-1)(z) -(DD_0)^{\frac{1}{2}-k}(\alpha F)^*|_{2-2k}(\gamma ^{-1}-1)(z)\\
=& (DD_0)^{\frac{1}{2}-k}\alpha \left( \frac{(2k-2)!}{(4\pi)^{2k-1}}
\mathcal{E}_{F}|_{2-2k}(\gamma ^{-1}-1)(z) -F^*|_{2-2k}(\gamma ^{-1}-1)(z)\right)
    \end{align*}
We will show that there exists a matrix such that the right hand side is nonzero. By Lemma \ref{lem: error mod eichler integrals}, this is equal to:
\begin{align*}
(DD_0)^{\frac{1}{2}-k}\alpha  (2i)^{1-2k} \left(  \int_{i \infty}^{-(\gamma (-i\infty))} \right. &\overline{F(-\overline{w})}(w+z)^{2k-2}dw \\
& \left. -\int_{i \infty}^{\gamma (i \infty)} F( w) (z-w)^{2k-2} d w\right)
\end{align*}
We set 
\begin{align*}
    R_{\gamma} (z):=\int_{i \infty}^{-(\gamma (-i\infty))}\overline{F(-\overline{w})}(w+z)^{2k-2}dw -\int_{i \infty}^{\gamma (i \infty)} F( w) (z-w)^{2k-2} d w
\end{align*}
Since $f_{k,N,D,D_0}$ has real coefficients (see also Remark \ref{rem: real cf of f_k,N,D,D_0}), we may assume that so has $F$ and therefore $F(z)=\overline{F(-\overline{z})}$. Hence, we get:
\begin{align}\label{def: polynom P_M}
    R_{\gamma} (z):=\int_{i \infty}^{-(\gamma (-i\infty))}F(w)(w+z)^{2k-2}dw -\int_{i \infty}^{\gamma (i \infty)} F( w) (z-w)^{2k-2} d w.
\end{align}
Note that $R_{\gamma} $ is a polynomial in $z$ of degree at most $2k-2$ whose coefficients are multiples of the integrals  of the form $\int_{i \infty}^{\pm(\gamma (-i\infty))}F(\omega)w^{n}dw$. Set
\begin{align*}
    \tilde{R}_{\gamma} (z):&=\left(z^2N\right)^{k-1}R_{\gamma}\left(\frac{-1}{Nz}\right)\\
    &= 
    z^{2k-2}N^{k-1}\left(
\int_{i \infty}^{-(\gamma (-i\infty))}F(w)\left(w+\frac{-1}{Nz}\right)^{2k-2}dw\right.\\
& \left.-  \int_{i \infty}^{\gamma (i \infty)} F( w) \left(\frac{-1}{Nz}
 -w\right)^{2k-2} d w\right)
\end{align*}
Then clearly $\tilde{R}_{\gamma} $ is the zero polynomial if and only if $R_{\gamma} $ is the zero polynomial. From Lemma \ref{lem: transform to Antoniadis function}, it follows that there exists a matrix $\gamma  \in \Gamma_0(N)$ such that $\tilde{R}_{\gamma}(z)$ is not the zero polynomial. 
This implies in particular that the polynomial has at most finitely many zeros on the real line. Thus $P_{k,N,D,D_0}(x)$ is not invariant under $|_{2-2k}\gamma$ for all $\gamma \in \Gamma_0(N)$.

We now prove that this implies that there exists $x \sim_{\Gamma_0(N)}0$ such that $P_{k,N,D,D_0}(x) \neq 0$. Let $\gamma=\begin{pmatrix}
    \gamma_{11} & \gamma_{12}\\ \gamma_{21} & \gamma_{22}
\end{pmatrix} \in \Gamma_0(N)$ be such that $R_{\gamma}$ is not the zero polynomial. Let $x \neq \frac{-\gamma_{22}}{\gamma_{21}}$ with $x \sim_{\Gamma_0(N)}0$ and such that  $R_{\gamma}(x) \neq 0$. If $P_{k,N,D,D_0}(x)\neq 0$, then we are done. Otherwise from 
\begin{align*}
(\gamma_{21}x+\gamma_{22})^{2k-2}P_{k,N,D,D_0}(\gamma \cdot x)-P_{k,N,D,D_0}(x)= R_{\gamma}(x)
\end{align*}
if follows that 
\begin{align*}
    P_{k,N,D,D_0}(\gamma \cdot x)= (\gamma_{21}x+\gamma_{22})^{2-2k}R_{\gamma}(x).
\end{align*}
By assumption $\gamma_{21}x+\gamma_{22}\neq 0$. By transitivity of the action of $\Gamma_0(N)$, as $x \sim_{\Gamma_0(N)}0$, so is $ \gamma \cdot x$. This finishes the proof of this direction and hence the proof of the statement in the case when $\dim(S_{2k}(N))=1$.

\textbf{Case 2: $\dim(S_{2k}(N))>1$} Assume that $\dim(S_{2k}(N))>1$. As explained in section \ref{sec: 3-Hecke Operators}, we have that
\begin{align*}
    \langle F,f_{k,N,D,D_0}\rangle=0 \Leftrightarrow \langle F,f_{k,N,D,D_0}|_{2k}\mathbb{T}\rangle=0.
\end{align*}
Moreover by Lemma \ref{lem: lifting Hecke operator} we have that $f_{k,N,D,D_0}|_{2k}\mathbb{T}=0$ if and only
\begin{align*}
    \mathcal{F}_{1-k,N,D,D_0}|_{2-2k}\tilde{\mathbb{T}}=\mathcal{P}_{k,N,D,D_0}|_{2-2k}\tilde{\mathbb{T}}.
\end{align*}
 We therefore can argue as in the case when $\dim(S_{2k}(N))=1$. 
\end{proof}

\begin{remark}\label{rmk: Easy computations}
    It might seem inconvenient, that from the (proof of this) theorem it only follows that, if $L(F\otimes \chi_D,k)L(F\otimes \chi_{D_0},k)=0$ there exists some matrix for which the modularity of $P_{k,N,D,D_0}$ fails. However, we have that $\tilde{R}_{\gamma _1\gamma _2}=\tilde{R}_{\gamma _1}+\gamma _1 \cdot \tilde{R}_{\gamma _2}$ for all $\gamma _1,\gamma _2 \in \Gamma_0(N)$ (see Proposition \ref{prop: Antonidas}). Therefore, since $\Gamma_0(N)$ is finitely generated, it suffices to check finitely many values.
    
    We can proceed as follows: for every generator $\gamma _i$ of $\Gamma_0(N)$, it suffices to compute the local polynomial for $x_{i,n}:=\gamma _i^n(0)$ for $1 \leq n \leq 2k-2$. If for some $n$, we have $P_{k,N,D,D_0}(x_{i,n})$ is not zero, then we know that modularity fails. However, if all of $P_{k,N,D,D_0}(x_{i,n})$ are zero, then as $\tilde{R}_{\gamma _i}$ is a polynomial of degree at most $2k-2$ with $2k-1$ roots, it is the zero polynomial.
\end{remark}

Note that we have shown that $L(F\otimes \chi_D,k)L(F\otimes \chi_{D_0},k)=0$ implies that $(P_{k,N,D,D_0} |_{2-2k}\tilde{\mathbb{T}})(x)=0$ for all $x \sim_{\Gamma_0(N)}0$, i.e. $x$ of the form $\frac{q_1}{q_2}$ with $(q_1,q_2)=(q_2,N)=1$. We further show that $(P_{k,N,D,D_0} |_{2-2k}\tilde{\mathbb{T}})(x)=0$ for all $x$ of the form $\frac{q_1}{Nq_2}$ with $(q_1,Nq_2)=1$. In general we expect that one can prove that $(P_{k,N,D,D_0} |_{2-2k}\tilde{\mathbb{T}})(x)=0$ for all $x$ using Atkin-Lehner involutions. 
\begin{proposition}
    Under the same assumptions as in Theorem \ref{theo: main prod L-functions}, we have:
    If $L(F\otimes \chi_{D},k)L(F\otimes \chi_{D_0},k)=0$, then $P_{k,N,D,D_0} |_{2-2k} \tilde{\mathbb{T}}(x)=0$ for all $x \in \Q$ of the form $x=\frac{q_1}{q_2N}$ with $(q_1,Nq_2)=1$.
\end{proposition}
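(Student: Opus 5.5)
The plan is to exploit the Fricke involution. It maps the cusp $\infty$ to $0$. So rationals $x=\frac{q_1}{q_2N}$ with $(q_1,Nq_2)=1$, which are exactly the rationals $\Gamma_0(N)$-equivalent to $\infty$, should be sent by $z\mapsto -1/(Nz)$ to rationals equivalent to $0$. On the latter set the main theorem already gives vanishing.

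\textbf{Step 1 (modularity).} Assume the product of $L$-values vanishes. By the proof of Theorem \ref{theo: main prod L-functions} (Case 1, or Case 2 via Lemma \ref{lem: lifting Hecke operator}) we have
\begin{align*}
\mathcal{F}_{1-k,N,D,D_0}|_{2-2k}\tilde{\mathbb{T}}=\mathcal{P}_{k,N,D,D_0}|_{2-2k}\tilde{\mathbb{T}}
\end{align*}
off the exceptional set. From the main theorem we also know that $P_{k,N,D,D_0}|_{2-2k}\tilde{\mathbb{T}}(y)=0$ for all $y\sim_{\Gamma_0(N)}0$.

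\textbf{Step 2 (Fricke symmetry).} I will use equation \eqref{eq: Flm Fricke involution}, namely $\mathcal{F}_{1-k,N,D,D_0}|_{2-2k}W_N=\left(\frac{D_0}{N}\right)\mathcal{F}_{1-k,N,D,D_0}$. For primes $p_i\nmid N$ the Hecke operators $T_{p_i}$ commute with $W_N$, so $\mathcal{F}_{1-k,N,D,D_0}|_{2-2k}\tilde{\mathbb{T}}$ is again a $W_N$-eigenfunction with eigenvalue $\left(\frac{D_0}{N}\right)$. I choose the $p_i$ coprime to $N$, which is possible since the $F_n$ are distinguished by good-prime eigenvalues (strong multiplicity one). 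It follows that $\mathcal{P}_{k,N,D,D_0}|_{2-2k}\tilde{\mathbb{T}}$ is a $W_N$-eigenfunction on $\mathbb{H}$ minus the exceptional set. Explicitly, for such $z$,
\begin{align*}
(\sqrt{N}z)^{2k-2}\,\mathcal{G}\!\left(\frac{-1}{Nz}\right)=\pm\left(\frac{D_0}{N}\right)\mathcal{G}(z),\qquad \mathcal{G}:=\mathcal{P}_{k,N,D,D_0}|_{2-2k}\tilde{\mathbb{T}}.
\end{align*}

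\textbf{Step 3 (passing to the boundary).} Take $x=\frac{q_1}{q_2N}$ and set $x'=-1/(Nx)=-q_2/q_1$. Since $(q_1,N)=1$, the point $x'$ has denominator coprime to $N$, so $x'\sim_{\Gamma_0(N)}0$ and $\mathcal{G}(x')=0$. I then let $z=x+iy$ with $y\to0^+$. By Lemma \ref{lem: def lim Plz =Plx}, $\mathcal{G}(z)\to\mathcal{G}(x)$. This uses the fact that $\tilde{\mathbb{T}}$ is a finite combination of slash operators by upper triangular matrices, which map vertical approach to vertical approach.

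The image point $-1/(Nz)$ does not approach $x'$ vertically: it approaches along a circle tangent to the real line at $x'$. This is the main obstacle, since the remarks in Section \ref{sec: local polynomial} show that the limit can depend on direction. I will first try to run the limit the other way. Approach $x'$ vertically, so that $\mathcal{G}(x'+iy)\to\mathcal{G}(x')=0$. Then, near $x$, the image points $-1/(N(x'+iy))$ move along a horocycle tangent at $x$.

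For each quadratic form I will use the following facts:
\begin{itemize}
\item the sets $\{Q: a<0<Q_z\}$ are determined by which side of the geodesic $S_Q$ the point $z$ lies on;
\item only finitely many $Q$ with $a<0$ have $S_Q$ passing near $x$;
\item the geodesics $S_Q$ with endpoint exactly $x$ cannot occur, because $DD_0$ is not a square, so the roots are irrational.
\end{itemize}
Hence every sufficiently small neighbourhood of $x$ meets no $S_Q$ relevant for the translated points. So along any path tending to the rational $x$ within a small enough cone or horocycle, the set $\mathcal{Q}_{N,DD_0}(z)$ stabilises to $\mathcal{Q}_{N,DD_0}(x)$, and the limit is path independent at rational points.

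This argument is exactly the one in Lemma \ref{lem: def lim Plz =Plx}, made uniform using the irrationality of the roots. With path independence in hand, the displayed identity gives $(\sqrt N x)^{2k-2}\cdot0=\pm\left(\frac{D_0}{N}\right)\mathcal{G}(x)$ in the limit, hence $\mathcal{G}(x)=0$. The remaining case $x=\infty$ is excluded, and $x=0$ is not of the stated form.
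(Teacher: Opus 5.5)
\textbf{Verdict: the route is viable, but Step~3 has a genuine gap.} Transporting $x=\frac{q_1}{q_2N}$ by $W_N$ to $x'=-q_2/q_1\sim_{\Gamma_0(N)}0$ and invoking part (iii) of Theorem~\ref{theo: main prod L-functions} can be made to work. Steps~1--2 are fine: for $p\nmid N$, $T_p$ commutes with $W_N$ as a double coset operator on any $\Gamma_0(N)$-invariant function, so $\mathcal{G}$ is a $W_N$-eigenfunction off the exceptional set. The gap is at the point you yourself call the main obstacle, and your resolution of it is wrong on two counts.

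\emph{The geometry is wrong.} The map $z\mapsto -1/(Nz)$ is an isometry of $\mathbb{H}$. It sends the vertical geodesic from $x$ to $\infty$ onto the geodesic from $x'$ to $0$, which meets $\mathbb{R}$ \emph{orthogonally} at $x'$. So the approach is non-tangential, not along a circle tangent to $\mathbb{R}$; the same holds for your ``other way''.

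\emph{The justification of path independence is false.} It is not true that only finitely many $S_Q$ come near $x$, nor that a small neighbourhood of $x$ meets no $S_Q$. Local finiteness of $E_{N,DD_0}$ holds on compacta of $\mathbb{H}$, not at boundary points, and the geodesics $S_Q$ accumulate at every real point. The final corollary of Section~\ref{sec: local polynomial} shows exactly this: $\mathcal{Q}_{N,\Delta}(\tilde x)\neq\mathcal{Q}_{N,\Delta}(n_0)$ for rationals $\tilde x$ arbitrarily close to $n_0$. Irrationality of the roots alone does not prevent it. Along a genuinely tangential (horocyclic) approach the sets $\{Q: a<0<Q_z\}$ need not stabilise at all. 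So your conclusion for cones is true, but not for the reason you give.

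\textbf{The missing idea} is that a small \emph{horoball} at a rational point meets no $S_Q$:
\begin{itemize}
\item Choose $\sigma\in SL_2(\mathbb{Z})$ with $\sigma x'=\infty$. Then $\sigma(S_Q)=S_{Q\circ\sigma^{-1}}$.
\item $Q\circ\sigma^{-1}$ is integral of discriminant $DD_0$, and its leading coefficient is nonzero because $DD_0$ is not a square. Hence $\sigma(S_Q)$ has Euclidean radius at most $\sqrt{DD_0}/2$.
\item So $\{Q: a<0<Q_w\}$ is constant on the horoball $\sigma^{-1}\{\im(w)>\sqrt{DD_0}/2\}$.
\item Every vertical or non-tangential approach to $x'$ eventually lies in this horoball. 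The same holds for its images under the upper triangular matrices in $\tilde{\mathbb{T}}$, by the same argument at the image rationals.
\end{itemize}
This gives the path independence you need, and your argument then closes.

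\textbf{The paper's proof avoids all of this.} It takes $\gamma\in\Gamma_0(N)$ with bottom row $(q_2N,-q_1)$ and lets $z=x+iy\to x$ vertically. Then $\im(\gamma z)=1/(q_2^2N^2y)\to\infty$, and there $\mathcal{P}_{k,N,D,D_0}$, and likewise $\mathcal{P}_{k,N,D,D_0}|_{2-2k}\tilde{\mathbb{T}}$, is constant. Since $\mathcal{G}(z)=(q_2Nz-q_1)^{2k-2}\mathcal{G}(\gamma z)$ and the automorphy factor tends to $0$, it follows that $\mathcal{G}(x)=0$. This uses only $\Gamma_0(N)$-invariance, constancy near $i\infty$ and vertical limits. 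It needs neither $W_N$, nor part (iii) of the theorem, nor any direction-of-approach lemma.
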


\begin{proof}
    We first assume that $\text{dim}(S_2(N))=1$. Let $x=\frac{q_1}{q_2N}$ with $(q_1,Nq_2)=1$. Then there exists $\gamma_{11},\gamma_{12} \in \Z$ such that $\gamma=\begin{pmatrix} \gamma_{11} & \gamma_{12} \\ q_2N & -q_1\end{pmatrix} \in \Gamma_0(N)$. Thus
\begin{align*}
    \lim_{z \rightarrow x} q_2Nz-q_1=0 \text{ and }\lim_{z \rightarrow x}\gamma \cdot z=i \infty.
\end{align*}
    This implies that 
    \begin{align*}
        \lim_{z \rightarrow x}(q_2N z-q_1)^{2k-2} \mathcal{P}_{k,N,D,D_0}\left(\gamma \cdot z\right)=\lim_{z \rightarrow x}(q_2N z-q_1)^{2k-2} c_{k,N,D,D_0, \infty}=0
    \end{align*}
    On the other hand, we just showed if $L(F\otimes \chi_D,k)L(F\otimes \chi_{D_0},k)=0$, then $\mathcal{P}_{k,N,D,D_0}$ is invariant under $|_{2-2k}\gamma$ for $\gamma \in \Gamma_0(N)$. Hence:    \begin{align*}
        \lim_{z \rightarrow x}(q_2N z-q_1)^{2k-2} \mathcal{P}_{k,N,D,D_0}\left(\gamma \cdot z\right)=\lim_{z \rightarrow x} \mathcal{P}_{k,N,D,D_0}(z) = P_{k,N,D,D_0}(x).
    \end{align*}
    This shows that $P_{k,N,D,D_0}(x)=0$ for all $x$ of the form $\frac{q_1}{Nq_2}$ with $(q_1,Nq_2)=1$. 
    Arguing as in the proof of the Theorem \ref{theo: main prod L-functions}, we can replace $P_{k,N,D,D_0}$ with $P_{k,N,D,D_0}|_{2-2k} \tilde{\mathbb{T}}$ when $\dim(S_{2k}(N))>1$.
\end{proof}

\section{Computational Examples}\label{sec: comp examples}
In this section we compute some explicit examples: we consider newforms in the space $S_4(9)$ and $S_{4}(25)$.
To compute the local polynomials, we used the code provided by \cite{males-mono-rollen-wagner} replacing the computation of the generalized genus character with the approach described in proposition \ref{prop: explicit genus char}.
First we outline the approach we used for the computations:
\subsection{Algorithm}
Let $F \in S_{2k}(N)$. In the following, we assume that $D$ and $D_0$ are two fundamental discriminants satisfying the conditions of the Theorem \ref{theo: main prod L-functions}. Fix $D_0$ such that $L(F\otimes \chi_{D_0},k)\neq 0$. Our goal is to run over discriminants $D$, satisfying the conditions of the main theorem, and determine if $L(F\otimes \chi_D,k)=0$.

Recall that $\tilde{\mathbb{T}}=\prod_i\left( T_p-a_{p_i}p_i^{1-2k}\right)$ is a product of Hecke operators. If $L(F\otimes \chi_D,k)=0$ then we expect that
\begin{align*}
    P_{k,N,D,D_0}|_{2-2k}\tilde{\mathbb{T}}(0)
    &= \prod_i (p_i^{1-2k}(1+a_{p_i})+1)c_{k,N,D,D_0,\infty} \\
    &+c_{k,DD_0}\left(\sum_{\substack{Q \in \mathcal{Q}_{N,DD_0},\\a<0<Q(x,1)}}\chi_{D_0}(Q)Q(x,1)^{k-1}\right)|_{2k-2}\tilde{\mathbb{T}}(0)
    =0.
\end{align*}
We set
\begin{align*}
    C_0:&=
    -\prod_{i}(p_i^{1-2k}(1+a_{p_i})+1)c_{k,N,D,D_0,\infty} c_{k,DD_0}^{-1}\\
    &=\left(\sum_{\substack{Q \in \mathcal{Q}_{N,DD_0}\\a<0<Q(x,1)}}\chi_{D_0}Q(x,1)^{k-1}\right)|_{2k-2}\tilde{\mathbb{T}}(0).
\end{align*}

Let $\gamma _1,...,\gamma _l$ be generators of $\Gamma_0(N)$. Set $x_i:=\gamma _i \cdot 0$. (Note that one needs to make sure that this is well-defined, i.e. if $\gamma _i=\begin{pmatrix}
    \gamma_{11} & \gamma_{12} \\\gamma_{21} & \gamma_{22}
\end{pmatrix}$, then $\gamma_{22} \neq 0$. If this is not the case, one may replace $0$ with another integer $n$ such that $\gamma_{21}n+\gamma_{22} \neq 0$ for all $l$.)

For all generators, i.e. for $1 \leq i \leq l$, we compute
\begin{align*}
    \left( \sum_{\substack{Q \in \mathcal{Q}_{N,DD_0}\\a<0<Q(x,1)}}\chi_{D_0}(Q)Q(x,1)^{k-1}\right)|_{2-2k}\tilde{\mathbb{T}}(x_i)-C_0
\end{align*}
We distinguish two cases:
\begin{itemize}
    \item If this is non-zero for one of $x_i$, then $P_{k,N,D,D_0}|_{2-2k}\tilde{\mathbb{T}}$ is not modular and therefore $L(F\otimes \chi_D,k)\neq 0$.
    \item If this is non-zero for all generators, then set $x_{i,j}:=\gamma _i^j \cdot 0$ for $1\leq i \leq l$ and $ 1\leq j \leq 2k-1$. Then
\begin{align*}
    \left( \sum_{\substack{Q \in \mathcal{Q}_{N,DD_0}\\a<0<Q(x_{i,j},1)}}\chi_{D_0}(Q)Q(x,1)^{k-1}\right)|_{2-2k}\tilde{\mathbb{T}}(x_{i,j})-C_0
\end{align*}
is equal to zero for all choices of $i,j$ if and only if $L(F \otimes \chi_D,k)=0.$ (See also remark \ref{rmk: Easy computations}.)
\end{itemize}

\subsection{The space $S_{4}(9)$.}\label{subsec: 3- ex S_4(9)}
One can show that the space $S_{4}^{new}(9)=S_{4}(9)$ is spanned by:
\begin{align*}F=q-8q^{4}+20q^7-70q^{13}+64q^{16}+56q^{19}&-
125q^{25}-160q^{28}\\&+308q^{31}+110q^{37}+O(q^{40})
\end{align*}
Since moreover $F|_2W_{3^2}=F$, the function $F$ satisfies the condition of the Theorem \ref{theo: main prod L-functions}. By one-dimensionality of the $S_4(9)$, we can ignore the Hecke operators. In Table \ref{tbl: sml L(F chiD,k) N=9, cf half-integral weight forms} we computed some central values of the L-function of twists of $F$:
\begin{table}[H]
    \centering
    \begin{tabular}{c|c}
  $D$& $L(F \otimes \chi_D,2)$ \\
  \hline
5  &  $1.22352\dots$ \\
13  &  $0.58368\dots$ \\
28  &  $2.95446\dots$ \\
53  &  $0.03545\dots$  \\
88  &  $0.53026\dots$  \\
152  &  $1.86870\dots$  \\
161  &  $1.31245\dots$ \\
172  &  $0$  
\end{tabular}
    \caption{$L(F \otimes \chi_D,2)$ for different fundamental discriminants $D$}
    \label{tbl: sml L(F chiD,k) N=9, cf half-integral weight forms}
\end{table}
Therefore we pick $D_0=13$ if $\left( \frac{D}{3}\right)=1$ and $D_0=5$ if $\left( \frac{D}{3}\right)=-1$. We set
\begin{align*}
P(x):= \sum_{\substack{Q \in Q_{9,DD_0}\\a<0<Q(x,1)}}\chi_{D_0}(Q)Q(x,1)-C_0,
\end{align*}
with
\begin{align*}
    C_0:=  \sum_{\substack{Q \in Q_{9,DD_0}\\a<0<c}}\chi_{D_0}(Q)Q(0,1).
\end{align*}
The group $\Gamma_0(9)$ is generated by
\begin{align*}
    \gamma_1:=\begin{pmatrix}
        1 & 1\\ 0 & 1
    \end{pmatrix}, \gamma_2:=\begin{pmatrix}
        4 & -1\\ 9 & -2
    \end{pmatrix}, \gamma_3:=\begin{pmatrix}
        7 & -4\\ 9 & -5
    \end{pmatrix}, \text{ and }\gamma_4:=\begin{pmatrix}
        -1 & 0\\ 0 & -1
    \end{pmatrix}.
\end{align*}
Hence in a first step, we compute $P(x)$ for $x=1,1/2,4/5$ and $0$.
The results of these computations can be found in Table \ref{tbl: sml P_{4,9,D,D_0}}:
\begin{table}[H]
    \centering
    \begin{tabular}{c|c|cccc}
  $D$& $\left( \frac{D}{3}\right)$ & $P(1)$ & $P(1/2)$ & $P(4/5)$ & $P(0)$\\
  \hline
28 & 1 & 0 & 12 & 96/25 & 0 \\
53 & -1 & 0 & -3/2 & -12/25 & 0 \\
88 & 1 & 0 & -12 & -96/25 & 0 \\
152 & -1 & 0 & 24 & 192/25 & 0 \\
161 & -1 & 0 & 21 & 168/25 & 0 \\
172 & 1 & 0 & 0 & 0 & 0 
\end{tabular}
    \caption{Local polynomial $P_{4,9,D,D_0}$}
   \label{tbl: sml P_{4,9,D,D_0}}
\end{table}

From the computed values it follows that we only need to check the case $D=172$ and $D_0=13$ more carefully. Indeed, we find that in this case $ P(\gamma_i^2\cdot0)=0$ and $ P(\gamma_i^3\cdot0)=0$ for all $1 \leq i \leq 4$. 
\subsection{The space $S_4(25)$}\label{subsec: Ex. S_4(25)}
We now study the function
\begin{align*}
    F_{25,1}:=q + q^2 + 7q^3 - 7  q^4 + 7  q^6 + 6  q^7 - 15  q^8 + 22  q^9 + O(q^{11}) \in S_4(25),
\end{align*}
which is a newfrom in $S_4(25)$. The space $S_4(25)$ is $5$ dimensional. Next to $F_{25,1}$, there are two other newforms which we denote by $F_{25,1}$ and $F_{25,2}$. Moreover, there are $2$ oldforms arising from level $5$, which we denote by $F_{5}$ and $F_{5,tw}(z)=F_{5}(5z)$. 

 As $F|W_5=F$, thus $F$ satisfies the conditions of the Theorem \ref{theo: main prod L-functions}. In Table \ref{tbl: small L(F_D,k) F in S4,25}, we computed some central values of the $L$-function of twists of $F_{25,1}$.
\begin{table}[H]
    \centering
    \begin{tabular}{c|c}
    $D$ & $L(F_{25,1} \otimes \chi_D,2)$ \\
    \hline
    8& $1.72936\dots$ \\
     21& $1.62649\dots$  \\
 44& $0.13407\dots$  \\
 53& $0$ \\
 56& $0.37350\dots$  \\ 
 69& $0$ \\
 73& $1.568476\dots$\\ 
 77& $0.23165\dots$ \\
    \end{tabular}
    \caption{$L(F_{25,1} \otimes \chi_D,2)$}
    \label{tbl: small L(F_D,k) F in S4,25}
\end{table}
We will fix $D_0=21$ if $\left( \frac{D}{5}\right)=1$ and $D_0=8$ if $\left( \frac{D}{5}\right)=-1$.

We now explain how we construct the product of Hecke operators: The eigenvalues of the $5$ cusp forms spanning $S_4(5)$ under the Hecke operator for the primes $2,3$ and $7$ can be found in Table \ref{tbl: Eigenvalues small Tp S4(25)}. 
\begin{table}[H]
    \centering
    \begin{tabular}{c|ccc}
        $p$ & 2& 3& 7 \\
        \hline        
        $F_{25,1}$& $1$& $7$ & $6$\\
        $F_{25,2}$& $4$& $-2$ & $-6$\\
        $F_{25,3}$& $-1$& $-7$ & $-6$\\
        $F_5,F_{5,tw}$& $-4$& $2$ & $6$\\
    \end{tabular}
    \caption{Eigenvalues under Hecke operators $T_p$ of eigenforms in the space $S_4(25)$.}
    \label{tbl: Eigenvalues small Tp S4(25)}
\end{table}

It follows that the operator
\begin{align*}
    \mathbb{T}=(T_7+6)(T_2+4): S_4(25) \rightarrow \text{span}_{\C}\{F_{25,1}\}
\end{align*}
is well-defined and non-zero.
We set:
\begin{align*}
\Tilde{\mathbb{T}}:=(T_7+7^{-3}\cdot 6)(T_2+ 2^{-3}\cdot 4).
\end{align*}

 Set
\begin{align*}
P(x):=\left( \sum_{\substack{Q \in Q_{25,DD_0}\\a<0<Q(x,1)}}\chi_{D_0}(Q)Q(x,1)^{k-1}\right)|_{2-2k}\tilde{\mathbb{T}}( x_i)-C_0,
\end{align*}
where
$C_0:= \left( \sum_{\substack{Q \in Q_{25,DD_0}\\a<0<Q(x,1)}}\chi_{D_0}(Q)Q(x,1)^{k-1}\right)|_{2-2k}\tilde{\mathbb{T}}(0)$. The group $\Gamma_0(N)$ is generated by:
\begin{align*}
    \gamma_1:=\begin{pmatrix}
        1 & 1 \\ 0 & 1
    \end{pmatrix}, \gamma_2:=\begin{pmatrix}
        6 & -1 \\ 25 & -4
    \end{pmatrix}, \gamma_3:=\begin{pmatrix}
        7 & -2 \\ 25 & -7
    \end{pmatrix}, \gamma_4:=\begin{pmatrix}
        11 & -4 \\ 25 & -9
    \end{pmatrix},\\
    \gamma_5:=\begin{pmatrix}
        16 & -9\\ 25 & -14
    \end{pmatrix},
    \gamma_6:=\begin{pmatrix}
        18 & -13 \\ 25 & -18
    \end{pmatrix},\text{ and }
    \gamma_7:=\begin{pmatrix}
        21 & -16 \\ 25 & -19
    \end{pmatrix}
\end{align*}
In Table \ref{tbl sml: Local polynomial P_{4,25,D,D_0}} we compute $P(x)$ at $x\in \{ \gamma_i\cdot 0 : 1 \leq i \leq 7\}=\{ 1,1/4,2/7,4/9,9/14,13/18,16/19\}$.
\begin{table}[H]
    \centering
    \scriptsize
    \begin{tabular}{c|c|ccccccc}
  $D$& $\left( \frac{D}{3}\right)$& $P(1)$& $P(1/4)$ &$P(2/7)$ &$P(4/9)$& $P(9/14)$& $P(13/18)$ &$P(16/19)$\\
  \hline
44 & 1 & 0 & 25/686 & 580/16807 & 200/3969 & 2475/67228 & 565/15876 & 400/17689 \\
53 & -1 & 0 & 0 & 0 & 0 & 0 & 0 & 0 \\
56 & 1 & 0 & -25/343 & -1160/16807 & -400/3969 & -2475/33614 & -565/7938 & -800/17689 \\
69 & 1 & 0 & 0 & 0 & 0 & 0 & 0 & 0 \\
73 & -1 & 0 & 125/1372 & 1450/16807 & 500/3969 & 12375/134456 & 2825/31752 & 1000/17689 \\
77 & -1 & 0 & -25/686 & -580/16807 & -200/3969 & -2475/67228 & -565/15876 & -400/17689 \\
    \end{tabular}
    \caption{Local polynomial $P_{4,25,D,D_0}|_{2-2k}\tilde{\mathbb{T}}$}
    \label{tbl sml: Local polynomial P_{4,25,D,D_0}}
\end{table}
From this table it follows that it suffices to compute $P(\gamma_i^2\cdot 0)$ for $ 1 \leq i \leq 7$ when $D_0=8,D=53$ as well as $D_0=21,D=69$.  We get that $P(\gamma_i^2\cdot 0)=0$, which confirms the claim of the Theorem \ref{theo: main prod L-functions}.

\appendix
\section{Locally Harmonic Maass Forms}\label{app: locally harmonic Maass Form}

This appendix is dedicated to carefully proving the properties of the locally harmonic Maass form from  equation \eqref{def: Flm}. We recall its definition:
\begin{align*}
    \Flm(z):=\frac{(DD_0)^{\frac{1}{2}-k}}{\binom{2k-2}{k-1}2\pi}\sum_{Q \in \mathcal{Q}_{N,DD_0}}&\chi_{D_0}(Q)\text{sgn}(Q_z)\\
    &Q(z,1)^{k-1}\beta\left( \frac{DD_0y^2}{|Q(z,1)|^2};k-\frac{1}{2},\frac{1}{2}\right).
\end{align*}
Here
\begin{itemize}
    \item $\mathcal{Q}_{N,DD_0}=\{ Q=[a,b,c]: N|a \text{ and }b^2-4ac=DD_0\}$
    \item The generalized genus character is given by (see definition \ref{def: generalized genus character}):
    \begin{align*}
        \chi_{D_0}(Q)=\begin{cases}
            0 & \text{if } (a,b,c,D)>1\\
            \left( \frac{D_0}{r}\right) &\text{if } (a,b,c,D_0)=1, \quad Q \text{ represents }r, \quad (r,D_0)=1.
        \end{cases}
    \end{align*}
    \item for $z \in \mathbb{H}$, we set: $Q_z:=\frac{1}{\im(z)}(a|z|^2+b\re(z)+c)$
    \item for $s,w \in \C$ with $\re(s),\re(w)>0$, we set
    \begin{align*}
        \beta(v;s,w):=\int_0^v u^{s-1}(1-u)^{w-1}du.
    \end{align*}
\end{itemize}

This work primarily consists in applying the straightforward modifications in the proofs from \cite{B-K-K} — typically substituting $SL_2(\Z)$ with $\Gamma_0(N)$ and incorporating the generalized genus character. Therefore often we will sketch a proof and refer for further details to \cite{B-K-K}.
We recall the definition of a locally harmonic Maass form: 
\begin{definition}\label{def: lhmF -app1}
A function $\mathcal{F}:\mathbb{H} \rightarrow \C$ is called a locally harmonic Maass form of weight $2-2k$ for the congruence subgroup $\Gamma$ and with exceptional set $E$ if:
\begin{itemize}
    \item[(i)] For every $\gamma \in \Gamma$ for every $ z\in \mathbb{H}$: $\mathcal{F}|_{2-2k}\gamma(z)=\mathcal{F}(z)$.
    \item[(ii)] For every $z \in \mathbb{H} \setminus E$, there is a neighborhood $\mathcal{U}$ of $z$ in which $\mathcal{F}$ is real analytic and $\Delta_{2-2k}(\mathcal{F})=0$. 
    
    \item[(iii)] For every $z \in E$, we have:
    \begin{align*}
        \mathcal{F}(z)=\frac{1}{2}\lim_{w \rightarrow 0^+} (\mathcal{F}(z +iw)+\mathcal{F}(z -iw)), \quad w \in \R.
    \end{align*}
    \item[(iv)] The function $\mathcal{F}$ exhibits at most polynomial growth towards $ i \infty$.
\end{itemize}
\end{definition}

This appendix is structured as follows: we first explain how to decompose $\Flm$ and $f_{k,N,D,D_0}$ into a sum of functions corresponding to equivalent classes of binary quadratic forms, then we show that $\Flm$ and $f_{k,N,D,D_0}$ can be written in terms of Poincare series. Afterwards we show the convergence of $\Flm$ and study its value on the exceptional set. We then study the action of the operators $\xi_{2-2k}$ and $D^{1-2k}$ and use this to prove the expansion of $\Flm$ in terms of (non-)holomorphic Eichler integrals of $f_{k,N,D,D_0}$ and a local polynomial. In the last section we study the Hecke operator and the Atkin-Lehner involution on $\Flm$. 

\subsection{Decomposition along equivalence classes of binary forms}
We decompose the functions $f_{k,N,D,D_0}$ and $\Flm$ along equivalence classes of binary quadratic forms. We recall that we defined (see equation \eqref{eq: def sec3-f_k,N,D,D_0}):
\begin{align*}
        f_{k,N,D,D_0}(z):=\frac{(DD_0)^{k-\frac{1}{2}}}{\binom{2k-2}{k-1}\pi}\sum_{Q \in \mathcal{Q}_{N,DD_0}}\chi_{D_0}(Q) Q(z,1)^{-k} \in S_{2k}(N).
\end{align*}

A subset $\mathcal{A} \subset \mathcal{Q}_{N,DD_0}$ is an equivalence class of binary quadratic forms if there exists $Q_0 \in \mathcal{Q}_{N,DD_0}$ such that:
\begin{align}\label{eq: A equiv class}
    \mathcal{A}=[Q_0]:=\{ Q \in \mathcal{Q}_{N,DD_0}: \exists \gamma \in \Gamma_0(N) \text{ such that }Q=Q_0\circ \gamma\}
\end{align}
We set
\begin{align}\label{eq: def f_k,N,D,D_0,A}
    f_{k,N,D,D_0,\mathcal{A}}:=\frac{(-1)^k(DD_0)^{k-\frac{1}{2}}\chi_{D_0}([A])}{\binom{2k-2}{k-1}\pi}\sum_{Q\in \mathcal{A}}Q(z)^{-k}.
\end{align}
Moreover we define
\begin{align}\label{eq: def F_1-k,N,D,D_0,A}
    \mathcal{F}_{1-k,N,D,D_0,\mathcal{A}}:=\frac{(DD_0)^{\frac{1}{2}-k}(-1)^k\chi_{D_0}([\mathcal{A}])}{\binom{2k-2}{k-1}\pi}\sum_{Q \in \mathcal{A}}\text{sgn}(Q_z)Q(z,1)^{k-1}\psi\left( \frac{DD_0y^2}{|Q(z,1)|^2}\right).
\end{align}
Here in order to simplify the notation we denote as in \cite{B-K-K}:
\begin{align*}
    \psi(v):=\frac{1}{2}\beta\left( v; k-\frac{1}{2},\frac{1}{2}\right)
\end{align*}

Note that, since the generalized genus character is invariant under the action of $SL_2(\Z)$, the value $\chi_{D_0}([\mathcal{A}])$ is indeed well-defined. 

The functions $\Flm$ and $f_{k,N,D,D_0}$ can be written as a sum over finitely many $\mathcal{F}_{1-k,N,D,D_0,\mathcal{A}}$, resp. $f_{k,N,D,D_0,\mathcal{A}}$. It is known (e.g. Theorem $1$ in §8 in \cite{Zagier-zetafunktionen-und-quadratische-korper}) that $SL_2(\Z)\setminus Q_{DD_0}$ is finite. Since $[SL_2(\Z):\Gamma_0(N)]<\infty$ and $\mathcal{Q}_{N,DD_0}\subset Q_{DD_0}$, so is $ \Gamma_0(N) \setminus \mathcal{Q}_{N,DD_0}$.

Instead of studying $\mathcal{F}_{1-k,N,D,D_0}$ we will often study $\mathcal{F}_{1-k,N,D,D_0, \mathcal{A}}$.
\subsection{Poincare series}
\subsubsection{Equivalence classes of binary forms and equivalence classes of matrices}
The goal of this section is to rewrite $f_{k,N,D,D_0,\mathcal{A}}$ and $\mathcal{F}_{1-k,N,D,D_0,\mathcal{A}}$ as Poincare series.
To do so we first describe the equivalence classes in terms of hyperbolic points. Here we say that a pair of points $\eta_1, \eta_2 \in \R \cup \{ \infty \}$ is hyperbolic if there exists a matrix $ \gamma \in SL_2({\Z})$ which fixes $\eta_1$ and $\eta_2$ and such that $|\text{tr}(\gamma)|>2$.

We start with making the following general observation. To any matrix
\begin{align*}
    \gamma=\begin{pmatrix}
    \gamma_{11} & \gamma_{12} \\ \gamma_{21} & \gamma_{22}\end{pmatrix} \in SL_2(\Z)
\end{align*}
 we can associate a binary quadratic from in the following way:
\begin{align*}
    Q_{\gamma}=\gamma_{21}z^2+(\gamma_{22}-\gamma_{11})z-\gamma_{12}.
\end{align*}
One can compute that $\text{det}(Q_{\gamma})=\text{tr}(\gamma)^2-4$.

We will use this to show that there is a bijection between equivalence classes of binary quadratic forms and equivalence classes of matrices in $\Gamma_0(N)$.

Let $Q_0=[a,b,c] \in \mathcal{Q}_{N,DD_0}$. We denote by $\eta_1,\eta_2$ its roots. Moreover let $\Gamma_{Q_0}=\Gamma_{\eta}$ be the group of matrices in $SL_2(\Z)$ fixing $Q_0$. Recall from equation \eqref{eq: def stab Q} that $\Gamma_{\eta}/ \{ \pm I \}$ is generated by
\begin{align*}
    \gamma_n:=\begin{pmatrix}
        \frac{t + bu}{2} & cu \\ -au& \frac{t -bu}{2}
    \end{pmatrix}.
\end{align*}
Here $t,u$ are the smallest positive integers such that $t^2-DD_0u^2=4$. Note that $t$ and $u$ only depend on $DD_0$, but not on $Q_0$. As $N|a$ we have that $\gamma_{\eta} \in \Gamma_0(N)$. 
The binary quadratic form associated to this matrix is given by:
\begin{align*}
   Q_{\gamma_{\eta}}(z)=-u(az^2+bz+c)=-uQ_0(z) \in Q_{N,DD_0u^2}
\end{align*}
In particular $\gamma_{\eta}$ fixes $\eta_1$ and $\eta_2$ and since $\text{tr}(\gamma_{\eta})^2=u^2DD_0+4$, it follows that $\eta_1,\eta_2$ are hyperbolic elements.

To prove the bijection between binary quadratic forms equivalent to $Q_0$ and $\Gamma_{Q_0}\setminus\Gamma_0(N)=\Gamma_{\eta}\setminus\Gamma_0(N)$, we need the following fact. There exists a matrix $\sigma_{\eta}=\sigma_{Q_0} \in SL_2(\R)$ such that $\sigma_{\eta}(0)=\eta_1$ and $\sigma_{\eta}(\infty)=\eta_2$ and:
\begin{align}\label{eq: sigma def prop}
    \sigma_{\eta}^{-1}\gamma_{\eta}\sigma_{\eta}=\pm \begin{pmatrix}
        \xi & 0 \\ 0 & \xi^{-1}
    \end{pmatrix} :=\gamma_{\xi}.
\end{align}
By replacing $\xi$ with its inverse if necessary, we may assume that $\xi>1$. (Note that if $|\xi|=1$, then $u=0$, contradicting that $u$ is positive). 
We are now ready to show the following:
\begin{lemma}
    There is a bijection:
\begin{align*}
    \{ Q \in \mathcal{Q}_{N,DD_0} : \exists \gamma \in \Gamma_0(N) \text{ such that } Q_0\circ \gamma = Q\} &\leftrightarrow \left\{ \gamma \in \Gamma_{\eta} \setminus \Gamma_0(N) \right\},\\
    Q\circ \gamma &\mapsto [\gamma]\\
    \frac{1}{-u}Q_{\gamma^{-1}\gamma_\eta\gamma} &\mapsfrom \gamma.
\end{align*}
\end{lemma}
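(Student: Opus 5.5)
The plan is to realize the stated correspondence as an orbit–stabilizer bijection, and then check that the explicit inverse formula is compatible with it.

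\textbf{The forward map.} The group $\Gamma_0(N)$ acts on $\mathcal{Q}_{N,DD_0}$ on the right via $Q\mapsto Q\circ\gamma$. The class $[Q_0]$ is exactly the orbit of $Q_0$. The stabilizer of $Q_0$ in $\Gamma_0(N)$ is $\Gamma_{Q_0}=\Gamma_\eta$, because a matrix fixes $Q_0$ if and only if it fixes both roots $\eta_1,\eta_2$. One inclusion is clear. For the converse, a matrix fixing the roots must fix $Q_0$ up to a scalar, and this scalar equals $1$ since the discriminant is preserved and the sign is determined.

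The map $\Gamma_\eta\gamma\mapsto Q_0\circ\gamma$ is well defined, since $Q_0\circ(\delta\gamma)=(Q_0\circ\delta)\circ\gamma=Q_0\circ\gamma$ for $\delta\in\Gamma_\eta$. It is surjective onto $[Q_0]$ by definition of the class. It is injective because $Q_0\circ\gamma=Q_0\circ\gamma'$ gives $\gamma'\gamma^{-1}\in\Gamma_\eta$. This is the standard orbit–stabilizer bijection, and it is the map $Q_0\circ\gamma\mapsto[\gamma]$ of the statement.

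\textbf{The inverse formula.} It remains to check that $\gamma\mapsto -\frac{1}{u}Q_{\gamma^{-1}\gamma_\eta\gamma}$ agrees with the inverse of this bijection. Concretely, I will verify the identity
\begin{align*}
Q_{\gamma^{-1}\gamma_\eta\gamma}=-u\,(Q_0\circ\gamma).
\end{align*}

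The key step is an equivariance property of $M\mapsto Q_M$ under conjugation, namely
\begin{align*}
Q_{\gamma^{-1}M\gamma}=Q_M\circ\gamma \quad\text{for all } \gamma\in SL_2(\Z).
\end{align*}
To prove it, observe that $Q_M(z,1)=\gamma_{21}z^2+(\gamma_{22}-\gamma_{11})z-\gamma_{12}$, where $M=\begin{pmatrix}\gamma_{11}&\gamma_{12}\\\gamma_{21}&\gamma_{22}\end{pmatrix}$, and that this equals $(z\ 1)\,J M\,(z\ 1)^T$ for a fixed matrix $J$ with $J\gamma^{-1}=\gamma^{T}J$. Here $J=\begin{pmatrix}0&1\\-1&0\end{pmatrix}$ up to sign, and the relation $J\gamma^{-1}=\gamma^TJ$ holds because $\det\gamma=1$. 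With this, $(z\ 1)J\gamma^{-1}M\gamma(z\ 1)^T=((z\ 1)\gamma^T)JM(\gamma(z\ 1)^T)$, which is $Q_M\circ\gamma$. The symmetric part is what matters here, since the antisymmetric part contributes nothing to the quadratic form.

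Alternatively, one can argue more conceptually. $Q_M$ is, up to scalar, the unique quadratic form vanishing at the fixed points of $M$, and conjugating $M$ moves the fixed points by $\gamma^{-1}$. A comparison of leading coefficients (or of the discriminant together with a sign) then fixes the scalar.

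\textbf{Conclusion.} The paper has already computed $Q_{\gamma_\eta}=-uQ_0$. Combining this with the equivariance gives $Q_{\gamma^{-1}\gamma_\eta\gamma}=-u\,Q_0\circ\gamma$, so $-\frac1u Q_{\gamma^{-1}\gamma_\eta\gamma}=Q_0\circ\gamma$, which is the inverse map. This expression is also independent of the representative: replacing $\gamma$ by $\delta\gamma$ with $\delta\in\Gamma_\eta$ leaves it unchanged, since $\delta$ commutes with $\gamma_\eta$ up to sign ($\Gamma_\eta/\{\pm1\}$ is cyclic, generated by $\gamma_\eta$) and $Q_{-M}=-Q_M$ is absorbed by $\gamma_\eta$ being defined up to $\pm$.

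The main obstacle I expect is bookkeeping of signs and conventions. The paper writes the generator as both $\begin{pmatrix}\frac{t-bu}2&-cr\\ au&\frac{t+bu}2\end{pmatrix}$ and $\begin{pmatrix}\frac{t+bu}2&cu\\-au&\frac{t-bu}2\end{pmatrix}$, and the right-action convention for $Q\circ\gamma$ must match the conjugation $\gamma^{-1}M\gamma$ rather than $\gamma M\gamma^{-1}$. I will fix the second form of $\gamma_\eta$, verify $Q_{\gamma_\eta}=-uQ_0$ directly, and check the equivariance identity on the generators $\begin{pmatrix}1&1\\0&1\end{pmatrix}$ and $\begin{pmatrix}0&-1\\1&0\end{pmatrix}$. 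This settles the convention, after which the identity extends by multiplicativity.
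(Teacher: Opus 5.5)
Your proof is correct, but the injectivity step runs along a different route from the paper's.

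\textbf{The paper's route.} It asserts $Q_0\circ\gamma=-\frac1u Q_{\gamma^{-1}\gamma_\eta\gamma}$ as a ``straightforward computation''. Its real work is showing that the centralizer $\{\gamma\in\Gamma_0(N):\gamma^{-1}\gamma_\eta\gamma=\gamma_\eta\}$ equals $\Gamma_\eta$. After conjugating by $\sigma_\eta$, anything commuting with $\pm\mathrm{diag}(\xi,\xi^{-1})$, where $\xi\neq\xi^{-1}$, is diagonal, and hence fixes $\eta_1,\eta_2$.

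\textbf{Your route.} You apply orbit--stabilizer to $Q_0$ directly. The paper defines $\Gamma_{Q_0}=\Gamma_\eta$ as the stabilizer of $Q_0$, so well-definedness, injectivity and surjectivity are immediate, with no $\sigma_\eta$ or centralizer computation. You use the conjugation identity only to match the explicit inverse formula. Your derivation of it from $\gamma^TJ\gamma=J$, with $J=\begin{pmatrix}0&1\\-1&0\end{pmatrix}$, supplies exactly the computation the paper omits.

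\textbf{Trade-off.} The paper's route identifies $[Q_0]$ with the $\Gamma_0(N)$-conjugacy class of the hyperbolic element $\gamma_\eta$, and $\Gamma_\eta$ with its centralizer. That is the viewpoint behind the hyperbolic Poincar\'e series that follow. Yours is shorter and more elementary.

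\textbf{Two small corrections.}
\begin{itemize}
\item Elements of $\Gamma_\eta$ commute with $\gamma_\eta$ exactly, since all of them are diagonalized by $\sigma_\eta$. Your ``up to sign'' discussion is therefore unnecessary. Independence of the representative in $\gamma\mapsto-\frac1uQ_{\gamma^{-1}\gamma_\eta\gamma}$ already follows from the identity together with well-definedness of $\Gamma_\eta\gamma\mapsto Q_0\circ\gamma$.
\item If you keep the root-fixing description of the stabilizer, both directions need a word.
  \begin{itemize}
  \item ``The sign is determined'' follows from your own identity. For $\gamma\neq\pm I$ fixing $\eta_1,\eta_2$ one has $Q_\gamma=\mu Q_0$, so $Q_0\circ\gamma=\mu^{-1}Q_{\gamma^{-1}\gamma\gamma}=Q_0$.
  \item The ``clear'' direction must rule out a $\gamma$ that swaps the two roots. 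Such a $\gamma$ sends $Q_0$ to $-Q_0$, so it does not lie in the stabilizer.
  \end{itemize}
\end{itemize}
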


\begin{proof}
    Let $Q=Q_0\circ \gamma$ for some matrix $\gamma \in \Gamma_0(N)$. Then we have that
\begin{align*}
    Q=Q_0\circ \gamma = \frac{1}{-u}(Q_{\gamma_{\eta}} \circ \gamma)=\frac{1}{-u} Q_{ \gamma^{-1} \gamma_{\eta} \gamma}.
\end{align*}
The last step follows from a straight forward computation. It remains to show that
\begin{align*}
    \{ \gamma \in \Gamma_0(N) | \gamma^{-1}\gamma_{\eta}\gamma=\gamma_{\eta}\}=\Gamma_{\eta}.
\end{align*}
One sees that $\Gamma_{\eta}$ is a subset of the left hand side and one can check that the latter one is indeed a subgroup. Now let $\gamma \in \Gamma_0(N)$ be such that $\gamma^{-1}\gamma_{\eta}\gamma=\gamma_{\eta}$. We will show that $\sigma_{\eta}^{-1}\gamma\sigma_{\eta}$ is a diagonal matrix. It then follows from the definition of $\sigma_{\eta}$ that $\gamma \in \Gamma_{\eta}$. Using the properties of $\gamma$ and $\sigma_{\eta}$, we find that:
\begin{align*}
    \sigma_{\eta}^{-1}\gamma\sigma_{\eta}=\sigma_{\eta}^{-1}\gamma_{\eta}^{-1}\gamma\gamma_{\eta}\sigma_{\eta}
    = \gamma_{\xi}^{-1}\sigma_{\eta}^{-1}\gamma\sigma_{\eta}\gamma_{\xi}
\end{align*}
    Thus:
\begin{align*}\gamma_{\xi}\sigma_{\eta}^{-1}\gamma\sigma_{\eta}=\sigma_{\eta}^{-1}\gamma\sigma_{\eta}\gamma_{\xi}.
\end{align*}
Since $\sigma_{\eta}^{-1}\gamma\sigma_{\eta}$ commutes with a diagonal matrix and $\xi^{-1}\neq\xi$, this means that $\sigma_{\eta}^{-1}\gamma\sigma_{\eta}$ is a diagonal matrix itself. Since its determinant is $1$, it needs to be of the same shape as $\gamma_{\xi}$.
\end{proof}
\begin{remark}
Note that here the normalization by $-1/u$ is necessary, to get a binary quadratic form with determinant $DD_0$.
However, when rewriting $\mathcal{F}_{1-k,N,D,D_0,\mathcal{A}}$ and $f_{k,N,D,D_0,\mathcal{A}}$ in terms of Poincare series we sum over terms of the form
\begin{align*}
    \frac{uQ(z,1)}{(\text{det}(uQ))^{1/2}}=\frac{Q(z,1)}{(\text{det}(Q))^{1/2}}.
\end{align*}
\end{remark}
We also need the following lemma:
\begin{lemma}[Lemma 3.1 in \cite{B-K-K}]\label{lem: B-K-K}
Let $\eta_1, \eta_2$ be a hyperbolic pair. Let $Q_{\eta}=\frac{1}{-u}Q_{\gamma_{\eta}}\in \mathcal{Q}_{N,DD_0}$ be the associated binary quadratic form. Let $Q:=Q_{\eta}\circ \gamma$ for some $\gamma \in \Gamma_0(N)$ Then there exists a constant $r \in \mathbb{R}^+$ such that:
\begin{align*}
    \sigma_{\eta}^{-1}\gamma=\begin{pmatrix}
        \sqrt{r} & 0 \\ 0 & \frac{1}{\sqrt{r}}
    \end{pmatrix}\sigma_Q^{-1}
\end{align*}
    In particular:
    \begin{align*}
        \text{arg}(\sigma^{-1}_{\eta}\gamma z)=\text{arg}(\sigma_{Q}^{-1}z) \quad \text{and }\quad \text{sgn}(\re(\sigma_{\eta}^{-1}\gamma \tau))=\text{sgn}(\re(\sigma_{\eta}^{-1}\tau)).
    \end{align*}
Moreover we have:
    \begin{align*}
       ( z|_{-2}\sigma_{\eta}^{-1}\gamma )(z)
       =(z|_{-2}\sigma_{Q}^{-1}) (z)
       =\frac{-Q(z,1)}{\sqrt{DD_0}}
    \end{align*}
    Here we denote by $\sigma_{Q}$ the matrix associated to the roots of $Q$ as in equation \eqref{eq: sigma def prop}.
\end{lemma}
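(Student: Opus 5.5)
The plan is to exploit the fact that both $\sigma_\eta^{-1}\gamma$ and $\sigma_Q^{-1}$ send the roots of $Q$ to $\{0,\infty\}$, so they differ only by a diagonal matrix.

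First I identify the roots of $Q$. By the compatibility of the action with Möbius transformations, $(Q_\eta\circ\gamma)(z,1)=(\gamma_{21}z+\gamma_{22})^2Q_\eta(\gamma z,1)$. Hence the roots of $Q$ are $\gamma^{-1}\eta_1$ and $\gamma^{-1}\eta_2$. I fix the convention that $\sigma_Q(0)=\gamma^{-1}\eta_1$ and $\sigma_Q(\infty)=\gamma^{-1}\eta_2$, with the roots ordered in the same way as for $\sigma_\eta$.

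Next consider $M:=\sigma_\eta^{-1}\gamma\sigma_Q\in SL_2(\R)$. It sends $0\mapsto\sigma_\eta^{-1}\eta_1=0$ and $\infty\mapsto\sigma_\eta^{-1}\eta_2=\infty$. A matrix in $SL_2(\R)$ fixing both $0$ and $\infty$ is diagonal, so $M=\operatorname{diag}(\lambda,\lambda^{-1})$ with $\lambda\in\R^\times$.

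If $\lambda<0$, I replace $\sigma_Q$ by $-\sigma_Q$. This is harmless because $\sigma_Q$ is only determined up to $\pm I$ by the defining property \eqref{eq: sigma def prop}, and $-I$ acts trivially both as a Möbius transformation and in the weight $-2$ slash action. With $r:=\lambda^2>0$ this gives $\sigma_\eta^{-1}\gamma=\operatorname{diag}(\sqrt r,1/\sqrt r)\,\sigma_Q^{-1}$.

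The two "in particular" statements follow at once. The diagonal matrix acts as $w\mapsto rw$ with $r>0$, which preserves $\arg w$ and $\operatorname{sgn}(\re w)$. For the sign statement I apply this with $w=\sigma_Q^{-1}\tau$ and compare with the case $\gamma=I$, where $Q=Q_\eta$ and $\sigma_Q=\sigma_\eta$.

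For the last identity, the slash action is multiplicative, so
\begin{align*}
(z|_{-2}\sigma_\eta^{-1}\gamma)(z)=\left((z|_{-2}\operatorname{diag}(\sqrt r,1/\sqrt r))|_{-2}\sigma_Q^{-1}\right)(z).
\end{align*}
One checks directly that $z|_{-2}\operatorname{diag}(\sqrt r,1/\sqrt r)=(1/\sqrt r)^{2}\cdot rz=z$. This reduces everything to computing $z|_{-2}\sigma^{-1}$ for $\sigma=\begin{pmatrix}\alpha&\beta\\ \gamma'&\delta\end{pmatrix}=\sigma_Q$.

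Here $\sigma^{-1}=\begin{pmatrix}\delta&-\beta\\-\gamma'&\alpha\end{pmatrix}$, so
\begin{align*}
z|_{-2}\sigma^{-1}=(-\gamma'z+\alpha)^2\frac{\delta z-\beta}{-\gamma'z+\alpha}=-\gamma'\delta\,(z-\beta/\delta)(z-\alpha/\gamma').
\end{align*}
Since $\beta/\delta=\sigma(0)$ and $\alpha/\gamma'=\sigma(\infty)$ are the two roots of $Q$, we have $Q(z,1)=a(z-\beta/\delta)(z-\alpha/\gamma')$. Therefore $z|_{-2}\sigma^{-1}$ is a real multiple of $Q(z,1)$.

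To fix the multiple I use $\det\sigma=1$, which gives $\gamma'\delta\,(\alpha/\gamma'-\beta/\delta)=1$. I also use $\sqrt{DD_0}=|a|\,|\alpha/\gamma'-\beta/\delta|$, the difference of the roots. Together these give the factor $\pm1/\sqrt{DD_0}$.

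The main obstacle is getting the sign to be exactly $-1$. This depends on the orientation convention for $\sigma_Q$, namely which root is sent to $0$ relative to the sign of $a$. I would fix that convention, in the same way as \cite{B-K-K}, precisely so that the identity holds for $Q_\eta$ with $\gamma=I$. The general case then follows from the reduction above, since the orientation of the pair of roots is preserved under $\gamma\in\Gamma_0(N)$.
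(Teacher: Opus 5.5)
The paper gives no proof of this lemma; it simply quotes Lemma 3.1 of \cite{B-K-K}. Your argument is the standard proof of that lemma and is correct in substance: $\sigma_\eta^{-1}\gamma\sigma_Q$ fixes $0$ and $\infty$, hence is diagonal up to $\pm I$, and then you evaluate $z|_{-2}\sigma^{-1}=-Q(z,1)/\bigl(a(\sigma(\infty)-\sigma(0))\bigr)$ directly.

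Two things need fixing.

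First, the sign statement as printed, with $\text{sgn}(\re(\sigma_\eta^{-1}\tau))$ on the right, is a misprint and is false in general. Take $\tau$ inside the semicircle $S_{Q_\eta}$ and $\gamma=\begin{pmatrix}1&n\\0&1\end{pmatrix}$ with $n$ large; then $\tau$ and $\gamma\tau$ lie on opposite sides of $S_{Q_\eta}$. The intended right-hand side is $\text{sgn}(\re(\sigma_Q^{-1}\tau))$, and your substitution $w=\sigma_Q^{-1}\tau$ already gives exactly that. Drop the ``compare with $\gamma=I$'' step, which proves nothing.

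Second, your appeal to orientation being preserved under $\gamma$ is true but unproved. It amounts to the identity $a'(\gamma^{-1}\eta_2-\gamma^{-1}\eta_1)=a(\eta_2-\eta_1)$, where $a,a'$ are the leading coefficients of $Q_\eta$ and $Q$. You can avoid it entirely. With your transported choice of $\sigma_Q$, use multiplicativity of the slash action together with the compatibility $(Q_\eta\circ\gamma)(z,1)=(\gamma_{21}z+\gamma_{22})^2Q_\eta(\gamma z,1)$. This gives $z|_{-2}\sigma_Q^{-1}=(z|_{-2}\sigma_\eta^{-1})|_{-2}\gamma=-Q(z,1)/\sqrt{DD_0}$ directly from the case $\gamma=I$. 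Orientation preservation is then needed only to match your $\sigma_Q$ with one defined from $Q$ alone. For that matrix, the final identity requires $\sigma_Q(\infty)=\frac{-b+\sqrt{DD_0}}{2a}$ when $Q=[a,b,c]$.
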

\subsubsection{Poincare series for $f_{k,N,D,D_0,\mathcal{A}}$}
We show that we can write $f_{k,N,D,D_0,\mathcal{A}}$ as a Poincare series. Let $\mathcal{A}=[Q_0]$ be an equivalence of binary quadratic forms. Denote with $\eta:=\{\eta_1,\eta_2\}$ the roots of $Q_0$.
We set
\begin{align*}
    h_k(z):=z^{-k}.
\end{align*}
 We define the following hyperbolic Poincare series:
\begin{align*}
    m_{k,\eta,N,D_0}:= \chi_{D_0}([Q_0])\sum_{\gamma \in \Gamma_{\eta} \setminus \Gamma_0(N)} (h_k |_{2k}\sigma_{\eta}^{-1}\gamma )(z).
\end{align*}
Note that apart from multiplying with the generalized genus character, this series only differs from the one defined in \cite{B-K-K} by the set we are summing over, replacing $\Gamma_0(N)$ with  $SL_2(\Z)$.

\begin{proposition} 
Let $k>0$ and $D,D_0$ be two fundamental discriminants with $(-1)^kD,(-1)^kD_0>0$ and such that $DD_0$ is not a perfect square. Then we have:
    \begin{align*}
        m_{k, \eta, N,D_0}=(DD_0)^{\frac{1-k }{2}}\pi \binom{2k-2}{k-1}f_{k,N,D,D_0,\mathcal{A}}
    \end{align*}
\end{proposition}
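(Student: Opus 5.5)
The plan is to expand the Poincaré series term by term and use Lemma \ref{lem: B-K-K}, which describes each summand explicitly, together with the bijection between $\mathcal{A}$ and $\Gamma_\eta\setminus\Gamma_0(N)$ proved just above.

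\textbf{Step 1: rewrite each summand.} By definition of the slash operator,
\begin{align*}
(h_k|_{2k}\sigma_\eta^{-1}\gamma)(z) = j(\sigma_\eta^{-1}\gamma,z)^{-2k}\,(\sigma_\eta^{-1}\gamma z)^{-k}.
\end{align*}
The combination $j(g,z)^{-2}\cdot(g z)^{-1}$ is exactly $\bigl((z|_{-2}g)(z)\bigr)^{-1}$. Here $z|_{-2}g = j(g,z)^{2}\, g z$, and the determinant factor is trivial since $g\in SL_2(\R)$. Raising to the $k$-th power, we get
\begin{align*}
(h_k|_{2k}\sigma_\eta^{-1}\gamma)(z)=\bigl((z|_{-2}\sigma_\eta^{-1}\gamma)(z)\bigr)^{-k}.
\end{align*}
One must check that no branch issue arises. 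There is none, because $k$ is an integer and all expressions are genuine powers. By Lemma \ref{lem: B-K-K}, $(z|_{-2}\sigma_\eta^{-1}\gamma)(z)=-Q(z,1)/\sqrt{DD_0}$ with $Q=Q_\eta\circ\gamma$. Hence each summand equals
\begin{align*}
(-1)^k (DD_0)^{k/2}\, Q(z,1)^{-k}.
\end{align*}

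\textbf{Step 2: reindex the sum.} Use the bijection $\Gamma_\eta\setminus\Gamma_0(N)\leftrightarrow\mathcal{A}$, $\gamma\mapsto Q_0\circ\gamma$, to turn the sum over cosets into $\sum_{Q\in\mathcal{A}}Q(z,1)^{-k}$. The summand depends only on the coset: replacing $\gamma$ by $g\gamma$ with $g\in\Gamma_\eta$ leaves $Q_0\circ\gamma$ unchanged, since $g$ stabilizes $Q_0$. One subtlety needs care. $\Gamma_\eta$ contains $\pm I$, and $\Gamma_0(N)$ should be read either projectively or with $-I$ on both sides, so that the coset space matches $\mathcal{A}$ without a factor of $2$. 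I would follow the convention of \cite{B-K-K} here. The genus character $\chi_{D_0}([Q_0])$ is constant on $\mathcal{A}$ and can be pulled out.

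\textbf{Step 3: compare constants.} This gives
\begin{align*}
m_{k,\eta,N,D_0}=(-1)^k(DD_0)^{k/2}\chi_{D_0}([Q_0])\sum_{Q\in\mathcal{A}}Q(z,1)^{-k}.
\end{align*}
Comparing with \eqref{eq: def f_k,N,D,D_0,A}, the right-hand side equals
\begin{align*}
(DD_0)^{k/2}(DD_0)^{\frac12-k}\pi\binom{2k-2}{k-1}f_{k,N,D,D_0,\mathcal{A}}=(DD_0)^{\frac{1-k}{2}}\pi\binom{2k-2}{k-1}f_{k,N,D,D_0,\mathcal{A}},
\end{align*}
which is the claim. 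Absolute convergence for $k\ge2$ justifies the rearrangement; this is standard, as in \cite{B-K-K}.

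\textbf{Expected obstacle.} The main risk is bookkeeping of signs and normalizations rather than any conceptual step. Two things need checking: the sign in $-Q/\sqrt{DD_0}$ from Lemma \ref{lem: B-K-K}, and whether $\sigma_\eta$ is normalized so that $Q_\eta=\frac{1}{-u}Q_{\gamma_\eta}$ agrees with $Q_0$ rather than $-Q_0$. I would verify both by a direct computation with $\sigma_\eta$ sending $0,\infty$ to $\eta_1,\eta_2$. If a sign discrepancy appears, the factor $(-1)^k$ built into $f_{k,N,D,D_0,\mathcal{A}}$ is precisely what should absorb it.
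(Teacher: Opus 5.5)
Your proposal is correct and matches the paper's proof. The paper likewise combines the bijection $\mathcal{A}\leftrightarrow\Gamma_\eta\backslash\Gamma_0(N)$ with Lemma~\ref{lem: B-K-K} to obtain $\sum_{Q\in\mathcal{A}}Q(z,1)^{-k}=(-\sqrt{DD_0})^{-k}\sum_{\gamma}(h_k|_{2k}\sigma_\eta^{-1}\gamma)(z)$, then compares constants. Your side remarks are points the paper passes over silently: $-I$ lies in both $\Gamma_\eta$ and $\Gamma_0(N)$, so no factor of $2$ arises, and absolute convergence actually requires $k\ge 2$ rather than the stated $k>0$.
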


\begin{proof}
    Using the bijection between the set of binary quadratic froms equivalent to $Q_0$ and $\Gamma_{\eta}\setminus \Gamma_0(N)$ and Lemma \ref{lem: B-K-K} we get:
\begin{align*}
    \sum_{Q \in [Q_0]} Q(z,1)^{-k}
    &=\sum_{\gamma \in \Gamma_{\eta}\setminus \Gamma_0(N)}\left((Q_{\eta}\circ\gamma)(z)\right)^{-k}\\
    &=\left(-\sqrt{DD_0}\right)^{-k}\sum_{\gamma \in \Gamma_{\eta}\setminus \Gamma_0(N)} (h_k |_{2k} \sigma_{\eta}^{-1}\gamma) (z),
\end{align*}
which yields the desired result. 
\end{proof}

\subsubsection{Poincare series for $\mathcal{F}_{k,N,D,D_0,\mathcal{A}}$}
We also want to write $\mathcal{F}_{1-k,N,D,D_0,\mathcal{A}}$ in terms of Poincare series.
First we define
\begin{align*}
    \varphi\left( \text{arctan}\left|\frac{\sqrt{DD_0}y}{a|z|^2+b\re z+c}\right| \right):=\frac{1}{2}\beta\left( \frac{DD_0y^2}{|az^2+bz+c|^2};k-\frac{1}{2},\frac{1}{2}\right)
\end{align*}
Moreover we set:
\begin{align*}
    \hat{\varphi}(z):=z^{k-1}\text{sgn}(x)\varphi\left(\text{arctan}\left|\frac{y}{x}\right|\right),
\end{align*}
where if $a|z|^2+b\re z+c=0$, we assume the arctangent to be equal to $\frac{\pi}{2}$.
We define:
\begin{align*}
    \mathcal{M}_{1-k, \eta,N,D_0}(z):=\chi_{D_0}(Q_{\eta})\sum_{\gamma \in \Gamma_{\eta}\setminus \Gamma_0(N)} (\hat{\varphi}|_{2-2k} \sigma_{\eta}^{-1}\gamma) (z)
\end{align*}
We have the following result:
\begin{proposition}
Let $k>1$ and let $D,D_0$ be two fundamental discriminants with $(-1)^kD,(-1)^kD_0>0$ and such that $DD_0$ is not a perfect square.
Let $\mathcal{A}=[Q_0]$ where $Q_0$ has roots $\eta_1,\eta_2$. Then we have:
\begin{align*}
        \mathcal{M}_{1-k,\eta,N,D_0}=(DD_0)^{\frac{k}{2}}\pi \binom{2k-2}{k-1}\mathcal{F}_{1-k,N,D,D_0,\mathcal{A}}.
    \end{align*}
\end{proposition}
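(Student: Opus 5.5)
The plan is to mirror the proof of the analogous Poincaré series identity for $m_{k,\eta,N,D_0}$: use the bijection between $\mathcal{A}=[Q_0]$ and $\Gamma_\eta\setminus\Gamma_0(N)$, $\gamma\mapsto Q_\eta\circ\gamma$, and evaluate each term $(\hat{\varphi}|_{2-2k}\sigma_\eta^{-1}\gamma)(z)$ using Lemma \ref{lem: B-K-K}. For a fixed $\gamma$, write $Q=Q_\eta\circ\gamma$. Lemma \ref{lem: B-K-K} gives $\sigma_\eta^{-1}\gamma=\mathrm{diag}(\sqrt r,1/\sqrt r)\,\sigma_Q^{-1}$ with $r>0$. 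Since $\hat\varphi$ is homogeneous of degree $k-1$ under positive scalings $w\mapsto rw$ (the factors $\mathrm{sgn}(\re w)$ and $\arctan|\im w/\re w|$ are scale invariant), and the slash action of this diagonal matrix in weight $2-2k$ multiplies by $r^{1-k}$, the diagonal factor cancels. Hence $\hat\varphi|_{2-2k}\sigma_\eta^{-1}\gamma=\hat\varphi|_{2-2k}\sigma_Q^{-1}$.

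Next I will compute $\hat\varphi|_{2-2k}\sigma_Q^{-1}$ explicitly. Write $w:=\sigma_Q^{-1}z$ and let $j(\sigma_Q^{-1},z)$ be the automorphy factor. The last identity in Lemma \ref{lem: B-K-K} states $w\, j(\sigma_Q^{-1},z)^2=-Q(z,1)/\sqrt{DD_0}$. Therefore the factor $w^{k-1}j^{2k-2}$ produced by the slash action equals $(-1)^{k-1}(DD_0)^{\frac{1-k}{2}}Q(z,1)^{k-1}$. For the angular part, I need $\tan\arg w$ in terms of $Q$. Since $\im w=y/|j|^2$ and $w\,j^2=-Q(z,1)/\sqrt{DD_0}$, we get $|w|=|Q(z,1)|/(\sqrt{DD_0}|j|^2)$, so $\sin\arg w=\sqrt{DD_0}\,y/|Q(z,1)|$. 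The quantity $\varphi(\arctan|\im w/\re w|)$ is built so that $\sin^2$ of the angle is the beta argument $DD_0y^2/|Q(z,1)|^2$, which is consistent with the definition of $\varphi$. For the sign, $\re w$ has the same sign as $\im(w\bar w\cdots)$; more concretely, a direct computation should give $\re(w)\,|j|^2\sqrt{DD_0}=\pm(a|z|^2+bx+c)$. I would verify this on the model case $\sigma_Q$ for $Q=[0,1,0]$-type normalizations, or by using that $\re w=0$ exactly on the geodesic $S_Q$. This yields $\mathrm{sgn}(\re w)=\pm\mathrm{sgn}(Q_z)$ with a global sign.

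Summing over $\gamma$ and multiplying by $\chi_{D_0}(Q_\eta)=\chi_{D_0}([\mathcal{A}])$ gives
\begin{align*}
\mathcal{M}_{1-k,\eta,N,D_0}(z)=\pm(DD_0)^{\frac{1-k}{2}}\chi_{D_0}([\mathcal{A}])\sum_{Q\in\mathcal{A}}\mathrm{sgn}(Q_z)Q(z,1)^{k-1}\psi\!\left(\frac{DD_0y^2}{|Q(z,1)|^2}\right).
\end{align*}
Comparing with \eqref{eq: def F_1-k,N,D,D_0,A} then yields the stated constant $(DD_0)^{k/2}\pi\binom{2k-2}{k-1}$, with the power of $DD_0$ and the sign $(-1)^k$ absorbed by the normalizations. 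Absolute convergence, which is needed to reorder the sum, follows as in \cite{B-K-K} for $k>1$, since the summands are dominated by $|Q(z,1)|^{k-1}\,(y^2/|Q(z,1)|^2)^{k-1/2}$.

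The main obstacle is the sign and orientation bookkeeping. This covers the identification $\mathrm{sgn}(\re\sigma_Q^{-1}z)=\mathrm{sgn}(Q_z)$, which depends on the ordering of the roots $\eta_1,\eta_2$ in $\sigma_Q$ and on the choice $\xi>1$. It also covers tracking the powers $(-1)^{k-1}$ and $(DD_0)^{\cdot}$ so that they match the normalization constant exactly. I would fix conventions by checking one explicit form first.
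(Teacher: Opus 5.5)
Your route is the paper's: the bijection $\mathcal{A}\leftrightarrow\Gamma_\eta\backslash\Gamma_0(N)$, then Lemma~\ref{lem: B-K-K} to replace $\sigma_\eta^{-1}\gamma$ by $\sigma_Q^{-1}$ (your homogeneity argument for the diagonal factor is fine). With $w=\sigma_Q^{-1}z$, you handle the algebraic factor with $w\,j(\sigma_Q^{-1},z)^2=-Q(z,1)/\sqrt{DD_0}$ and the angular factor with $\sin\arg w=\sqrt{DD_0}\,y/|Q(z,1)|$. The gap is that the one step which decides the statement is left open. Write $\mathrm{sgn}(\re w)=\epsilon\,\mathrm{sgn}(Q_z)$. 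Your computation then gives
\[
\mathcal{M}_{1-k,\eta,N,D_0}(z)=\epsilon(-1)^{k-1}(DD_0)^{\frac{1-k}{2}}\chi_{D_0}([\mathcal{A}])\sum_{Q\in\mathcal{A}}\mathrm{sgn}(Q_z)Q(z,1)^{k-1}\psi\Bigl(\frac{DD_0y^2}{|Q(z,1)|^2}\Bigr).
\]
The definition of $\mathcal{F}_{1-k,N,D,D_0,\mathcal{A}}$ carries $(-1)^k(DD_0)^{\frac12-k}$, so this equals $(DD_0)^{k/2}\pi\binom{2k-2}{k-1}\mathcal{F}_{1-k,N,D,D_0,\mathcal{A}}$ exactly when $\epsilon=-1$. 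Nothing is ``absorbed by the normalizations'': the power $(DD_0)^{\frac{1-k}{2}-(\frac12-k)}=(DD_0)^{k/2}$ and the sign are both forced. With $\epsilon=+1$ the proposition would fail whenever $\chi_{D_0}([\mathcal{A}])\neq 0$. You also have not shown that $\epsilon$ is the same for every $Q\in\mathcal{A}$.

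The missing fact is $\mathrm{sgn}(\re\sigma_Q^{-1}z)=-\mathrm{sgn}(Q_z)$, which the paper reads off from $\re(\sigma_Q^{-1}z)=-(a|z|^2+bx+c)/(|a||z-\eta_Q|^2)$. Your worry about the ordering of the roots and the choice $\xi>1$ is settled by the normalization you already use: the identity $w\,j(\sigma_Q^{-1},z)^2=-Q(z,1)/\sqrt{DD_0}$ from Lemma~\ref{lem: B-K-K} fixes the orientation of $\sigma_Q$. Here is how to close the step. Since $\sigma_Q^{-1}$ has real entries, continuity gives, for real $x$ away from the roots of $Q$, $\sigma_Q^{-1}x=-Q(x,1)/(\sqrt{DD_0}\,j(\sigma_Q^{-1},x)^2)$. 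This is a real number of sign $-\mathrm{sgn}(Q(x,1))$, while $\mathrm{sgn}(Q_z)=\mathrm{sgn}(Q(x,1))$ for $z$ near $x$. Now $\re\sigma_Q^{-1}z=0$ exactly on $S_Q$, and both components of $\mathbb{H}\setminus S_Q$ reach the real axis. Hence $\mathrm{sgn}(\re\sigma_Q^{-1}z)=-\mathrm{sgn}(Q_z)$ on all of $\mathbb{H}$, for every $Q\in\mathcal{A}$. Equivalently, a direct computation from the same identity gives $\sqrt{DD_0}\,|j(\sigma_Q^{-1},z)|^2\re(\sigma_Q^{-1}z)=-(a|z|^2+bx+c)$, which is your guessed formula with the sign determined. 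Then $\epsilon(-1)^{k-1}=(-1)^k$ and the constant comes out exactly as stated. Note that testing on a form like $[0,1,0]$ would not have settled this, since that form is degenerate and outside the setting of Lemma~\ref{lem: B-K-K}.
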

The proof is essentially the same as the proof of lemma 3.2 in \cite{B-K-K}.
\begin{proof-sketch}
The statement is clear if $\chi_{D_0}(Q_0)=0$.
Hence we may assume that $\chi_{D_0}(Q_0)\neq 0$. Let $Q \in \mathcal{Q}_{N,DD_0}$ with root $\eta_Q$. We denote by $\sigma_Q=\sigma_{\eta_{Q}}$.

Unraveling the definitions of $\mathcal{M}_{1-k,\eta, N,D_0}$ we rewrite $\chi_{D_0}(Q_0) \mathcal{M}_{1-k,\eta,N,D_0}$:
\begin{align*}
    \chi&_{D_0}(Q_0) \mathcal{M}_{1-k,\eta,N}(z)\\
    &=\sum_{Q \in [Q_0] }(-1)^{k-1}\left(\frac{Q(z,1)}{(DD_0)^{1/2}}\right)^{k-1}\text{sgn}(\re( \sigma^{-1}_{Q}(z,1)))\varphi\left( \text{arctan}\left(\frac{|\im (\sigma^{-1}_{Q}(z))|}{|\re(\sigma^{-1}_{Q}(z))|}\right)\right)
\end{align*}
Here we used Lemma \ref{lem: B-K-K}.
Moreover note that if $Q=[a,b,c]$ as above then we can rewrite:
\begin{align*}
    \re (\sigma_Q^{-1} z)=-\frac{a|z|^2+bx+c}{|a||-z+\eta_{Q}|^2}\quad \text{ and }\quad \im (\sigma_Q^{-1}z)=\frac{y \sqrt{DD_0}}{|a||-z+\eta_{Q}|^2}.
\end{align*}
Then the claim follows.
\end{proof-sketch}
\subsection{Convergence and values at exceptional points}
In this section we study the convergence of $\mathcal{F}_{1-k,N,D,D_0,\mathcal{A}}$ and the values at points in the exceptional set. 
\begin{proposition}\label{prop: convergence flm}
    Let $k>1$ and $D,D_0$ be two fundamental discriminants with $(-1)^kD,(-1)^kD_0>0$ and such that $DD_0$ is not a perfect square. The function $\mathcal{F}_{1-k,N,D,D_0,\mathcal{A}}$ converges compactly on $\mathbb{H}$, i.e. on every compact subset $\mathcal{U}\subset \mathbb{H}$ $\mathcal{F}_{1-k,N,D,D_0,\mathcal{A}}$ converges uniformly.
\end{proposition}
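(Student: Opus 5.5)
We follow the proof of the corresponding statement in \cite{B-K-K}, replacing $SL_2(\Z)$ by $\Gamma_0(N)$. The character $\chi_{D_0}$ is bounded by $1$ in absolute value, so it plays no role in convergence. It therefore suffices to show that the majorant
\begin{align*}
    \sum_{Q \in \mathcal{A}} |Q(z,1)|^{k-1}\,\psi\!\left( \frac{DD_0y^2}{|Q(z,1)|^2}\right)
\end{align*}
converges uniformly on every compact $\mathcal{U}\subset\mathbb{H}$. The factor $\text{sgn}(Q_z)$ is bounded as well, and it only affects continuity, not absolute convergence.

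\textbf{Bound on $\psi$.} Since $(1-u)^{-1/2}$ is integrable near $u=1$, the function $\psi$ is bounded on $[0,1]$. For small $v$ we have $\psi(v)\le C v^{k-1/2}$, because near $u=0$ the integrand of $\beta(v;k-\frac12,\frac12)$ behaves like $u^{k-3/2}$. Moreover, $v=DD_0y^2/|Q(z,1)|^2\le 1$ always holds, since $|Q(z,1)|^2-DD_0y^2=(a|z|^2+bx+c)^2\ge 0$. Hence $\psi(v)\ll v^{k-1/2}$ uniformly for $v\in[0,1]$. Each summand is then bounded by
\begin{align*}
    |Q(z,1)|^{k-1}\left(\frac{DD_0y^2}{|Q(z,1)|^2}\right)^{k-\frac12}\ll_{\mathcal{U}} |Q(z,1)|^{-k},
\end{align*}
uniformly in $z\in\mathcal{U}$.

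\textbf{Convergence of the majorant.} It remains to show that $\sum_{Q\in\mathcal{A}}|Q(z,1)|^{-k}$ converges uniformly on compacta for $k>1$. This is the absolute convergence of the series defining $f_{k,N,D,D_0,\mathcal{A}}$. There are two ways to see it.

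One way uses the quantity $|Q(z,1)|^2/y^2=Q_z^2\,$-type expression $\big((a|z|^2+bx+c)^2+DD_0y^2\big)/y^2$. For fixed $z$ this is a positive definite quadratic form in $(a,b,c)$, with constants uniform on compact $\mathcal{U}$. Hence $|Q(z,1)|\gg_{\mathcal{U}} \max(|a|,|b|,|c|)$. The number of $Q$ in $\mathcal{Q}_{DD_0}$ with $\max(|a|,|b|,|c|)\le T$ is $O(T^{1+\epsilon})$, or more precisely $O(T\log T)$, because $b^2-4ac=DD_0$ is a hyperboloid. The sum therefore behaves like $\sum_T T^{\epsilon}\,T^{-k}$, which converges for $k>1$.

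The other way uses the Poincaré series representation $m_{k,\eta,N,D_0}$ over $\Gamma_\eta\setminus\Gamma_0(N)$, which is dominated by the full $SL_2(\Z)$ version from \cite{B-K-K}. That version converges for $k>1$ by the standard hyperbolic Poincaré series argument, which unfolds to $\int \Im(\cdot)^{k}$ over a strip.

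\textbf{Main obstacle.} The delicate step is the lattice-point count, namely that the representations of $DD_0$ by $b^2-4ac$ in a box of size $T$ number $O(T^{1+\epsilon})$, made uniform in $z\in\mathcal{U}$. I would use the divisor bound: for each pair $(a,b)$ there is at most one $c$, and $4ac=b^2-DD_0$ forces $a$ to be a divisor of $b^2-DD_0$. This gives at most $\sum_{|b|\le T}d(b^2-DD_0)\ll T^{1+\epsilon}$ forms, which is nonzero as $DD_0$ is not a square. Uniform convergence on $\mathcal{U}$ then follows from the Weierstrass M-test. Summing over the finitely many classes $\mathcal{A}$ yields compact convergence of $\mathcal{F}_{1-k,N,D,D_0}$ too.
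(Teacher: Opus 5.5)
Your proof is correct in substance, but it takes a more self-contained route than the paper.

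\textbf{How the two proofs differ.} The paper estimates nothing itself. It observes that the $\Gamma_0(N)$-class $\mathcal{A}$ is contained in the $SL_2(\Z)$-class $\mathcal{A}'\subset\mathcal{Q}_{DD_0}$ of $Q_0$. It then uses $|\chi_{D_0}|\le 1$ to bound $\mathcal{F}_{1-k,N,D,D_0,\mathcal{A}}$ termwise by the absolute series of the level-one function $\mathcal{F}_{1-k,DD_0,\mathcal{A}'}$, and quotes Proposition 4.1 of \cite{B-K-K} for the absolute, locally uniform convergence of that series.

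You instead reprove that input. You correctly justify $v\le 1$ and $\psi(v)\ll v^{k-1/2}$ on $[0,1]$. These majorize each term by $\ll_{\mathcal{U}}|Q(z,1)|^{-k}$, which reduces everything to absolute convergence of the series underlying $f_{k,N,D,D_0,\mathcal{A}}$. You then obtain that from a divisor-bound lattice count. Your second route, domination by the $SL_2(\Z)$ series, is exactly the paper's inclusion $\mathcal{A}\subset\mathcal{A}'$, applied to the holomorphic majorant instead of to $\mathcal{F}$.

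The paper's version is two lines but entirely outsourced. Yours exposes the mechanism, namely that the locally harmonic series is dominated by the holomorphic cusp form series, and it needs only $d(n)\ll_\epsilon n^{\epsilon}$.

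\textbf{A step to repair.} One intermediate claim is false as written. The expression $|Q(z,1)|^2/y^2=|az^2+bz+c|^2/y^2$ is \emph{not} a positive definite quadratic form in $(a,b,c)$. It is a sum of squares of two real linear forms, so it has rank two. It vanishes at $(a,b,c)=(1,-2x,|z|^2)$, the definite form with root $z$.

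What you need holds only on the hyperboloid $b^2-4ac=DD_0>0$. There $|Q(z,1)|^2/y^2=Q_z^2+DD_0\ge\frac12\bigl(2Q_z^2+(b^2-4ac)\bigr)$. The form $2Q_z^2+(b^2-4ac)$ \emph{is} positive definite in $(a,b,c)$, with coefficients continuous in $z$. At $z=i$ it equals $2a^2+b^2+2c^2$. For general $z=gi$ with $g\in SL_2(\R)$, use $Q_{gi}=(Q\circ g)_i$ and the invariance of the discriminant.

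This gives $|Q(z,1)|\gg_{\mathcal{U}}\max(|a|,|b|,|c|)$ uniformly on $\mathcal{U}$. The rest of your argument then goes through as described: the count $\ll T^{1+\epsilon}$ (using $a\neq0$ and $b^2-DD_0\neq 0$, since $DD_0$ is not a square), partial summation for $k\ge 2$, and the Weierstrass M-test.
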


\begin{proof}
    Recall that $\mathcal{A}=[Q_0]$ are binary quadratic forms $Q \in \mathcal{Q}_{N,DD_0}$ which are equivalent to $Q_0$ with respect to the action of $\Gamma_0(N)$. We denote $\mathcal{A}'$ all matrices in $Q_{DD_0}$ which are equivalent to $Q_0$ with respect to $SL_2(\Z)$. Clearly, we have that $\mathcal{A} \subset \mathcal{A}'$.
    From Proposition 4.1 in \cite{B-K-K} it follows that for $k>1$ the function:
    \begin{align*}
        \mathcal{F}_{1-k,DD_0,\mathcal{A}'}:=\frac{(-1)^k(DD_0)^{\frac{1}{2}-k}}{\binom{2k-2}{k-1}\pi}\sum_{Q \in \mathcal{A}'}\text{sgn}(Q_z)Q(z)^{k-1}\psi\left(\frac{DD_0y^2}{|Q(z,1)|^2}\right)
    \end{align*}
    converges compactly. They show that for each $z \in \mathbb{H}$ there exists a compact neighborhood $\mathcal{U}\subset \mathbb{H}$ in which $\mathcal{F}_{1-k,D,A'}$ converges absolutely.
     Since
\begin{align*}
        |\mathcal{F}_{1-k,N,D,D_0,\mathcal{A}}(z)|&\leq \frac{|(-1)^k(DD_0)^{\frac{1}{2}-k}\chi_{D_0}([A])|}{\binom{2k-2}{k-1}\pi} \times \\
        &\sum_{Q \in \mathcal{A}} \left| \text{sgn}(Q_z)Q(z)^{k-1}\psi\left(\frac{DD_0y^2}{|Q(z,1)|^2}\right)\right|\\
        &\leq \frac{(DD_0)^{\frac{1}{2}-k}}{\binom{2k-2}{k-1}\pi}\sum_{Q \in \mathcal{A}'} \left| \text{sgn}(Q_z)Q(z)^{k-1}\psi\left(\frac{DD_0y^2}{|Q(z,1)|^2}\right)\right|
    \end{align*}
    the result follows.
\end{proof}
Recall that we defined the exceptional set as
\begin{align*}
    E_{N,DD_0}:=\{z \in \mathbb{H}: \exists Q \in \mathcal{Q}_{N,DD_0} \text{with } Q_z=0\}.
\end{align*}
\begin{proposition}\label{prop: values at E_N,DD_0}
Let $k>1$ and $D,D_0$ two fundamental discriminants with $(-1)^kD,(-1)^kD_0>0$ and such that $DD_0$ is not a perfect square. Let $z_0 \in E_{N,DD_0}$. Then
    \begin{align*}
       \mathcal{F}_{1-k,N,D,D_0,\mathcal{A}}(z_0)= \frac{1}{2}\lim_{w \rightarrow 0+}\left( \mathcal{F}_{1-k,N,D,D_0,\mathcal{A}}(z_0+iw) + \mathcal{F}_{1-k,N,D,D_0,\mathcal{A}}(z_0-iw)\right).
    \end{align*}
\end{proposition}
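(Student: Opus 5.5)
The plan is to follow the proof of the analogous statement in \cite{B-K-K}, working term by term with the finitely many terms that are singular at $z_0$.

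\textbf{Step 1: isolate the singular terms.} Since $z_0 \in E_{N,DD_0}$, the set $\{Q \in \mathcal{A} : Q_{z_0}=0\}$ consists of the forms whose geodesic $S_Q$ passes through $z_0$. This set is finite. Indeed, each such $Q=[a,b,c]$ has fixed discriminant $DD_0$, and its geodesic passes through a fixed point of $\mathbb{H}$; this bounds $|a|$, hence $|b|$ and $|c|$. So only finitely many integer triples qualify. Call this finite set $\mathcal{A}_{z_0}$ and split
\begin{align*}
\mathcal{F}_{1-k,N,D,D_0,\mathcal{A}} = \Sigma_{1}+\Sigma_{2},
\end{align*}
where $\Sigma_1$ is the sum over $\mathcal{A}\setminus\mathcal{A}_{z_0}$ and $\Sigma_2$ is the sum over $\mathcal{A}_{z_0}$.

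\textbf{Step 2: the regular part.} By Proposition \ref{prop: convergence flm}, the full series converges uniformly on a compact neighborhood $\mathcal{U}$ of $z_0$. Hence so does the subseries $\Sigma_1$, since it is dominated by the same absolute majorant. We can shrink $\mathcal{U}$ so that no geodesic $S_Q$ with $Q \notin \mathcal{A}_{z_0}$ meets it. Concretely, the geodesics meeting a fixed compact set are again finitely many, so we discard those not passing through $z_0$. Each summand of $\Sigma_1$ is then continuous on $\mathcal{U}$, because $\text{sgn}(Q_z)$ is locally constant there. So $\Sigma_1$ is continuous at $z_0$, and
\begin{align*}
\Sigma_1(z_0)=\tfrac12\lim_{w\to0^+}\bigl(\Sigma_1(z_0+iw)+\Sigma_1(z_0-iw)\bigr).
\end{align*}

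\textbf{Step 3: the singular terms.} It remains to check the identity for each single term
\begin{align*}
g_Q(z):=\text{sgn}(Q_z)\,Q(z,1)^{k-1}\psi\!\left(\frac{DD_0y^2}{|Q(z,1)|^2}\right), \qquad Q\in\mathcal{A}_{z_0}.
\end{align*}
The functions $Q(z,1)^{k-1}$ and $\psi\bigl(DD_0y^2/|Q(z,1)|^2\bigr)$ are continuous near $z_0$. They are well defined because $Q(z,1)\neq 0$ on $\mathbb{H}$ when $DD_0>0$ is not a square, as the roots of $Q$ are real. At $z_0$ we have $Q_{z_0}=0$, so $\text{sgn}(Q_{z_0})=0$ and $g_Q(z_0)=0$.

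Next, $Q_z$ is $y^{-1}$ times $a|z|^2+bx+c$, whose derivative in $y$ is $2ay$. If $a\neq 0$, this derivative is nonzero at $z_0$. So $Q_z$ changes sign when crossing $S_Q$ vertically: $\text{sgn}(Q_{z_0+iw})=-\text{sgn}(Q_{z_0-iw})$ for small $w>0$. If $a=0$, then $S_Q$ is a vertical line. In this case it is cleaner to note that the two sides of $S_Q$ carry opposite signs and that the continuous factor has the same limit $L$ from both sides. In both cases
\begin{align*}
\lim_{w\to0^+}\bigl(g_Q(z_0+iw)+g_Q(z_0-iw)\bigr)=L\cdot(1-1)=0=2g_Q(z_0).
\end{align*}

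\textbf{Main obstacle.} The expected obstacle is the case where the approach direction is tangent to $S_Q$, i.e.\ the vertical geodesics with $a=0$. Here the limits along $z_0\pm iw$ may stay on $S_Q$ itself. I would handle this by checking directly that $a=0$ is impossible: for $Q\in\mathcal{Q}_{N,DD_0}$, $a=0$ forces $b^2=DD_0$, contradicting that $DD_0$ is not a square. Therefore every $S_Q$ is a genuine semicircle, and vertical approach is always transversal. This reduces Step 3 to the case $a\neq 0$ already treated. Adding Steps 2 and 3 and multiplying by the constant $\chi_{D_0}([\mathcal{A}])$ prefactor gives the claim.
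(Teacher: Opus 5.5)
Your proof is correct and follows the same route as the paper's sketch (itself modeled on Proposition 5.2 of \cite{B-K-K}). Both isolate the finitely many forms whose geodesic passes through $z_0$, use continuity for the remaining terms, and use $\text{sgn}(Q_{z_0+iw})=-\text{sgn}(Q_{z_0-iw})$ to make the singular terms cancel. You also make explicit three points the paper leaves implicit: the choice of a neighborhood avoiding all other geodesics, the interchange of limit and infinite sum via uniform convergence, and $a\neq 0$ because $DD_0$ is not a square.
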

Since the argument is the same as in the proof of Proposition 5.2 in \cite{B-K-K}, we only give a sketch of the proof here. 
\begin{proof-sketch}
By Lemma 5.1 in \cite{B-K-K} the set of binary quadratic form $Q \in Q_{DD_0}$ with $Q_{z_0}=0$ is finite. Hence so is
\begin{align*}
    \mathcal{B}_{z_0,N,DD_0}:=\{Q \in \mathcal{Q}_{N,DD_0}: Q_{z_0}=0\}.
\end{align*}
 Thus there exists a neighborhood $\mathcal{U}$ of $z_0$, such that for all $z \in \mathcal{U}$ for all $Q \in \mathcal{Q}_{N,DD_0}\setminus \mathcal{B}_{z_0,N,DD_0}$ we have that $\text{sgn}(Q_z)=\text{sgn}(Q_{z_0})$. 
Since $Q(z,1)$ and $\psi\left(\frac{DD_0y^2}{|Q(z,1)|^2}\right)$ are continuous for all such $Q$ we have that:
\begin{align*}
    \frac{1}{2}\lim_{w \rightarrow 0^+}\text{sgn}(Q_{z_0+iw})&Q(z_0+iw,1)^{k-1}\psi\left(\frac{DD_0(y_0+w)^2}{|Q(z_0+iw,1)|^2}\right)\\
    &+  \text{sgn}(Q_{z_0-iw})Q(z_0-iw,1)^{k-1}\psi\left(\frac{DD_0(y_0+w)^2}{|Q(z_0-iw,1)|^2}\right)\\
    &= \text{sgn}(Q_{z_0})Q(z_0,1)^{k-1}\psi\left(\frac{DD_0y_0^2}{|Q(z_0,1)|^2}\right).
\end{align*}
If $Q \in \mathcal{B}_{z_0,N,DD_0}$ one can show that $\text{sgn}(Q_{z_0+iw})=-\text{sgn}(Q_{z_0-iw})$. Hence for such $Q$
\begin{align*}
    \frac{1}{2}\lim_{w \rightarrow 0^+}\text{sgn}(Q_{z_0+iw})&Q(z_0+iw,1)^{k-1}\psi\left(\frac{DD_0(y_0+w)^2}{|Q(z_0+iw,1)|^2}\right)\\
    &+  \text{sgn}(Q_{z_0-iw})Q(z_0-iw,1)^{k-1}\psi\left(\frac{DD_0(y_0+w)^2}{|Q(z_0-iw,1)|^2}\right)=0.
\end{align*}
Comparing this to the definition of $\mathcal{F}_{1-k,N,D,D_0, \mathcal{A}}(z)$ (equation \eqref{eq: def F_1-k,N,D,D_0,A}) yields the claim. 
\end{proof-sketch}

\subsection{Action of $\xi_{2-2k}$ and $D^{2k-1}$ and the expansion of $\mathcal{F}_{1-k,N,D,D_0}$}
In this section, we begin by examining the action of $\xi_{2-2k}$ and $D^{2k-1}$ on $\mathcal{F}_{1-k,N,D,D_0, \mathcal{A}}$, which helps us to study its expansion.
We are then finally ready to show that $\mathcal{F}_{1-k,N,D,D_0,\mathcal{A}}$ is indeed a locally harmonic Maass form. 

\begin{theorem}\label{theo: Flm under xi/D}
    Let $k>1$ and let $D,D_0$ be fundamental discriminants such that $(-1)^kD, (-1)^kD_0>0$ and $DD_0$ is not a perfect square. Let $z  \in \mathbb{H}\setminus E_{N,DD_0}$Then
    \begin{align*}
        \xi_{2-2k}(\mathcal{F}_{1-k,N,D,D_0,\mathcal{A}})(z)&=(DD_0)^{\frac{1}{2}-k}f_{k,N,D,D_0, \mathcal{A}},\\
        D^{2k-1}(\mathcal{F}_{1-k,N,D,D_0,\mathcal{A}})(z)&=-(DD_0)^{\frac{1}{2}-k}\frac{(2k-2)!}{(4\pi)^{2k-1}}f_{k,N,D,D_0,\mathcal{A}}.
    \end{align*}
In particular this implies that
\begin{align*}
    \Delta_{2-2k}(\mathcal{F}_{1-k,N,D,D_0,\mathcal{A}})(z)=0.
\end{align*}
\end{theorem}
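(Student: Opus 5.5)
The plan is to work termwise with the Poincar\'e series representation. By the second Proposition of the Poincar\'e series subsection, $\mathcal{F}_{1-k,N,D,D_0,\mathcal{A}}$ is a constant multiple of $\mathcal{M}_{1-k,\eta,N,D_0}=\chi_{D_0}(Q_\eta)\sum_{\gamma\in\Gamma_\eta\setminus\Gamma_0(N)}\hat\varphi|_{2-2k}\sigma_\eta^{-1}\gamma$. Likewise, $f_{k,N,D,D_0,\mathcal{A}}$ is a constant multiple of $m_{k,\eta,N,D_0}=\chi_{D_0}(Q_0)\sum h_k|_{2k}\sigma_\eta^{-1}\gamma$. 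Both $\xi_{2-2k}$ and $D^{2k-1}$ intertwine the slash actions:
\begin{align*}
\xi_{2-2k}(G|_{2-2k}M)=(\xi_{2-2k}G)|_{2k}M, \qquad D^{2k-1}(G|_{2-2k}M)=(D^{2k-1}G)|_{2k}M
\end{align*}
for $M\in SL_2(\R)$, where the second identity is Bol's identity. Since $\chi_{D_0}$ is real, it therefore suffices to compute $\xi_{2-2k}\hat\varphi$ and $D^{2k-1}\hat\varphi$ on $\{x\neq 0\}$. These operators can be applied termwise because the series converges compactly (Proposition \ref{prop: convergence flm}). Near a point $z\notin E_{N,DD_0}$ only finitely many terms change sign, and termwise differentiation is justified by the same majorants as in Proposition 4.1 of \cite{B-K-K}, now restricted to the subset $\mathcal{A}\subset\mathcal{A}'$ as in the proof of Proposition \ref{prop: convergence flm}.

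For the local computation, I would write $z=re^{i\theta}$. Then $\hat\varphi(z)=z^{k-1}\operatorname{sgn}(x)\varphi(\theta')$ with $\theta'=\arctan|y/x|$, and $\varphi(\theta')=\tfrac12\beta(\sin^2\theta';k-\tfrac12,\tfrac12)$, so $\varphi'(\theta')=\sin^{2k-2}\theta'$. Applying $\xi_{2-2k}=2iy^{2-2k}\overline{\partial_{\bar z}}$, the factor $z^{k-1}$ is holomorphic and only the angular factor contributes. Using $\partial_{\bar z}\theta=\frac{i}{2\bar z}$ and $y=r\sin\theta$, I expect $\xi_{2-2k}\hat\varphi=c\,z^{-k}$ for an explicit constant $c$ (up to sign and a power of $2$). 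The point of the computation is that $\sin^{2k-2}\theta\cdot r^{k-1}\cdot y^{2-2k}\cdot\bar z^{k-1}/\bar z$ collapses to a multiple of $z^{-k}$ after conjugation. The sign factor $\operatorname{sgn}(x)$ combines with the fact that $\theta'$ equals $\theta$ or $\pi-\theta$ to give one uniform formula. For $D^{2k-1}$, I would split $\hat\varphi$ as a polynomial of degree $\le 2k-2$ on each half-plane plus a non-holomorphic part. Since $\varphi$ is an incomplete beta function, $\hat\varphi$ equals $\operatorname{sgn}(x)\,\varphi(\pi/2)z^{k-1}$ minus a term of the form $z^{k-1}\int_\theta^{\pi/2}\sin^{2k-2}$. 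That term can be rewritten as a non-holomorphic Eichler-type integral of $z^{-k}$, namely $\int_{-\bar z}^{i\infty}$ of $w^{-k}(w+z)^{2k-2}$ up to conjugation and scaling. $D^{2k-1}$ kills the polynomial part, and the standard formula $D^{2k-1}$ applied to a holomorphic Eichler integral gives a multiple of the integrand. The result should be $D^{2k-1}\hat\varphi=c'z^{-k}$ with $c'$ containing $(2k-2)!/(4\pi)^{2k-1}$.

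The main obstacle is bookkeeping of constants and signs. I need to check that $c$ and $c'$, combined with the normalizations $(DD_0)^{k/2}\pi\binom{2k-2}{k-1}$ and $(DD_0)^{\frac{1-k}{2}}\pi\binom{2k-2}{k-1}$ from the two Poincar\'e-series Propositions and the factor $(-1)^k$, yield exactly $(DD_0)^{\frac12-k}$ and $-(DD_0)^{\frac12-k}\frac{(2k-2)!}{(4\pi)^{2k-1}}$. I would do this by following the corresponding computation in Lemma 3.3 and Theorem 1.1 of \cite{B-K-K} and tracking the only changes, which are the character and the group. Finally, $\Delta_{2-2k}=-\xi_{2k}\circ\xi_{2-2k}$, and $\xi_{2-2k}\mathcal{F}$ is a multiple of the holomorphic function $f_{k,N,D,D_0,\mathcal{A}}$, which $\xi_{2k}$ annihilates. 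Hence $\Delta_{2-2k}\mathcal{F}_{1-k,N,D,D_0,\mathcal{A}}=0$ off $E_{N,DD_0}$.
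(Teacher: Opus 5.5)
Your overall strategy matches the paper's. You express $\mathcal{F}_{1-k,N,D,D_0,\mathcal{A}}$ and $f_{k,N,D,D_0,\mathcal{A}}$ as hyperbolic Poincar\'e series, move $\xi_{2-2k}$ and $D^{2k-1}$ through the slash action, and reduce to $\hat\varphi$. Your version of the intertwining with $M\in SL_2(\R)$ is the one actually needed, since $\sigma_\eta^{-1}\gamma\notin\Gamma_0(N)$. The paper then simply quotes $\xi_{2-2k}\hat\varphi=z^{-k}$ and $D^{2k-1}\hat\varphi=-\frac{(2k-2)!}{(4\pi)^{2k-1}}z^{-k}$ from Proposition 6.1 of \cite{B-K-K}. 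Your $\xi$-computation is correct and gives exactly $c=1$. With the two normalizations $(DD_0)^{k/2}\pi\binom{2k-2}{k-1}$ and $(DD_0)^{\frac{1-k}{2}}\pi\binom{2k-2}{k-1}$, this produces precisely $(DD_0)^{\frac12-k}$.

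The $D^{2k-1}$ step, however, fails as you describe it.
\begin{itemize}
\item The integral $\int_{-\bar z}^{i\infty}w^{-k}(w+z)^{2k-2}\,dw$ diverges for $k>1$, since the integrand grows like $w^{k-2}$.
\item Any convergent replacement with a fixed endpoint is a polynomial of degree $2k-2$ in $z$ with $\bar z$-dependent coefficients, so $\partial_z^{2k-1}$ kills it.
\item Hence your decomposition ``polynomial plus non-holomorphic Eichler integral'' would give $D^{2k-1}\hat\varphi=0$. Nothing in it is a holomorphic Eichler integral for your final sentence to act on.
\end{itemize}

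The missing point is that the holomorphic part of $\hat\varphi$ on each half-plane is not a polynomial. On both half-planes, $\hat\varphi(z)=z^{k-1}\int_0^{\arg z}\sin^{2k-2}t\,dt$ plus a half-plane-dependent constant times $z^{k-1}$. Expand $\sin^{2k-2}t=2^{2-2k}\binom{2k-2}{k-1}+\sum_{0<|j|<k}b_je^{2ijt}$ and use $\arg z=\frac{1}{2i}(\log z-\log\bar z)$ together with $z^{k-1}e^{2ij\arg z}=z^{k-1+j}\bar z^{-j}$. Every resulting term then has $z$-degree at most $2k-2$, except $2^{2-2k}\binom{2k-2}{k-1}\frac{1}{2i}\,z^{k-1}\log z$. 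Since $\partial_z^{2k-1}(z^{k-1}\log z)=(-1)^{k-1}((k-1)!)^2z^{-k}$, this term yields exactly $D^{2k-1}\hat\varphi=-\frac{(2k-2)!}{(4\pi)^{2k-1}}z^{-k}$.

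In the integral language you were aiming for, $z^{k-1}\int_{\arg z}^{\pi/2}\sin^{2k-2}t\,dt$ is a nonzero multiple of $\int_{-\bar z}^{z}(-w)^{-k}(w+z)^{2k-2}\,dw$ taken along an arc of $|w|=|z|$. Its upper endpoint is $z$, not $i\infty$. That $z$-dependent endpoint is the holomorphic Eichler-type piece that $D^{2k-1}$ detects.
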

The statement follows almost directly from the proof of Proposition 6.1 in \cite{B-K-K}.
\begin{proof}
For each $z \in \mathbb{H}\setminus E_{N,DD_0}$ there exists a neighborhood such that $\mathcal{F}_{1-k,N,D,D_0,\mathcal{A}}$ is continuous and real differentiable (see also Lemma 5.1 in \cite{B-K-K}). Moreover we can write $\mathcal{F}_{1-k,N,D,D_0,\mathcal{A}}$ as a Poincare series. Since $\xi_{2k-2}$ and $D^{2k-1}$ commute with the the action of $\Gamma_0(N)$, it suffices to study their action on $\hat{\varphi}$. Note that by Lemma \ref{lem: B-K-K} we may assume that the real part of $z$ is nonzero. 

From the proof of Proposition 6.1 in \cite{B-K-K} it follows that for $z \in \mathbb{H}\setminus E_{N,DD_0}$ with $x \neq 0$:
    \begin{align*}
        \xi_{2-2k}(\hat{\varphi})(z)=z^{-k} \text{ and }
        D^{2k-1} ( \hat{\varphi})(z)=-\frac{(2k-2)!}{(4 \pi)^{2k-1}}z^{-k}.
    \end{align*}
Now the claim follows by comparing the Poincare series of $\mathcal{F}_{1-k,N,D,D_0,\mathcal{A}}$ and $f_{k,N,D,D_0,\mathcal{A}}$. Note that $ \Delta_{2k-2}=-\xi_k\xi_{2-2k}$.
\end{proof}
To study the expansion of $\mathcal{F}_{1-k,N,D,D_0,\mathcal{A}}$ we need the following definitions.

Let $\mathcal{A}$ be an equivalence class in $\mathcal{Q}_{N,DD_0}$. For $a>0, b \in \Z$ we set:
\begin{align*}
    r_{a,b}(\mathcal{A}):=\begin{cases}
        1 + (-1)^k &\text{if }[a,b, \frac{b^2-DD_0}{4a}] \in\mathcal{A} \text{ and }[-a,-b, - \frac{b^2-DD_0}{4a}] \in \mathcal{A},\\
        1  &\text{if }[a,b, \frac{b^2-DD_0}{4a}] \in \mathcal{A} \text{ and }[-a,-b, - \frac{b^2-DD_0}{4a}] \notin \mathcal{A},\\
        (-1)^k &\text{if }[a,b, \frac{b^2-DD_0}{4a}] \notin \mathcal{A} \text{ and }[-a,-b, - \frac{b^2-DD_0}{4a}] \in \mathcal{A},\\
        0 & \text{otherwise}.
    \end{cases}
\end{align*}

Let $Q \in \mathcal{Q}_{N,DD_0}$. For $\epsilon =\pm 1$ we set
\begin{align*}
    \mathcal{C}^{\epsilon}_Q:=\left\{ z \in \mathbb{H}:  \text{sgn}\left( \left| z+\frac{b}{2a}\right| - \frac{(DD_0)^{1/2}}{2|a|}\right)=\epsilon\right\}.
\end{align*}
A short computation reveals that $z \in \mathcal{C}^{\epsilon}_Q$ is equivalent to $\text{sgn}(Q_z)=\epsilon \text{sgn}(a)$. Let $\mathcal{C}\subset \mathbb{H}\setminus E_{N,DD_0}$ be a connected component. We define:
\begin{align*}
    \mathcal{B}_{\mathcal{C},\mathcal{A}}:=\{ Q \in \mathcal{A}: z \in \mathcal{C}^-_Q, \text{ for all }z \in \mathcal{C}\}.
\end{align*}
In other words the set $\mathcal{B}_{\mathcal{C},\mathcal{A}}$ consists of those binary quadratic forms such that $\text{sgn}(Q_z)=-\text{sgn}(a)$ for all $z \in \mathcal{C}$.
Finally we set:
\begin{align*}
         c_{\infty}(\mathcal{A})=\frac{-\chi_{D_0}([\mathcal{A}])}{2^{2k-2}(2k-1)\binom{2k-2}{k-1}}\sum_{\substack{a \in \N, \\N |a}} a^{-k}\sum_{\substack{b \mod 2a,\\b^2\equiv DD_0 \mod 4a}}r_{a,b}(\mathcal{A}).
     \end{align*}

We have the following theorem:
\begin{theorem}\label{theo: splitting Flm}
Let $k>1$ and let $D,D_0$ be two fundamental discriminants such that $(-1)^kD, (-1)^kD_0>0$ and such that $DD_0$ is not a perfect square.
Let $\mathcal{C} \subset \mathbb{H} \setminus E_{N,DD_0}$ be a connected component not intersecting the exceptional set. For every $z \in \mathcal{C}$, the function $\mathcal{F}_{1-k,N,D,D_0,\mathcal{A}}$ has the following expansion:
    \begin{align*}
        \mathcal{F}_{1-k,N,D,D_0,\mathcal{A}}(z)=&(DD_0)^{\frac{1}{2}-k}f^*_{k,N,D,D_0,\mathcal{A}}(z)\\
        &-(DD_0)^{\frac{1}{2}-k}\frac{(2k-2)!}{(4 \pi)^{2k-1}}\mathcal{E}_{f_{k,N,D,D_0,\mathcal{A}}}(z)
        +P_{\mathcal{C}, \mathcal{A}}(z).
    \end{align*}
    Here $P_{\mathcal{C}, \mathcal{A}}(z) \in \C[z]$ is the polynomial given by:
    \begin{align*}
        P_{\mathcal{C}, \mathcal{A}}(z)=c_{\infty}(\mathcal{A})-(-1)^k2^{2-2k}(DD_0)^{\frac{1}{2}-k}\chi_{D_0}([A])\sum_{Q \in \mathcal{B}_{\mathcal{C,\mathcal{A}}}}\text{sgn}(a)Q(z,1)^{k-1}.
    \end{align*}
\end{theorem}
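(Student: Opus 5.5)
We follow the strategy of Bringmann, Kane and Kohnen (Theorem 7.1 in \cite{B-K-K}). Fix the connected component $\mathcal{C}$. The plan is to show that
\begin{align*}
G(z):=\mathcal{F}_{1-k,N,D,D_0,\mathcal{A}}(z)-(DD_0)^{\frac{1}{2}-k}f^*_{k,N,D,D_0,\mathcal{A}}(z)+(DD_0)^{\frac{1}{2}-k}\frac{(2k-2)!}{(4 \pi)^{2k-1}}\mathcal{E}_{f_{k,N,D,D_0,\mathcal{A}}}(z)
\end{align*}
is a polynomial of degree at most $2k-2$ on $\mathcal{C}$, and then to identify that polynomial.

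\textbf{Polynomiality.} By Theorem \ref{theo: Flm under xi/D}, on $\mathcal{C}$ we have $\xi_{2-2k}\mathcal{F}_{\mathcal{A}}=(DD_0)^{\frac12-k}f_{\mathcal{A}}$ and $D^{2k-1}\mathcal{F}_{\mathcal{A}}=-(DD_0)^{\frac12-k}\frac{(2k-2)!}{(4\pi)^{2k-1}}f_{\mathcal{A}}$. The Eichler integrals satisfy $\xi_{2-2k}f^*=f$, $D^{2k-1}f^*=0$, $\xi_{2-2k}\mathcal{E}_f=0$ and $D^{2k-1}\mathcal{E}_f=f$. These identities follow by differentiating \eqref{eq: holomorphic Eichler} and \eqref{eq: non-holomorphic eichler}, and the normalizations should be checked against Remark \ref{rem: real cf of f_k,N,D,D_0} together with the realness of the coefficients. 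Hence $\xi_{2-2k}G=0$, so $G$ is holomorphic on $\mathcal{C}$, and $D^{2k-1}G=0$. A holomorphic function killed by $D^{2k-1}$ on a connected open set is a polynomial of degree at most $2k-2$, so $G=P_{\mathcal{C},\mathcal{A}}$ for some polynomial.

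\textbf{Identifying the polynomial.} We split $\mathcal{F}_{\mathcal{A}}$ termwise. For $Q\notin\mathcal{B}_{\mathcal{C},\mathcal{A}}$ we have $\operatorname{sgn}(Q_z)=\operatorname{sgn}(a)$ on $\mathcal{C}$; for $Q\in\mathcal{B}_{\mathcal{C},\mathcal{A}}$ we write $\operatorname{sgn}(Q_z)=\operatorname{sgn}(a)-2\operatorname{sgn}(a)$. The second piece contributes the finite sum
\begin{align*}
-2\operatorname{sgn}(a)\,Q(z,1)^{k-1}\psi\!\left(\tfrac{DD_0y^2}{|Q(z,1)|^2}\right).
\end{align*}
To evaluate it we use $\psi(v)=\tfrac12\beta(1;k-\tfrac12,\tfrac12)-\tfrac12\int_v^1\cdots$, where $\beta(1;k-\tfrac12,\tfrac12)=\pi\binom{2k-2}{k-1}2^{2-2k}$. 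The constant part then gives exactly the finite sum $-(-1)^k2^{2-2k}(DD_0)^{\frac12-k}\chi_{D_0}([\mathcal{A}])\sum_{\mathcal{B}}\operatorname{sgn}(a)Q(z,1)^{k-1}$. The remainders, together with the modified sum $\sum_{Q\in\mathcal{A}}\operatorname{sgn}(a)Q(z,1)^{k-1}\psi(\cdot)$, which no longer depends on $\mathcal{C}$, form a single function $H$ defined on all of $\mathbb{H}$ whose difference from $\mathcal{F}_{\mathcal{A}}$ on $\mathcal{C}$ is polynomial. This $H$ is $\mathcal{C}$-independent and real analytic in the upper region $\operatorname{Im}(z)>\sqrt{DD_0}/(2N)$. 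By Remark \ref{rem: Q_z=0 if im >>0}, for $z$ in that region the set $\mathcal{B}$ is empty, since there $\operatorname{sgn}(Q_z)=\operatorname{sgn}(a)$ whenever $Q_z\neq 0$. It therefore suffices to show that in this region
\begin{align*}
\mathcal{F}_{\mathcal{A}}=(DD_0)^{\frac12-k}f^*_{\mathcal{A}}-(DD_0)^{\frac12-k}\frac{(2k-2)!}{(4\pi)^{2k-1}}\mathcal{E}_{f_{\mathcal{A}}}+c_\infty(\mathcal{A}).
\end{align*}

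\textbf{Main obstacle: the Fourier computation.} This last identity is the main step. We group $Q=[a,b,c]\in\mathcal{A}$ by $a$ (with $a\neq0$, since $DD_0$ is not a square, and $N\mid a$) and by $b$ modulo $2a$, using translation by $\begin{pmatrix}1&1\\0&1\end{pmatrix}\in\Gamma_0(N)$. The pairs $[a,b,c]$ and $[-a,-b,-c]$ then combine into the weights $r_{a,b}(\mathcal{A})$. For each such group we apply Poisson summation to the $x$-periodic function
\begin{align*}
\sum_{n}\operatorname{sgn}(a)\,Q(z+n,1)^{k-1}\psi\!\left(\tfrac{DD_0y^2}{|Q(z+n,1)|^2}\right),
\end{align*}
exactly as in \cite{B-K-K}, Section 7. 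The nonzero Fourier modes reproduce the Fourier coefficients of $f^*$ and $\mathcal{E}_f$; this can also be forced by the uniqueness argument above, since the $\xi$ and $D$ images are already known. The zeroth Fourier coefficient is an integral $\int_{\R}Q(x+iy,1)^{k-1}\psi(\cdot)\,dx$. After the substitution $x\mapsto(x+b/2a)/(\sqrt{DD_0}/2|a|)$ it becomes $|a|^{-k}(DD_0)^{k-\frac12}$ times a constant integral, which \cite{B-K-K} evaluate as $\frac{-\pi 2^{2-2k}}{2k-1}$ up to normalization. Summing over $a$ and $b$ yields $c_\infty(\mathcal{A})$, and absolute convergence for $k>1$ follows from Proposition \ref{prop: convergence flm}. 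The real care needed is bookkeeping: carrying the factor $\chi_{D_0}([\mathcal{A}])$ and the restriction $N\mid a$ through the B-K-K computation without altering the constants.
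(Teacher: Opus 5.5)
Your argument is correct. The polynomiality step and the computation of $c_\infty(\mathcal{A})$ in the top component match the paper, but you identify $P_{\mathcal{C},\mathcal{A}}$ by a genuinely different route.

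The paper inducts on $\#\mathcal{B}_{\mathcal{C},\mathcal{A}}$. It finds a neighbouring component $\mathcal{C}_1$, separated from $\mathcal{C}$ by a single geodesic $S_{Q_0}$, with $\#\mathcal{B}_{\mathcal{C}_1,\mathcal{A}}<\#\mathcal{B}_{\mathcal{C},\mathcal{A}}$. It then computes the jump $\lim_{w\to 0^+}\bigl(\mathcal{F}_{1-k,N,D,D_0,\mathcal{A}}(z-iw)-\mathcal{F}_{1-k,N,D,D_0,\mathcal{A}}(z+iw)\bigr)$ on $S_{Q_0}$, distinguishing whether $-Q_0\in\mathcal{A}$.

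You instead perform all wall crossings at once. Splitting $\psi(v_Q)=\psi(1)-(\psi(1)-\psi(v_Q))$ for the finitely many $Q\in\mathcal{B}_{\mathcal{C},\mathcal{A}}$ writes $\mathcal{F}_{1-k,N,D,D_0,\mathcal{A}}$ on $\mathcal{C}$ as a single function $H$ on $\mathbb{H}$ minus an explicit polynomial. The polynomial attached to $H$ is then global and can be read off where $\mathcal{B}=\emptyset$. This buys you three things:
\begin{itemize}
\item it removes the combinatorics of the induction;
\item it removes the $\pm Q_0$ case distinction, since $Q_0$ and $-Q_0$ enter and leave $\mathcal{B}$ together and your sum over $\mathcal{B}$ handles both;
\item it shows directly that the coefficient $2^{2-2k}$ comes from $\beta(1;k-\tfrac12,\tfrac12)=\pi\binom{2k-2}{k-1}2^{2-2k}$.
\end{itemize}
The paper's version, in exchange, records the jump of $\mathcal{F}_{1-k,N,D,D_0,\mathcal{A}}$ across each geodesic explicitly.

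The one step your route depends on is asserted rather than proved, and you name the wrong property for it. Real analyticity of $H$ in the upper region is irrelevant. What you need is that $H$ is continuous across every geodesic. Then on two adjacent components the polynomials
$H-(DD_0)^{\frac12-k}f^*_{k,N,D,D_0,\mathcal{A}}+(DD_0)^{\frac12-k}\frac{(2k-2)!}{(4\pi)^{2k-1}}\mathcal{E}_{f_{k,N,D,D_0,\mathcal{A}}}$
agree on a common arc, hence identically. Chaining components then reaches the top one.

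Continuity does follow from your splitting. Set $v_Q:=DD_0y^2/|Q(z,1)|^2$. Since $|Q(z,1)|^2=y^2(Q_z^2+DD_0)$, you get $v_Q=DD_0/(Q_z^2+DD_0)$, so $v_Q=1$ exactly on $S_Q$. Hence the remainder $\operatorname{sgn}(a)Q(z,1)^{k-1}(\psi(1)-\psi(v_Q))$ vanishes on $S_Q$. Crossing $S_{Q_0}$ only adds or removes the remainders of $\pm Q_0$, which vanish on that wall. Meanwhile $\sum_{Q\in\mathcal{A}}\operatorname{sgn}(a)Q(z,1)^{k-1}\psi(v_Q)$ is continuous on $\mathbb{H}$, because locally it differs from $\mathcal{F}_{1-k,N,D,D_0,\mathcal{A}}$ in only finitely many terms. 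This is the exact counterpart of the paper's jump computation and should be written out.

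Finally, in the top component your $G$ is a $1$-periodic polynomial, hence constant. So only the zeroth Fourier coefficient has to be computed, and the nonzero modes need not be matched.
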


This proof follows closely the proof of Theorem 7.1 in \cite{B-K-K}. We therefore only sketch it.
\begin{proof-sketch}
     Using Theorem \ref{theo: Flm under xi/D} and the definition of the (non)holomorphic Eichler integral, one can show that for every $z \in \mathbb{H} \setminus E_{N,DD_0}$, we have that:
     \begin{align*}
         P_{C,A}(z):=\mathcal{F}_{1-k,N,D,D_0,\mathcal{A}}&-(DD_0)^{\frac{1}{2}-k}f^*_{k,N,D,D_0,\mathcal{A}}(z)\\
         &+(DD_0)^{\frac{1}{2}-k}\frac{(2k-2)!}{(4\pi)^{2k-1}}\mathcal{E}_{f_{k,N,D,D_0,\mathcal{A}}}(z)
     \end{align*}
     is a polynomial of degree at most $2-2k$.
     To prove the explicit formula for the local polynomial we proceed by induction on the cardinality of the set $\mathcal{B}_{\mathcal{C},A}$: \\
    If $z=x+iy \in \mathcal{C}$ with $y> \frac{\sqrt{DD_0}}{2|N|}$ then $\mathcal{B}_{\mathcal{C},A}$ is the empty set. We thus want to show that the local polynomial is equal to the constant at infinity $c_{\infty}(\mathcal{A})$.
    
    Given a binary quadratic form $[a,b,c]=[a,b,\frac{b^2-DD_0}{4a}]$, we can write b as a residue modulo $2a$. This yields that for $y> \frac{\sqrt{DD_0}}{2|N|}$
     \begin{align*}
         \mathcal{F}_{1-k,N,D,D_0,\mathcal{A}}(z)= &\frac{(-1)^k(DD_0)^{\frac{1}{2}-k}\chi_{D_0}([\mathcal{A}])}{\binom{2k-2}{k-1}\pi}\\
         &\times \sum_{\substack{a \in \N, \\N|a}} \sum_{\substack{b \mod 2a\\b^2\equiv DD_0 \mod 4a\\Q=[a,b \frac{b^2-DD_0}{4a}]}}r_{a,b}(\mathcal{A})
          \sum_{n \in Z}Q(z+n,1)^{k-1}
         \psi\left(\frac{DD_0y^2}{|Q(z,1)|^2}\right)
     \end{align*}
     Using Poisson summation and Lemma 7.3 in \cite{B-K-K}, we find that the constant term corresponds to 
     \begin{align*}
         c_{\infty}(\mathcal{A})=\frac{-\chi_{D_0}([\mathcal{A}])}{2^{2k-2}(2k-1)\binom{2k-2}{k-1}}\sum_{\substack{a \in \N, \\N |a}} a^{-k}\sum_{\substack{b \mod 2a,\\b^2\equiv DD_0 \mod 4a}}r_{a,b}(\mathcal{A})
     \end{align*}

     For the induction step one shows that given a component $\mathcal{C}$ with $\# \mathcal{B}_{\mathcal{C}, \mathcal{A}}=n$, then there exists a neighboring component $\mathcal{C}_1$, only separated from $\mathcal{C}$ by one geodesic $S_{Q_0}$, and  such that $\#\mathcal{B}_{\mathcal{C}_1, \mathcal{A}}<n$. Hence the statement is true for $\mathcal{C}_1$. The difference between the two local polynomials $P_{\mathcal{C}, \mathcal{A}}$ and $P_{\mathcal{C}_1, \mathcal{A}}$ is determined by the binary quadratic forms associated to the geodesic $S_{Q_0}$. Two cases are possible: 1) Only the binary quadratic form $Q_0$ corresponds to $S_{Q_0}$; 2) If $ -Q_0 \in \mathcal{A}$, then both $Q_0$ and $-Q_0$ correspond to $S_{Q_0}$. The claim follows by computing explicitly 
     \begin{align*}
         \lim_{w \rightarrow 0^+}\mathcal{F}_{1-k,N,D,D_0,\mathcal{A}}(z-iw)-\mathcal{F}_{1-k,N,D,D_0,\mathcal{A}}(z+iw)
     \end{align*}
for $z$ on $S_{Q_0}$.
\end{proof-sketch}
 Apriori we do not know if both $Q$ and $-Q$ are in the same equivalence class, which makes it difficult to compute $P_{\mathcal{C},\mathcal{A}}$ explicitly. However taking the sum over all equivalence classes, we find an unexpectedly elegant expression. Set
\begin{align}\label{eq:app def c_infty}
    c_{k,N,D,D_0,\infty}: =\frac{-2^{3-2k}}{(2k-1)\binom{2k-2}{k-1}}\sum_{\substack{a \in \N, \\N |a}} a^{-k}\sum_{\substack{b \mod 2a,\\b^2\equiv DD_0 \mod 4a}}\chi_{D_0}\left(\left[a,b\frac{b^2-DD_0}{4a}\right]\right)
\end{align}
We have that
\begin{align*}
    \mathcal{F}_{1-k,N,D,D_0}=\sum_{\mathcal{A}}\mathcal{F}_{1-k,N,D,D_0, \mathcal{A}}
\end{align*}
where we sum over all equivalence classes $\mathcal{A}$ of binary quadratic forms. We have the following:
\begin{theorem}\label{theo: decomposition F}
    Let $k>1$ and let $D,D_0$ be two fundamental discriminant such that $(-1)^kD,(-1)^kD_0>0$ and such that $DD_0$ is not a perfect square.
    \begin{align*}
    \mathcal{F}_{1-k,N,D,D_0}(z)
        =&(DD_0)^{\frac{1}{2}-k}f^*_{k,N,D,D_0}(z)\\
        &-(DD_0)^{\frac{1}{2}-k}\frac{(2k-2)!}{(4 \pi)^{2k-1}}\mathcal{E}_{f_{k,N,D,D_0}}(z)
        +\mathcal{P}_{k,N,D,D_0}(z).
    \end{align*}
    Where
    \begin{align*}
        \mathcal{P}_{k,N,D,D_0}(z)=
        c_{k,N,D,D_0,\infty}+ (-1)^k 2^{3-2k}(DD_0)^{\frac{1}{2}-k}\sum_{\substack{Q \in \mathcal{Q}_{N,DD_0},\\a<0<Q_z}} \chi_{D_0}(Q)Q(z,1)^{k-1}.
    \end{align*}
\end{theorem}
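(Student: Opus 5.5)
The plan is to obtain Theorem \ref{theo: decomposition F} by summing the class-wise expansions of Theorem \ref{theo: splitting Flm} over the finitely many $\Gamma_0(N)$-classes $\mathcal{A}\subset\mathcal{Q}_{N,DD_0}$, and then to identify the combined local polynomial. Since $\mathcal{F}_{1-k,N,D,D_0}=\sum_{\mathcal{A}}\mathcal{F}_{1-k,N,D,D_0,\mathcal{A}}$ and $f_{k,N,D,D_0}=\sum_{\mathcal{A}}f_{k,N,D,D_0,\mathcal{A}}$ (after checking that the normalisations in \eqref{eq: def f_k,N,D,D_0,A} and \eqref{eq: def F_1-k,N,D,D_0,A} match those in \eqref{def: Flm} and \eqref{eq: def sec3-f_k,N,D,D_0}, including the sign $(-1)^k$ and the factor $\frac12$ in $\psi$), linearity of $F\mapsto \mathcal{E}_F$ and $F\mapsto F^*$ immediately gives the Eichler-integral terms. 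It then remains to show, on each component $\mathcal{C}$ of $\mathbb{H}\setminus E_{N,DD_0}$, that
\begin{align*}
\sum_{\mathcal{A}}P_{\mathcal{C},\mathcal{A}}(z)=c_{k,N,D,D_0,\infty}+(-1)^k2^{3-2k}(DD_0)^{\frac12-k}\sum_{\substack{Q\in\mathcal{Q}_{N,DD_0}\\ a<0<Q_z}}\chi_{D_0}(Q)Q(z,1)^{k-1}.
\end{align*}
This identity extends to the exceptional set by the averaging property (Proposition \ref{prop: values at E_N,DD_0}), where both sides are defined accordingly.

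\textbf{Constant term.} Summing $c_\infty(\mathcal{A})$ over all classes, for fixed $(a,b)$ we have $\sum_{\mathcal{A}}\chi_{D_0}([\mathcal{A}])r_{a,b}(\mathcal{A})=\chi_{D_0}([a,b,c])+(-1)^k\chi_{D_0}([-a,-b,-c])$. This is because each of $[a,b,c]$ and $-[a,b,c]$ lies in exactly one class, and the four-case definition of $r_{a,b}$ simply counts these two memberships. Next I will show $\chi_{D_0}(-Q)=\operatorname{sgn}(D_0)\chi_{D_0}(Q)=(-1)^k\chi_{D_0}(Q)$: $-Q$ represents $-r$, and $\left(\frac{D_0}{-1}\right)=\operatorname{sgn}D_0$, with $(-1)^kD_0>0$. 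Hence the sum equals $2\chi_{D_0}([a,b,c])$, and the factor $2^{2-2k}$ becomes $2^{3-2k}$, giving exactly $c_{k,N,D,D_0,\infty}$ from \eqref{eq:app def c_infty}.

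\textbf{Non-constant term.} Summing the $\mathcal{B}_{\mathcal{C},\mathcal{A}}$-parts over $\mathcal{A}$ gives
\begin{align*}
-(-1)^k2^{2-2k}(DD_0)^{\frac12-k}\sum_{Q\in\mathcal{B}_{\mathcal{C}}}\chi_{D_0}(Q)\operatorname{sgn}(a)Q(z,1)^{k-1},
\end{align*}
where $\mathcal{B}_{\mathcal{C}}$ consists of all $Q$ with $\operatorname{sgn}(Q_z)=-\operatorname{sgn}(a)$ on $\mathcal{C}$. I split this set into the forms with $a<0<Q_z$ and those with $a>0>Q_z$. The map $Q\mapsto -Q$ is a bijection between the two subsets; under it $\chi_{D_0}$ picks up $(-1)^k$, $\operatorname{sgn}(a)$ flips, and $Q(z,1)^{k-1}$ picks up $(-1)^{k-1}$. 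Each term therefore transforms by $(-1)^k\cdot(-1)\cdot(-1)^{k-1}=1$, so the two halves contribute equally. The total is $2\cdot(-1)\cdot(-1)=2$ times the sum over $a<0<Q_z$ (the sign coming from $\operatorname{sgn}(a)=-1$ there), which gives the coefficient $(-1)^k2^{3-2k}(DD_0)^{\frac12-k}$.

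\textbf{Main obstacle.} The main difficulty is the bookkeeping behind Theorem \ref{theo: splitting Flm} itself: its sketch only asserts the formula for $P_{\mathcal{C},\mathcal{A}}$, and the jump across $S_{Q_0}$ differs according to whether $-Q_0$ lies in the same class. I would check that in both cases the jump, once weighted by $\chi_{D_0}$, equals $(-1)^k2^{2-2k}(DD_0)^{\frac12-k}\chi_{D_0}\,\operatorname{sgn}(a)Q_0^{k-1}$ per form, so that after summing over classes the symmetrisation above is legitimate. This is the step where sign and normalisation errors are most likely.
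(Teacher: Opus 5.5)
Your route is the paper's: sum Theorem~\ref{theo: splitting Flm} over the $\Gamma_0(N)$-classes, use linearity for the Eichler integrals, and pair $Q$ with $-Q$ using $\chi_{D_0}(-Q)=\operatorname{sgn}(D_0)\chi_{D_0}(Q)=(-1)^k\chi_{D_0}(Q)$. Your computation of $\sum_{\mathcal{A}}P_{\mathcal{C},\mathcal{A}}$ is correct. The identity $\sum_{\mathcal{A}}\chi_{D_0}([\mathcal{A}])r_{a,b}(\mathcal{A})=\chi_{D_0}(Q)+(-1)^k\chi_{D_0}(-Q)$ is a cleaner replacement for the paper's even/odd case split.

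The gap is the step you defer: the normalisations do \emph{not} match. The factor $\frac12$ in $\psi$ does compensate $\pi$ versus $2\pi$. However, \eqref{eq: def f_k,N,D,D_0,A} and \eqref{eq: def F_1-k,N,D,D_0,A} both carry a factor $(-1)^k$ that is absent from \eqref{eq: def sec3-f_k,N,D,D_0} and \eqref{def: Flm}. Hence
\[
\sum_{\mathcal{A}}\mathcal{F}_{1-k,N,D,D_0,\mathcal{A}}=(-1)^k\mathcal{F}_{1-k,N,D,D_0},\qquad \sum_{\mathcal{A}}f_{k,N,D,D_0,\mathcal{A}}=(-1)^kf_{k,N,D,D_0}.
\]
The Eichler-integral terms are unaffected, because the sign appears on both sides. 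The local polynomial, however, becomes $(-1)^k\sum_{\mathcal{A}}P_{\mathcal{C},\mathcal{A}}$, so your argument actually proves
\[
\mathcal{P}_{k,N,D,D_0}(z)=(-1)^kc_{k,N,D,D_0,\infty}+2^{3-2k}(DD_0)^{\frac12-k}\sum_{\substack{Q\in\mathcal{Q}_{N,DD_0}\\ a<0<Q_z}}\chi_{D_0}(Q)Q(z,1)^{k-1}.
\]

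This is a real discrepancy for odd $k$, not just bookkeeping, and you can confirm it directly with the jump computation you planned:
\begin{itemize}
\item Since $\chi_{D_0}(-Q)=(-1)^k\chi_{D_0}(Q)$, the summands of $Q$ and $-Q$ in \eqref{def: Flm} are identical.
\item The identity $|Q(z,1)|^2=y^2(Q_z^2+DD_0)$ shows the argument of $\beta$ equals $1$ on $S_Q$.
\item $\beta(1;k-\frac12,\frac12)=\pi\binom{2k-2}{k-1}4^{1-k}$.
\end{itemize}
So for $a_0<0$, $\mathcal{F}_{1-k,N,D,D_0}$ jumps across $S_{Q_0}$, from $Q_{0,z}<0$ to $Q_{0,z}>0$, by $2^{3-2k}(DD_0)^{\frac12-k}\chi_{D_0}(Q_0)Q_0(z,1)^{k-1}$ for every $k$. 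The stated $\mathcal{P}_{k,N,D,D_0}$ jumps by $(-1)^k$ times this, and the Eichler integrals are real-analytic on $\mathbb{H}$. A concrete case is $k=3$, $N=1$, $D=-3$, $D_0=-4$, $Q_0=[-1,0,3]$, where $\chi_{-4}(Q_0)=-1\neq0$.

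So your proof is fine for even $k$. For odd $k$ the displayed formula holds only after replacing $\mathcal{P}_{k,N,D,D_0}$ by $(-1)^k\mathcal{P}_{k,N,D,D_0}$, so you must carry the sign through rather than check it away. The paper's proof has the same defect, since it simply asserts $\mathcal{F}_{1-k,N,D,D_0}=\sum_{\mathcal{A}}\mathcal{F}_{1-k,N,D,D_0,\mathcal{A}}$ and $\sum_{\mathcal{A}}f_{k,N,D,D_0,\mathcal{A}}=f_{k,N,D,D_0}$. The overall sign is harmless for Theorem~\ref{theo: main prod L-functions}, which only concerns vanishing and modularity of the local polynomial.
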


\begin{proof}
We have that 
    \begin{align*}
    \mathcal{F}_{1-k,N,D,D_0}(z)=&\sum_{\mathcal{A}}
        \mathcal{F}_{1-k,N,D,D_0,\mathcal{A}}(z)\\
        =&(DD_0)^{\frac{1}{2}-k}\sum_{\mathcal{A}}f^*_{k,N,D,D_0,\mathcal{A}}(z)\\
        &-(DD_0)^{\frac{1}{2}-k}\frac{(2k-2)!}{(4 \pi)^{2k-1}}\sum_{\mathcal{A}}\mathcal{E}_{f_{k,N,D,D_0,\mathcal{A}}}(z)
        +\sum_{\mathcal{A}}P_{\mathcal{C}, \mathcal{A}}(z).
    \end{align*}
    Since $\sum_{\mathcal{A}}f_{k,N,D,D_0,\mathcal{A}}=f_{k,N,D,D_0}$ it follows that 
    \begin{align*}
        \sum_{\mathcal{A}}f^*_{k,N,D,D_0,\mathcal{A}}(z)=f^*_{k,N,D,D_0}(z), \text{ and }\sum_{\mathcal{A}}\mathcal{E}_{f_{k,N,D,D_0,\mathcal{A}}}(z)=\mathcal{E}_{f_{k,N,D,D_0}}(z).
    \end{align*}

  Before studying the local polynomial, note that
\begin{align*}
    \chi_{D_0}([a,b,c])=\left( \frac{D_0}{-1}\right) \chi_{D_0}([-a,-b,-c])=\begin{cases}
        \chi_{D_0}([a,b,c]) & \text{if }D_0>0, \\
        -\chi_{D_0}([a,b,c]) & \text{if }D_0<0.
    \end{cases}
\end{align*}
Recall that we assume $(-1)^kD_0>0$. Thus $D_0>0$ implies that $k$ is even and $D_0<0$ implies that $k$ is odd.

We first study the constant at infinity:
    \begin{align*}
         \sum_{\mathcal{A}}c_{\infty}(A)=\frac{-1}{2^{2k-2}(2k-1)\binom{2k-2}{k-1}}\sum_{\mathcal{A}}\chi_{D_0}([\mathcal{A}])\sum_{\substack{a \in \N, \\N |a}} a^{-k}\sum_{\substack{b \mod 2a,\\b^2\equiv DD_0 \mod 4a}}r_{a,b}(A)
     \end{align*}
To write this sum without sets of equivalence classes $\mathcal{A}$ we focus on pairs of binary quadratic forms $Q$ and $-Q$. We distinguish depending on the parity of $k$.\\
If $k$ is even then from the definition of $r_{a,b}(\mathcal{A})$ one sees that independently whether $Q$ and $-Q$ are in the same equivalence class or not, they both contribute $(-1)^k\chi_{D_0}([\mathcal{A}])=\chi_{D_0}(Q)$ to the sum. Hence we may rewrite:
\begin{align}\label{eq: cst at infty sum}
    \sum_{\mathcal{A}}\chi_{D_0}([A])\sum_{\substack{a \in \N, \\N |a}} a^{-k}&\sum_{\substack{b \mod 2a,\\b^2\equiv DD_0 \mod 4a}}r_{a,b}(A)\\
    &=2\sum_{\substack{a \in \N, \\N |a}} a^{-k}\sum_{\substack{b \mod 2a,\\b^2\equiv DD_0 \mod 4a}}\chi_{D_0}\left(\left[a,b,\frac{b^2-DD_0}{4a}\right]\right).
\end{align}
If $k$ is odd, then since the generalized genus character is invariant under $SL_2(\Z)$, if $Q$ and $-Q$ are in the same equivalence class, then $\chi_{D_0}(Q)=0$. Otherwise their added contribution is:
\begin{align*}
    \chi_{D_0}(Q)+(-1)^k\chi_{D_0}(-Q))=2\chi_{D_0}(Q).
\end{align*}
Hence also in this case we recover the equation \eqref{eq: cst at infty sum}.

We use a similar trick to rewrite the nonconstant part of the local polynomial: 
\begin{align*}
    \sum_{\mathcal{A}}\chi_{D_0}{([A])}\sum_{Q \in \mathcal{B}_{\mathcal{C},\mathcal{A}}} \text{sgn}(a)Q(z,1)^{k-1}= \sum_{\mathcal{A}}\sum_{Q \in \mathcal{B}_{\mathcal{C},\mathcal{A}}} \text{sgn}(a)\chi_{D_0}{(Q)}Q(z,1)^{k-1}
\end{align*}
Recall that $Q \in \mathcal{B}_{\mathcal{C},\mathcal{A}}$ if for all $ z \in \mathcal{C}$ we have that:
\begin{align*}
    \text{sgn}(Q_z)=-\text{sgn}(a).
\end{align*}
Thus if $Q \in \mathcal{B}_{\mathcal{C},\mathcal{A}}$ for some equivalence class $\mathcal{A}$, then there exists an equivalence class $\mathcal{A}'$ (which might be $\mathcal{A}$) with $ -Q \in \mathcal{B}_{\mathcal{C},\mathcal{A}'}$. Hence we can study again the contribution of pairs $Q$ and $-Q$. Let $Q=[a,b,c]$ with $a<0$, then we get:
\begin{align*}
    \left(\text{sgn}(a)\chi_{D_0}{(Q)}Q(z,1)^{k-1}\right)&+\left(\text{sgn}(-a)\chi_{D_0}(-Q)(-Q)(z,1)^{k-1}\right)\\
    &= \chi_{D_0}(Q)Q(z,1)^{k-1}\left((-1)+(-1)^{k-1}\left(\frac{D_0}{-1}\right)\right)\\
    &= -2 \chi_{D_0}(Q)Q(z,1)^{k-1}
\end{align*}
Here we used that by the same argument as above $(-1)^{k-1}\left(\frac{D_0}{-1}\right)=-1$ if $\chi_{D_0}(Q)\neq 0$. This proves the claim.
\end{proof}
We are now ready to prove that $\mathcal{F}_{1-k,N,D,D_0, \mathcal{A}}$ and $\mathcal{F}_{1-k,N,D,D_0}$ are locally harmonic Maass forms:

\begin{theorem}\label{theo: Flm is locally harmonic}
Let $k>1$ and let $D,D_0$ be two fundamental discriminants such that $(-1)^kD, (-1)^kD_0>0$ and $DD_0$ is not a perfect square. Moreover let $\mathcal{A}$ be any equivalence class of binary quadratic forms. The functions $\mathcal{F}_{1-k,N,D,D_0,\mathcal{A}}$ and $\Flm$ are locally harmonic Maass form with exceptional set $E_{N, DD_0}$.
\end{theorem}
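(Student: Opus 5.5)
The plan is to verify the four conditions of the definition of a locally harmonic Maass form (Definition \ref{def: lhmF -app1}) for each $\mathcal{F}_{1-k,N,D,D_0,\mathcal{A}}$. The statement for $\Flm$ then follows by linearity, because $\Flm=\sum_{\mathcal{A}}\mathcal{F}_{1-k,N,D,D_0,\mathcal{A}}$ is a finite sum: $\Gamma_0(N)\setminus\mathcal{Q}_{N,DD_0}$ is finite. All four conditions are stable under finite sums, and all summands share the exceptional set $E_{N,DD_0}$.

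For \emph{modularity} (i), I will use the Poincar\'e series representation $\mathcal{M}_{1-k,\eta,N,D_0}=(DD_0)^{k/2}\pi\binom{2k-2}{k-1}\mathcal{F}_{1-k,N,D,D_0,\mathcal{A}}$. For $\gamma\in\Gamma_0(N)$, the map $\delta\mapsto\delta\gamma$ permutes $\Gamma_\eta\setminus\Gamma_0(N)$. Since $|_{2-2k}$ is an action, $\mathcal{M}|_{2-2k}\gamma=\mathcal{M}$, provided the series may be rearranged. Absolute and compact convergence (Proposition \ref{prop: convergence flm}) justifies the rearrangement. Alternatively, I can argue directly with the sum over $Q\in\mathcal{A}$. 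The substitution $Q\mapsto Q\circ\gamma$ preserves $\mathcal{A}$ and $\chi_{D_0}$. By the compatibility $(Q\circ\gamma)(z,1)=(\gamma_{21}z+\gamma_{22})^2Q(\gamma z,1)$ and $\im(\gamma z)=y/|\gamma_{21}z+\gamma_{22}|^2$, the argument of $\psi$ is invariant. The sign $\text{sgn}(Q_z)$ is invariant as well, since $(Q\circ\gamma)_z=Q_{\gamma z}$. Together these give exactly the weight $2-2k$ factor.

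Condition (ii) follows from Theorem \ref{theo: Flm under xi/D}. Off $E_{N,DD_0}$, each point has a neighbourhood in which every sign $\text{sgn}(Q_z)$ is constant. This holds because only finitely many geodesics $S_Q$ meet a compact set (Lemma 5.1 of \cite{B-K-K}). The series consists of real analytic terms and converges locally uniformly, and so do its derivatives. Hence $\mathcal{F}$ is real analytic there and $\Delta_{2-2k}\mathcal{F}=-\xi_{2k}\xi_{2-2k}\mathcal{F}=-(DD_0)^{1/2-k}\xi_{2k}f_{k,N,D,D_0,\mathcal{A}}=0$, because $f_{k,N,D,D_0,\mathcal{A}}$ is holomorphic. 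Condition (iii) is exactly Proposition \ref{prop: values at E_N,DD_0}.

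For the growth condition (iv), I will use Theorem \ref{theo: splitting Flm} together with Remark \ref{rem: Q_z=0 if im >>0}. For $y>\sqrt{DD_0}/(2N)$, the connected component has $\mathcal{B}_{\mathcal{C},\mathcal{A}}=\emptyset$, so $\mathcal{F}=c_\infty(\mathcal{A})+(DD_0)^{1/2-k}f^*_{k,N,D,D_0,\mathcal{A}}-(\ast)\,\mathcal{E}_{f_{k,N,D,D_0,\mathcal{A}}}$. Both Eichler integrals of a cusp form decay exponentially as $y\to\infty$, which gives boundedness. At the other cusps, I will apply the same argument after conjugating by a scaling matrix. Under $\sigma\in SL_2(\Z)$, the class $\mathcal{A}$ turns into forms for $\sigma^{-1}\Gamma_0(N)\sigma$, and there is again a uniform bound on the radii of the geodesics, namely $\sqrt{DD_0}/2$ because the leading coefficients are nonzero integers. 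So the local polynomial is again constant high up, and the growth is at most polynomial.

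I expect the main obstacle to be this last point, growth at the cusps other than $\infty$, together with the rigorous interchange of $\xi_{2-2k}$ and the infinite sum. For the cusps, my first attempt will be to bound $|\mathcal{F}|$ by the $SL_2(\Z)$-sum $\mathcal{F}_{1-k,DD_0,\mathcal{A}'}$ as in Proposition \ref{prop: convergence flm}, and then invoke the polynomial-growth statement of \cite{B-K-K} for that level-one function. This handles all cusps at once, since every cusp of $\Gamma_0(N)$ is $SL_2(\Z)$-equivalent to $\infty$.
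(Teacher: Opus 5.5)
Your proposal is correct and follows essentially the same route as the paper. The paper likewise gets modularity from the Poincar\'e series together with absolute convergence (Proposition~\ref{prop: convergence flm}), gets (ii) and (iii) from Theorem~\ref{theo: Flm under xi/D} and Proposition~\ref{prop: values at E_N,DD_0}, and gets growth from the splitting in Theorem~\ref{theo: splitting Flm}, where the local polynomial is constant for $y>\sqrt{DD_0}/(2N)$; it then sums over the finitely many classes $\mathcal{A}$.

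The extra work you plan on the cusps other than $\infty$ is not needed. Condition (iv) of Definition~\ref{def: lhmF -app1} only asks for polynomial growth towards $i\infty$.
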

The proof is the same as the proof of Theorem 7.4 in \cite{B-K-K}.
\begin{proof}
We check that $\mathcal{F}_{1-k,N,D,D_0, \mathcal{A}}$ satisfies definition \ref{def: lhmF -app1}.
Writing $\mathcal{F}_{1-k,N,D,D_0,\mathcal{A}}$ in terms of Poincare series and using the absolute convergence proven in Proposition \ref{prop: convergence flm} yields that $\mathcal{F}_{1-k,N,D,D_0,\mathcal{A}}$ is modular of weight $2-2k$. Properties $(ii)$ and $(iii)$ from definition \ref{def: lhmF -app1} is proven by Proposition \ref{prop: values at E_N,DD_0} and Theorem \ref{theo: Flm under xi/D}. Finally the bounded growth follows from the expansion of $\mathcal{F}_{1-k,N,D,D_0, \mathcal{A}}$ proven in Theorem \ref{theo: splitting Flm} and the explicit description of the local polynomial. This proves the claim for $\mathcal{F}_{1-k,N,D,D_0, \mathcal{A}}$. Since $\mathcal{F}_{1-k,N,D,D_0}=\sum_{\mathcal{A}}\mathcal{F}_{1-k,N,D,D_0, \mathcal{A}}$, the statement holds for $\mathcal{F}_{1-k,N,D,D_0}$.
\end{proof}

\subsection{Hecke operators}
In this section we study the action of the Hecke operators on $f_{k,N,D,D_0}$ and $\Flm$. They are crucial in both the statement and the proof of Theorem \ref{theo: main prod L-functions} for higher-dimensional spaces of cusp forms. We show that when $(D_0,p)=1$, we can explicitly describe the action of the Hecke operators. Moreover, we study the behavior of $\Flm$ under the Atkin-Lehner involution. 

We start by recalling that for any translation invariant function $f$ we can define the Hecke operators $T_p$ of weight $k$ :
\begin{align*}
    f|_{k}T_p:=p^{k-1}f(pz)+p^{-1}\sum_{r \bmod p}f\left( \frac{z+r}{p}\right).
\end{align*}
We start by studying the action of $T_p$ on $f_{k,N,D,D_0}$.
\begin{theorem}
Let $k>1$ and let $D,D_0$ be two fundamental discriminants such that $(-1)^kD, (-1)^kD_0>0$ and $DD_0$ is not a perfect square. 
    Let $p$ be a prime such that $(p,N)=(p,D_0)=1$. Then
    \begin{align*}
        f_{k,N,D,D_0}\mid_{2k} T_p&=
        f_{k,N,Dp^2,D_0}+\left( \frac{D}{p}\right)p^{k-1}f_{k,N,D,D_0}+p^{2k-1}f_{k,N,\frac{D}{p^2},D_0}
    \end{align*}
    Here $f_{k,N,\frac{D}{p^2},D_0}=0$ if $p^2 \nmid D$.
\end{theorem}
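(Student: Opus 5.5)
We compute $f_{k,N,D,D_0}|_{2k}T_p$ directly from the series $\sum_{Q\in\mathcal{Q}_{N,DD_0}}\chi_{D_0}(Q)Q(z,1)^{-k}$ by letting $T_p$ act on each term. The normalising constant $(DD_0)^{k-1/2}/(\binom{2k-2}{k-1}\pi)$ changes with $D$, so we first work with the unnormalised sums $g_{\Delta}(z):=\sum_{Q\in\mathcal{Q}_{N,\Delta}}\chi_{D_0}(Q)Q(z,1)^{-k}$. Only at the end do we insert the factors $p^{2k-1}$ and $p^{1-2k}$ coming from $(Dp^2D_0)^{k-1/2}=p^{2k-1}(DD_0)^{k-1/2}$.

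\textbf{Step 1: matrices.} Write $T_p$ as a sum over the $p+1$ matrices $M$ of determinant $p$ in upper-triangular form, namely $\begin{pmatrix}p&0\\0&1\end{pmatrix}$ and $\begin{pmatrix}1&r\\0&p\end{pmatrix}$ for $r \bmod p$. For $Q(z,1)^{-k}$ one has $(Q(\cdot,1)^{-k})|_{2k}M = p^{k}\,(Q\circ M)(z,1)^{-k}$ up to the fixed normalisation of the slash operator, where $Q\circ M$ has discriminant $p^2\Delta$. Since $(p,N)=1$, the form $Q\circ M$ still has leading coefficient divisible by $N$. Hence $g_\Delta|_{2k}T_p = p^{k-1}\sum_{Q'\in\mathcal{Q}_{N,p^2\Delta}} \nu(Q')\,Q'(z,1)^{-k}$, where
\[
\nu(Q')=\sum_{(Q,M):\,Q\circ M=Q'}\chi_{D_0}(Q).
\]

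\textbf{Step 2: counting preimages.} This is the main obstacle: for each $Q'\in\mathcal{Q}_{N,p^2\Delta}$ we need a closed formula for $\nu(Q')$. The argument is the classical one of Zagier and Kohnen for $N=1$, with $\Gamma_0(N)$ compatibility provided by $(p,N)=1$. There are three cases.
\begin{itemize}
\item If $Q'$ is not divisible by $p$, the number of $M$ with $Q'\circ M^{-1}p$ integral is $1+\left(\frac{\Delta}{p}\right)$-type data; more precisely, it is the number of roots of $Q'$ modulo $p$ that lie in the correct lattice. The standard computation gives a total contribution of $1$ if $p^2\Delta$ corresponds to a primitive-at-$p$ form. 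When $p\,\|\,$content after descent, one obtains $\left(\frac{\Delta/\cdot}{p}\right)$ corrections.
\item If $p\mid Q'$, write $Q'=pQ''$ with $Q''\in\mathcal{Q}_{N,\Delta}$. The preimages count gives $1+\left(\frac{D}{p}\right)$-type terms; the term $p\cdot p^{-2k}$ relative weights produce the middle term $\left(\frac{D}{p}\right)p^{k-1}f_{k,N,D,D_0}$.
\item If $p^2\mid Q'$ in the appropriate sense (possible only if $p^2\mid D$), we get $Q'=p^2Q'''$ with $Q'''$ of discriminant $\Delta/p^2$. This yields the term $p^{2k-1}f_{k,N,D/p^2,D_0}$.
\end{itemize}
I would follow Kohnen's proof of the Hecke equivariance of the $f_{k,D,D_0}$ (the $N=1$ case in \cite{Kohnen1985}) step by step. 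Two observations keep it unchanged: every matrix manipulation used there lies in a coset compatible with $\Gamma_0(N)$, and $N\mid a$ is preserved because $(p,N)=1$.

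\textbf{Step 3: the genus character.} We must show $\chi_{D_0}(Q)=\chi_{D_0}(Q\circ M)$ and $\chi_{D_0}(pQ)=\chi_{D_0}(Q)$. By Definition \ref{def: generalized genus character}, $\chi_{D_0}$ depends only on a value $r$ coprime to $D_0$ represented by the form. The form $Q\circ M$ represents $Q(M(x,y))$, and these values include values of $Q$ itself, so the character is unchanged. For $pQ$, multiplication by $p$ changes the character by $\left(\frac{D_0}{p}\right)^{\pm}$; combined with the case bookkeeping, this is precisely why $\left(\frac{D}{p}\right)$ (rather than $\left(\frac{DD_0}{p}\right)$) appears. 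This is where $(p,D_0)=1$ is used, and Proposition \ref{prop: explicit genus char} can serve as a cross-check.

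Finally we collect the three contributions and restore the normalising constants, which gives the stated identity.
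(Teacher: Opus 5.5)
Your route is the paper's: let $T_p$ act termwise, reindex over $\mathcal{Q}_{N,p^2DD_0}$ via the $p+1$ matrices $M$, count preimages as in Zagier, and track $\chi_{D_0}$. However, the three steps that fix the constants are wrong as written, so carrying them out literally does not give the stated identity.

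\textbf{Step 1: the prefactor.} It is $p^{2k-1}$, not $p^{k-1}$. Directly,
\[
p^{2k-1}Q(pz,1)^{-k}=p^{2k-1}(Q\circ M_\infty)(z,1)^{-k},
\qquad
p^{-1}Q\left(\tfrac{z+r}{p},1\right)^{-k}=p^{2k-1}(Q\circ M_r)(z,1)^{-k}.
\]
This factor is exactly what turns $(DD_0)^{k-1/2}$ into $(p^2DD_0)^{k-1/2}$. With your $p^{k-1}$ the first term would come out as $p^{-k}f_{k,N,Dp^2,D_0}$. Also, $(p,N)=1$ is needed in the direction you did not state: the preimage $[A/p^2,B/p,C]$ and the forms $Q'/p$, $Q'/p^2$ must still have $N$ dividing the leading coefficient.

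\textbf{Step 2: the preimage count.} It must be exact. For $Q'\in\mathcal{Q}_{N,p^2DD_0}$, the number of $M$ for which $Q'=Q\circ M$ has an integral solution $Q$ is:
\begin{itemize}
\item $1$ if $p\nmid Q'$;
\item $1+\left(\frac{DD_0}{p}\right)$ if $Q'=pQ''$ with $p\nmid Q''$ (the zeros of $Q''$ on $\mathbb{P}^1(\mathbb{F}_p)$);
\item $p+1$ if $p^2\mid Q'$.
\end{itemize}
Equivalently the count is $1+\left(\frac{DD_0}{p}\right)\epsilon(Q'/p)+p\,\epsilon(Q'/p^2)$, where $\epsilon(Q'/p)=1$ if $p\mid Q'$ and $0$ otherwise. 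Your bullets say ``$1+(\frac{\Delta}{p})$-type data'' in the primitive case and $1+\left(\frac{D}{p}\right)$ in the middle case. They also attach the weight $p\cdot p^{-2k}$ to the middle term, whereas it belongs to the third.

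\textbf{Step 3: the genus character.} The relation $\chi_{D_0}(Q)=\chi_{D_0}(Q\circ M)$ is fine. It also holds in the vanishing case, since $\det M=p$ is prime to $D_0$, so a prime $q\mid D_0$ divides $Q$ iff it divides $Q\circ M$. But $\chi_{D_0}(pQ)=\chi_{D_0}(Q)$ is false; the correct relation is $\chi_{D_0}(p^mQ)=\left(\frac{D_0}{p}\right)^m\chi_{D_0}(Q)$.

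The middle coefficient comes from $\left(\frac{DD_0}{p}\right)\left(\frac{D_0}{p}\right)=\left(\frac{D}{p}\right)$. If you combine your count $1+\left(\frac{D}{p}\right)$ with the correct character factor, you get $\left(\frac{DD_0}{p}\right)$ instead. Your later sentence in Step 3 describes the right mechanism, but it contradicts both your Step 2 bullet and your stated claim.

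With these corrections the three pieces are:
\begin{itemize}
\item the ``$1$'' summed over all $Q'$, giving $f_{k,N,Dp^2,D_0}$;
\item the $p\mid Q'$ terms, giving $p^{2k-1}\cdot p^{-k}\left(\frac{D}{p}\right)f_{k,N,D,D_0}$, where $p^{2k-1}$ is the ratio of normalising constants;
\item the $p^2\mid Q'$ terms, giving $p^{2(2k-1)}\cdot p\cdot p^{-2k}f_{k,N,D/p^2,D_0}=p^{2k-1}f_{k,N,D/p^2,D_0}$.
\end{itemize}
This is the paper's computation.
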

In \cite{bengoechea-irreducibilitygaloisgrouphecke}, Bengoechea proves the same statement for $N=1$. If $N>1$, one can follow exactly the same argumentation. We therefore only sketch the proof.
\begin{proof-sketch}
Using the definition, we find that
\begin{align*}
    f_{k,N,D,D_0}\mid_{2k} T_p&=\frac{(DD_0p)^{k-\frac{1}{2}}}{\binom{2k-2}{k-1}\pi}\sum_{[a,b,c]\in \mathcal{Q}_{N,DD_0}} \left( \frac{\chi_{D_0}([a,b,c])}{([a,bp,cp^2](z,1))^k }\right.\\
    & \left. +\sum_{r \bmod p} \frac{\chi_{D_0}([a,b,c])}{([a,bp+2ar,cp^2+bpr+ar^2](z,1))^k}\right)
\end{align*}
    Since $(p,D_0)=1$, one can show that $\chi_{D_0}([a,b,c])=\chi_{D_0}([ap^2,bp,c])$ and that $\chi_{D_0}([a,b,c])=\chi_{D_0}([a,bp,cp^2])=\chi_{D_0}([a,bp+2ar,cp^2+bpr+ar^2])$.
    Then we can rewrite the sum:
    \begin{align*}
        f_{k,N,D,D_0}\mid_{2k} T_p=\frac{(DD_0p)^{k-\frac{1}{2}}}{\binom{2k-2}{k-1}\pi}\sum_{[a,b,c]\in Q_{N,DD_0p^2}} n(a,b,c)([a,b,c](z,1))^{-k}.
    \end{align*}
    Here
    \begin{align*}
        n(a,b,c):&=\chi_{D_0}([a,b,c])\left( \epsilon\left(\frac{a}{p^2}, \frac{b}{p},c\right)+ \sum_{r \bmod p }\epsilon\left(a, \frac{b-2aj}{p},\frac{c-bj+aj^2}{p^2}\right)\right)\\
    \end{align*}
Where we assume that $ \epsilon(a,b,c)=1$ if $a,b,c$ are integers and $0$ otherwise.

    Let $[a,b,c] \in Q_{DD_0p^2}$ with $p^m|a,b,c$ for some $ m \in \{1,2\}$. Then since $(N,p)=1$, we have that $N|a \Leftrightarrow N |(a/p^m)$. Now the result follows by rewriting $n(a,b,c)$ as in \cite{Zagier-Eisenstein-Riemann-Zeta} (p. 291/293) and noting that if $p^m|a,b,c$ for $m \in \{1,2\}$ then
    \begin{align*}
        \chi_{D_0}([a,b,c])=\left(\frac{D_0}{p^m}\right)\chi_{D_0}([a/p,b/p,c/p]).
    \end{align*}
\end{proof-sketch}
We now study the action of the Hecke operator of $\Flm$. We prove the following:
\begin{theorem}\label{theo: Flm Hecke}
Let $k>1$ and $D,D_0$ be two fundamental discriminants such that $(-1)^kD,(-1)^kD_0>0$ and $DD_0$ is not a perfect square. 
Let $p$ be a prime such that $(p,N)=(p,D_0)=1$. Then
\begin{align*}
    \mathcal{F}_{1-k,N,D,D_0}|_{2-2k} T_p=
    \mathcal{F}_{1-k,N,Dp^2,D_0}(z)&+p^{-k}\left( \frac{D}{p}\right)\mathcal{F}_{1-k,N,D,D_0}\\
    &+p^{1-2k}\mathcal{F}_{1-k,N,\frac{D}{p^2},D_0}.
\end{align*}
Here if $p^2 \nmid D$, then $\mathcal{F}_{1-k,N,\frac{D}{p^2},D_0}=0$.
\end{theorem}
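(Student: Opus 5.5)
The plan is to reduce the statement to the Hecke relation for $f_{k,N,D,D_0}$ proven just before it, using the differential operators from Theorem \ref{theo: Flm under xi/D}. The intended route is: show that both sides of the claimed identity are locally harmonic of weight $2-2k$; show that they have the same image under $\xi_{2-2k}$ and under $D^{2k-1}$; and then compare the remaining local polynomials.

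\emph{Step 1: the operators $\xi_{2-2k}$ and $D^{2k-1}$.} Since $\xi_{2-2k}$ and $D^{2k-1}$ are differential operators, they intertwine $T_p$ of weight $2-2k$ with $T_p$ of weight $2k$, up to explicit powers of $p$. For $\xi$ the factor comes from the antiholomorphic weight change; for $D^{2k-1}$ it comes from the chain rule on $pz$ and $(z+r)/p$. Concretely I expect
\[
\xi_{2-2k}(\mathcal{G}|_{2-2k}T_p)=p^{1-2k}\,\xi_{2-2k}(\mathcal{G})|_{2k}T_p
\quad\text{and}\quad
D^{2k-1}(\mathcal{G}|_{2-2k}T_p)=p^{1-2k}\,D^{2k-1}(\mathcal{G})|_{2k}T_p .
\]
These factors should be consistent with the factors $p^{1-2k}$ already visible in Lemma \ref{lem: lifting Hecke operator}.

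\emph{Step 2: matching the Eichler-integral parts.} Apply Theorem \ref{theo: Flm under xi/D}, summed over all classes $\mathcal{A}$, to get $\xi_{2-2k}\mathcal{F}_{1-k,N,D,D_0}=(DD_0)^{1/2-k}f_{k,N,D,D_0}$. Then insert the Hecke relation $f_{k,N,D,D_0}|_{2k}T_p=f_{k,N,Dp^2,D_0}+\left(\frac{D}{p}\right)p^{k-1}f_{k,N,D,D_0}+p^{2k-1}f_{k,N,D/p^2,D_0}$. The normalization $(Dp^2D_0)^{1/2-k}=p^{1-2k}(DD_0)^{1/2-k}$ should convert the three coefficients into exactly $1$, $p^{-k}\left(\frac{D}{p}\right)$ and $p^{1-2k}$. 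This is a bookkeeping check on the constants $(DD_0)^{k-1/2}$ built into $f$, and it must be done carefully. The same computation for $D^{2k-1}$ follows from the second formula in Theorem \ref{theo: Flm under xi/D}.

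\emph{Step 3: the difference is a constant.} Let $\mathcal{H}$ be the left side minus the right side. By Steps 1 and 2, on every connected component off the union of the exceptional sets of $D$, $Dp^2$ and $D/p^2$, $\mathcal{H}$ is annihilated by both $\xi_{2-2k}$ and $D^{2k-1}$. Hence $\mathcal{H}$ is a polynomial of degree at most $2k-2$ on each component. Comparing the explicit local polynomials from Theorem \ref{theo: decomposition F} reduces the claim to two identities.
\begin{itemize}
\item The first is the same Hecke relation at the level of the sums $\sum_{a<0<Q_z}\chi_{D_0}(Q)Q(z,1)^{k-1}$. This follows from the identical rearrangement of quadratic forms used in the proof for $f$: the matrices $\begin{pmatrix}p&0\\0&1\end{pmatrix}$ and $\begin{pmatrix}1&r\\0&p\end{pmatrix}$ act on forms, and $\chi_{D_0}$ is unchanged by them since $(p,D_0)=1$. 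The condition $a<0<Q_z$ is also preserved, because these matrices have positive determinant and do not change the sign of $a$.
\item The second is the corresponding identity for the constants $c_{k,N,D,D_0,\infty}$.
\end{itemize}

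\emph{Main obstacle.} I expect the hardest part to be the constant term. The cleanest way around it is to avoid computing it directly. Instead, evaluate $\mathcal{H}$ for $\operatorname{Im} z$ large, where all nonconstant local parts vanish by Remark \ref{rem: Q_z=0 if im >>0}. There, compare constant Fourier terms by applying $T_p$ to the Poisson-summation expression for $c_\infty$, which is a Dirichlet-type series in $a$ weighted by counts of $b$ modulo $2a$. The multiplicativity of these counts at $p\nmid N D_0$ then gives the required Hecke relation, as in Zagier's treatment of Eisenstein-type coefficients.
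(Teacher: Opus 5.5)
Your reduction is sound and your constant bookkeeping in Steps 1--2 checks out, but this is a genuinely different and heavier route than the paper's, and the one step that is not formal is left unproved.

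\textbf{The paper's route.} It never uses $\xi_{2-2k}$, $D^{2k-1}$, the Hecke relation for $f$, or Theorem \ref{theo: decomposition F}. It applies the rearrangement from your first bullet directly to the defining series of $\mathcal{F}_{1-k,N,D,D_0}$. The two basic identities are $Q(pz,1)=[ap^2,bp,c](z,1)$ and $p^2Q\left(\frac{z+r}{p},1\right)=[a,bp+2ar,cp^2+bpr+ar^2](z,1)$. From these one checks three things:
\begin{itemize}
\item $\text{sgn}(Q_{pz})$ and $\text{sgn}(Q_{(z+r)/p})$ equal the signs of the new forms at $z$;
\item the $\beta$-argument becomes $DD_0p^2y^2/|Q'(z,1)|^2$;
\item $\chi_{D_0}$ is unchanged, since $(p,D_0)=1$.
\end{itemize}
One then regroups over $\mathcal{Q}_{N,DD_0p^2}$ with Zagier's multiplicity $n(a,b,c)$ and uses $\chi_{D_0}(pQ)=\left(\frac{D_0}{p}\right)\chi_{D_0}(Q)$ on imprimitive forms. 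This produces all three terms at once, constant term included. Since you already have this rearrangement for the local polynomial sums, nothing prevents you from applying it to the whole function.

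\textbf{Where your route stands.} The Eichler-integral parts cancel exactly: by $q$-expansions, $\mathcal{E}_f|_{2-2k}T_p=p^{1-2k}\mathcal{E}_{f|_{2k}T_p}$, and similarly for $f^*$; compare Lemma \ref{lem: lifting Hecke operator}. So once you invoke the splitting, your ``polynomial on each component'' step is not even needed. Your first bullet is also correct. All the remaining content then sits in the relation
\[
(1+p^{1-2k})\,c_{k,N,D,D_0,\infty}=c_{k,N,Dp^2,D_0,\infty}+p^{-k}\left(\frac{D}{p}\right)c_{k,N,D,D_0,\infty}+p^{1-2k}\,c_{k,N,D/p^2,D_0,\infty},
\]
and your proposal only asserts it.

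\textbf{The missing step.} ``Applying $T_p$'' to a constant just multiplies it by $1+p^{1-2k}$. What must actually be shown is a relation between the series $\sum_{N\mid a}a^{-k}\sum_{b}\chi_{D_0}\left(\left[a,b,\frac{b^2-\Delta}{4a}\right]\right)$ at three different discriminants. That requires an honest Euler-factor computation at $p$ for genus-character-weighted root counts, including the imprimitive forms at the non-fundamental discriminant $DD_0p^2$. Already for $N=D_0=1$ it is the three-term relation for Zagier's $L(k,\Delta)$, which one has to verify from its Euler product. The relation is true; it follows a posteriori from the paper's direct identity combined with your other two pieces. But it is exactly the input that the direct rearrangement makes unnecessary.

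\textbf{Trade-off.} Your approach gives a structural explanation of how $T_p$ respects the splitting into Eichler integrals plus local polynomial. The price is this extra arithmetic computation. You also need Theorems \ref{theo: Flm under xi/D} and \ref{theo: decomposition F} at the non-fundamental discriminant $Dp^2$, whereas the paper states them only for fundamental $D$ (though their proofs do not use this).
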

We show this statement in the same manner as in the previous proof. Note that this argumentation is different than the approach in \cite{B-K-K}.
\begin{proof}
It follows from the definition that:
\begin{align*}
    \mathcal{F}&_{1-k,N,D,D_0}|_{2-2k}T_p(z)\\
    &=p^{1-2k}\mathcal{F}_{1-k,N,D,D_0}(pz)+p^{-1}\sum_{r \bmod p}\mathcal{F}_{1-k,N,D,D_0}\left( \frac{z+r}{p}\right)\\
    &= \frac{(DD_0)^{\frac{1}{2}-k}}{\binom{2k-2}{k-1}\pi}\left( \sum_{Q \in \mathcal{Q}_{N,DD_0}}p^{1-2k}\chi_{D_0}(Q)\text{sgn}(Q_{pz})Q(pz,1)^{k-1}\psi\left( \frac{DD_0p^2y^2}{|Q(pz,1)|^2}\right)\right.\\
    &\left.+ p^{-1}\sum_{r \bmod p }\sum_{Q \in \mathcal{Q}_{N,DD_0}}\chi_{D_0}(Q)\text{sgn}(Q_{(z+r)/p})Q((z+r/p),1)^{k-1}\psi\left( \frac{DD_0(y/p)^2}{|Q((z+r)/p,1)|^2}\right)\right)\\
\end{align*}
Note that
\begin{align*}
    [a,b,c](pz,1)=[ap^2,bp,c](z,1).
\end{align*}
Moreover we have that
\begin{align*}
    p^2\left([a,b,c]\left(\frac{z+r}{p},1\right)\right)&=a(z+r)^2+bp(z+r)+cp^2\\
    &=[a,bp+2ar,cp^2+bpr+ar^2](z,1).
\end{align*}
Since we assume that $(D_0,p)=1$ it follows that:
\begin{align*}
    \chi_{D_0}([a,b,c])=\chi_{D_0}([ap^2,bp,c])=\chi_{D_0}([a,bp+2ar,cp^2+bpr+ar^2]).
\end{align*}
Therefore, we can rewrite $\mathcal{F}_{1-k,N,D,D_0}|_{2-2k}T_p(z)$ as 
\begin{align*}
    \mathcal{F}&_{1-k,N,D,D_0}|_{2-2k}T_p(z)\\
    &= \frac{(DD_0)^{\frac{1}{2}-k}}{\binom{2k-2}{k-1}\pi}  \sum_{[a,b,c] \in \mathcal{Q}_{N,DD_0}}\left( p^{1-2k}\chi_{D_0}([ap^2,bp,c])\text{sgn}([ap^2,bp,c]_{z}) \right.\\
    &\times\left([ap^2,bp,c](z,1)\right)^{k-1}\psi\left( \frac{DD_0p^2y^2}{|[ap^2,bp,c](z,1)|^2}\right)\\
    &+ \sum_{r \mod p }p^{1-2k} \chi_{D_0}([a,bp+2ar,cp^2+bpr+ar^2])\text{sgn}([a,bp+2ar,cp^2+bpr+ar^2]_z)\\
    &\times \left([a,bp+2ar,cp^2+bpr+ar^2](z,1)\right)^{k-1}\psi\left( \frac{DD_0(y/p)^2}{p^{-4}|[a,bp+2ar,cp^2+bpr+ar^2](z,1)|^2}\right)\\
    &= \frac{(DD_0p^2)^{\frac{1}{2}-k}}{\binom{2k-2}{k-1}\pi} \sum_{Q=[a,b,c] \in \mathcal{Q}_{N,DD_0p^2}}\left( \text{sgn}(Q_z)Q(z,1)^{k-1}  \psi\left( \frac{DD_0p^2y}{|Q(z,1)|^2}\right) \times n(a,b,c)\right).
\end{align*}
Here we set again:
\begin{align*}
        n(a,b,c):&=\chi_{D_0}([a,b,c])\left( \epsilon\left(\frac{a}{p^2}, \frac{b}{p},c\right)+ \sum_{r \mod p }\epsilon\left(a, \frac{b-2aj}{p},\frac{c-bj+aj^2}{p^2}\right)\right)\\
        &=\chi_{D_0}([a,b,c])\left( 1 +\left(\frac{DD_0}{p}\right)\epsilon\left(\frac{a}{p}, \frac{b}{p},\frac{c}{p}\right) +p  \epsilon\left(\frac{a}{p^2}, \frac{b}{p^2},\frac{c}{p^2}\right)\right) 
    \end{align*}
The second equation follows from \cite{Zagier-Eisenstein-Riemann-Zeta} (p. 291/292). We have that:
\begin{align*}
    \mathcal{F}&_{1-k,N,Dp^2,D_0}\\&=\frac{(DD_0p^2)^{\frac{1}{2}-k}}{\binom{2k-2}{k-1}\pi} 
   \sum_{Q=[a,b,c] \in \mathcal{Q}_{N,DD_0p^2}} \chi_{D_0}([a,b,c])\text{sgn}(Q_z)Q(z,1)^{k-1} \psi\left( \frac{DD_0p^2y}{|Q(z,1)|^2}\right). 
\end{align*}
 Let $m \in \{ 1,2\}$. If  $p^m|a,b,c$ then for any $x,y \in \Z$ we have: $ax^2+bxy+cy^2=p^m\left(ax^2/p^m+bxy/p^m+cy^2/p^m\right)$ it follows that
\begin{align*}
    \chi_{D_0}([a,b,c])=\left(\frac{D_0}{p^m}\right) \chi_{D_0}[a/p^m,b/p^m,c/p^m].
\end{align*}
Thus 
\begin{align*}
    \frac{(DD_0p^2)^{\frac{1}{2}-k}}{\binom{2k-2}{k-1}\pi} \sum_{Q=[a,b,c] \in \mathcal{Q}_{N,DD_0p^2}} &\left( \frac{DD_0}{p}\right)\epsilon \left(\frac{a}{p}, \frac{b}{p},\frac{c}{p}
    \right) \chi_{D_0}([a,b,c])\\
    &\times \text{sgn}(Q_z)Q(z,1)^{k-1} \psi\left( \frac{DD_0p^2y}{|Q(z,1)|^2}\right)\\
    &= \frac{(DD_0p^2)^{\frac{1}{2}-k}}{\binom{2k-2}{k-1}\pi} \sum_{Q=[a,b,c] \in \mathcal{Q}_{N,DD_0}}\left( \frac{D}{p}\right)\chi_{D_0}([a,b,c])\\& \times \text{sgn}(Q_z)p^{k-1}Q(z,1)^{k-1} \psi\left( \frac{DD_0y}{|Q(z,1)|^2}\right)\\
    &=p^{-k}\left( \frac{D}{p}\right)\mathcal{F}_{1-k,N,D,D_0}.
\end{align*}
Finally for the last term note that:
\begin{align*}
    \frac{(DD_0p^2)^{\frac{1}{2}-k}}{\binom{2k-2}{k-1}\pi} &\sum_{Q=[a,b,c] \in \mathcal{Q}_{N,DD_0p^2}}p\epsilon \left(\frac{a}{p^2}, \frac{b}{p^2},\frac{c}{p^2}
    \right) \chi_{D_0}([a,b,c])\\
    & \times \text{sgn}(Q_z)Q(z,1)^{k-1} \psi\left( \frac{DD_0p^2y}{|Q(z,1)|^2}\right)\\
    &= \frac{(DD_0p^2)^{\frac{1}{2}-k}p}{\binom{2k-2}{k-1}\pi} \sum_{Q=[a,b,c] \in \mathcal{Q}_{N,DD_0/p^2}}\chi_{D_0}([a,b,c])\\
    & \times \text{sgn}(Q_z)p^{2k-2}Q(z,1)^{k-1} \psi\left( \frac{(DD_0/p^2)y}{|Q(z,1)|^2}\right)\\
    &=p^{1-2k}\mathcal{F}_{1-k,N,D/p^2,D_0}
\end{align*}
This proves the claim.
\end{proof}
We finish this section by determining the behavior of $\Flm$ under the Fricke involution. 
\begin{lemma}\label{lem: Flm Fricke involution}
We have:
\begin{align*}
\Flm\Big\vert_{2-2k}\left(W_N - \left(\frac{D_0}{N}\right) \text{ id} \right) = 0.
\end{align*}
\end{lemma}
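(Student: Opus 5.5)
The plan is to compute $\Flm|_{2-2k}W_N$ termwise from the definition \eqref{def: Flm}, using $W_N=\begin{pmatrix}0&-1\\N&0\end{pmatrix}$. The slash action in weight $2-2k$ is
\begin{align*}
\Flm|_{2-2k}W_N(z)=N^{1-k}z^{2k-2}\,\Flm\left(\frac{-1}{Nz}\right).
\end{align*}
Absolute and locally uniform convergence (Proposition \ref{prop: convergence flm}) allows us to treat each summand separately and then reindex the sum. For $Q=[a,b,c]\in\mathcal{Q}_{N,DD_0}$, set $Q'=Q|W_N=[cN,-b,a/N]$. The Fricke involution is a bijection of $\mathcal{Q}_{N,DD_0}$, so summing over $Q'$ is the same as summing over $Q$.

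The key identities to verify for each summand are the following.
\begin{itemize}
\item[(a)] $Q(-1/(Nz),1)=(Nz^2)^{-1}\,Q'(z,1)$. This gives
\begin{align*}
N^{1-k}z^{2k-2}\,Q(-1/(Nz),1)^{k-1}=Q'(z,1)^{k-1}.
\end{align*}
\item[(b)] The argument of $\beta$ is invariant. Since $\im(-1/(Nz))=y/(N|z|^2)$, we get
\begin{align*}
\frac{DD_0\,\im(-1/(Nz))^2}{|Q(-1/(Nz),1)|^2}=\frac{DD_0\,y^2}{|Q'(z,1)|^2}.
\end{align*}
\item[(c)] $\text{sgn}(Q_{-1/(Nz)})=\text{sgn}(Q'_z)$. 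Writing $w=-1/(Nz)$, we have
\begin{align*}
a|w|^2+b\re(w)+c=\frac{a/N-bx+cN|z|^2}{N|z|^2},
\end{align*}
and the right-hand numerator is exactly the numerator of $Q'_z$; both denominators are positive.
\item[(d)] $\chi_{D_0}(Q)=\left(\frac{D_0}{N}\right)\chi_{D_0}(Q')$. This is Lemma \ref{lem: chi(W_N Q)}, using that $\left(\frac{D_0}{N}\right)^2=1$ because $(D_0,N)=1$.
\end{itemize}

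Combining (a)–(d) and reindexing over $Q'$ gives $\Flm|_{2-2k}W_N=\left(\frac{D_0}{N}\right)\Flm$ at every point of $\mathbb{H}$. On the exceptional set $E_{N,DD_0}$, the identity (c) holds including the value $0$, and the sign convention there (Proposition \ref{prop: values at E_N,DD_0}) is compatible with it. So the identity holds everywhere.

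I expect the main obstacle to be sign and branch bookkeeping rather than any conceptual step. First, one must check that the normalisation of $|_{2-2k}W_N$ matches the one used for $F|_{2k}W_N$ in the preliminaries, so that $\det(W_N)^{1-k}$ exactly cancels the factor $N^{k-1}$ coming from (a). Second, one must confirm that the orientation in (c) holds with no stray minus sign. Because $k-1$ is an integer power, no branch issues arise.
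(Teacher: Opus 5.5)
Your proposal is correct and is essentially the paper's proof: a termwise computation using $Q(-1/(Nz),1)=(Nz^2)^{-1}(Q|W_N)(z,1)$, invariance of the $\beta$-argument, and $Q_{W_N z}=(Q|W_N)_z$, followed by reindexing over $Q|W_N$ and applying Lemma \ref{lem: chi(W_N Q)}. One slip needs fixing. With the paper's normalization $\det(W_N)^{1-k}(Nz)^{2k-2}$, the prefactor is $N^{k-1}z^{2k-2}$, not $N^{1-k}z^{2k-2}$; with your displayed prefactor, (a) would actually give $N^{2-2k}(Q|W_N)(z,1)^{k-1}$. Your closing remark, that $\det(W_N)^{1-k}$ cancels the factor $N^{k-1}$, shows you had the right normalization in mind.
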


\begin{remark}
    If $ \left( \frac{D_0}{N}\right)=1$, then we recover  Lemma 3.3. \cite{males-mono-rollen-wagner}.
\end{remark}
The proof follows the one of Lemma 3.3 in \cite{males-mono-rollen-wagner}:
\begin{proof}
Using that:
\begin{align*}
   [a,b,c]|W_N (z)&= \frac{1}{N}([a,b,c]\circ W_N)(z,1)\\
    &=\frac{1}{N}(a-bNz+cN^2z^2)\\
    &=Nz^2[a,b,c](-1/(Nz),1)
\end{align*}
we get:
\begin{align*}
\frac{DD_0\im\left(W_N(z)\right)^2}{\left\vert Q\left(W_N(z),1\right) \right\vert^2}
&=\frac{DD_0\im\left(-\frac{1}{Nz}\right)^2}{\left\vert Q\left(-\frac{1}{Nz},1\right) \right\vert^2}\\
&= \frac{DD_0(N|z|^2)^{-2}\im(z)^2}{(N|z|^2)^{-2}\left\vert \left(Q |W_N\right)(z,1) \right\vert^2}
= \frac{DD_0\im\left(z\right)^2}{\left\vert \left(Q | W_N\right)(z,1) \right\vert^2}.
\end{align*}
Moreover,
\begin{align*}
[a,b,c]_{W_Nz}
&= \frac{1}{\im\left(-\frac{1}{Nz}\right)}\left(a\left\vert -\frac{1}{Nz}\right\vert^2+b\re\left(-\frac{1}{Nz}\right)+c\right) \\
&= \frac{N|z|^2}{\im(z)}\left(\frac{a}{N^2|z|^2}-b\frac{\re(z)}{N|z|^2}+c\right) \\
&= \left[cN,-b,\frac{a}{N},\right]_{z}
= \left([a,b,c] | W_N\right)_{z}.
\end{align*}
In particular, we have that $ sgn( [a,b,c]_{W_Nz})=sgn(\left([a,b,c] |W_N \right)_{z}).$

We compute $\mathcal{F}_{1-k,N,D,D_0}(z)|_{2k-2}W_N$:
\begin{align*}
    &\frac{\binom{2k-2}{k-1}2\pi}{(DD_0)^{\frac{1}{2}-k}}\Flm|_{2-2k}W_N (z)\\
    &= (Nz)^{2k-2}\sum_{Q \in \mathcal{Q}_{N,DD_0}} \chi_{D_0}(Q)sgn(Q_{W_N(z)})Q(W_N(z),1)^{k-1}\beta\left(\frac{DD_0\im((W_N(z)))^2}{|Q(W_N(z),1)|^2};k-\frac{1}{2}, \frac{1}{2}\right)\\
    &=(Nz)^{2k-2}\sum_{Q \in \mathcal{Q}_{N,DD_0}} \chi_{D_0}(Q) sgn((Q|W_N)_{z}) (Q|W_N) (z,1)^{k-1} \left( Nz \right)^{2-2k}\\&\beta\left(\frac{DD_0\im(z)^2}{|(Q| W_N)(z,1)|^2};k-\frac{1}{2}, \frac{1}{2}\right)\\
    & = \sum_{Q \in \mathcal{Q}_{N,DD_0}} \left(\frac{D_0}{N}\right)\chi_{D_0}(Q) sgn(Q_{z}) Q (z,1)^{k-1} \beta\left(\frac{DD_0\im(z)^2}{|Q(z,1)|^2};k-\frac{1}{2}, \frac{1}{2}\right)\\
     &= \left( \frac{D_0}{N}\right) \frac{\binom{2k-2}{k-1}2\pi}{(DD_0)^{\frac{1}{2}-k}} \Flm|_{2-2k} \text{ id} (z)
\end{align*}
Here we replaced $Q$ with $Q|W_N$ and used Lemma \ref{lem: chi(W_N Q)}.
\end{proof}

This finishes our study of the function $\mathcal{F}_{1-k,N,D,D_0}$.

\bibliographystyle{plain}
\bibliography{References}

\end{document}